\documentclass[11pt]{article}
\usepackage[utf8]{inputenc}
\usepackage{fullpage}
\usepackage{comment}
\usepackage{amsmath}
\usepackage{amsfonts}
\usepackage{amssymb}
\usepackage{tikz-cd}
\usepackage{color,soul}
\usepackage{array}
\usepackage{zref-savepos}
\usepackage{amsthm}
\usepackage{multirow, bigstrut}
\usepackage[font={small,it}]{caption}
\usepackage{tikz}
\usepackage{bm}
\usetikzlibrary{arrows}

\usepackage{hyperref}
\hypersetup{
    colorlinks=true,
    linkcolor=blue,
    filecolor=magenta,  
    urlcolor=cyan,
}
\usepackage{cleveref}
\usetikzlibrary{calc}
\usepackage{comment}
\usepackage{enumerate}
\usepackage{tikz}
\usepackage{amsmath,accents}
\usetikzlibrary{shapes.geometric}

\newtheorem{theorem}{Theorem}[section]
\newtheorem*{theorem*}{Theorem}
\newtheorem{lemma}[theorem]{Lemma}

\newtheorem{example}[theorem]{Example}
\newtheorem{proposition}[theorem]{Proposition}
\newtheorem{corollary}[theorem]{Corollary}

\theoremstyle{definition}
\newtheorem{definition}[theorem]{Definition}
\newtheorem{remark}[theorem]{Remark}

\theoremstyle{plain}

\newcommand\restr[2]{{
  \left.\kern-\nulldelimiterspace 
  #1 
  \vphantom{\big|} 
  \right|_{#2} 
  }}

\newcommand{\C}{\mathbb{C}}
\newcommand{\R}{\mathbb{R}}
\newcommand{\N}{\mathbb{N}}
\newcommand{\Z}{\mathbb{Z}}

\newcommand{\g}{\mathfrak{g}}
\newcommand{\h}{\mathfrak{h}}
\newcommand{\uu}{\mathfrak{u}}
\newcommand{\Q}{\mathbb{Q}}

\DeclareMathOperator{\Sym}{Sym}

\DeclareMathOperator{\Poly}{Poly}

\DeclareMathOperator{\cpc}{Cap}
\DeclareMathOperator{\Newt}{Newt}
\DeclareMathOperator{\supp}{supp}

\DeclareMathOperator{\trunc}{Trunc}
\DeclareMathOperator{\ttrunc}{trunc}

\DeclareMathOperator{\Deg}{Deg}
\DeclareMathOperator{\CT}{CT}
\DeclareMathOperator{\PS}{LS}

\DeclareMathOperator{\relint}{relint}
\DeclareMathOperator{\character}{char}

\DeclareMathOperator{\sspan}{span}

\DeclareMathOperator{\netflow}{net-flow}

\title{New Bounds for Integer Flows and Verma Modules, via Denormalized Lorentzian Laurent Series}
\author{Jonathan Leake and Maryam Mohammadi Yekta}
\begin{document}
\maketitle
\begin{abstract}
The theory of log-concave polynomials has recently been developed to study objects and problems in combinatorics and other subfields of mathematics. Particular classes of log-concave polynomials called \textit{Lorentzian polynomials} and \textit{denormalized and dually Lorentzian (DL) polynomials} have been used to prove log-concavity statements for various combinatorial sequences. This includes the strongest form of Mason’s log-concavity conjecture on the independent sets of matroids and the log-concavity of sequences of Kostka numbers.


In this paper, we develop an analogous class of power series called \textit{denormalized Lorentzian (DL) Laurent series}. This class is the natural generalization of DL polynomials to homogeneous power series, with the benefit of capturing a number of combinatorial generating series including the Kostant partition function for integer flows of directed graphs. We then analyze specific DL Laurent series to obtain
new bounds for integral flows on general directed acyclic graphs and new bounds for the dimensions of weight spaces of parabolic {$\mathfrak{sl}_{n+1}(\mathbb{C})$} Verma modules.
\end{abstract}
\section{Introduction}

The theories of Lorentzian and log-concave polynomials have been developed over the past 25 years to solve problems and study objects in various subfields of mathematics and computer science (see \cite{Gur09, BH20, ALOV19, ALOGV24}).
Particular classes of log-concave polynomials which has received recent attention is that of \textit{denormalized Lorentzian} (e.g. \cite{BLP23}) and \textit{dually Lorentzian polynomials} \cite{RSW25}. These classes differ from usual Lorentzian polynomials by a simple normalization operation, but this is already enough to capture important combinatorial polynomials, including Schur polynomials, Schubert polynomials, Tutte-like polynomials, covolume polynomials, and conjecturally many more \cite{HMMD22, BEST23, Alu24}.

One major thread of applications of these classes of polynomial lies in approximating and bounding combinatorial quantities via \textit{Gurvits's capacity method} \cite{Gur08}. This method involves bounding specific coefficients of log-concave polynomials via a certain entropy optimization problem based on the polynomial (called the \textit{capacity}). This method has been applied to prove lower bounds for interesting quantities, such as the permanent and mixed discriminant \cite{Gur08,Gur06},
integer flows of graphs \cite{Bar09, Bar12, Gur15, BLP23, LM26},
various quantities related to matroids \cite{AO17,SV17,AOV21}
distribution probabilities related to the traveling salesman problem \cite{KKO21, GKL24}, and beyond.

In this paper, we further develop this thread by defining and applying the capacity method to a new class of log-concave functions: \textit{denormalized Lorentzian (DL) Laurent series}\footnote{We remark that the theory developed here could also be analogously applied to define dually Lorentzian Laurent series. }. This class, motivated by {\S 3.1 of \cite{HMMD22}}, is the natural generalization of denormalized Lorentzian polynomials to homogeneous (two-sided) power series allowing negative powers. The benefit of this definition is that it captures a number of combinatorial generating series, leading to new lower bounds on their coefficients.
This includes new bounds on the number of integral flows on general directed acyclic graphs, and new bounds on the dimensions of weight spaces of parabolic {$\mathfrak{sl}_{n+1}(\C)$} Verma modules. These bounds improve and generalize similar bounds on contingency tables and integer flows obtained in \cite{BLP23,LM26}. We describe our results in more detail now.

\subsection{Main result: Integer flows}

Our first main result gives lower bounds on integer flows of a general directed acyclic graph, based on the capacity of the Kostant partition function. This improves upon the bound achieved in \cite{LM26} which gave a slightly worse and more complicated bound for integer flows (mainly in the complete graph case). As is typical for capacity bounds (since capacity is a convex program up to $\log$-$\log$ transformation, see \Cref{capacity is a convex optimization problem}), this result can also be seen as an algorithmic result for approximating integer flows.

\begin{theorem}
\label{main thm for DAGs}
    Let $G = ([n], E)$ be a directed graph. We say that $i \in [n]$ is a terminal vertex if no edge $i \to j$ exists in $E$. Let $T \subseteq [n]$ be the set of terminal vertices of $G$ and assume $\bm{N} \in \Z^{n}$ satisfies $|\bm{N}|_1 = 0$. For each $i \in [n]$, let $G_i$ be the undirected induced subgraph of $G$ on vertices $\{i, i+1, \cdots, n\}$, and denote by $C_i$ the connected component of $G_i$ containing $i$. Then: 
    \[
    |K_G({\bm{N}})| \ge \cpc_{\bm{N}}(f_G)\cdot \prod_{i = 2}^{n} \max \left\{ \frac{|s_i|^{|s_i|}}{(|s_i| + 1)^{|s_i| + 1}},\frac{|N_i|^{|N_i|}}{(|N_i| + 1)^{|N_i| + 1}} \cdot \delta_{i \in T} \right\}
    \]
    where $s_i = \sum_{j \in C_i} N_j$, and $\delta_{i \in T}$ is the indicator variable of $i$ being a terminal vertex and:
    \[
    \cpc_{\bm{N}}(f_G)  = \inf_{\overset{\bm{x}>\bm{0}}{x_j < x_i \forall i \to j \in E}} \left[ \bm{x^{-N}}\prod_{i \to j \in E}\frac{1}{1 - \frac{x_j}{x_i}} \right]
    .\]
\end{theorem}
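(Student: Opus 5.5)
We propose to prove \Cref{main thm for DAGs} by the capacity method, applied one variable at a time to the generating series
\[
f_G(\bm{x}) = \prod_{i\to j\in E}\frac{1}{1-\frac{x_j}{x_i}} = \sum_{\bm{N}} |K_G(\bm{N})|\,\bm{x}^{\bm{N}} .
\]
This is a homogeneous Laurent series of degree $0$, expanded in the region $x_j < x_i$ for every edge $i\to j$. Up to sign conventions in the exponent, each edge contributes $\sum_{k\ge 0} x_i^{-k}x_j^{k}$, which is the net-flow vector of $k$ units along $i\to j$. Hence the coefficient of $\bm{x}^{\bm{N}}$ (with the sign convention matching $\bm{x^{-N}}$ in $\cpc_{\bm{N}}$) is exactly the number of integer flows with net flow $\bm{N}$. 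It is natural to relabel so that every edge goes from a smaller to a larger index; the restriction to the vertex sets $\{i,\dots,n\}$ in the statement suggests this topological ordering.

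\textbf{Step 1: $f_G$ is a DL Laurent series.} The factor $1/(1-x_j/x_i)$ is a DL Laurent series in two variables: its coefficient sequence is constant along degree $0$, and it is the prototypical example in the definition. The class of DL Laurent series is closed under products. So $f_G$ is DL on the relevant domain.

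\textbf{Step 2: coefficient extraction one variable at a time.} This is the core, mirroring the Gurvits/BLP23 argument. We fix the variable $x_1$ by homogeneity. Then for $i=2,\dots,n$ we successively extract the coefficient of $x_i^{N_i}$. The key lemma is a univariate capacity-to-coefficient bound for DL (denormalized, hence log-concave in the coefficient sense) series. If $g$ is a univariate series whose coefficient sequence is log-concave, with no internal zeros and support on one side of degree $d$, then
\[
[t^{d}]\,g \;\ge\; \frac{d^{d}}{(d+1)^{d+1}}\inf_{t}\frac{g(t)}{t^{d}} .
\]
Here $d$ is the distance of the target exponent from the boundary of the support. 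After each extraction we must show that the resulting series in the remaining variables is again DL with the correct capacity inequality, i.e.\ that capacity drops by at most the stated factor. This requires the DL Laurent class to be stable under taking coefficients (``specialization'') in one variable, which we would prove from the definition via the analogous polynomial statement after truncation.

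\textbf{Step 3: the two competing factors.} When extracting $x_i$, the relevant exponent depends on the support of the partially specialized series in $x_i$. Since the variables $x_1,\dots,x_{i-1}$ have already been fixed, the series in $x_i$ effectively aggregates the net flow out of the connected component $C_i$ of the induced graph on $\{i,\dots,n\}$. The total flow leaving $C_i$ through $i$ is forced to equal $s_i$, so the support in $x_i$ is one-sided from $s_i$. This gives the factor $|s_i|^{|s_i|}/(|s_i|+1)^{|s_i|+1}$.

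Alternatively, if $i$ is terminal, then $x_i$ appears only as $x_i^{+k}$ (inflow only). The extracted series in $x_i$ is then one-sided at $0$, and the distance to the boundary is $|N_i|$, which yields the second factor. In each case we may use whichever one-sided description gives the larger constant, hence the maximum.

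\textbf{Main obstacle.} The hardest step is making Step 3 rigorous. We must identify the precise support/boundary of the intermediate series, and we must verify that the univariate log-concavity bound applies with exponent $|s_i|$ for every pattern of component structure. This includes the cases where $s_i$ has either sign, where the relevant extremal direction is $x_i\to 0$ or $x_i\to\infty$ within the constraint region $x_j<x_i$. We would first try to handle this by showing that, after substituting the optimizing values for the earlier variables, the dependence on $x_i$ factors as (a monomial in $x_i$) times (a series in $x_i^{\pm1}$ supported on one side). The monomial factor would be determined by the components $C_i$.
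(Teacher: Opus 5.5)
Your outline has the paper's architecture: DL-ness of $f_G$ as a well-defined product of bivariate DL series, peeling off one variable at a time with the one-sided log-concave bound, then bounding the exponent sets via $C_i$ and terminal vertices. The gap is that the step carrying the whole proof is only asserted.

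Write $p=\sum_k x_n^k p_k(x_1,\dots,x_{n-1})$ and $\hat{\bm{\alpha}}=(\alpha_1,\dots,\alpha_{n-1})$. The induction needs $\cpc_{\hat{\bm{\alpha}}}(p_{\alpha_n})\ge c_n\,\cpc_{\bm{\alpha}}(p)$, where $c_n$ is the one-variable constant. Applying the univariate bound to $s\mapsto p(\bm{y},s)$ requires two things you do not address.

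\begin{enumerate}
\item The sequence $\{p_k(\bm{y})\}_k$ must be log-concave at real points $\bm{y}$. This is about values of infinite series, not coefficient extraction. The DL definition only controls the truncations $\Poly[\bm{x}^{\bm{\beta}}p]$, so it needs a limiting argument (part 1 of \Cref{omega pk and how it is related to omega p}).
\item Even then you only get $p_{\alpha_n}(\bm{y})/\bm{y}^{\hat{\bm{\alpha}}}\ge c_n\,\cpc_{\bm{\alpha}}(p)$ for $\bm{y}$ in the projection $\pi_n(\Omega_p)$. But $\cpc_{\hat{\bm{\alpha}}}(p_{\alpha_n})$ is an infimum over $\Omega_{p_{\alpha_n}}\supseteq\pi_n(\Omega_p)$, and that inclusion points the wrong way. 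The induction closes once you know $\pi_n(\Omega_p)=\Omega_{p_{\alpha_n}}$, i.e.\ every convergence point of the coefficient series lifts to one of $p$.
\end{enumerate}

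Proving this lifting is the paper's main technical work. All nonzero $p_k$ share one domain of convergence (\Cref{moving the boundary case in to the interior}). Its $n=3$ base case uses the one-positive-eigenvalue condition on a $3\times 3$ Hessian to compare the growth of the coefficients of $s$ and $r$. One-sidedness of the $x_n$-support then permits lifting (\Cref{projection of p's domain of convergence}). Your proposal never mentions domains of convergence, and the ``main obstacle'' you single out, the support computation, is the routine part.

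For flow series alone you could bypass the general theory. After fixing the exponents of $x_{i+1},\dots,x_n$, the series is a nonzero Laurent polynomial with nonnegative coefficients times the flow series of the induced graph on $\{1,\dots,i\}$. Its domain is then visibly a projection of the previous one, and log-concavity at real points reduces to convolving log-concave sequences. This would have to be argued explicitly.

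Step 3 also has the peeling order backwards. The factor for $i$ involves the component of $i$ in the induced subgraph on $\{i,\dots,n\}$ precisely because, when $x_i$ is treated, the exponents of $x_{i+1},\dots,x_n$ are already fixed to $N_{i+1},\dots,N_n$. Meanwhile $x_1,\dots,x_{i-1}$ are only evaluated at points of the domain. So you must extract $x_n$ first and work down to $x_2$.

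With the paper's convention that edges go from larger to smaller labels, all flow leaving $C_i\setminus\{i\}$ enters $\{1,\dots,i\}$ and totals $-\sum_{j\in C_i\setminus\{i\}}N_j$. This bounds the inflow, hence the net flow, at $i$ from above. It is an inequality, not the ``forced to equal $s_i$'' you state.

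With your order $i=2,\dots,n$, the very first exponent set is typically two-sided. For a directed path with middle vertex $2$ and endpoints $1$ and $3$, the $x_2$-exponent is inflow minus outflow at $2$ and takes every integer value, so no one-sided bound exists.

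Substituting ``optimizing values'' for the other variables also cannot replace the pointwise argument. The capacity infimum need not be attained, and the comparison must hold at every point of the domain.
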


Additionally, by considering the complete bipartite directed acyclic graph case, we recover the bounds on contingency tables obtained in \cite{BLP23}. We state this result explicitly now.
\begin{theorem}
\label{main thm for contingency tables}
    Assume $\bm{\alpha} \in \Z_{\ge 0}^n, \bm{\beta} \in \Z_{\ge 0}^m$ are given so that $\bm{\alpha}$ is in decreasing order. Then
    \[
    \CT(\bm{\alpha}, \bm{\beta}) \ge \cpc_{(\bm{\alpha}, -\bm{\beta})}(C).\prod_{i = 1}^m \frac{\beta_i^{\beta_i}}{(\beta_i+1)^{\beta_i+1}} \prod_{i  = 2}^n  \frac{\alpha_i^{\alpha_i}}{(\alpha_i+1)^{\alpha_i + 1}},
    \]
    where
    \[
    \cpc_{(\bm{\alpha}, - \bm{\beta})}(C)  = \inf_{\overset{\bm{x}, \bm{y}>\bm{0}}{x_i < y_j \forall i, j}} \left[ \bm{x}^{-\bm{\alpha}} \bm{y^{\beta}}\prod_{i \in [n],j \in [m]}\frac{1}{1 - \frac{x_i}{y_j}} \right]
    .\]
\end{theorem}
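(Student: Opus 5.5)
The plan is to obtain this as a direct specialization of \Cref{main thm for DAGs} to a complete bipartite directed graph, with vertices ordered so that the product in that theorem produces exactly the stated factors.

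\emph{The graph.} Take the vertex set $[n+m]$. Vertex $i\in[n]$ corresponds to row $i$ and carries the variable $x_i$. Vertex $n+j$ corresponds to column $j$ and carries the variable $y_j$. Put an edge $n+j \to i$ for every $i\in[n]$, $j\in[m]$, and no other edges. This graph is acyclic. Its edge factors are $\frac{1}{1-x_i/y_j}$, with domain $x_i<y_j$. Set $\bm N=(\bm\alpha,-\bm\beta)$, so that $\bm x^{-\bm N}=\bm x^{-\bm\alpha}\bm y^{\bm\beta}$. Then $\cpc_{\bm N}(f_G)$ is literally $\cpc_{(\bm\alpha,-\bm\beta)}(C)$.

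\emph{Flows versus tables.} Next I would check that $K_G(\bm N)$ is in bijection with contingency tables with margins $(\bm\alpha,\bm\beta)$. A nonnegative integer flow $(a_{ij})$ on the edges $n+j\to i$ has net flow $N$ exactly when:
\begin{itemize}
\item the total inflow at vertex $i$ is $\alpha_i$, i.e. row sums are $\bm\alpha$;
\item the total outflow at vertex $n+j$ is $\beta_j$, i.e. column sums are $\bm\beta$.
\end{itemize}
This uses the paper's sign convention for net flow, which I would confirm against the generating function: the coefficient of $\bm x^{\bm N}$ in $\prod 1/(1-x_i/y_j)$ counts such tables.

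If $|\bm\alpha|_1\neq|\bm\beta|_1$, then $\CT(\bm\alpha,\bm\beta)=0$. The capacity is also $0$, since scaling $(\bm x,\bm y)\mapsto(t\bm x,t\bm y)$ multiplies the objective by $t^{|\bm\beta|_1-|\bm\alpha|_1}$ and keeps the constraints. So that case is trivial, and we may assume $|\bm N|_1=0$.

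\emph{The factors.} Apply \Cref{main thm for DAGs} and bound each maximum from below by one chosen term.
\begin{itemize}
\item For $2\le i\le n$: vertex $i$ has no outgoing edges, so $i\in T$. Choose the second term, which gives $\frac{\alpha_i^{\alpha_i}}{(\alpha_i+1)^{\alpha_i+1}}$.
\item For $i=n+j$: the induced subgraph $G_i$ lies on column vertices only. It has no edges, so $C_i=\{i\}$ and $s_i=-\beta_j$. The first term then gives $\frac{\beta_j^{\beta_j}}{(\beta_j+1)^{\beta_j+1}}$.
\end{itemize}
The product over $i=2,\dots,n+m$ is therefore at least the stated product. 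Vertex $1$, i.e. $\alpha_1$, is the one omitted. Since $t\mapsto t^t/(t+1)^{t+1}$ is decreasing, putting the largest $\alpha$ first (the decreasing-order hypothesis) makes the omitted factor the smallest one, which gives the best bound. The bound remains valid for any order.

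\emph{Main obstacle.} The only delicate point is matching conventions: the edge direction, the sign of $\bm N$, and which vertices count as terminal must agree with how $K_G$ and $f_G$ are defined later in the paper. If the paper's convention reverses edges, I would instead orient edges $i\to n+j$ and reorder the vertices so that the column vertices come last and are terminal. This changes which terms are chosen in the maxima but not the argument.
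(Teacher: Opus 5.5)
Your proposal is correct and follows the paper's own route: specialize the general flow bound (\Cref{main thm for DAGs}) to the complete bipartite graph with edges $n+j \to i$ and $\bm N = (\bm\alpha,-\bm\beta)$, taking the terminal-vertex term for rows $i \ge 2$ and the singleton component $C_{n+j}=\{n+j\}$ for columns. The differences are minor. The paper uses the decreasing order of $\bm\alpha$ to show that each row maximum actually equals the $\alpha_i$-term, since $\sum_{k<i}\alpha_k \ge \alpha_i$; you simply bound the maximum below by that term, and you also handle the case $|\bm\alpha|_1 \neq |\bm\beta|_1$, which the paper leaves implicit.
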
 

\subsection{Main result: Parabolic Verma modules}

Beyond the improved bounds on integer flows stated above, we also obtain new bounds on dimensions of weight spaces of parabolic {$\mathfrak{sl}_{n+1} (\C)$} Verma modules. Parabolic Verma modules are well-studied infinite-dimensional representations of Lie algebras, and their weight spaces are a standard decomposition of the underlying infinite-dimensional vector space. Throughout we will denote by $M(\bm\lambda,J)$ the parabolic Verma module of $\mathfrak{sl}_{n+1}(\C)$, where $\bm\lambda$ is the highest weight of the module, and $J$ is a subset of the simple roots of the root system associated to $\mathfrak{sl}_{n+1}(\C)$.

The proofs of our new bounds build on the results of \cite{KMS25}, which study the log-concavity of the characters of parabolic Verma modules. We refer the reader to \cite{KMS25} for further background and discussion of parabolic Verma modules and how they are defined. Our first result is the most general result we have for dimensions of weight spaces of such modules.

\begin{theorem}
Fix $J \subseteq [n]$. Define $0 =: i_0 < i_1 < \cdots < i_r < i_{r+1} := n+1$ so that $J^c = \{i_1, \ldots, i_r\}$. For each integer $t \in [0,r]$, denote by $J_t$ the set $\{i_t + 1, \cdots , i_{t + 1} - 1 \}$.  Construct a complete directed multipartite graph $G_J$ with $r+1$ parts, the $t$-th part being $J_t \cup \{i_t\}$ for $t \in [0,r]$, where edges are directed $u \to v$ when $u < v$.
Let $M(\bm \lambda,J)$ be a parabolic Verma module and denote by $\lambda_t$ the partition $(\lambda_{i} - \lambda_{1 + \max J_t}: i \in J_t)$. Then for any weight $\bm{\mu}\in \Z^{n+1}$:
    \[
        \dim M(\bm{\lambda}, J)_{\bm{\mu}} \ge \cpc_{\bm{\mu}}\left(\character M (\bm{\lambda}, J) \right) \prod_{i = 1}^{n}\frac{|m_i|^{|m_i|}}{(|m_i| + 1)^{|m_i| + 1}},
    \]
where:
\[\character M(\bm{\lambda}, J) = f_{G_J} (x_i: i \in [n+1]) \cdot \prod_{ t =0}^r s_{\lambda_t}(x_i: i \in J_t). \]
$s_{\lambda_t}$ is the Schur polynomial indexed by $\lambda_t$ and  $m_i = -\sum_{k \in C_i} \mu_k$ where $C_i$ is the connected component containing $i$ in the undirected induced subgraph of $G_J$ on $\{i, \cdots, n+1\}$.

\end{theorem}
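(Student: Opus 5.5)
The argument has three ingredients. The character formula comes from \cite{KMS25}. The product is a denormalized Lorentzian Laurent series, using the closure properties of this class developed in the paper. Finally, the same inductive coefficient-extraction argument that gives \Cref{main thm for DAGs} is applied to this product.

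\textbf{Step 1: character formula.} Write $\character M(\bm\lambda,J)$ in the stated form. By \cite{KMS25}, the character of the parabolic Verma module is the product of the character of the finite-dimensional Levi module and the generating function of the Kostant partition function for the roots outside the Levi. The Levi is $\prod_t \mathfrak{gl}$ on the blocks $J_t\cup\{i_t\}$, so its character is a product of Schur polynomials $s_{\lambda_t}$, after normalizing by a monomial, which only shifts $\bm\mu$. The remaining positive roots $e_u-e_v$ with $u<v$ in different blocks are exactly the edges of the complete multipartite DAG $G_J$. Their generating series is therefore $f_{G_J}$, expanded in the region $x_v<x_u$. Consequently $\dim M(\bm\lambda,J)_{\bm\mu}$ is a coefficient of this product, with the sign convention $\bm\mu \leftrightarrow -\bm N$.

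\textbf{Step 2: DL property.} Show the product is a DL Laurent series. The paper will have shown that $f_G$ is DL for any DAG; this is the input to \Cref{main thm for DAGs}. Schur polynomials are denormalized Lorentzian \cite{HMMD22, BLP23}. So it remains to invoke a closure property: the product of a DL Laurent series with a DL polynomial in variables that also appear in the series is again DL. Products of this kind are the main obstacle. Products of denormalized Lorentzian polynomials are known to be DL, so I would first try to reduce to that case. Truncate $f_{G_J}$ to a polynomial, as the paper presumably does via $\trunc$ when defining DL Laurent series. Then use that the product of polynomials is DL and pass to the limit. This needs the DL class to be closed under coefficientwise limits, which is presumably how the Laurent definition is set up.

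\textbf{Step 3: coefficient extraction.} Apply the capacity coefficient bound to the product, peeling off variables in the order $x_{n+1},x_n,\dots,x_2$ exactly as in \Cref{main thm for DAGs}. At each step, specialize or differentiate in the last variable via the DL analogue of Gurvits's single-variable lemma. Each step loses a factor $\frac{k^k}{(k+1)^{k+1}}$, where $k$ is the degree in that variable of the relevant homogeneous piece. Homogeneity restricted to the connected component $C_i$ of the induced subgraph on $\{i,\dots,n+1\}$ forces the exponent of $x_i$ to be determined by $m_i=-\sum_{k\in C_i}\mu_k$. The Schur factors are supported within blocks, and each block $J_t\cup\{i_t\}$ is connected inside $G_J$ to later blocks. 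So the Schur factors do not change the components, and the same $m_i$ works.

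There are only $n$ factors rather than $n+1$, because the final variable's exponent is fixed by homogeneity, matching the index range $i=2,\dots,n$ in \Cref{main thm for DAGs} after relabeling. The terminal-vertex alternative of \Cref{main thm for DAGs} is dropped: only the $m_i$ factor is kept, which is weaker and holds uniformly. Combining the $n$ losses with $\cpc_{\bm\mu}$ gives the stated bound.
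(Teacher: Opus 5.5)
Your overall plan matches the paper's. You take the product formula from \cite{KMS25}, show the character is a DL Laurent series, and apply \Cref{a lower bound for coefficients using capacity} with a one-sided bound $b_i$ on $\Deg^{\bm\mu}_{x_i}$ for each peeled variable. Your Step~2 is a valid variant of the paper's, which simply invokes \Cref{multiplication of DL Laurent series with admissible pairs}. Because $S=\prod_t s_{\lambda_t}$ is a polynomial, $\bm{x}^{-\bm\beta}\bigl(\Poly[\bm{x}^{\bm\beta}f_{G_J}]\cdot S\bigr)$ is DL by \Cref{multiple of a DL Laurent series is DL}. Each of its coefficients stabilizes to the corresponding coefficient of $f_{G_J}S$ once $\bm\beta$ is large, and \Cref{DL Laurent series are closed} concludes. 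This route needs no admissibility bookkeeping.

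The step specific to this theorem, producing the $b_i$, is not actually carried out, and the reasons you give for it are not right.
\begin{itemize}
\item \Cref{a lower bound for coefficients using capacity} charges a factor in $|b_i-\mu_i|$, the distance to a one-sided bound of $\Deg^{\bm\mu}_{x_i}$, not in a degree. Here $\Deg^{\bm\mu}_{x_i}$ is unbounded above whenever $i$ has a smaller neighbour, since flow into $i$ from $[1,i-1]$ is unconstrained by fixing the exponents $\mu_k$ for $k>i$.
\item For the same reason, the exponent of $x_i$ is not determined by homogeneity on $C_i$. For $J=\emptyset$, $n=2$, fixing $\mu_3\ge0$ leaves the $x_2$-exponent ranging over $[-\mu_3,\infty)$.
\item Write the $x_k$-exponent of a monomial of the product as $\netflow_\phi(k)+w_k$, with $w_k\ge 0$ coming from the Schur factor. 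This nonnegativity, not any preservation of components, is what makes the Schur factor harmless.
\end{itemize}
The missing argument is the paper's. Edges of $G_J$ point to larger vertices, and every larger neighbour of a vertex of $C_i$ lies in $C_i$. So no edge leaves $D_i=C_i\setminus\{i\}$, and every edge out of $i$ enters $D_i$. Hence the flow leaving $i$ is at most $\sum_{k\in D_i}\netflow_\phi(k)=\sum_{k\in D_i}(\mu_k-w_k)\le\sum_{k\in D_i}\mu_k$. Therefore $\netflow_\phi(i)+w_i\ge-\sum_{k\in D_i}\mu_k=:b_i$, with $|b_i-\mu_i|=|m_i|$.

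Finally, your ``after relabeling'' sentence hides a real discrepancy. Peeling $x_{n+1},\dots,x_2$ yields the factors for $i=2,\dots,n+1$. Since $m_i$ is tied to $i$ through $C_i$, no relabeling converts this into $\prod_{i=1}^{n}$.

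The printed range is in fact false. For $n=1$, $J=\emptyset$, the character is $\sum_{k\ge0}x_1^{-k}x_2^{k}$. At $\bm\mu=(-1,1)$ the coefficient is $1$ and the capacity is $\inf_{0<z<1}\frac{1}{z(1-z)}=4$. Since $m_1=0$ contributes the factor $1$, the claim reads $1\ge 4$.

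The paper's proof makes the same slip. It writes $\prod_{i=1}^{n}$ right after invoking \Cref{a lower bound for coefficients using capacity}, which for $n+1$ variables gives $\prod_{i=2}^{n+1}$. What your argument and the paper's actually prove is the bound with $\prod_{i=2}^{n+1}$; in this example that is the tight $1\ge 4\cdot\frac14$.
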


Finally, it is also desirable to have bounds which do not depend on the capacity optimization problem. Thus we apply the dual program techniques of \cite[Lemma 5]{Bar12}, \cite[Proposition 6.2]{BLP23} and \cite[Proposition 3.1]{LM26} to obtain explicit bounds as corollaries of the above results. The bounds are general, and thus probably can be improved in specific cases. We discuss this further in Section \S \ref{subsections: explicit bounds for dimensions of weight spaces}.

\begin{corollary}
Suppose  $M(\bm{\lambda}, J)$ is a parabolic Verma module. Fix the following
\begin{itemize}
 
    \item $\bm{\nu} \in \Z^{[n+1]}_{\ge 0}$ with $\bm{\nu}_t := \restr{\bm{\nu}}{J_t}$ such that $|\bm{\nu}_t|_1 = |{\lambda}_t|_1$ and $\nu_{i_{t+1}} = 0$ for all $t \in \{0,1,\ldots,r\}$, and
    \item a (not necessarily integral) flow $\phi$ of $G_J$ with net-flows $\bm\mu-\bm\nu$.
\end{itemize}

Then
    \begin{align*}
        \dim M(\bm{\lambda}, J)_{\bm{\mu}} &\ge   \prod_{e \in E(G_J)}\frac{(\phi(e) + 1)^{\phi(e) + 1}}{\phi(e)^{\phi(e)}} \cdot \prod_{ t = 0}^r K_{\lambda_t, \bm{\nu}_t} \cdot \prod_{i = 1}^{n}\frac{|m_i|^{|m_i|}}{(|m_i| + 1)^{|m_i| + 1}} 
    \end{align*}
    where $m_i = -\sum_{k \in C_i} \mu_k$, and $K_{\lambda_t, \bm{\nu}_t}$ is the Kostka number indexed by $\lambda_t, \bm{\nu}_t$. 
\end{corollary}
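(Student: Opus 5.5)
The corollary follows from the preceding theorem once we produce an explicit upper bound for the capacity. The capacity is an infimum, so any explicit lower bound for it will do. Concretely, it suffices to show
\[
\cpc_{\bm\mu}\left(\character M(\bm\lambda,J)\right) \ge \prod_{e\in E(G_J)}\frac{(\phi(e)+1)^{\phi(e)+1}}{\phi(e)^{\phi(e)}}\cdot\prod_{t=0}^r K_{\lambda_t,\bm\nu_t}.
\]
Multiplying this by the product of the $|m_i|$ factors and invoking the theorem then gives the claim.

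\textbf{Step 1: bound the Schur factors.} Fix any admissible $\bm x$, meaning $\bm x>\bm 0$ and $x_v<x_u$ for every edge $u\to v$. Every monomial of $s_{\lambda_t}$ has a nonnegative coefficient, and the coefficient of $\bm x^{\bm\nu_t}$ is the Kostka number $K_{\lambda_t,\bm\nu_t}$; this monomial is present because $|\bm\nu_t|_1=|\lambda_t|_1$. Since $s_{\lambda_t}$ is a polynomial with nonnegative coefficients, we get $s_{\lambda_t}(\bm x)\ge K_{\lambda_t,\bm\nu_t}\bm x^{\bm\nu_t}$. The condition $\nu_{i_{t+1}}=0$ ensures that $\bm\nu$ is supported only on the variables appearing in the Schur factors. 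Multiplying over $t$ gives
\[
\bm x^{-\bm\mu}\,\character M(\bm\lambda,J)(\bm x)\ \ge\ \prod_t K_{\lambda_t,\bm\nu_t}\cdot \bm x^{-(\bm\mu-\bm\nu)} f_{G_J}(\bm x).
\]

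\textbf{Step 2: bound the flow part.} Write $y_e=x_v/x_u\in(0,1)$ for each edge $e=u\to v$. Because $\phi$ has net-flows $\bm\mu-\bm\nu$, we have
\[
\bm x^{-(\bm\mu-\bm\nu)}=\prod_e y_e^{\pm\phi(e)},
\]
where the sign is fixed by the paper's net-flow convention. I will choose the orientation so that this equals $\prod_e y_e^{-\phi(e)}$; this is consistent with $f_{G_J}$ expanding in the ratios $x_v/x_u$. The expression then factors completely over edges as
\[
\prod_e y_e^{-\phi(e)}(1-y_e)^{-1}.
\]
Each factor is minimized separately over $y_e\in(0,1)$ by elementary calculus. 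The minimizer is $y=\phi/(\phi+1)$, and the minimum value is $(\phi+1)^{\phi+1}/\phi^{\phi}$, with the convention $0^0=1$. Since each factor is at least its minimum, this holds for every admissible $\bm x$.

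\textbf{Step 3: conclude.} Taking the infimum over $\bm x$ yields the displayed capacity bound, and the corollary follows.

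The main obstacle I expect is the sign bookkeeping in Step 2. I need to confirm that the paper's definition of net-flow, together with the orientation of the $\mu$ conventions, gives exponent $-\phi(e)$ rather than $+\phi(e)$. With the opposite sign the per-edge infimum would degenerate to $0$, so the convention must match. I also need to check that $\phi\ge 0$ is implicitly required for the bound to make sense.
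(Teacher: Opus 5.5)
Your proof is correct, and it takes a more elementary route than the paper. The paper works at the level of capacities:
\begin{itemize}
\item it uses \Cref{product of capacities} to reduce to $\cpc_{\bm\mu-\bm\nu}(f_{G_J})\prod_t \cpc_{\bm\nu_t}(s_{\lambda_t})$;
\item it uses \Cref{convex analysis of capacity of flow series} (Fenchel duality via Rockafellar's theorem on conjugates of sums) to rewrite $\cpc_{\bm\mu-\bm\nu}(f_{G_J})$ as a supremum over flows, which it then evaluates at $\phi$;
\item it uses \Cref{capacity is at least the corresponfing coefficient} to bound each Schur capacity by a Kostka number.
\end{itemize}
Your pointwise estimate in $\bm x$ is exactly the weak-duality half of those identities, and that half is all the corollary uses. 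Bounding each edge factor $y_e^{-\phi(e)}(1-y_e)^{-1}$ by its own infimum is legitimate even though the $y_e$ are coupled through $\bm x$, because only a lower bound is needed.

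What your route buys is self-containedness. It needs no duality theorem and no limiting argument, and it applies verbatim to real-valued $\phi\ge 0$. What the paper's route buys is that it identifies $\cpc_{\bm\mu-\bm\nu}(f_{G_J})$ exactly as a supremum over flows. That shows nothing is lost in the flow step once $\phi$ is optimized, which explains why optimizing over $\phi$ is the right move.

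On the points you flagged, the sign involves no choice. In the paper's conventions, net-flow is in-flow minus out-flow, and an edge $u\to v$ contributes $\sum_{k\ge 0}x_v^k x_u^{-k}$ to $f_{G_J}$. Hence $\bm x^{\netflow_\phi}=\prod_{e=u\to v}(x_v/x_u)^{\phi(e)}$, so $\bm x^{-(\bm\mu-\bm\nu)}=\prod_e y_e^{-\phi(e)}$ automatically. Nonnegativity of $\phi$ is part of being a flow and, as you observe, it is genuinely needed.

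Two minor remarks:
\begin{itemize}
\item The condition $|\bm\nu_t|_1=|\lambda_t|_1$ does not by itself make $\bm x^{\bm\nu_t}$ appear in $s_{\lambda_t}$; that requires dominance. Your inequality $s_{\lambda_t}(\bm x)\ge K_{\lambda_t,\bm\nu_t}\bm x^{\bm\nu_t}$ holds anyway, trivially when the Kostka number is $0$.
\item Restricting to admissible $\bm x$ is harmless, because $\character M(\bm\lambda,J)(\bm x)=+\infty$ at every other $\bm x>\bm 0$. On the admissible region, factoring the evaluated product is justified because all terms are nonnegative.
\end{itemize}
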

\subsection{Technical contribution}

The bounds of this paper are derived from relating the coefficients of various log-concave polynomials and power series to the capacity optimization problem defined above. This is a technique that has at this point a relatively established history, dating back to its first use by Gurvits around 2004. One of the main contributions of this paper is then to extend this technique to power series in a systematic way. A new feature of the analysis is then the domain of convergence of the input power series. Specifically, we need to know how log-concavity properties of power series relate to their domains of convergence.

To give a sense of this relation, let us consider a standard univariate power series
\[
    p(x) = \sum_{n=0}^\infty p_n x^n,
\]
and let us assume that the coefficients of $p$ are positive and log-concave (i.e., $p_n^2 \geq p_{n-1} p_{n+1}$ for all $n$). For such a $p$, we can obtain the bound
\[
    \cpc_k(p) \geq p_k \geq \frac{k^k}{(k+1)^{k+1}} \cdot \cpc_k(p) \qquad \text{where} \qquad \cpc_k(p) = \inf_{x > 0} \frac{p(x)}{x^k}
\]
for all $k$ (see Corollary \ref{a bound for coefficients of bivariate DL Laurent series}\footnote{Actually we consider a certain homogenization of $p$, but our results are equivalent to the ones we describe here.}). The upper bound is straightforward and only requires positivity of the coefficients of $p$, whereas the lower bound is more difficult and relies heavily on the log-concavity assumption.

Notice that we make no explicit assumption here about convergence of $p(x)$, which of course is needed to make the above bounds sensible. This is because a non-empty region of convergence is guaranteed by the log-concavity property. To see this, note that positivity and log-concavity imply
\[
    \frac{p_1}{p_0} \geq \frac{p_2}{p_1} \geq \frac{p_3}{p_2} \geq \cdots \geq \frac{p_n}{p_{n-1}} \geq \frac{p_{n+1}}{p_n} \geq \frac{p_{n+2}}{p_{n+1}} \geq \cdots.
\]
By the ratio test for convergence, we then have that $p(x)$ converges whenever
\[
    \lim_{n \to \infty} \frac{p_{n+1} x^{n+1}}{p_n x^n} \leq x \cdot \frac{p_1}{p_0}
\]
is strictly less than 1. Thus we are guaranteed $\frac{p(x)}{x^k}$ is finite for $x < \frac{p_0}{p_1}$, and therefore $\cpc_k(p)$ is finite as well.

The main technical contribution of this paper is then to extend this line of reasoning to multivariate power series (including Laurent series with possibly infinitely many negative powers as well). This requires a notion of log-concavity for multivariate power series, which we define and develop as a natural generalization of such notions for multivariate polynomials. We then need to extend previous bounds to the power series setting, which requires analysis of regions of convergence for multivariate power series with the aforementioned log-concavity properties. Once this is done, we obtain our main results by applying our technical results to particular power series which encode combinatorial data for the various objects we want to study.

\subsection{A small example}

To give a better sense of our bounds, we also show how we can use our bounds to obtain a weak version of Stirling's approximation. As mentioned above, our bounds imply for $p(x) = \sum_{n=0}^\infty p_n x^n$ where $(p_n)_{n=0}^\infty$ is positive and log-concave that
\[
    \cpc_k(p) \geq p_k \geq \frac{k^k}{(k+1)^{k+1}} \cdot \cpc_k(p) \qquad \text{where} \qquad \cpc_k(p) = \inf_{x > 0} \frac{p(x)}{x^k}.
\]
If $p(x) = e^x = \sum_{n=0}^\infty \frac{x^n}{n!}$, then $(p_n)_{n=0}^\infty$ is positive and log-concave. Further, basic calculus implies
\[
    \cpc_k(p) = \inf_{x > 0} \frac{e^x}{x^k} = \left[\frac{e^x}{x^k}\right](k) = \frac{e^k}{k^k}.
\]
Thus the above inequalities imply
\[
    \frac{e^k}{k^k} \geq \frac{1}{k!} \geq \frac{e^k}{(k+1)^{k+1}} \iff \left(\frac{k}{e}\right)^k \leq k! \leq (k+1) \cdot \left(\frac{k+1}{e}\right)^k
\]
for all $k$. As claimed, this is weak version of the fact that $k! \approx \sqrt{2\pi k} \left(\frac{k}{e}\right)^k$.

We note two main differences between this weak Stirling's approximation and our main results. First, we usually cannot do the calculus step above to actually compute the capacity. So our results are either in terms of the capacity, or in terms of some lower bound on the capacity we can obtain using convex analytic techniques (see \Cref{subsection: explicit bounds for integer flows} and \Cref{subsections: explicit bounds for dimensions of weight spaces}). And second, the power series we analyze to obtain the main results have many variables instead of just one. Generally, the approximation factors one can achieve decay exponentially in the number of variables, and thus the bounds we obtain appear much worse than the one obtained here. That said, this exponential decay is expected to be necessary in general, and thus we expect our bounds to be close to tight.

\subsection{Paper structure}

We start by reviewing some previous works on log-concave polynomials and discussing previously known bounds for contingency tables and type A Kostant partition functions in \S \ref{Section: background}. We then introduce and study denormalized Lorentzian Laurent series in \S \ref{Section: DL Laurent series}. We continue studying these Laurent series and their domains of convergence to prove bounds on their coefficients using the capacity function in \S \ref{section: capacity for Laurent series}. We then introduce flow Laurent series and present lower bounds for the number of integer flows of a general directed acyclic graph in \S \ref{section: Flows of a graph}. We will use some convex analysis tools in the same section to obtain more explicit lower bounds for these numbers. We then restrict to two special graphs to obtain an improved bound on type A Kostant partition functions and re-prove a bound on contingency tables. Lastly, we apply our bounds to the characters of parabolic Verma modules and obtain bounds on dimensions of weight spaces of parabolic Verma modules in \S \ref{section: applying bounds to parabolic verma modules}.

\section{Background}
\label{Section: background}
\subsection{Basic notation}
We use $\C$ and $\R$ to denote the set of complex and real numbers. Let $\R^* = \R \cup \{ \infty\}$ and $\N = \{ 0, 1, \cdots, \}$. For any two integers $m > n \ge 1$, define $[n] := \{ 1, \cdots, n\}$, $[n,m] := \{ n, n + 1, \cdots, m\}$ and $\partial_n =\frac{\partial}{\partial x_n}$. 

For two vectors $\bm{x} = (x_1, \cdots, x_n)$ and $\bm{\alpha} = (\alpha_1, \cdots, \alpha_n)$, $\bm{x^{\alpha}}$ is the product $x_1^{\alpha_1} \cdots x_n^{\alpha_n}$ and $\bm{\alpha}!$ is $\prod_{i=1}^n \alpha_i!$ whenever $\bm{\alpha} \in \N^n$. We say that $\bm{x}\ge \bm{\alpha}$ if $x_i \ge \alpha_i$ for all $i \in [n]$. Moreover, $|\bm{x}|_1 = \sum_{i = 1}^n x_i$.

The support of a polynomial $p \in \R[x_1, \cdots, x_n]$ is the set $\supp(p) = \{ \bm{\alpha} \in \Z^n : p_{\bm{\alpha}} \neq 0 \}$, and $\deg_{x_i}(p)$ is the degree of the variable $x_i$ in $p$. 

For $\bm{\mu} \in \Z^n$, let $\R^{\bm{\mu}}[x_1, \cdots, x_n] := \{p(\bm{x}) \in \R[\bm{x}]: \deg_{x_i}p \le \mu_i \}$. A linear operator $T: \R^{\bm{\mu}}[x_1, \cdots, x_n] \to \R[x_1, \cdots, x_m]$ is said to be homogeneous if there exists some $k \in \N$ such that $T[\bm{x^{\alpha}}]$ is a homogeneous polynomial of degree $|\bm{\alpha}|_1 + k$ for any $\bm{\alpha} \in \N^n$.

A nonempty set $A \in \ \Z^n$ is {$M$-convex} if for any $\bm{x,y} \in A$ and $i \in [n]$ such that $x_i > y_i$, there exists some $j \in [n]$ satisfying $x_j < y_j$ and $\bm{x} + \bm{e}_j - \bm{e}_i, \bm{y} - \bm{e_j} + \bm{e_i}\in A$ 

\subsection{Lorentzian polynomials}

Lorentzian polynomials (in their various equivalent forms) were developed in \cite{Gur09,ALOV19,BH20}. We will review some of their basic properties now.

\begin{definition}[{\cite[Definition 2.1]{BH20}}, Lorentzian polynomials]
    \label{definition of Lorentzian polynomials}
    A homogeneous polynomial $p \in \R_{\ge 0}[x_1,\cdots, x_n]$ is \textit{strictly Lorentzian} if it has positive coefficients and for any $1 \le i_1, \cdots, i_{d-2} \le n $ and $\bm{x} \in \R^n$, the  Hessian of $\partial_{i_1} \cdots \partial_{i_n}p$ at $\bm{x}$ is nonsingular and has exactly one positive eigenvalue. The closure of the set of strictly Lorentzian polynomials (in the Euclidean space of homogeneous polynomials of degree at most $d$ with $n$ variables and non-negative coefficients) is called the set of \textit{Lorentzian} polynomials.
\end{definition}
Note that by definition, $\partial_i p$ is Lorentzian if $p$ is a Lorentzian polynomial. 

\begin{lemma} [{\cite[Example 2.26]{BH20}}]
\label{bivariate homog polynomial is Lorentzian iff it has ULC coeffs}
    A bivariate homogeneous polynomial $p(x,y)= \sum_{k = 0}^d a_k x^k y^{d-k} $ is Lorentzian if and only if $\{ a_k\}_{k = 0}^d$ is an ultra log-concave sequence, i.e.,it doesn't have any internal zeroes and for any valid index $i$, we have \[\left(\frac{a_i}{{n \choose i}}\right)^2 \ge \frac{a_{i-1}}{{n \choose i-1}}\cdot \frac{a_{i+1}}{{n \choose i+1}}\].
\end{lemma}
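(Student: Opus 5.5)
The plan is to avoid working with the closure in \Cref{definition of Lorentzian polynomials} directly. Instead I would use the standard equivalent characterization of Lorentzian polynomials from \cite{BH20}, their Theorem 2.25. It says that a homogeneous $p \in \R_{\ge 0}[x_1,\ldots,x_n]$ of degree $d$ is Lorentzian if and only if two conditions hold. First, $\supp(p)$ is $M$-convex. Second, for every choice of $i_1,\ldots,i_{d-2}$, the quadratic form $\partial_{i_1}\cdots\partial_{i_{d-2}} p$ has a Hessian with at most one positive eigenvalue. With this in hand, the lemma reduces to checking both conditions in the bivariate case. (Note that the binomial coefficients in the statement should read $\binom{d}{i}$.) The cases $d \le 1$ are trivial, so assume $d \ge 2$.

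\textbf{Step 1: support.} Identify $\supp(p)$ with $\{k : a_k \neq 0\}$ via $k \mapsto (k,d-k)$. Take two support points $(k,d-k)$ and $(l,d-l)$ with $k>l$. The only admissible exchange moves them to $(k-1,d-k+1)$ and $(l+1,d-l-1)$. Requiring both of these to lie in the support for all such pairs is exactly the statement that the support is an integer interval. So $M$-convexity is equivalent to the absence of internal zeros.

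\textbf{Step 2: derivatives.} Every $(d-2)$-fold derivative has the form $\partial_x^{i-1}\partial_y^{d-1-i} p$ for some $1 \le i \le d-1$. Only the terms $a_{i-1},a_i,a_{i+1}$ survive. A direct computation shows the Hessian of this quadratic is
\[
H_i=\begin{pmatrix} a_{i+1}(i+1)!(d-i-1)! & a_i\, i!(d-i)! \\ a_i\, i!(d-i)! & a_{i-1}(i-1)!(d-i+1)! \end{pmatrix}.
\]
This is a symmetric $2\times 2$ matrix with nonnegative entries, so its trace is nonnegative. Hence it has at most one positive eigenvalue if and only if $\det H_i \le 0$. (If the determinant were positive, both eigenvalues would be positive. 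If it is $\le 0$, the eigenvalues have opposite signs or one of them vanishes.) Dividing $\det H_i \le 0$ by $(d!)^2$ turns it into
\[
\left(\frac{a_i}{\binom{d}{i}}\right)^2 \ge \frac{a_{i-1}}{\binom{d}{i-1}}\cdot\frac{a_{i+1}}{\binom{d}{i+1}}.
\]
This is exactly the ultra log-concavity inequality at index $i$.

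Combining Steps 1 and 2 with the characterization gives both directions at once.

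The main obstacle is the appeal to the characterization itself, since the lemma's definition is via a closure. If one wants to avoid citing \cite[Theorem 2.25]{BH20}, the two directions can be argued directly.

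For \emph{Lorentzian $\Rightarrow$ ULC}: the nonpositive-determinant condition is closed under limits, so it passes from strictly Lorentzian polynomials to their limits, giving the inequalities. Absence of internal zeros then follows from log-concavity together with a limiting argument.

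For \emph{ULC $\Rightarrow$ Lorentzian}: first handle a ULC sequence with no internal zeros whose support is all of $[0,d]$. Perturb it to one with strictly positive entries and strict inequalities; each $H_i$ then has negative determinant, so the perturbation is strictly Lorentzian, and $p$ is a limit of such perturbations. If the support is only an interval $[l,u]$, write $p = x^l y^{d-u} q$, where $q$ has full support and is ULC after renormalizing. Then approximate $p$ by adding small multiples of terms extending the sequence geometrically so that ultra log-concavity is preserved.
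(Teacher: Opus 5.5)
Your main argument is correct and is the standard derivation of \cite[Example 2.26]{BH20} from the characterization you invoke (M-convex support together with the $2\times 2$ Hessian determinant condition on each $(d-2)$-fold derivative). This is all the paper relies on, since it cites the lemma without proof. One caution about your optional direct argument: the claim that $q$ is ULC after renormalizing is false for $q$'s own degree. For example, $p=y^4+4xy^3+6x^2y^2$ is Lorentzian while $q=y^2+4xy+6x^2$ is not. Your geometric extension of the normalized coefficient sequence of $p$ itself does not need that claim, so the fallback still goes through.
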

We say that an operator $T:\R^{\bm{\mu}}[x_1, \cdots, x_n] \to \R[x_1, \cdots, x_m]$ preserves the class of Lorentzian polynomials if $T[P]$ is Lorentzian for any Lorentzian polynomial $p$. The following theorem gives a sufficient condition for $T$ to preserve the class of Lorentzian polynomials. 

\begin{theorem}[{\cite[Theorem 3.2]{BH20}}]
\label{symbol theorem for Lorentzian}
    If $T : \R^{\bm{\mu}}[x_1, \cdots,x_n] \to \R[x_1, \cdots, x_m]$ is a linear homogeneous operator, then $T$ preserves the class of Lorentzian polynomials if $\Sym_T(\bm{x}, \bm{z})$ is a Lorentzian polynomial, where:
    \[
    \Sym_T(\bm{x},\bm{z}) := T_{\bm{x}} \left[\prod_{i = 1}^n ( x_i + z_i)^{\mu_i} \right].
    \]
    $\Sym_T$ is called the symbol of $T$, and $T_{\bm{x}}$ treats all the $z$ variables as constants and only acts on the $x$ variables. 
\end{theorem}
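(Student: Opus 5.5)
The plan is to write $T[p]$ as a differential operator in the $\bm z$ variables applied to $\Sym_T(\bm x,\bm z)$, and then to use closure properties of Lorentzian polynomials (products, derivatives, setting variables to zero, and polarization). First, by linearity and homogeneity it suffices to treat homogeneous Lorentzian $p=\sum_{\bm\alpha\le\bm\mu}p_{\bm\alpha}\bm x^{\bm\alpha}$. Expanding the symbol gives
\[
\Sym_T(\bm x,\bm z)=\sum_{\bm\alpha\le\bm\mu}\binom{\bm\mu}{\bm\alpha}T[\bm x^{\bm\alpha}]\,\bm z^{\bm\mu-\bm\alpha},
\]
where $\binom{\bm\mu}{\bm\alpha}=\prod_i\binom{\mu_i}{\alpha_i}$. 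Since $\partial_{\bm z}^{\bm\mu-\bm\alpha}\bm z^{\bm\mu-\bm\beta}\big|_{\bm z=0}=(\bm\mu-\bm\alpha)!\,\delta_{\bm\alpha\bm\beta}$, we get the identity
\[
T[p](\bm x)=\Big(q(\partial_{\bm z})\,\Sym_T(\bm x,\bm z)\Big)\Big|_{\bm z=0},\qquad q(\bm z)=\sum_{\bm\alpha}\frac{\bm\alpha!}{\bm\mu!}\,p_{\bm\alpha}\,\bm z^{\bm\mu-\bm\alpha}.
\]
Up to the positive constant $1/\bm\mu!$, $q$ is the normalized reversal $\bm z^{\bm\mu}\,N(p)(1/\bm z)$ of $p$.

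Second, I would reduce to the multiaffine case by polarization. Replace each $x_i$ by $\mu_i$ copies $x_{i1},\dots,x_{i\mu_i}$ (and likewise for $z_i$), and use the standard fact that the polarization $\Pi^\uparrow$ preserves Lorentzianity. Let $\widetilde T$ be the lift of $T$ acting on multiaffine polynomials. Its symbol is the polarization of $\Sym_T$, and the substitution $x_{ij}\mapsto x_i$ (a nonnegative linear change of variables, which preserves Lorentzianity) recovers $T[p]$ from $\widetilde T[\Pi^\uparrow p]$. So we may assume $\bm\mu=\bm 1$. In that case $q$ is simply the reversal $\bm z^{\bm 1}p(1/\bm z)$ and $q(\partial_{\bm z})$ involves no repeated derivatives.

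Third, in the multiaffine setting I would rewrite $q(\partial_{\bm z})f|_{\bm z=0}$ as a coefficient extraction. Write $f=\Sym_T$ and $g(\bm z)=\sum_S p_S \bm z^S$, the polarized $p$ renamed to the variables $\bm z$. Then
\[
T[p](\bm x)=\big[\bm z^{\bm 1}\big]\big(f(\bm x,\bm z)\cdot g(\bm z)\big)=\partial_{\bm z}^{\bm 1}\big(f\,g\big)\big|_{\bm z=0},
\]
because $\partial_{\bm z}^{\bm 1-S}$ applied to the term $\bm z^{\bm 1-S}$ of $f$ must pair with the term $\bm z^{S}$ of $g$. The product of Lorentzian polynomials is Lorentzian, and both differentiation and setting variables to zero preserve Lorentzianity. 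Hence $T[p]$ is Lorentzian.

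The main obstacle is justifying the closure facts used in the second and third steps: that products of Lorentzian polynomials are Lorentzian and that polarization preserves Lorentzianity. In \cite{BH20} both rest on the characterization of Lorentzian polynomials by $M$-convex support together with the condition that all quadratic derivatives $\partial^{\bm\alpha}p$ have at most one positive eigenvalue. For the product, I would first verify that the support of $fg$ is $M$-convex, as a Minkowski sum of $M$-convex sets. I would then check the Hessian condition for the quadratic derivatives of $fg$ by induction on degree, reducing it to the corresponding conditions for $f$ and $g$.
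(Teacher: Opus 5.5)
The paper gives no proof of this statement; it is quoted from \cite{BH20}, so there is no internal argument to compare yours with.

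Your route is the standard one: reduce to the multiaffine case by polarization, then read off $T[p]$ as the coefficient of $\bm z^{\bm 1}$ in $\Sym_T(\bm x,\bm z)\,p(\bm z)$. The identities in your first and third steps are correct. The proof is complete once polarization and products are known to preserve Lorentzianity, provided the versions you invoke are not themselves derived from the symbol theorem.

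Two small repairs:
\begin{itemize}
\item $q$ is not a multiple of the normalized reversal. Its coefficients are $\bm\alpha!\,p_{\bm\alpha}/\bm\mu!$, so $q=\bm z^{\bm\mu}(N^{-1}p)(\bm z^{-1})/\bm\mu!$. This is harmless, since after polarization all factorials equal $1$.
\item Define the lift as $\widetilde T:=T\circ\Pi^{\downarrow}$. Then $\widetilde T[\Pi^{\uparrow}p]=T[p]$ with no substitution on the output side. Its symbol is $\Sym_T$ polarized in the $\bm z$-variables only; the output variables must not be polarized, and their degrees are not bounded by $\bm\mu$. This partial polarization is Lorentzian: polarize all variables, then re-identify the copies of each output variable by a nonnegative substitution.
\end{itemize}

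The step that would not go through as you describe it is the self-contained proof of product closure. When $f$ and $g$ share variables, each quadratic derivative of $fg$ is a Leibniz sum $\sum_{\bm\beta\le\bm\alpha}\binom{\bm\alpha}{\bm\beta}\partial^{\bm\beta}f\,\partial^{\bm\alpha-\bm\beta}g$. The property ``at most one positive eigenvalue'' is not preserved under sums (e.g.\ $x_1^2+x_2^2$). An induction through $\partial_i(fg)=(\partial_i f)g+f\,\partial_i g$ fails for the same reason, so there is no termwise reduction to $f$ and $g$.

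The fix is to separate the variables first. Take $f(\bm x,\bm z)$ and $g(\bm y)$ in disjoint variables.
\begin{itemize}
\item The support of $f(\bm x,\bm z)g(\bm y)$ is a product of $M$-convex sets, hence $M$-convex, so no Minkowski-sum theorem is needed.
\item Every quadratic derivative is either a nonnegative multiple of a quadratic derivative of $f$ or of $g$, or a product $(\bm a\cdot(\bm x,\bm z))(\bm b\cdot\bm y)$ with $\bm a,\bm b\ge\bm 0$. The Hessian of such a product has eigenvalues $\pm\|\bm a\|\,\|\bm b\|$ and $0$.
\item Then substitute $\bm y=\bm z$, which is a nonnegative linear change of variables.
\end{itemize}

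You can even avoid shared-variable products altogether. In the multiaffine case
\[
T[p]=\prod_{i}\left(\partial_{z_i}+\partial_{y_i}\right)\Bigl[\Sym_T(\bm x,\bm z)\,g(\bm y)\Bigr]\Big|_{\bm z=\bm y=\bm 0},
\]
because $\prod_i(\partial_{z_i}+\partial_{y_i})=\sum_{R}\partial_{\bm z}^{R}\partial_{\bm y}^{[n]\setminus R}$ pairs each $\bm z^{\bm 1-S}$ with $\bm y^{S}$ exactly once. This uses only a disjoint product and derivatives in nonnegative directions.
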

Finally, note that Lorentzian polynomials and $M$-convex sets are deeply connected: 
\begin{theorem} [{\cite[Theorem 3.10]{BH20}}] 
\label{M-convexity and Lorentzian}
Let the generating function of a set $J \in \Z_{\ge 0}^n$ be the polynomial
\[
f_J(x_1, \cdots, x_n) = \sum_{\bm{\alpha} \in J} \frac{\bm{x^{\alpha}}}{\bm{\alpha}!}.
\]
Then $J$ is an $M$-convex set if and only if $f_J$ is Lorentzian. Moreover, the {support} of any Lorentzian polynomial $p \in \R_{\ge 0}[x_1, \cdots, x_n]$ is an M-convex set.
\end{theorem}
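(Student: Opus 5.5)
The statement is \cite[Theorem 3.10]{BH20}. I would prove it by reducing both directions to degree two via partial derivatives. The tool is the characterization of Lorentzian polynomials from \cite{BH20}: a homogeneous $p \in \R_{\ge 0}[\bm{x}]$ of degree $d$ is Lorentzian if and only if (a) $\supp(p)$ is $M$-convex, and (b) every quadratic derivative $\partial^{\bm{\alpha}} p$ with $|\bm{\alpha}|_1 = d-2$ has a Hessian with at most one positive eigenvalue. I would establish the "moreover" clause, namely (a) for Lorentzian $p$, first. Once that is in hand, proving that $J$ is $M$-convex if and only if $f_J$ is Lorentzian only requires checking condition (b) for $f_J$.

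\emph{Support of Lorentzian polynomials.} I would induct on $d$. For $d \le 1$ there is nothing to show.

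For $d=2$, write $p = \bm{x}^T Q \bm{x}$ with $Q \ge 0$ having at most one positive eigenvalue. By Cauchy interlacing, every principal submatrix of $Q$ also has at most one positive eigenvalue. Hence every $2\times 2$ principal minor with a positive diagonal entry satisfies $Q_{ij}^2 \ge Q_{ii}Q_{jj}$. Together with the $3\times 3$ minors, this gives exactly the exchange conditions for the degree-two support. For example, if $2\bm{e}_i$ and $2\bm{e}_j$ lie in the support, then $Q_{ij}>0$, so $\bm{e}_i+\bm{e}_j$ does too.

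For $d>2$, each $\partial_i p$ is Lorentzian of degree $d-1$, so its support $\{\bm{\beta} - \bm{e}_i : \bm{\beta} \in \supp(p),\ \beta_i > 0\}$ is $M$-convex by induction. From this I need to deduce $M$-convexity of $\supp(p)$ itself. I would use Murota's local characterization: a set of lattice points of fixed degree is $M$-convex if it has no "holes" and satisfies the exchange axiom for pairs $\bm{x},\bm{y}$ with $|\bm{x}-\bm{y}|_1 = 4$. Any such pair shares a common lower neighbour, so the exchange for it can be pulled back to a statement about the support of some $\partial^{\bm{\gamma}} p$, where it is either degree two or covered by the inductive hypothesis. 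This passage from local to global exchange is the step I expect to be the main obstacle. Verifying the hypotheses of Murota's local-to-global theorem, in particular the absence of holes, must again come from the derivative supports.

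\emph{The generating function $f_J$.} Condition (a) is immediate: $\supp(f_J) = J$, and by the previous paragraph this also gives the implication "$f_J$ Lorentzian $\Rightarrow$ $J$ $M$-convex". For the converse, suppose $J$ is $M$-convex. Then
\[
\partial^{\bm{\alpha}} f_J = \sum_{\bm{\beta}\in J,\ \bm{\beta}\ge \bm{\alpha}} \frac{\bm{x}^{\bm{\beta}-\bm{\alpha}}}{(\bm{\beta}-\bm{\alpha})!} = f_{J_{\bm{\alpha}}},
\]
where $J_{\bm{\alpha}} = \{\bm{\beta}-\bm{\alpha} : \bm{\beta} \in J,\ \bm{\beta} \ge \bm{\alpha}\}$. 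This set is again $M$-convex, because intersecting with the box $\{\bm{\beta} \ge \bm{\alpha}\}$ and translating both preserve $M$-convexity. So it suffices to treat $d=2$.

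There the Hessian $H$ is a $0/1$ matrix: $H_{ii}=1$ if and only if $2\bm{e}_i \in J$, and $H_{ij}=1$ if and only if $\bm{e}_i+\bm{e}_j\in J$. The exchange axiom forces a rigid shape, which I would obtain by a short case analysis. After discarding unused coordinates, the relation "$H_{ij}=0$" partitions the indices into parts $P_1,\dots,P_k$. Distinct parts are completely joined. Each part is either a single looped vertex (with $H_{ii}=1$) or an independent set. Equivalently, $H = \bm{1}\bm{1}^T - B$ with $B$ block diagonal, having $\bm{1}\bm{1}^T$ blocks on the non-looped parts and $0$ on the looped singletons. Then $H$ is a rank-one positive matrix minus a positive semidefinite matrix, so by Weyl's inequality it has at most one positive eigenvalue. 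Together with (a), the characterization of \cite{BH20} shows that $f_J$ is Lorentzian.
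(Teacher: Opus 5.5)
The paper gives no proof of this statement; it simply imports it from \cite{BH20}. Your direction ``$J$ M-convex $\Rightarrow f_J$ Lorentzian'' is sound, given the ``if'' half of the characterization you quote from \cite{BH20}. The steps that work are: $\partial^{\bm\alpha}f_J=f_{J_{\bm\alpha}}$; box restriction preserves M-convexity; degree-two M-convex sets really do have the complete multipartite shape you describe (a looped vertex is adjacent to every used vertex, and non-adjacency is transitive by the four-index exchange); and writing $H=\bm 1\bm 1^T-B$ and applying Weyl finishes it.

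The genuine gap is the inductive step for the ``moreover'' clause, which your implication ``$f_J$ Lorentzian $\Rightarrow J$ M-convex'' also rests on. M-convexity of $\supp(p)$ cannot be obtained from M-convexity of the supports of the $\partial_i p$ plus degree-two local exchange. In particular, the absence of holes cannot come from the derivative supports. Take $p=x_1^3+x_2^3$:
\begin{itemize}
\item its derivatives $3x_1^2$ and $3x_2^2$ are Lorentzian with M-convex supports;
\item every quadratic derivative has at most one positive eigenvalue;
\item no two support points are at $\ell_1$-distance $4$, so local exchange holds vacuously.
\end{itemize}
Yet $\{3\bm e_1,3\bm e_2\}$ is not M-convex. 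Of course $p$ is not Lorentzian, and that is the point: nothing your induction examines distinguishes it from a Lorentzian polynomial.

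What is missing is a property of $p$ itself, not of its derivatives, that survives passage to the closure and detects holes or disconnected supports. Two examples would do:
\begin{itemize}
\item $\nabla^2 p(\bm x)$ has at most one positive eigenvalue for every $\bm x\in\R^n_{>0}$, which fails for $x_1^3+x_2^3$;
\item for disjoint $A,B\subseteq[n]$, the specialization $p(s\bm 1_A+t\bm 1_B)$ is bivariate Lorentzian, hence ultra log-concave without internal zeros.
\end{itemize}
Extracting such a statement from the definition of Lorentzian polynomials as limits of strictly Lorentzian ones is exactly what the structural theorem of \cite{BH20}, which you quote as your tool, accomplishes. That theorem's ``only if'' half is literally your condition (a). So you have two options. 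You can cite it, in which case the moreover clause and ``$f_J$ Lorentzian $\Rightarrow J$ M-convex'' are immediate and your induction is superfluous. Otherwise you must supply the global argument yourself.

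A smaller slip is in your $d=2$ case: $2\times 2$ and $3\times 3$ principal minors do not yield all exchange conditions. For $x_1x_2+x_3x_4$, every $3\times3$ principal submatrix of the Hessian has at most one positive eigenvalue, yet the support $\{\bm e_1+\bm e_2,\bm e_3+\bm e_4\}$ is not M-convex. The exchange between $\bm e_i+\bm e_j$ and $\bm e_k+\bm e_l$ with four distinct indices needs the $4\times 4$ principal submatrix. One way is to exhibit a two-dimensional subspace on which the form is positive definite.
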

\subsection{Denormalized Lorentzian polynomials}
Let $N : \R[x_1, \cdots, x_n] \to \R[x_1, \cdots, x_n]$ be the \textit{normalization} operator, defined by $N[\sum_{\bm{\alpha}} p_{\bm{\alpha}} \bm{x^{\alpha}}] = \sum_{\bm{\alpha}}p_{\bm{\alpha}}\frac{\bm{x^{\alpha}}}{\bm{\alpha}!}$. 
A polynomial $p(\bm{x})$ is said to be denormalized Lorentzian (DL) if $N[p]$ is a Lorentzian polynomial. 

\begin{remark}
    \label{DL polynomials are closed}
    
Since normalization is a continuous operator, it follows that the class of DL polynomials of degree at most $d$ in $n$ variables is closed in the Euclidean space of homogeneous polynomials of degree at most $d$ with $n$ variables. 

\end{remark}
\begin{corollary}
\label{bivariate homog polynomial is DL iff it has LC coeffs}
    It follows from definition and \Cref{bivariate homog polynomial is Lorentzian iff it has ULC coeffs} that a bivariate homogeneous polynomial $p(x,y)= \sum_{k = 0}^d a_k x^k y^{d-k} $ is DL if and only if $\{ a_k\}_{k = 0}^d$ is a log-concave sequence, i.e., it doesn't have any internal zeroes and for any valid index $i$, we have $a_i^2 \ge a_{i-1}\cdot a_{i + 1}$. 
\end{corollary}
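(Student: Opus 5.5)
The plan is a direct translation. By definition, $p$ is DL exactly when $N[p] = \sum_{k=0}^d \frac{a_k}{k!(d-k)!} x^k y^{d-k}$ is Lorentzian. By \Cref{bivariate homog polynomial is Lorentzian iff it has ULC coeffs} (with $n=d$), this holds if and only if the sequence $b_k := \frac{a_k}{k!(d-k)!}$ has no internal zeros and satisfies
\[
\left(\frac{b_i}{\binom{d}{i}}\right)^2 \ge \frac{b_{i-1}}{\binom{d}{i-1}}\cdot\frac{b_{i+1}}{\binom{d}{i+1}}
\]
for all valid $i$. So the whole argument reduces to simplifying these quantities.

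First, compute the normalized coefficient:
\[
\frac{b_k}{\binom{d}{k}} = \frac{a_k}{k!(d-k)!}\cdot\frac{k!(d-k)!}{d!} = \frac{a_k}{d!}.
\]
Substituting, the ultra log-concavity inequality becomes $\frac{a_i^2}{(d!)^2} \ge \frac{a_{i-1}a_{i+1}}{(d!)^2}$. This is equivalent to $a_i^2 \ge a_{i-1}a_{i+1}$.

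Second, the no-internal-zeros condition transfers unchanged. The map $a_k \mapsto b_k$ multiplies each entry by the positive constant $\frac{1}{k!(d-k)!}$, so it preserves the zero pattern.

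Both conditions therefore match exactly, and each implication in the chain is an equivalence. The statement follows for sequences of nonnegative coefficients, which is implicit since Lorentzian polynomials have nonnegative coefficients.

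The only step needing care is reading the cited Lemma with its binomial index equal to the degree $d$ (the lemma writes $\binom{n}{i}$, a typo for $\binom{d}{i}$). Beyond that there is no real obstacle.
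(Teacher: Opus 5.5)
Your proposal is correct and is the argument the paper intends when it says the statement ``follows from definition'' and the cited lemma. You normalize, apply the ultra log-concavity criterion with binomials $\binom{d}{i}$, and note that $\frac{a_k}{k!(d-k)!}\cdot\binom{d}{k}^{-1} = \frac{a_k}{d!}$, so both the inequalities and the zero pattern transfer unchanged (and you are right that the lemma's $\binom{n}{i}$ should read $\binom{d}{i}$).
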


\begin{theorem}[{\cite[Theorem 3]{HMMD22}}]
    Schur polynomials are DL polynomials. 
\end{theorem}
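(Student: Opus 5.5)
The claim is \cite[Theorem 3]{HMMD22}: every Schur polynomial $s_\lambda(x_1,\dots,x_n)$ is DL, i.e. $N[s_\lambda]$ is Lorentzian. My plan is to reduce this to the symbol criterion, \Cref{symbol theorem for Lorentzian}, by writing $N[s_\lambda]$ as the image of a simpler Lorentzian polynomial under Lorentzian-preserving linear operators.

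\textbf{Step 1 (set-up).} Write $s_\lambda = \sum_{\bm\alpha} K_{\lambda,\bm\alpha}\bm{x^\alpha}$ with Kostka coefficients. Lorentzian polynomials need M-convex support (\Cref{M-convexity and Lorentzian}), so I first check this. The support of $s_\lambda$ is the set of all $\bm\alpha \in \N^n$ with $|\bm\alpha|_1=|\lambda|$ that are dominated by $\lambda$. This is the lattice-point set of the permutohedron of $\lambda$, which is an integral generalized permutohedron and hence M-convex. This gives a necessary condition and a sanity check.

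\textbf{Step 2 (main construction).} Realize $s_\lambda$ as a positive combination obtained by \emph{products and operators}. Using the Jacobi--Trudi or Gelfand--Tsetlin structure, $s_\lambda(x_1,\dots,x_n)$ is obtained from $s_\mu(x_1,\dots,x_{n-1})$ via the branching rule:
\[
s_\lambda(\bm x) = \sum_{\mu \prec \lambda} s_\mu(x_1,\dots,x_{n-1})\, x_n^{|\lambda|-|\mu|},
\]
where $\mu \prec \lambda$ means interlacing. I would induct on $n$. Equivalently, I would view $N[s_\lambda]$ as the image of the normalized Gelfand--Tsetlin pattern generating function under a linear projection. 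The base case $n=1$ is the monomial $x_1^{|\lambda|}$, which is trivially DL. For the induction I would construct a homogeneous linear operator $T$ such that $N[s_\lambda] = T$ applied to a product of $N[s_\mu]$-type pieces. The difficulty is that the interlacing constraint is not a product structure.

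\textbf{Step 3 (expected obstacle).} The obstacle is showing that the interlacing step preserves the Lorentzian property after normalization. My first attempt would be the approach of Huh--Matherne--M\'esz\'aros--St.~Dizier. Express $s_\lambda$ through the flagged (Demazure) characters of the Young diagram, or as a specialization of a Schubert polynomial. Use that Schubert/Demazure characters can be built from products of linear forms by divided-difference-like operators $\pi_i$. Then check, for the normalized versions, that each operator $N\circ\pi_i\circ N^{-1}$ has a Lorentzian symbol, so that \Cref{symbol theorem for Lorentzian} applies; bivariate checks can use \Cref{bivariate homog polynomial is DL iff it has LC coeffs}.

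If that fails, the fallback uses the M-convexity from Step 1 together with a characterization of Lorentzian polynomials by M-convex support plus the condition that all quadratic derivatives $\partial^{\bm\alpha}N[s_\lambda]$ have at most one positive eigenvalue. I would verify the quadratic case directly via Kostka number inequalities for two-row ``local'' shapes. Finally, closedness (\Cref{DL polynomials are closed}) handles any degenerate limiting cases.
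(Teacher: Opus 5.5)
The paper gives no proof of this statement; it imports it from \cite[Theorem 3]{HMMD22}. So your proposal has to stand on its own, and it does not. It never supplies the ingredient that makes the statement nontrivial, namely the one-positive-eigenvalue (Lorentzian) property of $N[s_\lambda]$.

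\textbf{Steps 1 and 2.} Step 1 gives only M-convexity of the support, which is necessary but not sufficient. Step 2 stops exactly where you say the interlacing is not a product structure. A sum of DL polynomials such as $\sum_{\mu\prec\lambda} s_\mu(x_1,\dots,x_{n-1})\,x_n^{|\lambda|-|\mu|}$ need not be DL (already $x_1^2+x_2^2$ is not DL), and you give no operator that realizes the branching.

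\textbf{Step 3: the mechanism is false.} $N\circ\pi_i\circ N^{-1}$ does not preserve Lorentzian polynomials, and its symbol is not Lorentzian.
\begin{itemize}
\item Since $\pi_1(x_2^2)=-x_1x_2$, the symbol of $N\circ\pi_1\circ N^{-1}$ on $\R^{(0,2)}[x_1,x_2]$ is $z_2^2-2x_1x_2$. It has a negative coefficient, so \Cref{symbol theorem for Lorentzian} cannot apply.
\item The failure is not only about signs. The polynomial $(x_1+x_3)(x_2+x_4)$ is DL, but $\pi_1\bigl((x_1+x_3)(x_2+x_4)\bigr)=x_1x_2+x_1x_4+x_2x_4+x_3x_4$ has nonnegative coefficients and support that is not M-convex. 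Both $e_1+e_2$ and $e_3+e_4$ lie in the support, but neither $e_1+e_3$ nor $e_2+e_3$ does. Hence it is not DL by \Cref{M-convexity and Lorentzian}.
\end{itemize}
Consequently, any Demazure-operator argument via $s_\lambda=\pi_{w_0}x^{\lambda}$ must exploit special structure of the intermediate key polynomials, and you do not identify any. Routing through Schubert polynomials is no reduction either: $s_\lambda$ is already a Grassmannian Schubert polynomial. To my knowledge, DL for general Schubert polynomials is an open conjecture (due to Huh--Matherne--M\'esz\'aros--St.~Dizier).

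\textbf{The fallback.} The framework is right: M-convex support plus the quadratic derivatives, where the Hessian of $\partial^{\bm{\alpha}}N[s_\lambda]$ is $\bigl(K_{\lambda,\bm{\alpha}+e_i+e_j}\bigr)_{i,j}$. But ``Kostka inequalities for two-row local shapes'' is not a reduction.
\begin{itemize}
\item Nothing reduces an arbitrary $\lambda$ to two-row shapes.
\item Inequalities involving two coordinates at a time, i.e.\ log-concavity of $K_{\lambda,\cdot}$ along $e_i-e_j$, only control the $2\times2$ principal minors of this Hessian. That does not give at most one positive eigenvalue. For example, $\left(\begin{smallmatrix}1&1&1\\1&1&3\\1&3&1\end{smallmatrix}\right)$ has full (M-convex) support and all $2\times 2$ principal minors $\le 0$. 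Yet it has eigenvalue $-2$ (eigenvector $(0,1,-1)$) and two positive eigenvalues $\tfrac{5\pm\sqrt{17}}{2}$.
\item Log-concavity of Kostka numbers is normally obtained as a consequence of the Lorentzian property, not used to prove it.
\end{itemize}
Closedness of the DL class plays no role here.

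\textbf{What is missing.} You need either an explicit Lorentzian-preserving construction of $N[s_\lambda]$ whose symbol or structure you can actually verify, or a genuine proof of the Hessian condition for every $\lambda$ and $\bm{\alpha}$.
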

We will review some useful operations that preserve the class of DL polynomials. 
\begin{theorem}[{\cite[Lemma 4.8]{BLP23}}]
\label{DL polynomials are closed under scaling variables}
If $p(x_1, \cdots, x_n)$ is a DL polynomial and $y_1, \cdots, y_n \in \R_{>0}$, then $p(y_1 x_1, \cdots, y_n x_n)$ is a DL polynomial as well.
    
\end{theorem}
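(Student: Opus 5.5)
The plan is to reduce the claim to the corresponding scaling invariance of Lorentzian polynomials, using that normalization commutes with diagonal scaling of the variables. Let $S_{\bm{y}}$ be the linear operator $q(x_1,\dots,x_n)\mapsto q(y_1x_1,\dots,y_nx_n)$. On a monomial, $S_{\bm{y}}[\bm{x^{\alpha}}]=\bm{y^{\alpha}}\bm{x^{\alpha}}$, so both $N\circ S_{\bm{y}}$ and $S_{\bm{y}}\circ N$ send $\bm{x^\alpha}$ to $\bm{y^\alpha}\bm{x^\alpha}/\bm{\alpha}!$. By linearity $N[S_{\bm{y}}p]=S_{\bm{y}}[N[p]]$. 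It therefore suffices to show: if $q=N[p]$ is Lorentzian and $\bm{y}>\bm{0}$, then $q(y_1x_1,\dots,y_nx_n)$ is Lorentzian.

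I first prove this for strictly Lorentzian $q$ of degree $d$ and then pass to closures. Homogeneity and positivity of the coefficients are clearly preserved, since each coefficient is multiplied by the positive number $\bm{y^\alpha}$. Write $D=\operatorname{diag}(\bm{y})$ and $\tilde q(\bm{x})=q(D\bm{x})$. By the chain rule, $\partial_{i_1}\cdots\partial_{i_{d-2}}\tilde q(\bm{x}) = y_{i_1}\cdots y_{i_{d-2}}\,(\partial_{i_1}\cdots\partial_{i_{d-2}}q)(D\bm{x})$. The Hessian of this function at $\bm{x}$ is therefore $c\,D H D$, where $c>0$ and $H$ is the Hessian of $\partial_{i_1}\cdots\partial_{i_{d-2}}q$ evaluated at $D\bm{x}$. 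By the hypothesis on $q$, $H$ is nonsingular and has exactly one positive eigenvalue. The matrix $DHD$ is congruent to $H$ via the invertible matrix $D$, so by Sylvester's law of inertia it has the same signature, and multiplying by $c>0$ does not change this. Hence $\tilde q$ is strictly Lorentzian.

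For a general Lorentzian $q$, choose strictly Lorentzian $q_k\to q$. The map $S_{\bm{y}}$ is linear on the finite-dimensional space of homogeneous degree-$d$ polynomials, so it is continuous, and $S_{\bm{y}}q_k\to S_{\bm{y}}q$. Each $S_{\bm{y}}q_k$ is strictly Lorentzian, so $S_{\bm{y}}q$ lies in the closure of the strictly Lorentzian polynomials and is Lorentzian. Combined with the commutation identity, $N[p(y_1x_1,\dots,y_nx_n)]$ is Lorentzian, which means $p(y_1x_1,\dots,y_nx_n)$ is DL.

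The only step that needs care is the inertia argument; the rest is bookkeeping. One small point: the definition in the paper writes $\partial_{i_1}\cdots\partial_{i_n}$, which is presumably a typo for $d-2$ derivatives. This does not matter for the argument, since the chain-rule computation works for any number of derivatives. Degrees $d\le 1$ are trivial.
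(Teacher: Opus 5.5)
Your proof is correct. Normalization commutes with the diagonal scaling $S_{\bm{y}}$, and your chain-rule computation, the inertia argument for $c\,DHD$ via Sylvester's law, and the continuity of $S_{\bm{y}}$ when passing to the closure together show that positive diagonal scaling preserves Lorentzian polynomials.

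The paper gives no proof of its own here; it only cites \cite[Lemma 4.8]{BLP23}. Your argument is the natural one. The only difference from the usual presentation is in the Lorentzian scaling step, which is typically quoted from the more general fact in \cite{BH20} that Lorentzian polynomials are preserved under linear changes of variables by matrices with nonnegative entries. You instead verify the diagonal case directly.
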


\begin{theorem}[{\cite[Lemma 4.8]{BLP23}}]
\label{x1=x2 polynomials}
    Let $T_{x_1=x_2}^{\Poly}: \R[x_1, \cdots, x_n] \to \R[x_1, \cdots, x_m]$ be the linear operator defined as $T_{x_1=x_2}^{\Poly}[p(x_1, x_2, x_3,\cdots, x_n)] = p(x_1, x_1, x_3, \cdots, x_n)$. Then $T^{\Poly}_{x_1 = x_2}$ preserves the class of DL polynomials.
\end{theorem}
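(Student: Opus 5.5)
The plan is to reduce the statement about $T^{\Poly}_{x_1=x_2}$ to the symbol criterion of \Cref{symbol theorem for Lorentzian}. The reduction goes through the normalization operator $N$. A polynomial $p$ is DL exactly when $N[p]$ is Lorentzian. So it suffices to show that the conjugated operator
\[
S := N \circ T^{\Poly}_{x_1=x_2} \circ N^{-1}
\]
preserves Lorentzian polynomials, restricted to $\R^{\bm{\mu}}[x_1,\ldots,x_n]$ for an arbitrary degree bound $\bm{\mu}$.

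First I compute $S$ on monomials. Applying the three operators in turn gives
\[
\bm{x^{\alpha}}/\bm{\alpha}! \mapsto \bm{x^{\alpha}} \mapsto x_1^{\alpha_1+\alpha_2}x_3^{\alpha_3}\cdots \mapsto \frac{x_1^{\alpha_1+\alpha_2}x_3^{\alpha_3}\cdots x_n^{\alpha_n}}{(\alpha_1+\alpha_2)!\,\alpha_3!\cdots\alpha_n!}.
\]
Hence $S[\bm{x^{\alpha}}] = \binom{\alpha_1+\alpha_2}{\alpha_1}^{-1} x_1^{\alpha_1+\alpha_2}x_3^{\alpha_3}\cdots x_n^{\alpha_n}$. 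The variables $x_3,\ldots,x_n$ are untouched, so $S$ is linear and homogeneous with $k=0$.

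Next I compute the symbol. Only the factors in $x_1$ and $x_2$ are affected by $S$, so
\[
\Sym_S(\bm{x},\bm{z}) = \left(\sum_{a\le\mu_1,\, b\le \mu_2} \binom{\mu_1}{a}\binom{\mu_2}{b}\binom{a+b}{a}^{-1} x_1^{a+b} z_1^{\mu_1-a} z_2^{\mu_2-b}\right)\prod_{i\ge 3}(x_i+z_i)^{\mu_i}.
\]
Each factor $(x_i+z_i)^{\mu_i}$ is Lorentzian, and products of Lorentzian polynomials in disjoint variables are Lorentzian (a standard fact from \cite{BH20}). So everything reduces to showing that the three-variable polynomial
\[
q(x_1,z_1,z_2)=\sum_{a,b}\binom{\mu_1}{a}\binom{\mu_2}{b}\binom{a+b}{a}^{-1}x_1^{a+b}z_1^{\mu_1-a}z_2^{\mu_2-b}
\]
is Lorentzian.

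The main obstacle is verifying that $q$ is Lorentzian. My first attempt is to recognize $q$, up to positive rescaling of variables, as $N$ applied to a known DL polynomial. Writing the coefficient as $\frac{\mu_1!\mu_2!}{(a+b)!(\mu_1-a)!(\mu_2-b)!}$, we get $q = \mu_1!\mu_2!\,N[\tilde q]$, where $\tilde q=\sum_{a\le\mu_1,b\le\mu_2}x_1^{a+b}z_1^{\mu_1-a}z_2^{\mu_2-b}$. This $\tilde q$ is the product $h_{\mu_1}(x_1,z_1)\,h_{\mu_2}(x_1,z_2)$ of complete homogeneous polynomials with $x_1$ shared. Equivalently, $q$ is the normalized generating function of the support $\{(a+b,\mu_1-a,\mu_2-b)\}$. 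That support is the set of integer points of a box projected along a coordinate sum, which is a Minkowski sum of M-convex sets and hence M-convex. By \Cref{M-convexity and Lorentzian}, $q$ is therefore Lorentzian, provided every coefficient of $\tilde q$ equals $1$ on that support.

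This proviso needs checking: distinct pairs $(a,b)$ give distinct exponent vectors, since $a$ and $b$ are read off from the $z_1$ and $z_2$ exponents. So each coefficient of $\tilde q$ is indeed $1$. With $q$ Lorentzian, $\Sym_S$ is Lorentzian, so \Cref{symbol theorem for Lorentzian} shows that $S$ preserves Lorentzian polynomials. Conjugating back by $N$ shows that $T^{\Poly}_{x_1=x_2}$ preserves DL polynomials. If the M-convexity argument has a gap, the fallback is to check the Hessian condition on $q$ directly, reducing by derivatives to the quadratic case.
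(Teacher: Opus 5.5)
Your argument is correct and is the standard one: you conjugate by $N$, apply \Cref{symbol theorem for Lorentzian} to $N\circ T^{\Poly}_{x_1=x_2}\circ N^{-1}$, and recognize the $(x_1,z_1,z_2)$-factor of the symbol as $\mu_1!\mu_2!$ times the normalized generating function of the M-convex set $\{\bm{c}\in\Z_{\ge 0}^3 : |\bm{c}|_1=\mu_1+\mu_2,\ c_1\le\mu_1,\ c_2\le\mu_2\}$. This is exactly the template the paper uses for \Cref{truncation above preserves DL polynomials}; for the present statement the paper gives no proof and only cites \cite[Lemma 4.8]{BLP23}. The M-convexity also follows directly without the Minkowski-sum remark: for this box-constrained set with fixed coordinate sum, any index $j$ with $c_j<c'_j$ works as the exchange index, so no fallback is needed.
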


\begin{theorem} [{\cite[Corollary 3.8]{BH20}}] 
\label{multiplication of DL polynomials stays DL}
The class of DL polynomials is closed under multiplication. 
\end{theorem}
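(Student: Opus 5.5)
The plan is to separate variables, use the fact that normalization factors across disjoint sets of variables, and then identify the variables pairwise using \Cref{x1=x2 polynomials}. Let $p(x_1,\ldots,x_n)$ and $q(x_1,\ldots,x_n)$ be homogeneous DL polynomials, so $N[p]$ and $N[q]$ are Lorentzian. Introduce a second set of variables $\bm{y}=(y_1,\ldots,y_n)$ and look first at the polynomial $p(\bm{x})q(\bm{y})$ in $2n$ variables.

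The first step is to check that $p(\bm{x})q(\bm{y})$ is DL. A monomial of $p(\bm{x})q(\bm{y})$ has the form $\bm{x}^{\bm\alpha}\bm{y}^{\bm\beta}$, and its normalization factor is $\bm\alpha!\,\bm\beta!$. Hence
\[
N[p(\bm{x})q(\bm{y})] = N[p](\bm{x})\cdot N[q](\bm{y}).
\]
It then suffices to know that a product of two Lorentzian polynomials in disjoint sets of variables is Lorentzian. This is a standard result in \cite{BH20}, and I would cite it. If a self-contained route is wanted, one can check the defining conditions directly. The support of the product is the Cartesian product of two $M$-convex sets, which is $M$-convex. The Hessian condition also holds: after taking derivatives down to degree $2$, one obtains either a product of two linear forms with nonnegative coefficients in disjoint variables, or a quadratic form in only one block of variables. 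Either kind of form has at most one positive eigenvalue. The nonnegative-coefficient product of linear forms needs a small perturbation or closure argument to handle the degenerate cases.

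The second step is to diagonalize. Apply the operator $T^{\Poly}_{x_i=y_i}$ of \Cref{x1=x2 polynomials} for $i=1,\ldots,n$ in turn. Each application sets one $y_i$ equal to $x_i$, and \Cref{x1=x2 polynomials} says DL is preserved; since the variables can be relabeled, this applies to any pair of variables, not just $x_1,x_2$. After $n$ steps we obtain $p(\bm{x})q(\bm{x})$, which is therefore DL. Homogeneity is preserved throughout, with the degree of the product being $\deg p+\deg q$.

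I expect the main obstacle to be the first step, namely establishing (or correctly citing) that Lorentzianity is preserved under products in disjoint variables. It is tempting to prove this with \Cref{symbol theorem for Lorentzian} applied to the operator $f\mapsto f\cdot N[q](\bm{y})$, but that is circular: the symbol of this operator is itself such a disjoint product. So I would rely on \cite{BH20} for this fact and keep the rest of the argument as above.
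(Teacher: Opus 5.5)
Your argument is correct. The paper gives no proof of its own here; the statement is imported from \cite[Corollary 3.8]{BH20}, so there is nothing in the text to match step by step.

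What is worth noting is that your route is exactly the polynomial prototype of the paper's own argument for Laurent series. You factor the normalization over disjoint variables and then identify $y_i=x_i$ one pair at a time via \Cref{x1=x2 polynomials}. The paper does the same in \Cref{multiplication of DL Laurent series with disjoint sets of variables}, \Cref{analytic admissibility -> x1 = x2 preserves DL} and \Cref{multiplication of DL Laurent series with admissible pairs}. The difference is what goes into the disjoint-variable step. The paper uses the very statement you are proving. You use the Lorentzian fact that $f(\bm x)g(\bm y)$ is Lorentzian whenever $f$ and $g$ are. So your version reduces the DL statement to two facts about Lorentzian polynomials.

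Quoting \Cref{x1=x2 polynomials} introduces no hidden circularity. It follows from \Cref{symbol theorem for Lorentzian} alone: the symbol of $N\circ T^{\Poly}_{x_1=x_2}\circ N^{-1}$ on $\R^{\bm\mu}[\bm x]$ is $\bm\mu!$ times the normalized generating polynomial of an $M$-convex set, just as in the proof of \Cref{truncation above preserves DL polynomials}.

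One adjustment to your optional self-contained sketch: perturbation is not the right tool. The product $f(\bm x)g(\bm y)$ is typically not strictly Lorentzian even when $f$ and $g$ are. For example, if $|\bm\delta|=\deg g$, the quadratic derivative $\partial_{\bm x}^{\bm\gamma}\partial_{\bm y}^{\bm\delta}(fg)=\partial^{\bm\gamma}f\cdot\partial^{\bm\delta}g$ has a Hessian that vanishes on the entire $\bm y$-block. What your support and Hessian checks really use is the characterization in \cite{BH20}: Lorentzian polynomials are exactly those with $M$-convex support whose partial derivatives of order $\deg f-2$ all have Hessians with at most one positive eigenvalue. With that characterization, your computation (a Cartesian product of $M$-convex sets, and the two types of quadratic derivatives) finishes the disjoint-product step directly, with no limiting argument.
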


\begin{corollary}
\label{multiple of DL polynomials}
    Let $t \in \N$ and $p,q \in \R[x_1,\cdots
    , x_n]$ be polynomials satisfting $p(\bm{x}) = x_1^t q(\bm{x})$. Then $p(\bm{x})$ is DL if and only if $q(\bm{x})$ is DL. 
\end{corollary}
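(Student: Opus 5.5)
The two directions are handled separately. For "if $q$ is DL then $p$ is DL", I would write $p = x_1^t \cdot q$ and apply \Cref{multiplication of DL polynomials stays DL}. For this it suffices to know that the monomial $x_1^t$ is DL. Its normalization is $N[x_1^t] = x_1^t/t!$, a positive multiple of a power of a linear form. Such a polynomial is Lorentzian: its support $\{t\bm{e}_1\}$ is a single point, hence trivially $M$-convex, so \Cref{M-convexity and Lorentzian} (the generating function of the set $\{t\bm e_1\}$ is exactly $x_1^t/t!$) gives that $x_1^t/t!$ is Lorentzian. Therefore $x_1^t$ is DL, and $p$ is DL.

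For the converse, suppose $p = x_1^t q$ is DL; I want $q$ DL. The natural tool is differentiation, since derivatives of Lorentzian polynomials are Lorentzian. Write $q = \sum_{\bm\alpha} q_{\bm\alpha}\bm{x^\alpha}$, so $p = \sum_{\bm\alpha} q_{\bm\alpha}\bm{x}^{\bm\alpha + t\bm e_1}$. Then
\[
N[p] = \sum_{\bm\alpha} q_{\bm\alpha}\frac{\bm{x}^{\bm\alpha+t\bm e_1}}{(\bm\alpha+t\bm e_1)!}, \qquad \partial_1^t N[p] = \sum_{\bm\alpha} q_{\bm\alpha}\frac{\bm{x}^{\bm\alpha}}{\bm\alpha!} = N[q],
\]
because $\partial_1^t x_1^{\alpha_1+t}/(\alpha_1+t)! = x_1^{\alpha_1}/\alpha_1!$. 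Thus $N[q] = \partial_1^t N[p]$ is obtained from a Lorentzian polynomial by $t$ applications of $\partial_1$. By the remark after \Cref{definition of Lorentzian polynomials}, Lorentzian polynomials are closed under partial derivatives, so $N[q]$ is Lorentzian, i.e. $q$ is DL.

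The only step needing care is this closure under $\partial_i$. The paper asserts it for Lorentzian polynomials, and I would invoke it directly, passing to limits of strictly Lorentzian polynomials if necessary since the closure is taken there. The identity $N[x_1^t q] = $ (an antiderivative of $N[q]$) is the key computation and is routine. Homogeneity is preserved in both directions, since multiplying by or dividing out $x_1^t$ shifts the degree uniformly.
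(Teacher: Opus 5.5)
Your proof is correct and follows the paper's argument: the forward direction uses closure of DL polynomials under multiplication, and the converse uses the identity $N[q]=\partial_1^t N[p]$ together with closure of Lorentzian polynomials under differentiation. Your extra check that $x_1^t$ is itself DL, via the $M$-convexity of the singleton support $\{t\bm{e}_1\}$, fills in a detail the paper leaves implicit.
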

\begin{proof}
    If $q(\bm{x})$ is a DL polynomial, then $p(\bm{x})$ is also DL using \Cref{multiplication of DL polynomials stays DL}. Conversely, if $p(\bm{x})$ is DL, then:
    \[
    N[q(\bm{x})] = N[x_1^{-t}p(\bm{x})] = \partial_1^t N[p(\bm{x})] 
    \]
    and $N[q(\bm{x})]$ is Lorentzian since the class of Lorentzian polynomials is closed under derivation. 
\end{proof}

\begin{theorem}
\label{truncation below preserves DL polynomials}
    For any $t \in \Z$ and $i \in [n]$, let $\trunc_{x_i}^{t} : \R[x_1, \cdots, x_n] \to \R[x_1, \cdots, x_n]$ be a linear operator defined as follows:
    \[ \trunc_{x_i}^t [\bm{x}^{\bm{\alpha}}] = \begin{cases}
        x_i^{-t} \bm{x^{\alpha}} & \text{ if } \hspace{0.2cm} \alpha_i \ge t\\
        0 & \text{ otherwise}
    \end{cases}\]
    Then $\trunc_k^i$ preserves the class of DL polynomials. 
\end{theorem}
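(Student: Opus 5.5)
The plan is to prove that $\trunc_{x_i}^t$ preserves DL polynomials by writing it as a composition of operations already known to preserve the class: multiplication by a DL polynomial (\Cref{multiplication of DL polynomials stays DL}), positive scalings of variables (\Cref{DL polynomials are closed under scaling variables}), identifying variables (\Cref{x1=x2 polynomials}), dividing off a power of a variable (\Cref{multiple of DL polynomials}), and closedness of the class (\Cref{DL polynomials are closed}). Without loss of generality $i=1$. If $t \le 0$, set $s=-t \ge 0$. Then $\trunc_{x_1}^t[p] = x_1^{s} p$, since every monomial has $\alpha_1 \ge 0 \ge t$. This is DL by \Cref{multiple of DL polynomials}, so the real content is the case $t \ge 1$.

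For $t\ge 1$, I would work on the normalized side. Write $q = N[p]$, which is Lorentzian. Then
\[
N\bigl[\trunc_{x_1}^t[\bm{x^\alpha}]\bigr] = \frac{x_1^{\alpha_1-t}}{(\alpha_1-t)!}\cdot\frac{\bm{x}_{\hat 1}^{\bm\alpha_{\hat 1}}}{\bm\alpha_{\hat 1}!}\quad\text{for }\alpha_1\ge t .
\]
Hence $N[\trunc_{x_1}^t[p]] = \partial_1^t\, q$ exactly, because $\partial_1^t$ kills the monomials with $\alpha_1<t$ and sends $x_1^{\alpha_1}/\alpha_1!$ to $x_1^{\alpha_1-t}/(\alpha_1-t)!$. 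Derivatives of Lorentzian polynomials are Lorentzian, as noted after \Cref{definition of Lorentzian polynomials}. So $\trunc_{x_1}^t[p]$ is DL.

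This is the same mechanism used in the proof of \Cref{multiple of DL polynomials}, where $N[x_1^{-t}p]=\partial_1^tN[p]$ was exploited. The only subtlety is that there the division was exact. Here the terms with $\alpha_1<t$ are precisely annihilated by the derivative, and that is what makes the truncation correct.

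The one point I would double-check is homogeneity. The convention of \Cref{definition of Lorentzian polynomials} requires a homogeneous polynomial, and $\trunc$ lowers the degree of every surviving monomial by the same amount $t$, so homogeneity is preserved. If $\partial_1^t q=0$, the zero polynomial is trivially in the closed class of Lorentzian polynomials. I expect no real obstacle beyond verifying the displayed identity coefficient by coefficient, with care about the factorial bookkeeping when $\alpha_1 = t$.
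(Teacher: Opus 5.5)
Your proposal is correct and is essentially the paper's proof: the identity $N[\trunc_{x_i}^t[p]] = \partial_i^t N[p]$, combined with the fact that derivatives of Lorentzian polynomials are Lorentzian. Your separate handling of $t \le 0$, where $\trunc_{x_1}^t[p] = x_1^{-t}p$, is a small improvement, since the paper's identity only makes sense for $t \ge 0$. The composition-of-operations plan in your opening sentence is never actually used and can be dropped.
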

\begin{proof}
   Similar to the proof of \Cref{multiple of DL polynomials}, we have that:
   \[
   N[\trunc_{x_i}^t[\bm{x^{\alpha}}]] = \partial_i^t N[\bm{x^{\alpha}}]
   \]
   and therefore $N[\trunc_{x_i}^{t}[p]] = \partial_i^t N[p]$ is Lorentzian whenever $N[p]$ is Lorentzian.
\end{proof}

\begin{theorem}
\label{truncation above preserves DL polynomials}
    For any $t \in \Z, i \in [n]$, let $\ttrunc_{x_i}^t : \R[x_1, \cdots, x_n] \to \R[x_1, \cdots, x_n]$ be a linear operator defined as follows:
    \[ \ttrunc_{x_i}^t [\bm{x}^{\bm{\alpha}}] = \begin{cases}
        \bm{x^{\alpha}} & \text{ if } \hspace{0.2cm} \alpha_i \le t\\
        0 & \text{ otherwise}
    \end{cases}\]
    Then $\ttrunc_k^i$ preserves the class of DL polynomials. 
\end{theorem}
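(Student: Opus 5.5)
The plan is to move the statement to the normalized side and then apply the symbol criterion, \Cref{symbol theorem for Lorentzian}. First, $\ttrunc_{x_i}^t$ acts diagonally on monomials: it multiplies each $\bm{x^\alpha}$ by $1$ or by $0$. The normalization $N$ also acts diagonally, multiplying $\bm{x^\alpha}$ by $1/\bm{\alpha}!$. Hence the two operators commute, $N[\ttrunc_{x_i}^t[p]] = \ttrunc_{x_i}^t[N[p]]$. It therefore suffices to show that $\ttrunc_{x_i}^t$ preserves Lorentzian polynomials.

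\emph{Trivial cases.} If $t<0$ the output is the zero polynomial, which is in the closed class of Lorentzian polynomials by convention. Fix a Lorentzian $q = N[p]$ of degree $d$, and take $\bm{\mu} = (d,\dots,d)$, so that $q \in \R^{\bm{\mu}}[\bm{x}]$. If $t \ge d$ the operator is the identity on this space, so there is nothing to prove.

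\emph{Setting up the symbol criterion.} Assume $0 \le t < d$ and restrict $T = \ttrunc_{x_i}^t$ to $\R^{\bm{\mu}}[x_1,\dots,x_n]$. This $T$ is linear. It is also homogeneous with shift $k=0$, since each monomial is sent either to itself or to $0$, and $0$ is homogeneous of every degree. So \Cref{symbol theorem for Lorentzian} applies, and we compute the symbol:
\[
\Sym_T(\bm{x},\bm{z}) = \prod_{j\neq i}(x_j+z_j)^{d}\cdot \sum_{k=0}^{t}\binom{d}{k} x_i^k z_i^{d-k}.
\]

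\emph{Lorentzianity of the factors.} Each factor $(x_j+z_j)^d$ is a bivariate homogeneous polynomial whose normalized coefficients $\binom{d}{k}/\binom{d}{k}=1$ form a constant sequence. This sequence is ultra log-concave, so the factor is Lorentzian by \Cref{bivariate homog polynomial is Lorentzian iff it has ULC coeffs}. The truncated factor has normalized coefficient sequence $(1,1,\dots,1,0,\dots,0)$. It has no internal zeros and satisfies the ultra log-concavity inequalities: they hold with equality inside the support, and trivially at its edge, where the right-hand side is $0$. So it too is Lorentzian by the same lemma.

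\emph{Product step (main obstacle).} The symbol is a product of Lorentzian polynomials in pairwise disjoint sets of variables. The products of Lorentzian polynomials are Lorentzian by \cite[Corollary 3.8]{BH20}, which is the result underlying \Cref{multiplication of DL polynomials stays DL}. Thus $\Sym_T$ is Lorentzian, and $T$ preserves Lorentzian polynomials. Consequently $N[\ttrunc_{x_i}^t[p]]$ is Lorentzian, i.e.\ $\ttrunc_{x_i}^t[p]$ is DL. The only delicate point is this citation: one must make sure the product result is invoked for Lorentzian rather than DL polynomials, which is exactly what \cite[Corollary 3.8]{BH20} provides.
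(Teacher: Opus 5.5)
Your proof is correct and follows the paper's route: the paper's operator $S = N\circ\ttrunc_{x_i}^t\circ N^{-1}$ is just $\ttrunc_{x_i}^t$ (the commutation you make explicit), and both arguments apply \Cref{symbol theorem for Lorentzian} to the same symbol. The only difference is how Lorentzianity of the symbol is certified: the paper reads it as a scalar multiple of the normalized generating function of the M-convex set $\{(\bm{\lambda},\bm{\mu}-\bm{\lambda}) : \bm{0}\le\bm{\lambda}\le\bm{\mu},\ \lambda_i\le t\}$ and invokes \Cref{M-convexity and Lorentzian}, whereas you factor it into bivariate pieces in disjoint variables, check each with \Cref{bivariate homog polynomial is Lorentzian iff it has ULC coeffs}, and use closure of Lorentzian polynomials under products, which is equally valid.
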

\begin{proof}
    Without loss of generality, assume that $i = 1$. Let $S:= N \circ \ttrunc_{x_i}^t \circ N^{-1}$. We just need to prove that $S$ preserves the class of Lorentzian polynomials. It suffices to show that $\restr{S}{\R^{\bm{\mu}}[\bm{x}]}$ preserves the class of DL polynomials for any $\bm{\mu}$. Note that $S$ is a homogeneous operator, and by \Cref{symbol theorem for Lorentzian}, we only need to show that the symbol of $S$ is Lorentzian. 
    \begin{align*}
    \Sym_{\restr{S}{\R^{\bm{\mu}}[\bm{x}]}}(\bm{x}, \bm{z}) &= \sum_{\bm{0} \le \bm{\lambda} \le \bm{\mu}} {\bm{\mu} \choose \bm{\lambda}} \bm{z}^{\bm{\mu- \lambda}}S[\bm{x}^{\bm{\lambda}}]\\
    &= (\bm{\mu}!)^n \sum_{\overset{0 \le \bm{\lambda} \le \bm{\mu}}{\lambda_1 \le t}} \frac{\bm{z}^{\bm{\mu - \lambda}}}{(\bm{\mu - \lambda})!}.\frac{\bm{x^{\lambda}}}{(\bm{\lambda})!}
    \end{align*}
    which is the generating polynomial of an M-convex set up to a scalar, and therefore, is Lorentzian by \Cref{M-convexity and Lorentzian}.
\end{proof}

Br\"and\'en, Leake and Pak use the capacity function (originally defined by Gurvits \cite{Gur06}) to obtain a lower bound for the coefficients of DL polynomials \cite{BLP23}, and utilize it to prove a lower bound for the number of contingency tables with marginals $\bm{\alpha}, \bm{\beta}$. We will go over the definition of contingency table and their bounds in \S \ref{subsection: contingency tables}. For now, let us state \cite{BLP23}'s capacity bound for the coefficients of a DL polynomial.
\begin{theorem}[{\cite[Theorem 5.10]{BLP23}}]
\label{theorem from BLP}
    Let $\bm{\alpha} \in \Z_{\ge 0}^n$ and $p(x_1, \cdots, x_n) = \sum_{\bm{\mu} \in \N^n} p_{\bm{\mu}}\bm{x^\bm{\mu}}$ be a DL polynomial of degree $d$. Let $d_i$ be the degree of $x_i$ in $\restr{\partial_{i+1}^{\alpha_{i+1}}\cdots \partial_n^{\alpha_n}p}{x_{i+1} = \cdots = x_n = 0}$, and let $d_n$ be the degree of $x_n$ in $p$. Then:
    \[
    [\bm{x^{\alpha}}]p \ge \cpc_{\bm{\alpha}}(p). \prod_{i = 2}^n \max\left \{ \frac{\alpha_i^{\alpha_i}}{(\alpha_i + 1)^{\alpha_i + 1}}, \frac{(d_i - \alpha_i)^{d_i - \alpha_i}}{(d_i - \alpha_i + 1)^{d_i - \alpha_i + 1}}\right\},
    \]
    where $\cpc_{\bm{\alpha}}(p) = \inf_{\bm{x>0}}\frac{f(\bm{x})}{\bm{x^{\alpha}}}$ is the {capacity} of $p$ at point $\bm{\alpha}$.
\end{theorem}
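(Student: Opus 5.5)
We will argue by induction on the number of variables $n$, peeling off one variable at a time starting from $x_1$, so that a univariate capacity bound is needed at each step. Everything reduces to the following bivariate fact. If $q(x,y)=\sum_{k=0}^d a_k x^k y^{d-k}$ is DL, i.e.\ $(a_k)$ is log-concave without internal zeros by \Cref{bivariate homog polynomial is DL iff it has LC coeffs}, then for every $0\le k\le d$
\[
a_k \;\ge\; \max\left\{\frac{k^k}{(k+1)^{k+1}},\frac{(d-k)^{d-k}}{(d-k+1)^{d-k+1}}\right\}\inf_{x,y>0}\frac{q(x,y)}{x^ky^{d-k}}.
\]
To prove it, dehomogenize ($y=1$) and use log-concavity to compare $q(x,1)$ with the geometric series $a_k\sum_j (a_{k+1}/a_k)^{j-k}x^j$ and its one-sided analogues. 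Evaluating at $x=\theta\, a_k/a_{k+1}$ (or at the reciprocal ratio) and optimizing over $\theta$ gives the factor $k^k/(k+1)^{k+1}$. Applying the same argument with the roles of $x,y$ swapped gives the second factor.

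For the inductive step, fix $\bm{x}'=(x_2,\dots,x_n)>0$ and consider the polynomial $q(x_1):=\partial_1^{\alpha_1}$-type extraction. Concretely, write $p=\sum_j x_1^j p_j(\bm{x}')$. The key claim is that
\[
r(\bm{x}') := [x_1^{\alpha_1}]p = p_{\alpha_1}(\bm{x}')
\]
is again DL in the remaining variables. This follows from \Cref{truncation below preserves DL polynomials} and \Cref{truncation above preserves DL polynomials} applied with $t=\alpha_1$ in $x_1$, followed by setting $x_1=0$ (a limit of scaling, using \Cref{DL polynomials are closed under scaling variables} and \Cref{DL polynomials are closed}). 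We also need the capacity inequality $\cpc_{\bm{\alpha}'}(r)\ge c\cdot \cpc_{\bm{\alpha}}(p)$ with the appropriate constant. Because $p$ is homogeneous of degree $d$, we can homogenize one step further: set $x_1=x$ and scale all of $\bm{x}'$ by $y$. For fixed direction $\bm{x}'$, the univariate slice $x_1\mapsto p(x_1,\bm{x}')$ has log-concave coefficients. To see this, identify the variables $x_2=\dots=x_n$ after scaling, using \Cref{x1=x2 polynomials} and \Cref{DL polynomials are closed under scaling variables} to keep DL, which yields a bivariate DL polynomial. The bivariate fact then gives $p_{\alpha_1}(\bm{x}')\ge c_1\inf_{x_1>0}p(\bm{x})/x_1^{\alpha_1}\ge c_1\cpc_{\bm{\alpha}}(p)\,\bm{x}'^{\bm{\alpha}'}$. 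Hence $\cpc_{\bm{\alpha}'}(r)\ge c_1 \cpc_{\bm{\alpha}}(p)$.

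The ordering in the statement, where the product runs over $i\ge2$ and $d_i$ is defined via derivatives in $x_{i+1},\dots,x_n$, suggests peeling from $x_n$ down instead. At the last step (one variable left) no loss occurs, since a homogeneous univariate polynomial equals its capacity ratio; this explains why $i=1$ is absent. So I will run the induction from $x_n$: extract $[x_n^{\alpha_n}]$, obtaining a polynomial equal (up to the factor $\alpha_n!$) to $\restr{\partial_n^{\alpha_n}p}{x_n=0}$. Its degree in $x_{n-1}$ is exactly the $d_{n-1}$ of the statement, which is what enters the second factor $(d_i-\alpha_i)^{d_i-\alpha_i}/(d_i-\alpha_i+1)^{d_i-\alpha_i+1}$ at the next step. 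For the second option of the max, I apply the bivariate fact with exponent $d_i-\alpha_i$ counted from the top degree of $x_i$. The slice polynomial has $x_i$-degree $d_i$, so this is the $y$-side estimate.

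The main obstacle is the slice step: showing that for every fixed positive $\bm{x}'$ the univariate polynomial in $x_i$ has log-concave coefficients (i.e.\ is bivariate DL after homogenization), and that the $x_i$-degree there is at most $d_i$ uniformly in $\bm{x}'$. I would first try to derive log-concavity of the slice by specializing variables through the DL-preserving scaling and identification operators above. If identifying variables with different positive weights is problematic, I would fall back on the Lorentzian side: $N[p]$ is Lorentzian, and restricting $\partial$-derivatives of $N[p]$ yields ultra-log-concavity, which implies ordinary log-concavity of the denormalized coefficients.
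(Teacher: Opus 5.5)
Your proposal is correct and follows essentially the paper's route. The paper recovers this statement as the polynomial case of its capacity theorem for DL Laurent series, with $0$ and $d_i$ serving as lower and upper bounds for $\Deg_{x_i}^{\bm{\alpha}}(p)$. That theorem's proof is your induction: peel off $x_n$, get log-concavity of $\{p_k(\bm{y})\}_k$ by scaling and then identifying variables, handle the bivariate step by comparison with a one-sided geometric sequence, and recurse on the DL polynomial $[x_n^{\alpha_n}]p$. The worries in your last paragraph are not real obstacles: scaling before identifying variables is legitimate, and every positive slice in $x_i$ has degree at most $d_i$ because $d_i$ is the $x_i$-degree of $[x_{i+1}^{\alpha_{i+1}}\cdots x_n^{\alpha_n}]p$.
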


\begin{remark}[{\cite[Remark 11.2]{BLP23}}]
    \label{capacity is a convex optimization problem}
    For a polynomial $p \in \R_{\ge 0}[x_1, \cdots, x_n]$, we can interpret $\cpc_{\bm{\alpha}}(p)$ as a convex polynomial optimization problem:
    \[
    - \log \cpc_{\bm{\alpha}}(p) = \inf_{\bm{y} \in \R^n} \left[ -\log p(e^{\bm{y}}) + \left< \bm{y},\bm{\alpha}\right>\right],
    \]
    where $\log$ is the natural logarithm function and $\langle \cdot, \cdot \rangle$ is the usual dot product. Note that both $- \log$ and the dot product are convex functions, and $\R^n$ is a convex set. 
\end{remark}
\subsection{A Lower bound for the number of contingency tables and type A Kostant partition functions}
\label{subsection: contingency tables} 

For $\bm{\alpha} \in \Z^n, \bm{\beta} \in \Z^m$, a contingency table of marginals $(\bm{\alpha}, \bm{\beta})$ is a matrix $A = (A_{ij}) \in \Z_{\ge 0}^{n \times m}$, with entries in the $i$-th row summing up to $\alpha_i$ and entries in the $j$-th row summing to $\beta_j$. Contingency tables are often studied as lattice points in transportation polytopes \cite{DK14}.
Let $K = ( K_{ij})_{i,j}$ be an $n \times m$ matrix with entries in $\Z_{\ge 0} \cup \{\infty  \}$. Then we say that $A$ is a $K$-contingency table with marginals $(\bm{\alpha}, \bm{\beta})$ if $A$ is a contingency table with marginals $(\bm{\alpha}, \bm{\beta})$ and $A_{ij} \le K_{ij}$ for all $i\in [n],j \in [m]$. Let $CT_{K}(\bm{\alpha}, \bm{\beta})$ be the number of $K$-contingency tables with marginals $(\bm{\alpha}, \bm{\beta})$. If $K_{ij} = \infty$ for all $i,j$, we refer to $\CT_{K}(\bm{\alpha}, \bm{\beta})$ by $\CT(\bm{\alpha}, \bm{\beta})$ for simplicity. We note that the problem of finding $\CT_K(\bm{\alpha}, \bm{\beta})$ is \#P-complete, even if we restrict to the case $n = 2$ (\cite[Theorem 1]{DKM97}).  

For an $n \times m$ finite matrix $K$ with column marginals $\bm{\gamma}$, consider the following polynomial: 
\[
{P}_K(x_1, \cdots, x_n; y_1, \cdots, y_m) = \prod_{i = 1}^n \prod_{j = 1}^m \sum_{l = 0}^{K_{ij}} x_i^l y_j^{l} = \sum_{\bm{\alpha, \beta}} CT_K(\bm{\alpha}, \bm{\beta}) \bm{x^{\alpha}y^{\beta}}.
\]
Modify $P_K$ as follows to get a homogeneous polynomial:
\[
\overset{\sim}{P}_K(x_1, \cdots, x_n; y_1, \cdots, y_m) = \prod_{i = 1}^n \prod_{j = 1}^m \sum_{l = 0}^{K_{ij}} x_i^l y_j^{K_{ij}-l} = \sum_{\bm{\alpha, \beta}} CT_K(\bm{\alpha}, \bm{\beta}) \bm{x^{\alpha}y^{\gamma-\beta}}.
\]
$\overset{\sim}{P}_K(\bm{x},\bm{y})$ is a DL polynomial by \Cref{multiplication of DL polynomials stays DL} and \Cref{bivariate homog polynomial is DL iff it has LC coeffs}. \Cref{theorem from BLP} then gives a lower bound for $\CT_{K}(\bm{\alpha},\bm{\beta})$ whenever $K$ is finite. If $K$ is not finite however, authors of \cite{BLP23} write $K$ as the limit point of a sequence $\{ K_t\}_{t \in \N}$ of finite matrices to get the following general theorem:
\begin{theorem}[{\cite[Theorem 2.1]{BLP23}}]
    \label{bound for contingency tables in BLP}
    For $\bm{\alpha} \in \N^n, \bm{\beta} \in \N^m$ and $K \in (\Z_{\ge 0} \cup \{ \infty\})^{n \times m}$ we have: 
    \[
    CT_{K}(\bm{\alpha}, \bm{\beta}) \ge \cpc_{(\bm{\alpha}, \bm{\beta})}(P_K). \prod_{i = 2}^n \frac{\alpha_i^{\alpha_i}}{(1 + \alpha_i)^{1 + \alpha_i}} \prod_{j = 1}^m \frac{\beta_j^{\beta_j}}{(1 + \beta_j)^{1 + \beta_j}},
    \]
    where:
    \[
    P_K = \prod_{i = 1}^n \prod_{j = 1}^m \sum_{l = 0}^{K_{ij}} x_i^l y_j^l.
    \]
\end{theorem}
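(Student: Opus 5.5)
The plan is to deduce the bound for infinite $K$ from the finite case by approximation. Throughout, $\bm{\alpha},\bm{\beta}$ are fixed. Choose finite matrices $K_t$ with $(K_t)_{ij} = \min\{K_{ij}, t\}$, so that $K_t \uparrow K$ entrywise.

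\textbf{Step 1: the finite case.} For finite $K_t$ with column marginals $\bm{\gamma}_t$, the polynomial $\overset{\sim}{P}_{K_t}(\bm{x},\bm{y})$ is DL, as noted above. Its coefficient of $\bm{x^{\alpha}}\bm{y}^{\bm{\gamma}_t-\bm{\beta}}$ equals $\CT_{K_t}(\bm{\alpha},\bm{\beta})$. Apply \Cref{theorem from BLP} with the variable order $y_1,\dots,y_m,x_1,\dots,x_n$ reversed appropriately, so that the factor for the first variable is dropped; placing $x_1$ first means only $i\ge 2$ appear in the $x$-product. Use the first option $\frac{a^a}{(a+1)^{a+1}}$ in the max for the $x_i$, which gives $\frac{\alpha_i^{\alpha_i}}{(\alpha_i+1)^{\alpha_i+1}}$. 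For the $y_j$, the exponent is $\gamma_{t,j}-\beta_j$, and the relevant degree $d_j$ equals $\gamma_{t,j}$ because each factor contributes $y_j^{K_{ij}-l}$ with full range in $l$. Hence the second option in the max becomes $\frac{\beta_j^{\beta_j}}{(\beta_j+1)^{\beta_j+1}}$. Checking that $d_j=\gamma_{t,j}$ after the earlier derivatives and restrictions is the one bookkeeping point in this step.

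\textbf{Step 2: matching the capacity.} Substituting $y_j \mapsto 1/y_j$ and multiplying by $\bm{y}^{\bm{\gamma}_t}$ gives
\[
\cpc_{(\bm{\alpha},\bm{\gamma}_t-\bm{\beta})}(\overset{\sim}{P}_{K_t}) = \cpc_{(\bm{\alpha},\bm{\beta})}(P_{K_t}),
\]
since $\overset{\sim}{P}_{K_t}(\bm{x},\bm{y}) = \bm{y}^{\bm{\gamma}_t}P_{K_t}(\bm{x},\bm{y}^{-1})$. Therefore
\[
\CT_{K_t}(\bm{\alpha},\bm{\beta}) \ge \cpc_{(\bm{\alpha},\bm{\beta})}(P_{K_t})\cdot \prod_{i = 2}^n \frac{\alpha_i^{\alpha_i}}{(1 + \alpha_i)^{1 + \alpha_i}} \prod_{j = 1}^m \frac{\beta_j^{\beta_j}}{(1 + \beta_j)^{1 + \beta_j}}.
\]

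\textbf{Step 3: passing to the limit (the main obstacle).} For $t \ge \max(|\bm{\alpha}|_1,|\bm{\beta}|_1)$, every table with these marginals has entries at most $t$, so $\CT_{K_t}(\bm{\alpha},\bm{\beta}) = \CT_K(\bm{\alpha},\bm{\beta})$. It remains to show
\[
\cpc(P_{K_t}) \ge \cpc(P_K)
\]
for that $t$, or in the limit. Pointwise on positive $(\bm{x},\bm{y})$, $P_{K_t}$ increases to $P_K$, so $\cpc(P_{K_t}) \le \cpc(P_K)$. That is the wrong direction, so monotonicity alone does not suffice.

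I would instead show $\lim_t \cpc(P_{K_t}) = \cpc(P_K)$. Each $\log P_{K_t}(e^{\bm{u}})$ is convex, and these functions increase to $\log P_K(e^{\bm{u}})$ on the convergence domain $\{x_iy_j<1\}$ where $K_{ij}=\infty$, with value $+\infty$ outside it. The objectives $\log P_{K_t}(e^{\bm{u}}) - \langle \bm{u},(\bm{\alpha},\bm{\beta})\rangle$ are convex and increase pointwise. An increasing sequence of convex functions has infima converging to the infimum of the limit, provided the sublevel sets are compact, or at least after restricting to a compact set. This is a Dini/epi-convergence argument.

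The needed coercivity does not hold in general, because capacity can be attained at infinity. I would handle this by showing the infimum for $P_{K_t}$ can be restricted to a region bounded independently of $t$. If that fails, the fallback is to use the fact that $\CT_{K_t}$ is eventually constant together with the upper bound $\cpc \ge$ coefficient, and to argue via the limit of minimizers.

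Once this limit is established, letting $t\to\infty$ in the finite bound yields the theorem.
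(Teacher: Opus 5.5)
Steps 1 and 2 are the finite-approximation scheme the paper attributes to \cite{BLP23}, and they are fine. In Step 1 you only need $d_j\le\gamma_{t,j}$, which is automatic because $z\mapsto z^z/(z+1)^{z+1}$ is decreasing.

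The gap is Step 3, which is the entire content of the infinite case. You correctly note two things. First, $P_{K_t}\le P_K$ only gives $\cpc_{(\bm{\alpha},\bm{\beta})}(P_{K_t})\le\cpc_{(\bm{\alpha},\bm{\beta})}(P_K)$. Second, primal compactness fails, because the infimum need not be attained, so near-minimizers cannot be confined to a fixed compact set. But you then offer only a plan, and the fallback cannot work. The bound $\cpc(P_{K_t})\ge\CT_{K_t}(\bm{\alpha},\bm{\beta})$ bounds $\cpc(P_{K_t})$ from below by the count, not by $\cpc(P_K)$. Using it would require $\CT_K(\bm{\alpha},\bm{\beta})\gtrsim\cpc(P_K)$, which is the statement being proved. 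As written, nothing establishes $\liminf_t\cpc(P_{K_t})\ge\cpc(P_K)$.

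The missing idea is to pass to the dual (entropy) description, where a monotone limit goes the right way. Let $q_{ij}(z)=\sum_{l=0}^{K_{ij}}z^l$ and $q^{(t)}_{ij}(z)=\sum_{l=0}^{\min\{K_{ij},t\}}z^l$. For every real table $A\ge 0$ with row sums $\bm{\alpha}$ and column sums $\bm{\beta}$ we have $\bm{x}^{\bm{\alpha}}\bm{y}^{\bm{\beta}}=\prod_{i,j}(x_iy_j)^{A_{ij}}$. Hence for all $\bm{x},\bm{y}>\bm{0}$,
\[
\frac{P_{K_t}(\bm{x},\bm{y})}{\bm{x}^{\bm{\alpha}}\bm{y}^{\bm{\beta}}}=\prod_{i,j}\frac{q^{(t)}_{ij}(x_iy_j)}{(x_iy_j)^{A_{ij}}}\ \ge\ \prod_{i,j}\cpc_{A_{ij}}\bigl(q^{(t)}_{ij}\bigr).
\]
An elementary one-variable check gives $\cpc_a(q^{(t)}_{ij})\uparrow\cpc_a(q_{ij})$ for each fixed $a\ge 0$; the limit is $(a+1)^{a+1}/a^a$ when $K_{ij}=\infty$. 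Therefore
\[
\liminf_t\cpc_{(\bm{\alpha},\bm{\beta})}(P_{K_t})\ge\sup_A\prod_{i,j}\cpc_{A_{ij}}(q_{ij}).
\]
What remains is strong duality, $\cpc_{(\bm{\alpha},\bm{\beta})}(P_K)=\sup_A\prod_{i,j}\cpc_{A_{ij}}(q_{ij})$. This follows from \Cref{conjugate of sum of convex functions} by the same argument as in \Cref{convex analysis of capacity of flow series}. In log coordinates, the domains of the summands $\log q_{ij}(e^{u_i+v_j})$ share relative-interior points, for example any point with all $u_i+v_j<0$.

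Alternatively, you can avoid limits altogether, as the paper does in \Cref{main theorem for contingency tables}. The series $\prod_{i,j}\sum_{l=0}^{K_{ij}}x_i^l y_j^{-l}$ is a DL Laurent series with nonempty domain of convergence. Its $x_i$-exponents are $\ge 0$ and its $y_j$-exponents are $\le 0$. Applying \Cref{a lower bound for coefficients using capacity} at $(\bm{\alpha},-\bm{\beta})$, with $x_1$ ordered first, gives the bound directly after substituting $y_j\mapsto 1/y_j$.
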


Integral flows on complete graphs are closely related to contingency tables.
This connection is employed in \cite{LM26} to obtain a lower bound on the number of integral flows of such graphs, also known as the \textit{Kostant partition number}. Kostant partition numbers are used to express other important quantities in representation theory, such as Kostka numbers and the Littlewood-Richardson coefficients \cite{Hum08}, see also \S \ref{subsection: verma modules and parabolic verma modules}. 

Let $G = ([n], E)$ be an acyclic directed graph, and let $\bm{N} \in \Z_{\ge 0}^n$. An $\bm{N}$-flow of $G$ is a $|E|$-tuple $\bm{\Phi} = (\Phi_e)_{e \in E} \in \N^{E}$ satisfying:
\[
\netflow_{\Phi}(v) :=\sum_{e : u \to v} \Phi_e - \sum_{e: v \to w} \Phi_e= N_v \hspace{1 cm} \forall v \in [n].
\]
Let $K_{n+1}$ be the complete acyclic directed graph with vertices $0, 1, \cdots , n$, and edges directed from $j$ to $i$ for $j > i$. Let $\bm{N} = (N_0, N_1 , \cdots , N_{n-1}, - \sum_{i = 1}^{n-1} N_i)\in \Z^{n+1}$. Denote by $K_n(\bm{N})$ the number of $\bm{N}$-flows of $K_{n+1}$. For certain $\bm{\alpha}, \bm{\beta} \in \Z_{\ge 0}^n $ and $ K \in \Z_{\ge 0}^{n \times n}$, the set of $\bm{N}$-flows of $K_{n+1} $ are in bijection with the set of $K$-contingency tables with marginals $(\bm{\alpha},\bm{\beta})$. See \cite[Section 1.3]{MMR17} and \cite[Section 2.1]{LM26} for more details and proof of this bijection. \Cref{bound for contingency tables in BLP} for $K$-contingency tables with marginals $(\bm{\alpha},\bm{\beta})$ implies: 



\begin{theorem}[{\cite[Theorem 2.28]{LM26}}]
\label{lower bounds for integral flows in LM}
Let $\bm{N} = (N_0, \cdots, N_n) \in \Z^{n+1}$, and $s_i = \sum_{k = 0}^i N_i$ for $0 \le i \le n-1$, and let $\bm{\alpha} = (s_0, \cdots, s_{n-1})$ and $\bm{\beta} = (s_{n-1}, \cdots, s_0)$. We have: 

\[
K_n(\bm{N}) \ge \cpc_{\bm{\alpha}, \bm{\beta}}(\Phi). \max_{0 \le i \le n-1}\left\{ \frac{(s_i + 1)^{s_i + 1}}{s_i^{s_i}} \right\} \prod_{i = 0}^{n-1} \left[\frac{s_i^{s_i}}{(s_i + 1)^{s_i + 1}}\right]^2 ,
\]
where 
\[
\Phi(\bm{x}, \bm{y}) = \prod_{\substack{ 0 \le i,j \le n-1 \\ i + j \le n} } \frac{1}{1- x_i y_j}.
\]
\end{theorem}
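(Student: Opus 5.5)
The plan is to deduce the statement directly from \Cref{bound for contingency tables in BLP}. The first step is to make the bijection between $\bm{N}$-flows of $K_{n+1}$ and $K$-contingency tables explicit, following \cite[Section 1.3]{MMR17}. Take the $n\times n$ matrix $K$ (rows and columns indexed by $0,\dots,n-1$) with $K_{ij}=\infty$ if $i+j\le n$ and $K_{ij}=0$ otherwise. Take the marginals $\bm{\alpha}=(s_0,\dots,s_{n-1})$ and $\bm{\beta}=(s_{n-1},\dots,s_0)$.

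Under the bijection, the flow on the edges crossing the cut between $\{0,\dots,i\}$ and $\{i+1,\dots,n\}$ is recorded in row $i$ of the table. The same flow is recorded in the column corresponding to that cut read from the other end. This is why both marginals are partial sums $s_i$, one in reverse order, and why only the entries with $i+j\le n$ may be nonzero. I would check that conservation of flow at each vertex is equivalent to the row and column sum conditions. I would also check that each admissible table gives exactly one flow.

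With this in place, $K_n(\bm{N})=\CT_K(\bm{\alpha},\bm{\beta})$. The generating function becomes
\[
P_K(\bm{x},\bm{y})=\prod_{i+j\le n}\sum_{l\ge 0}x_i^l y_j^l=\prod_{i+j\le n}\frac{1}{1-x_iy_j}=\Phi(\bm{x},\bm{y}),
\]
since the rows and columns with $K_{ij}=0$ contribute trivial factors. So $\cpc_{\bm{\alpha},\bm{\beta}}(P_K)=\cpc_{\bm{\alpha},\bm{\beta}}(\Phi)$.

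Applying \Cref{bound for contingency tables in BLP} gives a product of factors $\frac{s^s}{(s+1)^{s+1}}$ over all column marginals and all row marginals except the first. Since $\bm{\beta}$ is $\bm{\alpha}$ reversed, this product equals
\[
\prod_{i=0}^{n-1}\left[\frac{s_i^{s_i}}{(s_i+1)^{s_i+1}}\right]^2\cdot\frac{(s_0+1)^{s_0+1}}{s_0^{s_0}}.
\]

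To replace the special role of $s_0$ by a maximum over $i$, I would use that the BLP bound is insensitive to the ordering of the rows. Permuting the rows of $K$ together with $\bm{\alpha}$ and the $x$-variables leaves both $\CT_K(\bm{\alpha},\bm{\beta})$ and the capacity unchanged. Hence any row $i$ may be placed first, and its factor is the one omitted. Choosing the $i$ that maximizes $\frac{(s_i+1)^{s_i+1}}{s_i^{s_i}}$ yields the stated bound.

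The main obstacle is the bookkeeping in the bijection. The indexing conventions (the vertices run from $0$ to $n$, edges go from $j$ to $i$ for $j>i$, and the sign convention for the last coordinate of $\bm{N}$) must produce exactly the support pattern $i+j\le n$ and the partial-sum marginals. Degenerate cases, for instance a negative $s_i$ where the count is zero, also need separate handling. The capacity and permutation steps are routine.
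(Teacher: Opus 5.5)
Your proposal is correct and follows essentially the paper's route: the paper also gets this bound by combining the flow/contingency-table bijection of \cite[Section 1.3]{MMR17} and \cite[Section 2.1]{LM26} with \Cref{bound for contingency tables in BLP}, and your row-permutation argument supplies the justification, left implicit in the paper, for replacing the omitted first-row factor by the maximum over $i$. One clarification on the bookkeeping: for $n\ge 3$ a row cannot list the individual edges crossing the cut (there are $(i+1)(n-i)$ of them), so instead the bijection puts the flow on $b\to a$ in cell $(a,n-b)$, and puts the total flow on edges jumping over an internal vertex $v$ (those $b\to a$ with $a<v<b$) in the antidiagonal cell $(v,n-v)$; this gives exactly row sums $s_i$, column sums $s_{n-1-j}$, and support $i+j\le n$.
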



Note that \Cref{lower bounds for integral flows in LM} uses a bijection between the integral flows of a complete graph and $K$-contingency tables of a given marginal. A similar bijection can be constructed for any arbitrary acyclic graph by setting the entries of $K$ corresponding to the missing edges in our graph to be zero. So with some work, one could prove a result similar to \Cref{lower bounds for integral flows in LM} for any acyclic directed graph. 

\subsection{Verma modules and parabolic Verma modules}
\label{subsection: verma modules and parabolic verma modules}

In this subsection, we will briefly discuss some terminology related to semi-simple algebras, and two natural classes of modules over them, namely Verma modules and parabolic Verma modules. We refer to \cite{Hum08} and \cite[\S 2]{KMS25} for a more detailed study of these topics.

Let $\mathfrak{g} = \mathfrak{sl}_{n+1}(\C)$ for some $n \in \N$, let $\mathfrak{h}$ be the sub-algebra of traceless diagonal matrices, also known as the Cartan sub-algebra of $\mathfrak{g}$, and let $\h^*$ be the dual space of this sub-algebra. Note that we can think of any $\bm{\nu} \in \h^*$ as a vector in $\C^{n+1}$ with $|\bm{\nu}|_1 = 0$. For any $\bm{\lambda} \in \h^*$, the Verma module indexed by $\bm{\lambda}$ is some quotient of the universal enveloping of $\g$. We refrain from explaining more details about this definition, as it is out of the scope of this paper. 

Let $\epsilon_i$ be the function that sends a matrix $H = [h_{i,j}]_{i,j \in[n]}$ to $h_{i,i} \in \C$. We denote by $\Delta$ the following set of functions:
\[
\Delta = \{ \epsilon_i - \epsilon_j: i < j \in [n+1] \}
\]
$\Delta$ is called a set of \textit{positive roots} of $\g$. For any $\bm{\mu} \in \h^*$, the \textit{weight space multiplicity} $\dim M({\bm{\lambda}})_{\bm{\mu}}$ is the number of ways of writing $\bm{\lambda} - \bm{\mu}$   as a sum of positive roots, which in this case, is equal to $K_{n}(\bm{\lambda} - \bm{\mu})$. Define \textit{the character} of $M(\bm{\lambda})$ denoted by $\character M(\bm{\lambda})$ to be the generating series $\sum_{\bm{\mu} \in \h^*} \dim M(\bm{\lambda})_{\bm{\mu}}\bm{x^{-\mu}}$. Then $\character M(\bm{\lambda})$ is the following Laurent series:
\[
\character M(\bm{\lambda}) (x_1, \cdots, x_{n+1}) = \bm{x^{\lambda}} \cdot \prod_{\epsilon_i - \epsilon_j \in \Delta}\left( 1 + x_jx_i^{-1} + x_j^2x_i^{-2} + \cdots \right) 
\]
It is shown in \cite[Proposition 13]{HMMD22} that the polynomial part of any shift of $\character M(\lambda)$ is a DL polynomial, a property that we will later call denormalized Lorentzianity for Laurent series (see \S \ref{Section: DL Laurent series}).

Given $J \subseteq [n]$, define the \textit{$J$-dominant integral weights} to be the following subset of $\h^*$:
\[
\Lambda^+_J = \{ \bm{\lambda} \in \h^*: \bm\lambda(H_i) \in \Z_{>0} \hspace{.2 cm} \forall i \in J \},
\]
where $H_i \in \h$ is the matrix with a 1 in its $i$-th diagonal entry and a $-1$ in its $(i+1)$-th diagonal. For each $\bm{\lambda} \in \Lambda_J^+$, the parabolic Verma module $M(\bm{\lambda}, J)$, is a quotient of the Verma module $M(\bm{\lambda})$.

Each semi-simple lie algebra has a \textit{Dynkin diagram} associated with it. It is a well known fact that the Dynkin diagram of $\g$ is simply a path with vertices $[n]$ with $i$ connected to $i + 1$ for $i \in [n-1]$. Consider the induced subgraph of this Dynkin diagram on vertices $J \subseteq [n]$, and partition it into its connected components $J_0 \sqcup \cdots \sqcup J_l$. Note that $J_0, \cdots, J_l$ partition $J$ into maximal contiguous intervals. 
Furthermore, construct a graph $G_J$ on vertices $[n+1]$ and include the edge $i \to j$ if $i < j$ and $[i,j-1]\not \subseteq J$. We will explore the properties of this graph in \S \ref{section: applying bounds to parabolic verma modules}. The following identity is proven in \cite[\S 3]{KMS25}:
\[
\character M(\bm{\lambda},J) = \prod_{\overset{i < j \in [n+1]}{i \to j \in E(G)}} \left( 1 + x_jx_i^{-1} + x_j^2 x_i^{-2} + \cdots\right) \cdot \prod_{t = 0}^l s_{\lambda_t}(x_i: i \in J_r)
\]
where $s_{\lambda_t}$ is the Schur polynomial indexed by the composition $\lambda_t := (\lambda_i - \lambda_{1 + \max{J_t}} : i \in J_t)$, and $\max J_t$ is the largest element of $J_t$. One could verify that $\lambda_t$ is a partition since $\bm\lambda \in \Lambda_J^+$. This identity implies that the polynomial part of any shift of $\character M(\bm{\lambda}, J)$ is DL \cite[Theorem 1.5]{KMS25}, or by our terminology, that $\character M(\bm{\lambda}, J)$ is a DL Laurent series. We will use our tools to prove a lower bound for $\dim M(\bm{\lambda},J)_{\bm{\mu}}$.

\section{Denormalized Lorentzian Laurent series}
\label{Section: DL Laurent series}
In this section, we define the class of denormalized Lorentzian (DL) Laurent series. We explore some of their basic properties, and try to draw analogs between this class of Laurent series and DL polynomials. We aim to eventually prove lower bounds for the coefficients of these Laurent series, and use them to derive bounds for integer flows and dimensions of weight spaces of parabolic Verma module. Recall that in the context of this paper, a Laurent series is an infinite sum $\sum_{\bm{\alpha} \in \Z^n} p_{\bm{\alpha}} \bm{x^{\alpha}}$.
\subsection{Definition and basic properties}
\label{definition and basic properties}

Let $\Poly : \R((x_1, \cdots, x_n)) \to \R((x_1, \cdots, x_n))$ be the following operator:
\[
\Poly[\sum_{\bm{\alpha} \in \Z^n} p_{\bm{\alpha}}\bm{x^{\alpha}}] = \sum_{\bm{\alpha} \in \Z_{\ge 0}^n} p_{\bm{\alpha}}\bm{x^{\alpha}}.
\]

$\Poly[p]$ is not necessarily a polynomial, however, it will be if $p$ is a $d$-homogeneous Laurent series, since there are only finitely many $\bm{\alpha} \in \Z_{\ge 0}^n$ satisfying $|\bm{\alpha}|_1 = d$.  
\begin{definition} [Denormalized Lorentzian Laurent series]
    For a homogeneous Laurent series $p \in \R_{\ge 0}((x_1,\cdots, x_n))$, say that $p$ is a \textit{denormalized Lorentzian (DL) Laurent series} if, for any $\bm{\alpha} \in \Z_{\ge 0}^n$, the polynomial $\Poly[\bm{x}^{\bm{\alpha}}p(\bm{x})]$ is denormalized Lorentzian. 
\end{definition}

\begin{remark}
    The intersection of the class of DL Laurent series and polynomials is the class of DL polynomials. 
    So there is no ambiguity when we say that a polynomial $p$ is DL, without specifying whether it is a DL Laurent series or a DL polynomial.
\end{remark}

\begin{lemma}
\label{bivaraite Laurent series is DL iff it has a log concave sequence}
    Let $p(x,y) = \sum_{n \in \Z} p_nx^{d-n}y^{n}$. Then $p$ is DL if and only if the sequence $\{ p_n\}_{n \in \Z}$ is {log-concave}, that is, it has no internal zeroes and for all $n \in \Z$, we have $p_n^2 \ge p_{n-1}.p_{n+1}$.
\end{lemma}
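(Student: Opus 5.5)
The plan is to reduce both directions to \Cref{bivariate homog polynomial is DL iff it has LC coeffs} by computing $\Poly[x^a y^b p]$ explicitly for $(a,b)\in\Z_{\ge 0}^2$. Since $p$ is $d$-homogeneous, $x^a y^b p = \sum_{n} p_n x^{d+a-n} y^{n+b}$. Applying $\Poly$ keeps exactly the terms with $n+b\ge 0$ and $d+a-n\ge 0$, i.e.\ $-b\le n\le d+a$. Thus
\[
\Poly[x^a y^b p] = \sum_{n=-b}^{d+a} p_n x^{d+a-n} y^{n+b},
\]
a bivariate homogeneous polynomial of degree $d+a+b$. Its coefficient sequence is the contiguous window $(p_{-b}, p_{-b+1},\dots,p_{d+a})$ of the original sequence. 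By \Cref{bivariate homog polynomial is DL iff it has LC coeffs}, this polynomial is DL if and only if the window is log-concave, meaning it has no internal zeros and satisfies $p_n^2\ge p_{n-1}p_{n+1}$ for $-b<n<d+a$.

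For the forward direction, suppose $\{p_n\}$ is log-concave. Every window of the sequence inherits both properties. A zero strictly between two nonzero entries of the window is also internal in the whole sequence, so it cannot occur. The inequalities also restrict directly to the window. Hence every $\Poly[x^ay^bp]$ is DL, and $p$ is DL.

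For the converse, suppose every such polynomial is DL. Given any $n$, choose $a,b\ge 0$ large enough that $-b<n<d+a$. The window then contains $p_{n-1},p_n,p_{n+1}$ as interior entries, and log-concavity of the window gives $p_n^2\ge p_{n-1}p_{n+1}$. For internal zeros, suppose $p_i,p_k\neq 0$ with $i<j<k$. Choosing $b\ge -i$ and $a\ge k-d$ puts all three indices in one window. Since the window has no internal zeros, $p_j\neq 0$.

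The only delicate step is the convention issue. We need a window that is entirely zero (the zero polynomial) or has a single nonzero entry to count as DL and log-concave. This is consistent with the cited corollary, and the reduction is otherwise routine, so no real obstacle is expected.
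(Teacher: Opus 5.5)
Your proposal is correct and takes essentially the same approach as the paper. In both, $\Poly[x^a y^b p]$ is identified with the contiguous window $(p_{-b},\dots,p_{d+a})$ of the coefficient sequence, \Cref{bivariate homog polynomial is DL iff it has LC coeffs} is applied in each direction, and the window is enlarged to reach any given index; the paper uses the symmetric shift $x^m y^m$ for the DL-implies-log-concave direction, which makes no difference.
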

\begin{proof}
If $p$ is a DL Laurent series, then the polynomial \[ \Poly [x^my^{m} p(x,y) ] = \sum_{-m \le k \le d + m} p_k x^{d-k+m}y^{k+m}\]
    is DL for all $m \in \Z$, and therefore the sequence $\{ p_k \}_{k = -m}^{d+m}$ is log-concave by  \Cref{bivariate homog polynomial is DL iff it has LC coeffs}. Now given $n$, we just need to take a large enough $m$ to see that $p_n^2 \ge p_{n-1}p_{n+1}$. Furthermore, $\{ p_n \}_{n \in \Z}$ has no internal zeros since $\{ p_k \}_{k = -m}^{d+m}$ does not have any internal zeros for all $m \in \Z$.

    For the other direction of the proof, suppose $\{p_n\}_{n \in \Z}$ is a log concave sequence. We need to prove that the polynomial 
    \[
    \Poly[x^{\alpha}y^{\beta}p(x,y)] = \sum_{d + \alpha \ge k \ge - \beta} p_k x^{d - k + \alpha} y^{k + \beta} 
    \]
    is DL. The sequence $\{   p_k\}_{k = -\beta}^{ d+ \alpha}$ inherits log-concavity from $\{ p_n \} _{n \in \Z}$, and $\Poly[x^{\alpha}y^{\beta}p(x,y)]$ is DL by \Cref{bivariate homog polynomial is DL iff it has LC coeffs}.
\end{proof}
DL Laurent series maintain an intimate connection with M-convex sets. 

\begin{lemma}
\label{support of DL Laurent series is M convex}
    The support of any DL Laurent series $p \in \R_{\ge 0}((x_1, \cdots, x_n))$ is M-convex. 
\end{lemma}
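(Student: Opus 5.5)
The plan is to reduce to the polynomial case by shifting and then invoke \Cref{M-convexity and Lorentzian}. Let $p$ be a DL Laurent series of degree $d$ and write $S = \supp(p) \subseteq \Z^n$. Take $\bm{x}, \bm{y} \in S$ and an index $i$ with $x_i > y_i$. We must produce $j$ with $x_j < y_j$ such that both $\bm{x} + \bm{e}_j - \bm{e}_i$ and $\bm{y} - \bm{e}_j + \bm{e}_i$ lie in $S$.

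Since only two points are involved, choose $\bm{\alpha} \in \Z_{\ge 0}^n$ large enough that $\bm{x} + \bm{\alpha} - \bm{1} \ge \bm{0}$ and $\bm{y} + \bm{\alpha} - \bm{1} \ge \bm{0}$ coordinatewise; for instance $\alpha_k = 1 + \max(0, -x_k, -y_k)$ works. Put $q = \Poly[\bm{x}^{\bm{\alpha}} p]$. By definition of a DL Laurent series, $q$ is a DL polynomial, so $N[q]$ is Lorentzian. Normalization does not change supports, so \Cref{M-convexity and Lorentzian} shows that $\supp(q)$ is M-convex.

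Next we identify this support. It equals $(S + \bm{\alpha}) \cap \Z_{\ge 0}^n$, because multiplying by $\bm{x}^{\bm{\alpha}}$ translates coefficients and $\Poly$ keeps exactly the exponents with nonnegative entries. Hence $\bm{x} + \bm{\alpha}$ and $\bm{y} + \bm{\alpha}$ lie in $\supp(q)$, and they still satisfy $(x+\alpha)_i > (y+\alpha)_i$. M-convexity of $\supp(q)$ gives $j$ with $x_j < y_j$ such that $\bm{x} + \bm{\alpha} + \bm{e}_j - \bm{e}_i$ and $\bm{y} + \bm{\alpha} - \bm{e}_j + \bm{e}_i$ lie in $\supp(q)$.

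Subtracting $\bm{\alpha}$ returns these two points to $S$, since every element of $\supp(q)$ is of the form $\bm{\gamma} + \bm{\alpha}$ with $\bm{\gamma} \in S$. That is exactly the exchange property. Nonemptiness of $S$ is part of the definition of M-convexity; if $p = 0$ is allowed, that case is excluded by convention.

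The only step needing care is the choice of shift, and there are two reasons the argument is routine rather than subtle. First, there is no convergence or infinitude issue: the exchange property involves only finitely many points, so one finite shift handles it. Second, the restriction to the orthant causes no loss: the exchanged points differ from $\bm{x}, \bm{y}$ by $\pm 1$ in a coordinate, and the margin of $1$ in $\bm{\alpha}$ only makes this visibly safe. Strictly, M-convexity of $\supp(q)$ already forces the exchanged points into $\Z_{\ge 0}^n$.
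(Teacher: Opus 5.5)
Your proposal is correct and is essentially the paper's proof: the paper also shifts by a large $\bm{\gamma}$ so that both points become strictly positive, notes that $\bm{\alpha}+\bm{\gamma}\in\supp(N[\Poly[\bm{x}^{\bm{\gamma}}p]])$ iff $\bm{\alpha}\in\supp(p)$ (for $\bm{\alpha}+\bm{\gamma}\ge\bm{0}$), applies M-convexity of the Lorentzian polynomial's support, and translates the exchanged points back. As you observe, the extra margin of $1$ is not actually needed, because the exchanged points already lie in $\supp(q)\subseteq(S+\bm{\alpha})\cap\Z_{\ge 0}^n$.
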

\begin{proof}
    Let $\bm{\mu},\bm{\nu} \in \supp(p)$, and let $\mu_i > \nu_i$ for some $i \in [n]$. Take a vector $\bm{\gamma} \in \N^n$ with large enough coordinates $\gamma_1, \cdots, \gamma_n$ so that $\bm{\mu} + \bm{\gamma}, \bm{\nu} + \bm{\gamma} \in \Z_{> 0}^n$. 

    Note that if a vector $\bm{\alpha}$ satisfies $\bm{\alpha} + \bm{\gamma} \ge \bm{0}$, $\bm{\alpha} \in \supp(p)$ if and only if $\bm{\alpha} + \bm{\gamma}$ is in the support of $N[\Poly[\bm{x^{\gamma}}p]]$ because: 
    \[
    [\bm{x}^{\bm{\alpha} + \bm{\gamma}}] \left( N[\Poly[\bm{x^{\gamma}}p]] \right) = [\bm{x}^{\bm{\alpha} + \bm{\gamma}}]\left( \sum_{\bm{d} + \bm{\gamma} \ge \bm{0}} \frac{p_{\bm{d}}}{(\bm{d} + \bm{\gamma})!} \bm{x}^{\bm{d} + \bm{\gamma}} \right) = \frac{[\bm{x^{\alpha}}]p(\bm{x})}{(\bm{\alpha} + \bm{\gamma})!}.
    \]

    By definition, $N[\Poly[\bm{x^{\gamma}}p]]$ is a Lorentzian polynomial, and its support is M-convex by \Cref{M-convexity and Lorentzian}. Note that both $\bm{\mu} + \bm{\gamma}$ and $ \bm{\nu} + \bm{\gamma}$ are elements of the support of $N[\Poly[\bm{x^{\gamma}}p]]$, and we have $(\bm{\mu} + \bm{\gamma})_i = \mu_i + \gamma_i > (\bm{\nu} + \bm{\gamma})_i = \nu_i + \gamma_i$. Therefore, there should exist some $j \in [n]-\{ i \} $ such that $(\bm{\mu} + \bm{\gamma})_j < (\bm{\nu} +\bm{\gamma})_j $ and $\bm{\mu} + \bm{\gamma} - \bm{e}_i + \bm{e}_j, \bm{\nu} + \bm{\gamma} + \bm{e}_i - \bm{e}_j$ are both elements of $\supp(N[\Poly[\bm{x^{\gamma}}p]])$. So $\mu_j < \nu_j$ and $\bm{\mu} - \bm{e}_i + \bm{e}_j, \bm{\nu} + \bm{e}_i - \bm{e}_j \in \supp(p)$ by what we discussed above, and $\supp(p)$ is M-convex by definition.  
\end{proof}

The following is an analog of \Cref{multiple of DL polynomials}.
\begin{lemma}
\label{multiple of a DL Laurent series is DL} 
    Let $p \in \R_{\ge 0}((x_1, \cdots, x_n))$ be a DL Laurent series. Then $x_1^tp(\bm{x})$ is DL for any $t \in \Z$.  
\end{lemma}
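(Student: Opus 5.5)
The plan is to verify the definition directly. Homogeneity and nonnegativity of $x_1^t p$ are clear: if $p$ is $d$-homogeneous then $x_1^t p$ is $(d+t)$-homogeneous with the same coefficients, shifted. So it remains to show that, for every $\bm{\alpha} \in \Z_{\ge 0}^n$, the polynomial $\Poly[\bm{x^{\alpha}} x_1^t p(\bm{x})]$ is DL. I will treat $t \ge 0$ and $t < 0$ separately. In both cases the idea is to write this polynomial as the image of some $\Poly[\bm{x^{\gamma}} p]$, which is DL by hypothesis, under an operation already shown to preserve DL polynomials.

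\emph{Case $t \ge 0$.} Set $\bm{\gamma} = \bm{\alpha} + t\bm{e}_1 \in \Z_{\ge 0}^n$. Then $\bm{x^{\alpha}} x_1^t p = \bm{x^{\gamma}} p$, so $\Poly[\bm{x^{\alpha}}x_1^t p] = \Poly[\bm{x^{\gamma}}p]$, and this is DL by assumption. No further work is needed in this case.

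\emph{Case $t < 0$.} Write $s = -t > 0$. If $\alpha_1 \ge s$, take $\bm{\gamma} = \bm{\alpha} - s\bm{e}_1 \ge \bm{0}$ and argue exactly as above. Otherwise, take $\bm{\gamma} = \bm{\alpha} + (s - 2\alpha_1)\bm{e}_1$; more simply, take any $\bm{\gamma}$ with $\gamma_1 \ge 0$ and $\gamma_j = \alpha_j$ for $j \ne 1$. The key identity is
\[
\Poly[\bm{x^{\alpha}} x_1^{-s} p] = \trunc_{x_1}^{m}\bigl[\Poly[\bm{x^{\gamma}}p]\bigr] \quad\text{with } \bm{\gamma} = \bm{\alpha}, \; m = s .
\]
To see it, compare monomials $\bm{x}^{\bm{\beta}+\bm{\alpha}}$ with $\bm{\beta} \in \supp(p)$. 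On the left, the term survives iff $\bm{\beta}+\bm{\alpha}-s\bm{e}_1 \ge \bm{0}$, and it then appears as $p_{\bm{\beta}}\bm{x}^{\bm{\beta}+\bm{\alpha}-s\bm{e}_1}$. On the right, $\Poly$ keeps the terms with $\bm{\beta}+\bm{\alpha} \ge \bm{0}$. Then $\trunc_{x_1}^s$ keeps exactly those with $(\bm{\beta}+\bm{\alpha})_1 \ge s$, and divides them by $x_1^s$. These two conditions together are equivalent to $\bm{\beta}+\bm{\alpha}-s\bm{e}_1 \ge \bm{0}$, since the other coordinates coincide and $(\bm{\beta}+\bm{\alpha})_1 \ge s \ge 0$. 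So the identity holds for every $\bm{\alpha}$, and the subcase split above is actually unnecessary.

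Given the identity, \Cref{truncation below preserves DL polynomials} shows that the right-hand side is DL, because $\Poly[\bm{x^{\alpha}}p]$ is DL by hypothesis. Hence $x_1^t p$ is a DL Laurent series.

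The only real point of care is the bookkeeping in the identity: $\Poly$ followed by truncation must cut out exactly the same monomials as $\Poly$ applied after the shift. I will check this coordinatewise as above, and I expect no genuine obstacle.
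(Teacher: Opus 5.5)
Your proof is correct and is essentially the paper's argument: in both, $\Poly[x_1^t\bm{x^{\alpha}}p]$ is obtained from a DL polynomial $\Poly[\bm{x^{\gamma}}p]$ by applying the truncation-below operator of \Cref{truncation below preserves DL polynomials}. The paper handles all $t$ at once by taking a large exponent $M$ in $x_1$ and then removing a monomial factor via \Cref{multiple of DL polynomials}, whereas your choice $\bm{\gamma}=\bm{\alpha}$ with $\trunc_{x_1}^{s}$ (and the trivial case $t\ge 0$ done separately) needs neither step --- but delete the abandoned sentence proposing $\bm{\gamma}=\bm{\alpha}+(s-2\alpha_1)\bm{e}_1$.
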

\begin{proof}
   It suffices to prove that for any vector $\bm{\alpha} = (\alpha_1, \cdots, \alpha_n) \in \Z_{\ge 0}^n$, the polynomial $\Poly[x_1^t\bm{x^{\alpha}} p(x)]$ is DL. 
    
    Let $M > \alpha_1 + t$ be an integer, and let $\bm{\beta} := (M , \alpha_2 , \cdots, \alpha_n)$. Then $\Poly[\bm{x^{\beta}}p(\bm{x})]$ is a DL polynomial by definition, and \Cref{truncation below preserves DL polynomials} implies that $\trunc_{x_1}^{-(\alpha_1 + t)} [\Poly[\bm{x^{\beta}}p(\bm{x})]]$ is also a DL polynomial. Now observe that:
    \[
        \Poly[\bm{x^{\beta}}p(\bm{x})] = \sum_{\substack{d_1 \ge -M \\ d_i \ge - \alpha_i \hspace{0.1cm} \forall i \ge 2}} p_{\bm{d}} x_1^{d_1 + M}x_2^{d_2 + \alpha_2} \cdots x_n^{d_n + \alpha_n},
    \]   
    and: 
    \begin{align*}
        \trunc_{x_1}^{-(\alpha_1 + t)}\Poly[\bm{x^{\beta}}p(\bm{x})] &= \sum_{\substack{d_1 \ge -M \\ d_1 \ge -(\alpha_1 + t)\\ d_i \ge - \alpha_i \hspace{0.1cm} \forall i \ge 2}} p_{\bm{d}} x_1^{d_1 + M + \alpha_1+ t }x_2^{d_2 + \alpha_2} \cdots x_n^{d_n + \alpha_n}\\
        &= x_1^M \sum_{\substack{d_1 \ge -(\alpha_1 + t)\\ d_i \ge - \alpha_i \hspace{0.1cm} \forall i \ge 2}} p_{\bm{d}} x_1^{d_1 +\alpha_1+ t }x_2^{d_2 + \alpha_2} \cdots x_n^{d_n + \alpha_n}\\
        &= x_1^M \Poly[x_1^t \bm{x^{\alpha}} p(\bm{x})].
    \end{align*}
    We conclude that $x_1^M\Poly[x_1^t \bm{x^{\alpha}} p(\bm{x})]$ is DL, and by \Cref{multiple of DL polynomials}, so is $\Poly[x_1^t \bm{x^{\alpha}} p(\bm{x})]$.
\end{proof}

\begin{corollary}
    \label{equivalent definition for DL Laurent series}
    A homogeneous Laurent series $p \in \R_{\ge 0}((x_1, \cdots, x_n))$ is DL if and only if $\Poly[\bm{x^{\alpha}}p(\bm{x})]$ is DL for any $\bm{\alpha} \in \Z^n$.
\end{corollary}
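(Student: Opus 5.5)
The plan is to prove the two directions separately. The reverse direction is immediate, and the forward direction reduces to the preceding lemma applied one coordinate at a time.

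\emph{Reverse direction.} Suppose $\Poly[\bm{x^{\alpha}}p(\bm{x})]$ is DL for every $\bm{\alpha} \in \Z^n$. Then in particular it is DL for every $\bm{\alpha} \in \Z_{\ge 0}^n$, which is exactly the definition of a DL Laurent series. So $p$ is DL.

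\emph{Forward direction.} Assume $p$ is a DL Laurent series and fix $\bm{\alpha} \in \Z^n$. Write $\bm{\alpha} = \sum_{i=1}^n \alpha_i \bm{e}_i$, so that $\bm{x^{\alpha}} p = x_1^{\alpha_1} x_2^{\alpha_2} \cdots x_n^{\alpha_n} p$. We show by induction on $k$ that
\[
q_k := x_1^{\alpha_1} \cdots x_k^{\alpha_k}\, p
\]
is a DL Laurent series for $k = 0, 1, \ldots, n$.

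The case $k = 0$ is the hypothesis $q_0 = p$. For the inductive step we need \Cref{multiple of a DL Laurent series is DL} with the variable $x_{k}$ in place of $x_1$. This holds by symmetry, for two reasons.
\begin{enumerate}[(i)]
\item The definition of DL Laurent series is invariant under permuting variables, since Lorentzian polynomials are closed under permuting variables.
\item The proof of \Cref{multiple of a DL Laurent series is DL} uses only \Cref{truncation below preserves DL polynomials} and \Cref{multiple of DL polynomials}, and both are stated for an arbitrary variable $x_i$ (for \Cref{multiple of DL polynomials}, by the same relabeling).
\end{enumerate}
Each $q_k$ is homogeneous with nonnegative coefficients, because multiplying by a monomial preserves both properties. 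Hence $q_k = x_k^{\alpha_k} q_{k-1}$ is DL.

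\emph{Conclusion.} At $k = n$ we get that $q_n = \bm{x^{\alpha}} p$ is a DL Laurent series. Applying the definition of DL Laurent series to $q_n$ with the zero shift gives that $\Poly[\bm{x^{\alpha}}p(\bm{x})] = \Poly[\bm{x^{\bm{0}}}q_n]$ is a DL polynomial, as required.

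The only real obstacle is bookkeeping: checking that \Cref{multiple of a DL Laurent series is DL} is legitimately available for every variable, not just $x_1$. This is settled by the relabeling argument in (i) and (ii), or equivalently by rerunning that lemma's proof with $x_1$ replaced by $x_i$.
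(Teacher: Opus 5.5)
Your proposal is correct and takes essentially the same approach as the paper. The reverse direction is immediate from the definition, and the forward direction uses \Cref{multiple of a DL Laurent series is DL} to show that $\bm{x^{\alpha}}p$ is a DL Laurent series, then applies the definition with the zero shift. The paper cites that lemma directly even though it is stated only for $x_1$; your coordinate-by-coordinate induction and relabeling argument spell out what the paper leaves implicit.
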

\begin{proof}
    The backward direction of the proof is trivial. 
    To prove the forward direction, assume that $p$ is a DL Laurent series, and let $\bm{\alpha} \in \Z^n$. By \Cref{multiple of a DL Laurent series is DL}, $\bm{x^{\alpha}}p(\bm{x})$ is a DL Laurent series, and therefore $\Poly[ \bm{x^{\alpha}}p(\bm{x})]$ is a DL polynomial. 
\end{proof}

We will be using \Cref{equivalent definition for DL Laurent series} as an alternate definition for DL Laurent series. The rest of this section is dedicated to exploring some operators that preserve the class of denormalized Lorentzian Laurent series. 

\begin{lemma}
    \label{DL Laurent series are closed}
Let $\{p_i\}_{i \in \N}$ be a sequence of homogeneous DL Laurent series in variables $x_1, \cdots, x_n$, converging coefficient-wise to a Laurent series $p$. Then $p$ is a DL Laurent series itself.
\end{lemma}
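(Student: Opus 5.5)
The plan is to reduce the statement to the closedness of DL polynomials (\Cref{DL polynomials are closed}) by passing the coefficient-wise limit through the operator $\Poly[\bm{x}^{\bm{\alpha}}\,\cdot\,]$. Three things must be checked: that $p$ is homogeneous with nonnegative coefficients, that $\Poly[\bm{x^{\alpha}}p_i]\to\Poly[\bm{x^{\alpha}}p]$ inside a fixed finite-dimensional space of homogeneous polynomials, and that the limit is then DL.

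First, nonnegativity of the coefficients of $p$ is immediate from coefficient-wise convergence. Homogeneity needs a little care, because the degrees $d_i$ of the $p_i$ could a priori vary. If $p=0$ there is nothing to prove. Otherwise choose a monomial $\bm{x^{\beta}}$ with $[\bm{x^\beta}]p>0$. For all large $i$ we then have $[\bm{x^\beta}]p_i>0$, which forces $d_i=|\bm{\beta}|_1$. So after discarding finitely many terms, every $p_i$ is homogeneous of a common degree $d$. Any monomial of $p$ is a limit of monomials of the $p_i$, so it has degree $d$ too, and $p$ is $d$-homogeneous.

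Second, fix $\bm{\alpha}\in\Z_{\ge 0}^n$. The polynomial $q_i:=\Poly[\bm{x^\alpha}p_i]$ is supported on the finite set $\{\bm{\gamma}\in\Z_{\ge0}^n : |\bm{\gamma}|_1=d+|\bm{\alpha}|_1\}$. Its coefficient at $\bm{\gamma}$ equals $[\bm{x}^{\bm{\gamma}-\bm{\alpha}}]p_i$, which converges to $[\bm{x}^{\bm{\gamma}-\bm{\alpha}}]p$. Hence $q_i\to q:=\Poly[\bm{x^\alpha}p]$ in the Euclidean topology of the finite-dimensional space of homogeneous polynomials of degree $d+|\bm{\alpha}|_1$ in $n$ variables with nonnegative coefficients. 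Each $q_i$ is DL by the definition of a DL Laurent series, so \Cref{DL polynomials are closed} gives that $q$ is DL. Since $\bm{\alpha}$ was arbitrary, $p$ is a DL Laurent series.

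The only genuine subtlety is the homogeneity and common-degree step. The closedness statement in \Cref{DL polynomials are closed} lives in a fixed space of bounded degree, so the degrees must stabilize. The argument above handles this by using one positive coefficient of $p$. A degenerate case arises if the intended reading forces $p=0$; the zero series is trivially DL since $0$ lies in the closure of the Lorentzian polynomials. Everything else is direct.
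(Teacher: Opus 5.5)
Your proof is correct and follows the same route as the paper: coefficient-wise convergence passes through $\Poly[\bm{x^{\alpha}}\,\cdot\,]$, and closedness of DL polynomials (\Cref{DL polynomials are closed}) finishes the argument. Your extra step showing that the degrees $d_i$ stabilize makes explicit a detail the paper leaves implicit, namely that $p$ is homogeneous and that all the $\Poly[\bm{x^{\alpha}}p_i]$ lie in one finite-dimensional space.
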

\begin{proof}
    It suffices to show that $\Poly[\bm{x^{\alpha}}p(\bm{x})]$ is a DL polynomial for any $\bm{\alpha} \in \Z^n$. Note that the sequence $\{ \Poly[\bm{x^{\alpha}}p_i(\bm{x})]\}_{i \in \N}$ also converges coefficient-wise to $\Poly[\bm{x^{\alpha}}p(\bm{x})]$. So $\Poly[\bm{x^{\alpha}}p(\bm{x})]$ is the limit point of a sequence of DL polynomials, and is DL itself by \Cref{DL polynomials are closed}.
\end{proof}

\begin{lemma}
\label{scaling the variables}
    If $p \in \R_{\ge 0}((x_1,\cdots, x_n))$ is a DL Laurent series. Then, for any $y_1,\cdots, y_n > 0$, the Laurent series $p(y_1x_1, \cdots, y_nx_n)$ is also DL. 
\end{lemma}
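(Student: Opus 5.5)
The plan is to reduce directly to the polynomial case, \Cref{DL polynomials are closed under scaling variables}, by checking that the $\Poly$ operator commutes with scaling of variables. Write $q(\bm{x}) := p(y_1x_1, \cdots, y_nx_n) = \sum_{\bm{d}} p_{\bm{d}} \bm{y}^{\bm{d}} \bm{x}^{\bm{d}}$. This is again a homogeneous Laurent series of the same degree, and since each $y_i > 0$ its coefficients $p_{\bm{d}}\bm{y^d}$ are still nonnegative. So $q$ is a legitimate candidate for the definition of DL Laurent series, and it suffices to show that $\Poly[\bm{x^{\alpha}} q(\bm{x})]$ is a DL polynomial for every $\bm{\alpha} \in \Z_{\ge 0}^n$.

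The key identity is
\[
\Poly[\bm{x^{\alpha}} q(\bm{x})] = \bm{y}^{-\bm{\alpha}} \cdot \Poly[\bm{x^{\alpha}} p(\bm{x})]\big|_{\bm{x} \mapsto (y_1x_1, \cdots, y_nx_n)}.
\]
To verify it, compute both sides coefficientwise. On the left, the coefficient of $\bm{x}^{\bm{d}+\bm{\alpha}}$ is $p_{\bm{d}}\bm{y^d}$ when $\bm{d}+\bm{\alpha} \ge \bm{0}$, and it is $0$ otherwise. On the right, $\Poly[\bm{x^{\alpha}}p]$ keeps exactly the same index set $\{\bm{d} : \bm{d}+\bm{\alpha}\ge \bm{0}\}$, with coefficients $p_{\bm{d}}$. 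Substituting $y_ix_i$ multiplies these by $\bm{y}^{\bm{d}+\bm{\alpha}}$, and the prefactor $\bm{y}^{-\bm{\alpha}}$ cancels the extra $\bm{y^{\alpha}}$.

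With the identity in hand, the conclusion follows in three steps:
\begin{enumerate}
\item $\Poly[\bm{x^{\alpha}}p]$ is DL because $p$ is a DL Laurent series.
\item After the substitution $x_i \mapsto y_ix_i$, the polynomial remains DL by \Cref{DL polynomials are closed under scaling variables}.
\item Multiplying by the positive constant $\bm{y}^{-\bm{\alpha}}$ preserves DL, since normalization is linear and positive multiples of Lorentzian polynomials are Lorentzian.
\end{enumerate}
Hence $\Poly[\bm{x^{\alpha}}q]$ is DL for every $\bm{\alpha}$, so $q$ is a DL Laurent series.

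There is no real obstacle here. The only point needing care is the bookkeeping that $\Poly$ commutes with coordinatewise positive scaling. This works because scaling by positive reals changes only the sizes of coefficients, never which monomials have nonnegative exponents.
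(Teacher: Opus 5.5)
Your proposal is correct and follows the paper's proof essentially verbatim: the paper establishes the same identity $\Poly[\bm{x^{\alpha}}p(y_1x_1,\cdots,y_nx_n)] = \bm{y^{-\alpha}}\Poly[\bm{x^{\alpha}}p](y_1x_1,\cdots,y_nx_n)$ and concludes by \Cref{DL polynomials are closed under scaling variables}. You also make explicit the minor points the paper leaves implicit: nonnegativity and homogeneity of the rescaled series, and that a positive scalar multiple preserves the DL property.
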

\begin{proof}
    Let $\bm{\alpha} \in \Z^n$. Rewrite $\Poly[\bm{x^{\alpha}}p(y_1x_1, \cdots, y_nx_n)]$ as: 
    \begin{align*}
        \Poly[\bm{x^{\alpha}} p(y_1x_1, \cdots, y_nx_n)] &= \sum_{\substack{\bm{d} + \bm{\alpha} \ge \bm{0}\\ \bm{d} \in \Z^n}} p_{\bm{d}} \bm{x}^{\bm{d} + \bm{\alpha}}\bm{y^d}\\ 
        &=\bm{y^{-\alpha}}\Poly[\bm{x^{\alpha}} p](y_1x_1, \cdots, y_nx_n).
    \end{align*}
    
    The statement follows from \Cref{DL polynomials are closed under scaling variables}.
\end{proof}
\begin{lemma}
\label{aux for the next corollary}
    Let $p \in \R_{\ge 0}((x_1, \cdots, x_n))$ be a DL Laurent series. For any $d \in \Z$, the Laurent series $[x_1^d]p \in \R((x_2, \cdots, x_n))$ defined as:
        \[
        [x_1^d]p = \sum_{d_2, \cdots, d_n \in \Z}p_{d, d_2, \cdots, d_n}x_2^{d_2} \cdots x_n^{d_n}
        \]
        is DL as well. 
    
\end{lemma}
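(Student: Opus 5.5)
The plan is to reduce to the polynomial case and the operators $\trunc$ and $\ttrunc$ of \Cref{truncation below preserves DL polynomials} and \Cref{truncation above preserves DL polynomials}. If $p$ is $k$-homogeneous, then $q := [x_1^d]p$ is a $(k-d)$-homogeneous Laurent series in $x_2,\dots,x_n$ with nonnegative coefficients. By \Cref{equivalent definition for DL Laurent series}, it suffices to show that for every $\bm{\beta} = (\beta_2,\dots,\beta_n) \in \Z^{n-1}$ the polynomial $\Poly[\bm{x}^{\bm{\beta}} q]$ is DL. By definition this polynomial equals
\[
\sum_{d_i + \beta_i \ge 0 \ \forall i \ge 2} p_{d,d_2,\dots,d_n}\, x_2^{d_2+\beta_2}\cdots x_n^{d_n+\beta_n}.
\]

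To realize this as an operator applied to a DL polynomial, I will choose an integer $M \ge 0$ with $M + d \ge 0$ and set $\bm{\alpha} := (M, \beta_2, \dots, \beta_n) \in \Z^n$. Then $P := \Poly[\bm{x}^{\bm{\alpha}} p]$ is a DL polynomial by \Cref{equivalent definition for DL Laurent series}. Its monomials have the form $p_{\bm{d}}\, x_1^{d_1+M} x_2^{d_2+\beta_2}\cdots$ with all exponents nonnegative. The coefficient slice of $P$ at $x_1^{d+M}$ is exactly $\Poly[\bm{x}^{\bm{\beta}} q]$, since the constraint $d_1 + M \ge 0$ is automatic when $d_1 = d$.

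The extraction is then done in two steps. First apply $\ttrunc_{x_1}^{d+M}$, which kills the monomials with $x_1$-exponent greater than $d+M$ and preserves DL by \Cref{truncation above preserves DL polynomials}. Then apply $\trunc_{x_1}^{d+M}$, which keeps only the monomials with $x_1$-exponent at least $d+M$, so exactly $d+M$, divides out $x_1^{d+M}$, and preserves DL by \Cref{truncation below preserves DL polynomials}. The result is $\Poly[\bm{x}^{\bm{\beta}} q]$, viewed as a polynomial in $x_1,\dots,x_n$ that does not involve $x_1$, and it is DL.

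The only point needing care is that DL as a polynomial in $n$ variables, when $x_1$ is absent, implies DL in the variables $x_2,\dots,x_n$. This holds because normalization does not involve $x_1$, and a Lorentzian polynomial that does not depend on $x_1$ is Lorentzian in the remaining variables: the Hessian conditions restrict to the principal submatrix, and closure limits are unaffected. If one prefers to avoid this, one can instead set $x_1 = 0$ and use the fact that Lorentzian polynomials are closed under this specialization. This bookkeeping is the main, though mild, obstacle. Homogeneity of $q$ is clear, so $q$ is a DL Laurent series.
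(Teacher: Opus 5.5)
Your proof is correct; it differs from the paper's only in how the $x_1$-slice is cut out. The paper shifts by $x_1^{-d}$ rather than $x_1^{M}$, so the wanted slice sits in $x_1$-degree $0$. It then writes $\Poly[x_2^{\alpha_2}\cdots x_n^{\alpha_n}q]=\Poly[x_1^{-d}x_2^{\alpha_2}\cdots x_n^{\alpha_n}p](0,x_2,\dots,x_n)$ and justifies the specialization $x_1=0$ as the coefficientwise limit of $\Poly[x_1^{-d}x_2^{\alpha_2}\cdots x_n^{\alpha_n}p](x_1/k,x_2,\dots,x_n)$. Each term of that sequence is DL by \Cref{DL polynomials are closed under scaling variables}, and the limit is DL by \Cref{DL polynomials are closed}.

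If you took $M=-d$ (allowed by \Cref{equivalent definition for DL Laurent series}), your construction would reduce to the single operator $\ttrunc_{x_1}^{0}$, which on a polynomial is exactly the specialization $x_1=0$. So the two proofs produce the same polynomial and differ only in the lemma certifying that this step preserves DL. The paper's justification is lighter: it uses only scaling invariance and closedness. Yours invokes \Cref{truncation above preserves DL polynomials}, which rests on the symbol theorem and M-convexity. In exchange, yours is exact rather than a limit, and it is the device the paper itself uses later ($\trunc_{x_n}^{k}\circ\ttrunc_{x_n}^{k+1}$ in the proof of part (2) of \Cref{omega pk and how it is related to omega p}). Since \Cref{truncation in Laurent series} does not depend on this lemma, you could even apply $\trunc_{x_1}^{d}\circ\ttrunc_{x_1}^{d}$ to $p$ directly at the Laurent-series level.

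Both routes end with a DL polynomial in $n$ variables that does not involve $x_1$. The paper passes over the reduction to $x_2,\dots,x_n$ silently, so you are right to flag it. Under the paper's definition (closure of strictly Lorentzian polynomials), the phrase ``closure limits are unaffected'' is not by itself an argument, because the approximating polynomials do involve $x_1$. Either of two standard facts from \cite{BH20} closes the gap:
\begin{enumerate}
\item Lorentzian polynomials are preserved by nonnegative linear substitutions such as $x_1\mapsto 0$; this is your second suggestion.
\item Lorentzian polynomials are characterized by M-convex support together with the at-most-one-positive-eigenvalue condition on the quadratic derivatives. Under this characterization your principal-submatrix remark is exactly right.
\end{enumerate}
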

\begin{proof}
    Let $q(x_2, \cdots, x_n) =[x_1^{d}] p$. We need to prove that $\Poly[x_{2}^{\alpha_2}\cdots x_n^{\alpha_n}q]$ is DL for any choice of $\alpha_2, \cdots, \alpha_n \in \Z$. Observe that:
    \begin{align*}
        \Poly[x_{2}^{\alpha_2}\cdots x_n^{\alpha_n}q] &=\sum_{d_i+ \alpha_i\ge 0, \forall i \ge 2}p_{(d,d_2, \cdots, d_n)}x_2^{d_2 + \alpha_2} \cdots x_n^{d_n + \alpha_n} \\ &= \Poly[x_1^{-d}x_2^{\alpha_2}\cdots x_n^{\alpha_n}p] (0, x_2, \cdots, x_n)
    \end{align*}
    We know that $\Poly[x_1^{-d}x_2^{\alpha_2}\cdots x_n^{\alpha_n}p] (x_1, x_2, \cdots, x_n)$ is a DL polynomial, and for any $c > 0$, the polynomial $\Poly[x_1^{-d}x_2^{\alpha_2}\cdots x_n^{\alpha_n}p] (c x_1, x_2, \cdots, x_n)$ is still DL by \Cref{DL polynomials are closed under scaling variables}. Note that the class of DL polynomials is closed by \Cref{DL polynomials are closed}, so $\Poly[x_1^{d}x_2^{\alpha_2}\cdots x_n^{\alpha_n}p] (0, x_2, \cdots, x_n)$ as the limit point of $ \{ \Poly[x_1^{-d}x_2^{\alpha_2}\cdots x_n^{\alpha_n}p] (\frac{1}{k} x_1, x_2, \cdots, x_n)\}_{k \in \N}$ is DL and we are done. 
\end{proof}
Applying \Cref{aux for the next corollary} repeatedly to a Laurent series $p$, and using the fact that 
$
    [x_i^{d_i} x_j^{d_j}] p = [x_i^{d_i}] \left( [x_j^{d_j}]p\right)
    $
give us the following corollary: 

\begin{corollary}
\label{the coefficient of x^d in a DL Laurent series is DL}
    Let $p \in \R_{\ge 0}((x_1, \cdots, x_n))$ be a DL Laurent series. 
    Given a subset $I \subseteq [n]$, and $d_i \in \Z$ for all $i \in I$, we have that $[\prod_{i \in I} x_i^{d_i}] p$ is a DL Laurent series in variables $x_j$ for $j \in [n] \setminus I$, where:
    \[
    [\prod_{i \in I}x_i^{d_i}] p = \sum_{d_j \in \Z: \forall j \in [n] \setminus I} p_{\bm{d}} \prod_{j \in [n] \setminus I }x_j^{d_j}.
    \]
\end{corollary}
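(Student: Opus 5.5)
The plan is to argue by induction on $|I|$, applying \Cref{aux for the next corollary} one variable at a time.

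For $|I| = 0$ the statement is trivial, since the extraction is just $p$ itself.

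For $|I| = 1$, say $I = \{i\}$, the claim is exactly \Cref{aux for the next corollary}, after relabelling the variables so that $x_i$ plays the role of $x_1$. This relabelling is harmless. Permuting variables commutes with both $\Poly$ and the normalization operator $N$. It also preserves the Lorentzian property, since permuting variables only conjugates Hessians by a permutation matrix. Hence a Laurent series is DL if and only if its image under a permutation of variables is DL. Homogeneity is preserved too: if $p$ is homogeneous of degree $d$, then $[x_i^{d_i}]p$ is homogeneous of degree $d - d_i$.

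For the inductive step, write $I = I' \sqcup \{j\}$. The coefficient-extraction identity
\[
[\textstyle\prod_{i \in I} x_i^{d_i}]\, p \;=\; [x_j^{d_j}]\left( [\textstyle\prod_{i \in I'} x_i^{d_i}]\, p\right)
\]
holds termwise, because both sides equal $\sum p_{\bm{d}} \prod_{k \notin I} x_k^{d_k}$ summed over $\bm{d}$ with the prescribed entries on $I$. By the induction hypothesis, $q := [\prod_{i \in I'} x_i^{d_i}]\, p$ is a homogeneous DL Laurent series in the variables $x_k$ for $k \notin I'$. Applying \Cref{aux for the next corollary} to $q$, with $x_j$ moved to the first position, shows that $[x_j^{d_j}] q$ is DL in the remaining variables. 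This completes the induction.

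The only point requiring care is the degenerate case in which all variables are extracted. The result is then a constant, namely a single nonnegative coefficient, and it is DL trivially. Beyond this, the argument is routine bookkeeping; the main thing to verify is that the extraction identity and the invariance under relabelling hold, and both are immediate from the definitions.
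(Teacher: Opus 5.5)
Your proposal is correct and follows the paper's argument: the paper also obtains this corollary by applying \Cref{aux for the next corollary} repeatedly, using the identity $[x_i^{d_i}x_j^{d_j}]p = [x_i^{d_i}]\bigl([x_j^{d_j}]p\bigr)$. Your remarks on relabelling the variables, on preservation of homogeneity, and on the fully-extracted constant case simply make explicit what the paper leaves implicit.
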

\begin{corollary}
\label{truncation in Laurent series}
    Generalize $\trunc_{x_1}^t , \ttrunc_{x_1}^t$ from \Cref{truncation below preserves DL polynomials} and \Cref{truncation above preserves DL polynomials} to operators on Laurent series as follows: 
    \[
    \trunc_{x_1}^t \left[\sum_{\bm{\alpha} \in \Z^n } p_{\bm{\alpha}}\bm{x^{\alpha}}\right] = x_i^{-t}\sum_{\substack{\bm{\alpha} \in \Z^n \\ \alpha_1 \ge t }} p_{\bm{\alpha}} \bm{x^{\alpha}},
    \]
    and
    \[
    \ttrunc_{x_1}^t \left[\sum_{\bm{\alpha} \in \Z^n } p_{\bm{\alpha}}\bm{x^{\alpha}}\right] = \sum_{\substack{\bm{\alpha} \in \Z^n \\ \alpha_1 \le t }} p_{\bm{\alpha}} \bm{x^{\alpha}}.
    \]
    These new operators both preserve the class of DL Laurent series, that is, $\trunc_{x_1}^t[p]$ and $\ttrunc_{x_1}^t[p]$ are both DL whenever $p$ is DL.
\end{corollary}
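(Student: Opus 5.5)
We reduce both claims to the polynomial statements \Cref{truncation below preserves DL polynomials} and \Cref{truncation above preserves DL polynomials} via the alternate definition \Cref{equivalent definition for DL Laurent series}. Let $p$ be a homogeneous DL Laurent series of degree $d$. Both $\trunc_{x_1}^t[p]$ and $\ttrunc_{x_1}^t[p]$ are homogeneous, of degrees $d-t$ and $d$ respectively, and both have nonnegative coefficients. It therefore suffices to show that for every $\bm{\alpha}\in\Z^n$ the polynomials $\Poly[\bm{x^\alpha}\trunc_{x_1}^t[p]]$ and $\Poly[\bm{x^\alpha}\ttrunc_{x_1}^t[p]]$ are DL.

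For the lower truncation, we commute $\Poly$ with the truncation. A monomial $\bm{x}^{\bm{\beta}}$ of $p$ contributes to $\Poly[\bm{x^\alpha}\trunc_{x_1}^t[p]]$ exactly when $\beta_1\ge t$, $\beta_1-t+\alpha_1\ge 0$, and $\beta_i+\alpha_i\ge0$ for $i\ge2$. Now set $\bm{\alpha}'=(\alpha_1-t,\alpha_2,\dots,\alpha_n)$ and $s=\alpha_1$. The monomials $\bm{x}^{\bm\beta+\bm\alpha'}$ with $\beta_1+\alpha_1-t\ge 0$ and $(\bm\beta+\bm\alpha')_1\ge s-t+t$, i.e.\ $\beta_1\ge t$, are exactly those above. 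Under the operator $\trunc^{\alpha_1}_{x_1}$ (with exponent shift $x_1^{-\alpha_1}$ adjusted), this gives
\[
\Poly[\bm{x^\alpha}\trunc_{x_1}^t[p]] \;=\; x_1^{c}\,\trunc_{x_1}^{\alpha_1}\bigl[\Poly[\bm{x}^{\bm\alpha'}p]\bigr]
\]
for a suitable integer $c$. If $c$ comes out negative, we instead multiply the left side by $x_1^{-c}$ and use \Cref{multiple of DL polynomials}.

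We intend this as a bookkeeping step. The cleanest route is to pick $M$ large, as in the proof of \Cref{multiple of a DL Laurent series is DL}, and apply the polynomial truncation $\trunc_{x_1}^{M}$ to $\Poly[x_1^{M-t}\cdots]$ so that all exponent constraints align. The main risk is getting this index shift exactly right. To do so, we will track the exponent of $x_1$ explicitly: $\beta_1$ must satisfy $\beta_1\ge t$ and $\beta_1\ge t-\alpha_1$, i.e.\ $\beta_1\ge\max(t,t-\alpha_1)$, and we choose the polynomial truncation threshold to enforce this maximum. The resulting polynomial is DL by \Cref{truncation below preserves DL polynomials} together with \Cref{multiple of DL polynomials}.

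For the upper truncation the argument is simpler, since $\ttrunc$ does not shift exponents. We have
\[
\Poly[\bm{x^\alpha}\ttrunc_{x_1}^t[p]]=\ttrunc_{x_1}^{t+\alpha_1}\bigl[\Poly[\bm{x^\alpha}p]\bigr],
\]
because both sides retain exactly the monomials $\bm x^{\bm\beta+\bm\alpha}$ with $\bm\beta+\bm\alpha\ge\bm0$ and $\beta_1\le t$. The right side is DL by \Cref{truncation above preserves DL polynomials} applied to the DL polynomial $\Poly[\bm{x^\alpha}p]$. If $t+\alpha_1<0$, the result is $0$, which we regard as trivially DL. No step here is a real obstacle; the only care needed is the exponent bookkeeping in the lower-truncation case.
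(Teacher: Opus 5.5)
Your proposal is correct and follows essentially the same route as the paper. Both arguments use \Cref{equivalent definition for DL Laurent series} to reduce to the polynomials $\Poly[\bm{x}^{\bm\alpha}\,\cdot\,]$, rewrite each of these as a truncated or monomially shifted copy of some $\Poly[\bm{x}^{\bm\gamma}p]$, and then invoke the polynomial truncation results together with \Cref{multiple of DL polynomials}; your upper-truncation threshold $t+\alpha_1$ is the correct one.

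For the lower-truncation bookkeeping you left open, $c=\alpha_1$ works. In fact the truncation can be absorbed entirely into the shift, which is what the paper does. For $\alpha_1\ge 0$ one has $\Poly[\bm{x}^{\bm\alpha}\trunc_{x_1}^t[p]]=x_1^{\alpha_1}\Poly[x_1^{-t}x_2^{\alpha_2}\cdots x_n^{\alpha_n}p]$. For $\alpha_1<0$ it equals $\Poly[\bm{x}^{\bm\alpha'}p]$ outright.
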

\begin{proof}
    Given that $p(\bm{x})$ is DL, we need to prove that $\Poly[\bm{x^{\beta}}\trunc_{x_i}^t[p]]$ and $\Poly[\bm{x^{\beta}}\ttrunc_{x_i}^t[p]]$ are DL polynomials for any $\bm{\beta} \in \Z^n$. Let $\bm{\gamma} = (-\max\{-\beta_1, t \}, \beta_2, \cdots, \beta_n)$, then:
    \[\Poly[\bm{x^{\beta}}\trunc_{x_i}^t[p]] = x_i^{-t}\sum_{\substack{\alpha_i + \beta_i \ge 0 \forall i \\ \alpha_1 - t \ge 0}} p_{\bm{\alpha}}\bm{x^{\alpha}} = x_i^{-t} \Poly[ \bm{x^{\gamma}} p(\bm{x})],
    \]
    and:
    \[\Poly[\bm{x^{\beta}}\ttrunc_{x_1}^t[p]] = \sum_{\substack{\alpha_i + \beta_i \ge 0 \forall i \\ \alpha_1 \le t}} p_{\bm{\alpha}} \bm{x^{\alpha}} = \ttrunc_{x_1}^t[\Poly[\bm{x^{\beta}}p(\bm{x})]]. 
    \]
    $x_i^{-t} \Poly[\bm{x^{\gamma}}p(\bm{x})]$ is DL by \Cref{multiple of DL polynomials} and \Cref{truncation below preserves DL polynomials}, and $\ttrunc_{x_1}^t[\Poly[\bm{x^{\beta}}p(\bm{x})]]$ is DL by \Cref{truncation above preserves DL polynomials}.
    Therefore both $\Poly[\bm{x^{\beta}}\trunc_{x_i}^t[p]]$ and $\Poly[\bm{x^{\beta}}\ttrunc_{x_i}^t[p]]$ are DL polynomials, and we are done. 
\end{proof}
\subsection{Product of DL Laurent series}
\label{Product of DL Laurent series}
We know from \Cref{multiplication of DL polynomials stays DL} that the class of DL polynomials is closed under multiplication. In \cite{BLP23}, this fact is used to prove that the generating series for contingency tables with bounded entries is DL (see \Cref{subsection: contingency tables} for definition and details), so we naturally search for an analog of \Cref{multiplication of DL polynomials stays DL} for DL Laurent series. But first, we need to address the problem that $\R((x_1, \cdots, x_n))$ itself is not closed under multiplication. Recall that for two Laurent series $p,q \in \R((x_1, \cdots, x_n))$, their product is well defined if the sums $\sum_{\bm{\alpha} + \bm{\beta} = \bm{\gamma}} p_{\bm{\alpha}}q_{\bm{\beta
    }}$ converge in $\R$ for all $\bm{\gamma} \in \Z^n$. Then the product $p\cdot q(\bm{x})$ is defined as:
 \[
    (p\cdot q)(\bm{x}) = \sum_{\bm{\gamma} \in \Z^n} \left( \sum_{\bm{\alpha} + \bm{\beta} = \bm{\gamma}} p_{\bm{\alpha}}q_{\bm{\beta}}\right) \bm{x^{\gamma}}.
    \]
    As a special case, assume that $p,q \in \R((x_1, \cdots, x_n; y_1,\cdots, y_m))$ are two Laurent series in disjoint sets of variables. Say $p$ only uses the $x$ variables and $[\bm{x^{\mu}} \bm{y^{\nu}}]p = 0$ for all $\bm{\nu} \neq \bm{0}$, and $q$ only uses the $y$ variables and $[\bm{x^{\mu}} \bm{y^{\nu}}]q = 0$ for all $\bm{\mu} \neq \bm{0}$. Then $\sum_{\bm{\alpha} + \bm{\beta} = \bm{\gamma}} p_{\bm{\alpha}}q_{\bm{\beta
    }}$ is always a finite sum for any $\bm{\gamma} \in \Z^{n + m}$, and $p.q$ is a well defined Laurent series. It is also easy to see that in this case, if $p,q$ are both DL, their product will be DL: 

\begin{remark}
\label{multiplication of DL Laurent series with disjoint sets of variables}
    If $p(\bm{x}), q(\bm{y})$ are two DL Laurent series in disjoint sets of variables $\bm{x}$ and $\bm{y}$, then $p(\bm{x})q(\bm{y})$ is DL since:
    \begin{align*}
            \Poly[\bm{x^{\alpha}y^{\beta}}p(\bm{x}) q(\bm{y})] &= \sum_{\substack{\bm{\mu} + \bm{\alpha} \ge \bm{0} \\ 
            \bm{\nu} + \bm{\beta} \ge \bm{0}}} p_{\bm{\mu}} q_{\bm{\nu}} \bm{x}^{\bm{\mu} + \bm{\alpha}} \bm{y}^{\bm{\nu}+ \bm{\beta}}
            \\&= \sum_{\bm{\mu} + \bm {\alpha} \ge \bm{0}}p_{\bm{\mu}}\bm{x}^{\bm{\mu} + \bm{\alpha}}. \sum_{\bm{\nu} + \bm {\beta} \ge \bm{0}}q_{\bm{\nu}}\bm{y}^{\bm{\nu} + \bm{\beta}}
            \\
            &= \Poly[\bm{x^{\alpha}}p(\bm{x})] \Poly[\bm{y^{\beta}}q(\bm{y})]
    \end{align*}

    $\Poly[\bm{x^{\alpha}}p(\bm{x})]\Poly[\bm{y^{\beta}}q(\bm{y})]$ is the product of two DL polynomials and is itself DL by \Cref{multiplication of DL polynomials stays DL}. So $\Poly[\bm{x^{\alpha}y^{\beta}}p(\bm{x}) q(\bm{y})]$ is DL for all $\bm{\alpha}, \bm{\beta}$, and $p.q$ is DL by definition. 
\end{remark}

Suppose we have two DL Laurent series $p$ and $q$. Consider two disjoint sets of variables $\bm{x},\bm{y}$, and multiply $p(\bm{x})$ and $q(\bm{y})$ to get a well-defined DL Laurent series $p(\bm{x})q(\bm{y})$. Now in this Laurent series, set $y_1 = x_1$, then $y_2 = x_2$, and so on. We will prove that as long as the final result remains a well-defined Laurent series, it will be DL. 

\begin{definition}[Admissible pairs]
\label{analytical admissibility}
    Let $T_{x_1 = x_2}^{\PS}: \R((x_1, \cdots, x_n)) \to \R^*((x_1, \cdots, x_n))$ be an operator that sends $p(x_1,\cdots, x_n)$ to $p(x_1, x_1, x_3, \cdots, x_n)$. This operator is a generalization of $T^{\Poly}_{x_1 = x_2}$ in \Cref{x1=x2 polynomials}.
    
    We say that $(x_1,x_2)$ is an \textit{admissible pair} of $p$ if $T_{x_1 = x_2}^{\PS}[p]$ is a well defined Laurent series. In other words, if $p$ is given by its coefficients as:
    \[
    p(x_1, \cdots, x_n) = \sum_{\bm{\alpha} \in \Z^n} p_{\bm{\alpha}}\bm{x^{\alpha}},
    \]
    then $(x_1, x_2)$ is an admissible pair of $p$ if for any $\alpha, \alpha_3, \cdots, \alpha_n \in \Z$, the sum $\sum_{\alpha_1+ \alpha_2 = \alpha} p_{(\alpha_1, \alpha_2,\cdots, \alpha_n)}$ converges in $\R$.
    Similarly, one could define $T_{x_i = x_j}^{\PS}$ and admissibility for any pair $(x_i,x_j)$.
\end{definition} 
We will later see in \Cref{admissibility and coefficient of sub Laurent series} that $(x_1,x_2)$ is an admissible pair of $p(x_1, x_2)$ if and only if $(1,1)$ is in the domain of convergence of $p$. 
Now let us prove that the operator $T_{x_1= x_2}^{\PS}$ ``almost" preserves the class of DL Laurent series. 

\begin{theorem}
    \label{analytic admissibility -> x1 = x2 preserves DL}
    Assume that $(x_1, x_2)$ is an admissible pair of a $t$-homogeneous DL Laurent series $p$. 
Then $T_{x_1 = x_2}^{\PS}[p]$ is also a DL Laurent series.

To be more specific, for any given vector $\bm{\alpha} = (\alpha_1, \alpha_3, \cdots, \alpha_n)\in \Z_{\ge 0}^{n-1}$, we will show that:
\[
\Poly\left[x_1^{\alpha_1} x_3^{\alpha_3} \cdots x_n^{\alpha_n}T_{x_1 = x_2}^{\PS}[p] \right]= \lim_{k \to \infty} \Poly[x_1^{\alpha_1-2k}T_{x_1 = x_2}^{\PS}[\Poly[\bm{x^{\beta_k}}p]]],
\]
where $\bm{\beta}_k = (k,k,\alpha_3, \cdots,\alpha_n)$, and the limit is taken in the Euclidean space of homogeneous polynomials of degree at most $|\bm{\alpha}|_1 + t$ in $n-1$ variables.  
\end{theorem}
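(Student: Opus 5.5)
We argue coefficient by coefficient. Fix $\bm{\alpha}=(\alpha_1,\alpha_3,\dots,\alpha_n)\in\Z_{\ge 0}^{n-1}$ and write $q_k:=\Poly[x_1^{\alpha_1-2k}T^{\PS}_{x_1=x_2}[\Poly[\bm{x^{\beta_k}}p]]]$. Each $q_k$ is a DL polynomial. Indeed, $\Poly[\bm{x^{\beta_k}}p]$ is DL by \Cref{equivalent definition for DL Laurent series}. Applying $T^{\PS}_{x_1=x_2}$ to a polynomial is the same as applying $T^{\Poly}_{x_1=x_2}$, which preserves DL by \Cref{x1=x2 polynomials}. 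Finally, multiplying by $x_1^{\alpha_1-2k}$ and taking $\Poly$ is the truncation $\trunc_{x_1}^{2k-\alpha_1}$ when $2k\ge\alpha_1$, and this preserves DL by \Cref{truncation below preserves DL polynomials}. Every $q_k$ is homogeneous of degree $|\bm{\alpha}|_1+t$ in the $n-1$ variables $x_1,x_3,\dots,x_n$. So once we show $q_k\to\Poly[x_1^{\alpha_1}x_3^{\alpha_3}\cdots x_n^{\alpha_n}T^{\PS}_{x_1=x_2}[p]]$ coefficientwise in this finite-dimensional space, \Cref{DL polynomials are closed} shows the limit is DL. Since $\bm{\alpha}$ is arbitrary, $T^{\PS}_{x_1=x_2}[p]$ is then DL by definition. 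It is also homogeneous, with nonnegative coefficients that are finite by admissibility.

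The main step is the coefficient computation. Fix an exponent $(\gamma_1,\gamma_3,\dots,\gamma_n)\ge\bm 0$ with total degree $|\bm\alpha|_1+t$, and set $a:=\gamma_1-\alpha_1$ and $a_i:=\gamma_i-\alpha_i$ for $i\ge3$. The target coefficient is
\[
\sum_{\alpha'_1+\alpha'_2=a} p_{(\alpha'_1,\alpha'_2,a_3,\dots,a_n)}.
\]
Unwinding the definitions, the coefficient of $x_1^{\gamma_1}\prod_{i\ge 3}x_i^{\gamma_i}$ in $q_k$ is
\[
\sum_{\substack{\alpha'_1+\alpha'_2=a\\ \alpha'_1\ge -k,\ \alpha'_2\ge -k}} p_{(\alpha'_1,\alpha'_2,a_3,\dots,a_n)}.
\]
The constraints $\alpha'_i+\alpha_i\ge 0$ for $i\ge 3$ hold automatically because $\gamma_i\ge 0$, and the final $\Poly$ imposes only $\gamma_1\ge0$. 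This is a partial sum over the window $-k\le \alpha'_1\le a+k$. As $k\to\infty$ the windows exhaust $\Z$, and the terms are nonnegative. Admissibility says the full sum converges, so the partial sums converge to it monotonically.

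The only delicate point is the index bookkeeping. We must check that the shift by $x_1^{-2k}$ exactly cancels the $x_1^kx_2^k$ from $\bm{\beta}_k$ after merging the two variables. We must also check that the lower truncations of $\Poly$ in $x_1$ and $x_2$ become exactly the two-sided window above. For this we use that $\alpha'_1\ge -k$ and $\alpha'_2\ge -k$ together with $\alpha'_1+\alpha'_2=a$ give $-k\le\alpha'_1\le a+k$. Nonnegativity of the coefficients is what makes convergence of the full sum (admissibility) the right hypothesis, and no finer analytic input is needed.
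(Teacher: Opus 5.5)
Your proposal is correct and follows essentially the same route as the paper: the same approximants $\Poly[x_1^{\alpha_1-2k}T^{\PS}_{x_1=x_2}[\Poly[\bm{x^{\beta_k}}p]]]$, the same coefficientwise computation identifying them as partial sums over the window $-k\le\alpha'_1\le a+k$ that converge by admissibility, and then closedness of DL polynomials. Your explicit appeal to \Cref{truncation below preserves DL polynomials} shows that each approximant is DL, and not merely $T^{\Poly}_{x_1=x_2}[\Poly[\bm{x^{\beta_k}}p]]$; this fills in a step the paper's proof leaves implicit.
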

Note that a similar theorem can be proven for $T_{x_i = x_j}^{\PS}[p]$ when $(x_i,x_j)$ is an admissible pair of $p$.

\begin{proof}
    Let $p(\bm{x}) = \sum_{\bm{d} \in \Z^n} p_{\bm{d}}\bm{x^d}$ be a DL Laurent series and assume that $(x_1, x_2)$ is an admissible pair of $p$. Then :
    \[
    T_{x_1 = x_2}^{\PS}[p]= p(x_1, x_1, x_3, \cdots, x_n) = \sum_{d ,d_3 \cdots, d_n \in \Z} \left(\sum_{d_1 + d_2 = d}p_{d_1, d_2,d_3, 
    \cdots, d_n} \right ) x_1^{d} x_3^{d_3} \cdots x_n^{d_n}.
    \]
    We need to show that for any $\bm{\alpha} = (\alpha_1,\alpha_3, \cdots, \alpha_n) \in \Z_{\ge 0}^{n-1}$, the polynomial $\Poly[\bm{x^{\alpha}} T_{x_1 = x_2}^{\PS}[p]]$ is DL. We have:
    \begin{equation}\label{eq2}
            \Poly[\bm{x^{\alpha}}T_{x_1 = x_2}^{\PS}[p] ] = \sum_{\substack{d + \alpha_1 \ge 0 \\ d_i + \alpha_i \ge 0 \forall i \ge 3}} \left( \sum_{d_1 + d_2= d  } p_{d_1, d_2, d_3, \cdots, d_n}\right) x_1^{d + \alpha_1}x_3^{d_3 + \alpha_3} \cdots x_n^{d_n + \alpha_n}.
    \end{equation}

    So we want to show that the RHS of \Cref{eq2} is a DL polynomial. 
    
     Let $\bm{\beta}_k = (k,k, \alpha_3, \cdots, \alpha_n)$. Since $p$ itself is a DL Laurent series, the polynomial $T_{x_1 = x_2}^{\Poly}[\Poly[\bm{x}^{\bm{\beta}_k}p]]$ is also DL for all $k \in \Z$. Observe that: 
    \begin{align*}
       \Poly[\bm{x}^{\bm{\beta}_k}p]] = \Poly[x_1^{k}x_2^kx_3^{\alpha_3} \cdots x_n^{\alpha_n}p] & =   \sum_{\substack{d_1+k \ge 0 \\ d_2 + k \ge 0\\ d_i + \alpha_i \ge 0 \forall i \ge 3}}p_{\bm{d}}x_1^{d_1 + k} x_2^{d_2 + k } x_3^{d_3 + \alpha_3} \cdots x_n^{d_n + \alpha_n}.
    \end{align*}
   $T_{x_1 = x_2}^{\Poly} [\Poly[\bm{x^{\beta_k}}p]]$ is also a DL polynomial by \Cref{x1=x2 polynomials}:
 \begin{equation} \label{eq3}
        T_{x_1 = x_2}^{\Poly} [\Poly[\bm{x^{\beta_k}}p]]  =   \sum_{\substack{d + 2k \ge 0\\ d_i + \alpha_i \ge 0 \forall i \ge 3}} \left( \sum_{\substack{d_1 + d_2 = d \\ d_1 + k \ge 0 \\d_2 + k \ge 0}}p_{d_1,d_2, \cdots, d_n}\right) x_1^{d + 2k} x_3^{d_3 + \alpha_3} \cdots x_n^{d_n + \alpha_n}.
 \end{equation}
 
 Note that as we take $k $ to $\infty$, the coefficients of \Cref{eq3} converge to their corresponding coefficients in \Cref{eq2}. So let us apply some changes to \Cref{eq3} to get closer to what \Cref{eq2} looks like.
 \begin{align*}
       \Poly[x_1^{\alpha_1 - 2k} T_{x_1 = x_2}^{\Poly} [\Poly[\bm{x^{\beta_k}}p]]] & =   \Poly\left[ \sum_{\substack{d + 2k \ge 0\\ d_i + \alpha_i \ge 0 \forall i \ge 3}} \left( \sum_{\substack{d_1 + d_2 = d \\ d_1 + k \ge 0 \\d_2 + k \ge 0}}p_{d_1,d_2, \cdots, d_n}\right) x_1^{d + \alpha_1} x_3^{d_3 + \alpha_3} \cdots x_n^{d_n + \alpha_n} \right]\\
       & = \sum_{\substack{d + \alpha_1 \ge 0 \\ d + 2k \ge 0\\ d_i + \alpha_i \ge 0 \forall i \ge 3}} \left( \sum_{\substack{d_1 + d_2 = d \\ d_1 + k \ge 0 \\d_2 + k \ge 0}}p_{d_1,d_2, \cdots, d_n}\right) x_1^{d + \alpha_1} x_3^{d_3 + \alpha_3} \cdots x_n^{d_n + \alpha_n}.
 \end{align*}
 For any $k \ge \frac{\alpha_1}{2}$, the first summation would only impose the constraints $d+ \alpha_1 \ge 0$ and $d_i + \alpha_i \ge 0$ for $i \ge 3$. Moreover:
 \[
 \lim_{k \to \infty} \sum_{\substack{d_1 + d_2 = d \\ d_1 + k \ge 0 \\d_2 + k \ge 0}}p_{d_1,d_2, \cdots, d_n} = \sum_{d_1 + d_2 = d}p_{d_1,d_2, d_3, \cdots, d_n},
 \]
 and therefore: 
 \begin{align*}
       \lim_{k \to \infty} \Poly[x_1^{\alpha_1 - 2k} T_{x_1 = x_2}^{\Poly} [\Poly[\bm{x^{\beta_k}}p]]] & = \lim_{k \to \infty} \sum_{\substack{d + \alpha_1 \ge 0 \\ d + 2k \ge 0\\ d_i + \alpha_i \ge 0 \forall i \ge 3}} \left( \sum_{\substack{d_1 + d_2 = d \\ d_1 + k \ge 0 \\d_2 + k \ge 0}}p_{d_1,d_2, \cdots, d_n}\right) x_1^{d + \alpha_1} x_3^{d_3 + \alpha_3} \cdots x_n^{d_n + \alpha_n} \\
       &= \sum_{\substack{d+ \alpha_1 \ge 0 \\ d_i + \alpha_i \ge 0 \forall i \ge 3}}\left( \sum_{d_1 + d_2 = k} p_{d_1, d_2, \cdots, d_n}\right) x_1^{d + \alpha_1}x_3^{d_3 + \alpha_3} \cdots x_n^{d_n + \alpha_n}\\
    &= \Poly[x_1^{\alpha_1} x_3^{\alpha_3} \cdots x_n^{\alpha_n}T_{x_1 = x_2}^{\PS}[p] ].
 \end{align*}
 Observe that LHS of the above equality is DL since the set of DL polynomials is closed by \Cref{DL polynomials are closed}, and therefore $\Poly[x_1^{\alpha_1} x_3^{\alpha_3} \cdots x_n^{\alpha_n}T_{x_1 = x_2}^{\PS}[p] ]$ is DL for any $\alpha_1, \alpha_3, \cdots, \alpha_n \in \Z$.
\end{proof}
For the sake of brevity, we will denote by $T^{\PS}_{x_{i_k} = y_{i_k}, x_{i_{k-1}} = y_{i_{k-1}}, \cdots, x_{i_1} = y_{i_1} }$ the operator $T^{\PS}_{x_{i_k} = y_{i_k}} \circ \cdots \circ T^{\PS}_{x_{i_1} = y_{i_1}} $. The following corollary is obtained by applying \Cref{analytic admissibility -> x1 = x2 preserves DL} repeatedly.
\begin{corollary}
\label{multiplication of DL Laurent series with admissible pairs}
Suppose $p(\bm{x}),q(\bm{x})$ are two homogeneous DL Laurent series. Further assume that $(x_i, y_i)$ is an admissible pair of $T^{\PS}_{x_{i-1}=y_{i-1}, \cdots, x_1 = y_1}[p(\bm{x}) q(\bm{y})]$ for any $i \in [n]$ (i.e., $p(\bm{x}) q(\bm{x})$ is a well defined Laurent series). Then $p(\bm{x})q(\bm{x})$ is also a DL Laurent series.   
\end{corollary}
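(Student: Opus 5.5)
The plan is to realize $p(\bm{x})q(\bm{x})$ as the end result of a chain of identifications applied to a product in disjoint variables, checking at each link that the DL property survives. Introduce a second block of variables $\bm{y}=(y_1,\dots,y_n)$ and set $r_0(\bm{x},\bm{y}) := p(\bm{x})q(\bm{y})$. By \Cref{multiplication of DL Laurent series with disjoint sets of variables}, $r_0$ is a well-defined DL Laurent series in $2n$ variables. It is homogeneous of degree $\deg p+\deg q$, since each monomial $\bm{x}^{\bm{\mu}}\bm{y}^{\bm{\nu}}$ with $p_{\bm{\mu}}q_{\bm{\nu}}\neq 0$ has total degree $|\bm{\mu}|_1+|\bm{\nu}|_1$.

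Next define inductively $r_i := T^{\PS}_{x_i=y_i}[r_{i-1}]$ for $i=1,\dots,n$, so that $r_i = T^{\PS}_{x_i=y_i,\dots,x_1=y_1}[p(\bm{x})q(\bm{y})]$. We show by induction on $i$ that $r_i$ is a well-defined homogeneous DL Laurent series of degree $\deg p+\deg q$. The base case $i=0$ is the previous paragraph.

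For the inductive step, assume the claim for $r_{i-1}$. The hypothesis of the corollary says exactly that $(x_i,y_i)$ is an admissible pair of $r_{i-1}$. Then \Cref{analytic admissibility -> x1 = x2 preserves DL} (in its version for an arbitrary pair $(x_i,y_i)$, noted right after the theorem) shows that $r_i$ is a DL Laurent series. Homogeneity is preserved because identifying $y_i$ with $x_i$ sends each monomial to a monomial of the same total degree. The coefficients of $r_i$ are convergent sums of nonnegative reals by admissibility, so $r_i\in\R_{\ge 0}((\cdot))$ and the definition of DL Laurent series applies.

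After $n$ steps, $r_n$ is the series obtained from $p(\bm{x})q(\bm{y})$ by setting every $y_i=x_i$. Its coefficient at $\bm{x}^{\bm{\gamma}}$ is $\sum_{\bm{\mu}+\bm{\nu}=\bm{\gamma}}p_{\bm{\mu}}q_{\bm{\nu}}$, obtained as an iterated sum. Since all terms are nonnegative, Tonelli's theorem lets us rearrange the iterated sum into the unordered sum, so $r_n = p(\bm{x})q(\bm{x})$ as a Laurent series.

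The one point needing care is this identification of the iterated substitution with the product as defined in \S\ref{Product of DL Laurent series}, together with its well-definedness. The ordering of the variable identifications must not matter, and a finite iterated sum must coincide with the full sum. Nonnegativity of the coefficients handles both. Everything else is a direct application of the cited theorem $n$ times, so I do not expect a substantive obstacle.
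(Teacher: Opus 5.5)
Your proposal is correct and is essentially the paper's argument. The paper obtains this corollary by starting from the disjoint-variable product $p(\bm{x})q(\bm{y})$, which is DL by the remark on disjoint variables, and applying the $T^{\PS}_{x_i=y_i}$ theorem repeatedly, with the admissibility hypothesis used at each stage; your homogeneity check at each step and your Tonelli argument identifying the iterated substitution with the product $p(\bm{x})q(\bm{x})$ supply details the paper leaves implicit.
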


Roughly speaking, \Cref{multiplication of DL Laurent series with admissible pairs} states that a well-defined product of DL Laurent series remains DL. Throughout the rest of this paper, we will invoke this result without checking the details.

\begin{remark}
    Assuming that $(x_1,y_1)$ and $(x_2,y_2)$ are both admissible pairs of $p(\bm{x}, \bm{y})$, $(x_2,y_2)$ is not necessarily an admissible pair of $T_{x_1 = y_1}^{\PS}[p]$. For instance, $(x_1,y_1)$ and $(x_2,y_2)$ are both admissible pairs of the Laurent series $p(x_1,x_2,y_1,y_2) = \sum_{n \ge 0} x_1^n y_1^{-n} x_2^n y_2^{-n}$, but $(x_2, y_2)$ is not an admissible pair of $T_{x_1 = y_1}^{\PS}[p] = \sum_{n \ge 0} x_2^n y_2^{-n}$. 
    \\ So it is essential in the statement of \Cref{multiplication of DL Laurent series with admissible pairs} to assume that $(x_i,y_i)$ is an admissible pair of $T^{\PS}_{x_{i-1}=y_{i-1}, \cdots, x_1 = y_1}[p(\bm{x})q(\bm{y})]$, and not an admissible pair of $p(\bm{x})q(\bm{y})$.
\end{remark}

\section{Capacity bound for the coefficients of DL Laurent series}
\label{section: capacity for Laurent series}
With a better understanding of basic properties of DL Laurent series, we now aim to derive lower bounds for their coefficients. The problem of bounding coefficients of log-concave polynomials is a well-studied topic, as discussed in the introduction. We will use tools that have been employed before to find such bounds. To apply these methods in our setting, however, we must first analyze the domains of convergence of DL Laurent series.

We define and study these domains in \Cref{domain of convergence}. The main result of that section, \Cref{omega pk and how it is related to omega p}, will be used in \Cref{capacity bounds}, alongside other tools such as capacity, to obtain a lower bound on the coefficients of DL Laurent series. This bound is stated in \Cref{a lower bound for coefficients using capacity}.

\subsection{Domains of convergence}
\label{domain of convergence}
    The goal of this section is to study the domain of convergence of DL Laurent series. We will later use domains of convergence to generalize the definition of Gurvits's capacity function for Laurent series in \Cref{capacity bounds}. The capacity function will then be used to prove lower bounds for the coefficients of DL Laurent series. 
  \begin{definition} [Domain of convergence]
    \label{domain of convergence definition}
    Let $p \in \R_{\ge 0}((x_1, \cdots, x_n))$. For some $\bm{x} \in \R_{> 0}^n$, say that $p(\bm{x})$ converges absolutely to $L \in \R$ if the following limit converges to $L$:
    \[ \lim_{k \to \infty} \sum_{\forall i: \hspace{0.1cm}|\alpha_i| \le k }p_{\bm{\alpha}}\bm{x^{\alpha}}.\] 
    Then the \textit{domain of convergence} of $p$, denoted by $\Omega_p \subseteq \R_{> 0}^n$, is the set of all points $\bm{x} \in \R_{>0}^n$ for which $p(\bm{x})$ converges. If $p\equiv 0$ identically, let $\Omega_p = \R_{>0}^n$.
    We are purposefully excluding points $\bm{x} \in \R_{\ge 0}^n \setminus \R_{>0}^n$ because our definition of Laurent series allows negative powers, and working with such points brings up unnecessary complications. 
    \end{definition}

    For a DL Laurent series $p \in \R((x_1,\cdots, x_n))$, we seek a recursive method to obtain $\Omega_p$
  from domains of convergence of Laurent series in fewer than $n$ variables. Such a recursion will be useful for the inductive arguments in \Cref{capacity bounds}. 
  
  We will begin by stating the main result of this section (\Cref{omega pk and how it is related to omega p}), which relates $\Omega_p$ to $\Omega_{[x_n^k] p}$ for $k \in \Z$. Then we will explore \Cref{omega pk and how it is related to omega p}'s consequences in \Cref{if degree set is bounded then domain of convergence can be lifted} through \Cref{projection of p's domain of convergence}. The groundwork for the proof of \Cref{omega pk and how it is related to omega p} is developed in \Cref{constant multiple of a point in the domain of convergence} through \Cref{moving the boundary case in to the interior}, and the proof of \Cref{omega pk and how it is related to omega p} is given at the end of this section. 
    
   \begin{proposition}
    \label{omega pk and how it is related to omega p}
    Assume that $p$ is a $d$-homogeneous DL Laurent series in $x_1, \cdots,x_n$. Further assume that: 
    \[
    p(x_1, \cdots, x_n) = \sum_{k \in \Z} x_n^k p_k(x_1, \cdots, x_{n-1}).
    \]
    The following statements hold for $p$:
    \begin{enumerate}
        \item Let $(y_1, \cdots, y_{n-1}) \in \bigcap_{k}\Omega_{p_k}$. Then $\{p_k(y_1, \cdots, y_{n-1}) \}_{k}$ is a log-concave sequence with no internal zeros,
        \item $\Omega_{p_m} = \Omega_{p_k}$ whenever $p_m, p_k$ are both non-zero Laurent series, 
        \item Let $(z_1, \cdots, z_{n-1}) \in \Omega_{p_k}$ for some nonzero $p_k$.
        Let $y$ be such that
        \[
        y \in \left( \inf_{m \in \Z} \frac{p_{m}(z_1, \cdots, z_{n-1})}{p_{m+1}(z_1, \cdots, z_{n-1})} , \sup_{m \in \Z}\frac{p_m(z_1,\cdots, z_{n-1})}{p_{m+1}(z_1, \cdots, z_{n-1})}
        \right).\]
        Then $(z_1, \cdots, z_{n-1}, y) \in \Omega_p$. 
    \end{enumerate}
\end{proposition}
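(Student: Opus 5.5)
The plan is to reduce everything to bivariate DL polynomials obtained by truncating, scaling and merging variables, and then pass to the limit. Fix $\bm{z}=(z_1,\dots,z_{n-1})\in\R_{>0}^{n-1}$ and $\bm{\alpha}\in\Z_{\ge 0}^{n-1}$, $M\ge 0$. The polynomial $\Poly[\bm{x}^{\bm{\alpha}}x_n^{M}p]$ is DL by \Cref{equivalent definition for DL Laurent series}. Scaling $x_i\mapsto z_ix_i$ (\Cref{DL polynomials are closed under scaling variables}) and then merging $x_1=\dots=x_{n-1}=s$ (\Cref{x1=x2 polynomials}) yields a DL bivariate polynomial. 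Since $p_k$ is $(d-k)$-homogeneous, this polynomial equals
\[
\sum_{k\ge -M} c_k^{(\bm{\alpha})}(\bm{z})\, s^{d-k+|\bm{\alpha}|_1}x_n^{k+M},\qquad c_k^{(\bm{\alpha})}(\bm{z})=\Poly[\bm{x}^{\bm{\alpha}}p_k](\bm{z}).
\]
By \Cref{bivariate homog polynomial is DL iff it has LC coeffs}, the sequence $\{c_k^{(\bm{\alpha})}(\bm{z})\}_{k\ge -M}$ is log-concave with no internal zeros. As all $\alpha_i\to\infty$, monotone convergence (the coefficients are nonnegative) gives $\bm{z}^{-\bm{\alpha}}c_k^{(\bm{\alpha})}(\bm{z})\uparrow p_k(\bm{z})$. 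The common factor $\bm{z}^{-\bm{\alpha}}$ does not affect log-concavity.

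For part 1, the inequality $c_k^2\ge c_{k-1}c_{k+1}$ passes to the limit whenever all three values are finite. For the absence of internal zeros, I will use \Cref{support of DL Laurent series is M convex}. Projecting the M-convex set $\supp(p)$ onto the $x_n$-coordinate gives an interval of integers, so the set $\{k: p_k\neq 0\}$ is an interval. A nonzero series with nonnegative coefficients is strictly positive at a positive point. Hence $p_k(\bm{y})>0$ exactly on that interval.

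Part 2 is where I expect the main difficulty. I reduce to adjacent nonzero indices $m,m+1$ and argue by contradiction using the truncated inequalities. Suppose $\bm{z}\in\Omega_{p_m}\setminus\Omega_{p_{m+1}}$ and $p_{m-1}\ne 0$. Then $c_{m+1}^{(\bm{\alpha})}/\bm{z}^{\bm{\alpha}}\to\infty$, while $c_m/\bm{z}^{\bm{\alpha}}$ stays bounded and $c_{m-1}/\bm{z}^{\bm{\alpha}}$ is eventually bounded below by a positive constant. This contradicts $c_m^2\ge c_{m-1}c_{m+1}$. Symmetrically, if $p_{m+2}\neq 0$, the inequality $c_{m+1}^2\ge c_mc_{m+2}$ gives $\Omega_{p_{m+1}}\subseteq\Omega_{p_m}$. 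Chaining these inclusions along the support interval settles every case except the ones where the needed neighbour vanishes, in particular when the support interval has length $2$.

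For those endpoint cases I will try a different argument that does not use the neighbour. Compare monomials directly via the M-convex exchange property together with log-concavity of two-variable slices. These slices are obtained by \Cref{the coefficient of x^d in a DL Laurent series is DL} followed by \Cref{bivaraite Laurent series is DL iff it has a log concave sequence}, and they bound each coefficient of $p_{m+1}$ by coefficients of $p_m$ up to factors $z_j^{\pm1}$. If this does not work, I will fall back on replacing $x_n$ by a multiplied-in auxiliary shift to manufacture the missing neighbour.

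For part 3, set $a_m=p_m(\bm{z})$. By part 2 these values are finite for all $m$ with $p_m\ne0$, and by part 1 the sequence $a_m$ is log-concave on its support interval. Hence $a_m/a_{m+1}$ is nondecreasing in $m$. Suppose $y$ lies strictly between the infimum and supremum of these ratios. Then there is $m_0$ with $a_{m_0}/a_{m_0+1}>y$, and for all $k\ge m_0$ we get $y\,a_{k+1}/a_k\le y\,a_{m_0+1}/a_{m_0}<1$. Similarly, there is $m_1$ with $a_{m_1}/a_{m_1+1}<y$, which gives ratio-test convergence as $k\to-\infty$. If the support is bounded on one side, that side is trivial. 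Thus $\sum_k y^k a_k<\infty$. Since all terms of $p$ at $(\bm{z},y)$ are nonnegative, Tonelli lets us identify this iterated sum with the box-limit in \Cref{domain of convergence definition}, so $(\bm{z},y)\in\Omega_p$.
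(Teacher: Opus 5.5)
Your part 1, the ratio-test argument in part 3, and the three-term argument in part 2 are correct as far as they go. Deducing $\Omega_{p_m}\subseteq\Omega_{p_{m+1}}$ from $c_m^2\ge c_{m-1}c_{m+1}$ when $p_{m-1}\neq0$ is a valid shortcut that the paper does not use. The gap is the endpoint case of part 2, and it is not confined to supports of length $2$.

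\textbf{The endpoint case is not handled.} If $a=\min\{k:p_k\neq0\}$ is finite, your inequalities give $\Omega_{p_{a+1}}\subseteq\Omega_{p_a}$ (when $p_{a+2}\neq0$). They never give $\Omega_{p_a}\subseteq\Omega_{p_{a+1}}$, because at the boundary $c_a^2\ge c_{a-1}c_{a+1}$ is vacuous. The same happens at a finite right endpoint. So part 2, and part 3 for $\bm z\in\Omega_{p_a}$, remain unproved whenever $\Deg_{x_n}(p)$ is bounded on one side. Your argument is complete only when $\{k:p_k\neq0\}=\Z$, or when $\bm z$ is known to lie in $\Omega_{p_k}$ for a non-extreme $k$. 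The bounded case is exactly the one used in \Cref{projection of p's domain of convergence} and in the proof of \Cref{a lower bound for coefficients using capacity}, where $k=\alpha_n$ may equal the bound $b_n$.

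\textbf{Your proposed repair cannot work.} M-convexity of the support plus log-concavity of all two-variable slices does not relate the growth of two adjacent levels. Take $p=x_3u+v$ with $u=\sum_{j\ge0}u_jx_1^jx_2^{d-j}$ and $v=\sum_{j\ge0}v_jx_1^jx_2^{d+1-j}$, where $u_j=2^{-j}$ and $v_j=1$.
\begin{itemize}
\item The support is M-convex.
\item Every two-variable slice is log-concave: fixing the $x_1$- or $x_2$-exponent leaves at most two terms.
\item Yet $(1,1)\in\Omega_u\setminus\Omega_v$.
\end{itemize}
What excludes this for DL series is the one-positive-eigenvalue condition on the \emph{mixed} quadratic $\Poly[x_1^{-k}x_2^{k-d+1}p]$. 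Its normalized Hessian has determinant $-u_k^2v_{k+2}+2u_ku_{k+1}v_{k+1}-u_{k+1}^2v_k$, which must be nonnegative for DL series but equals $-4^{-k-1}$ here.

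The auxiliary-shift idea is circular. For $(x_n+x_1)p$ the extreme levels are $x_1p_a$ and $p_a+x_1p_{a+1}$, and equality of their domains is again $\Omega_{p_a}\subseteq\Omega_{p_{a+1}}$.

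\textbf{The missing ingredient} is \Cref{moving the boundary case in to the interior}, applied to the DL series $\trunc_{x_n}^{k}\circ\ttrunc_{x_n}^{k+1}[p]=p_k+x_np_{k+1}$.
\begin{itemize}
\item Its base case $n=3$ (\Cref{base case for induction proving that domain of convergence of two Laurent series is equal}) turns exactly this determinant inequality into $\delta_k-\sqrt{\delta_k^2-\delta_k\delta_{k+1}}\le\epsilon_k\le\delta_k+\sqrt{\delta_k^2-\delta_k\delta_{k+1}}$ for the consecutive coefficient ratios of the two levels. This forces their limiting ratios to agree.
\item The general case follows by induction, merging $x_1=x_2$ once admissibility is checked through the $n=3$ case.
\end{itemize}
That lemma handles every adjacent pair uniformly, boundary or not.
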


\begin{remark}
\label{How to handle 0/0 in inf and sup}
    Recall that if $\{p_m\}_{m \in \Z}$ is a log-concave sequence, then we have
    \[
        \cdots \leq \frac{p_{m-1}}{p_m} \leq \frac{p_m}{p_{m+1}} \leq \frac{p_{m+1}}{p_{m+2}} \leq \cdots.
    \]
    So if $p_m > 0$ for all $m$, the interval given above in \Cref{omega pk and how it is related to omega p} (3) can be written as
    \[
        \left( \lim_{m \to -\infty} \frac{p_{m}(z_1, \cdots, z_{n-1})}{p_{m+1}(z_1, \cdots, z_{n-1})} , \lim_{m \to +\infty}\frac{p_m(z_1,\cdots, z_{n-1})}{p_{m+1}(z_1, \cdots, z_{n-1})}
        \right).
    \]
    When the sequence $\{p_m\}_{m \in \Z}$ is eventually zero in either direction however, more care must be taken to understand how this interval is actually defined. Concretely, if $p_m = 0$ for all $m \geq M$ then we adopt the convention
    \[
        \sup_{m \in \Z}\frac{p_m(z_1,\cdots, z_{n-1})}{p_{m+1}(z_1, \cdots, z_{n-1})} = \lim_{m \to +\infty}\frac{p_m(z_1,\cdots, z_{n-1})}{p_{m+1}(z_1, \cdots, z_{n-1})} = +\infty,
    \]
    and if $p_m = 0$ for all $m \leq M$ then we adopt the convention
    \[
        \inf_{m \in \Z}\frac{p_m(z_1,\cdots, z_{n-1})}{p_{m+1}(z_1, \cdots, z_{n-1})} = \lim_{m \to -\infty}\frac{p_m(z_1,\cdots, z_{n-1})}{p_{m+1}(z_1, \cdots, z_{n-1})} = 0.
    \]
    With these conventions, \Cref{omega pk and how it is related to omega p} (3) holds and the interval given is as large as possible (within $\R_{>0}$).

\end{remark}

It follows from the third statement of \Cref{omega pk and how it is related to omega p} 
that if the sequence $\{p_j(z_1,\cdots, z_{n-1})\}_{j \in \Z}$ is not geometric, then $(z_1, \cdots, z_{n-1})$ can be extended to a point $(z_1, \cdots, z_{n-1},y)$ in $\Omega_p$. In particular, if $p_m \equiv 0$ for some index $m$ (or in other words if the set of powers of $x_n$ in $p$ is bounded below or above), then $\{p_j(z_1,\cdots, z_{n-1})\}_{j \in \Z}$ is not a geometric sequence. Observe that any point $(z_1, \cdots, z_{n-1}, y) \in \Omega_p$ can be traced back to a point $(z_1, \cdots, z_{n-1}) \in \Omega_{p_k}$ as well.

\begin{definition}[{The set $\Deg_{x_i}^{\bm{\alpha}}$}]
\label{degree set}
    Let $p(x_1,\cdots, x_n)$ be a Laurent series and $\bm{\alpha} \in \Z^n$ be a given vector. For any $2 \le i \le n $, let $\Deg_{x_i}^{\bm{\alpha}}(p)$ denote the set $$\{k_i : (k_1, \cdots, k_i , \alpha_{i+1}, \cdots, \alpha_n) \in \supp(p) \text{ for some } k_1, \cdots, k_{i-1} \in \Z \},$$ and let $\Deg_{x_i}(p)$ denote the set \[\{ k_i : (k_1, \cdots, k_i, \cdots, k_n) \in \supp(p) \text{ for some } k_1, \cdots, k_{i-1}, k_{i+1} , \cdots, k_n \in \Z \}.\] 
\end{definition}
The following remarks and corollary are direct consequences of \Cref{omega pk and how it is related to omega p}. 
\begin{remark}
\label{if degree set is bounded then domain of convergence can be lifted}
    Suppose $\Deg_{x_n}(p)$ is either bounded below or above. For any $\bm{z} = (z_1, \cdots, z_{n-1}) \in \Omega_{p_k}$, the following interval:
    \[
    \left(\inf_{m \in \Z} \frac{p_{m}(z_1, \cdots, z_{n-1})}{p_{m+1}(z_1, \cdots, z_{n-1})} , \sup_{m \in \Z}\frac{p_m(z_1,\cdots, z_{n-1})}{p_{m+1}(z_1, \cdots, z_{n-1})}
        \right)
    \]
    is nonempty. Therefore, by the third statement of \Cref{omega pk and how it is related to omega p}, $\bm{z}$ can always be extended to some $(z_1, \cdots, z_{n-1},y) \in \Omega_p$. 
\end{remark}

\begin{remark}
\label{any point in the domain of convergence of a p gives a log concave sequence pk}
    If $(y_1, \cdots, y_n) \in \Omega_p$ then $(y_1, \cdots, y_{n-1}) \in \Omega_{p_k}$ for any $k \in \Z$ since: 
    \[
    y_n^k p_k(y_1, \cdots, y_{n-1}) \le p (y_1, \cdots, y_n) < \infty
    \]
    and therefore $p_k(y_1, \cdots, y_n)$ converges as well. 
\end{remark}

\begin{corollary}
\label{projection of p's domain of convergence}
    If $\Deg_{x_n}(p)$ is bounded below or above, then by \Cref{if degree set is bounded then domain of convergence can be lifted} and \Cref{any point in the domain of convergence of a p gives a log concave sequence pk}:
    \[
    \pi_n(\Omega_p) = \Omega_{p_k},
    \]
    where $\pi_n: \R^n \to \R^{n-1}$ is defined as $\pi_n(x_1, \cdots, x_n) = (x_1, \cdots, x_{n-1})$.
\end{corollary}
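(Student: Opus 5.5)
The statement to prove is \Cref{projection of p's domain of convergence}. It says that if $\Deg_{x_n}(p)$ is bounded below or above, then $\pi_n(\Omega_p) = \Omega_{p_k}$. The right-hand side must be read as $\Omega_{p_k}$ for any index $k$ with $p_k \not\equiv 0$. By part (2) of \Cref{omega pk and how it is related to omega p}, all such $\Omega_{p_k}$ coincide, so this common set is well defined. (If $p \equiv 0$, both sides are $\R_{>0}^{n-1}$ by the convention in \Cref{domain of convergence definition}, and there is nothing to prove.) The plan is to prove the two inclusions separately.

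\emph{Inclusion $\pi_n(\Omega_p) \subseteq \Omega_{p_k}$.} Take $\bm{y} = (y_1, \cdots, y_n) \in \Omega_p$. I would invoke \Cref{any point in the domain of convergence of a p gives a log concave sequence pk} directly. All coefficients are nonnegative and $y_n > 0$, so the terms of $y_n^k p_k(y_1,\cdots,y_{n-1})$ form a sub-collection of the terms of the convergent nonnegative series $p(\bm{y})$. Hence $p_k(y_1, \cdots, y_{n-1})$ converges for every $k$, and in particular $(y_1,\cdots,y_{n-1}) \in \Omega_{p_k}$. This direction does not use the boundedness hypothesis.

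\emph{Inclusion $\Omega_{p_k} \subseteq \pi_n(\Omega_p)$.} Fix $\bm{z} \in \Omega_{p_k}$ for some nonzero $p_k$. By part (2), $\bm{z} \in \Omega_{p_m}$ for every nonzero $p_m$, and trivially also for the zero ones. So $\bm{z} \in \bigcap_m \Omega_{p_m}$, and by part (1) the sequence $\{p_m(\bm{z})\}_m$ is log-concave with no internal zeros. Since $\Deg_{x_n}(p)$ is bounded on one side, $p_m \equiv 0$ for all $m \ge M$ or for all $m \le M$. Then the conventions of \Cref{How to handle 0/0 in inf and sup} make the supremum equal to $+\infty$ or the infimum equal to $0$, respectively. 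In the first case the infimum is a finite nonnegative number (or $0$ by convention), and in the second the supremum is either $+\infty$ or a finite positive number. Either way the interval in part (3) is nonempty in $\R_{>0}$. This is exactly the content of \Cref{if degree set is bounded then domain of convergence can be lifted}.

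The one step needing care is confirming that nonemptiness when only finitely many $p_m(\bm{z})$ are positive. If both tails vanish, both conventions apply and the interval is $(0,\infty)$. If only one tail vanishes, the log-concavity from part (1) makes the ratios $p_m(\bm{z})/p_{m+1}(\bm{z})$ monotone on the support, so the finite endpoint is strictly separated from the infinite one. With nonemptiness settled, choose $y$ in the interval. Part (3) of \Cref{omega pk and how it is related to omega p} then gives $(\bm{z}, y) \in \Omega_p$, so $\bm{z} = \pi_n(\bm{z},y) \in \pi_n(\Omega_p)$. Combining the two inclusions proves the corollary.
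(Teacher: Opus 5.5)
Your proof is correct and follows the paper's route: the inclusion $\pi_n(\Omega_p)\subseteq\Omega_{p_k}$ is the termwise comparison of \Cref{any point in the domain of convergence of a p gives a log concave sequence pk}, and the reverse inclusion is part (3) of \Cref{omega pk and how it is related to omega p}, using that the interval there is nonempty (\Cref{if degree set is bounded then domain of convergence can be lifted}). Your extra care makes explicit what the paper leaves implicit: restricting to $k$ with $p_k\not\equiv 0$, and checking via log-concavity that the interval is nonempty in $\R_{>0}$.
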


We will now tend to the proof of \Cref{omega pk and how it is related to omega p}.

\begin{remark}
    \label{constant multiple of a point in the domain of convergence}
    Assume that $p \in \R_{\ge 0} ((x_1, \cdots, x_n))$ is $d$-homogeneous. If $\bm{x} \in \Omega_p$, then $\lambda \bm{x} \in \Omega_p$ for any $\lambda > 0$ since $p(\lambda \bm{x}) = \lambda^d p(\bm{x})$. 
\end{remark}

\begin{lemma}
\label{admissibility and coefficient of sub Laurent series}
Let $p \in \R_{\ge 0} ((x_1, \cdots, x_n))$ be a $d$-homogeneous Laurent series. Then $(x_1, x_2)$ is an admissible pair of $p$ if and only if $(1,1) \in \Omega_{[x_3^{\alpha_3} \cdots x_n^{\alpha_n}]p}$ for any $\alpha_3, \cdots, \alpha_n \in \Z$.   
   
\end{lemma}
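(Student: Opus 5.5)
The plan is to unwind both conditions into statements about the nonnegative coefficients of $p$ and check that they coincide. Write $p = \sum_{\bm{\alpha}\in\Z^n} p_{\bm{\alpha}}\bm{x^{\alpha}}$ with all $p_{\bm{\alpha}}\ge 0$. Fix $\alpha_3,\dots,\alpha_n\in\Z$ and set
\[
q(x_1,x_2) := [x_3^{\alpha_3}\cdots x_n^{\alpha_n}]p = \sum_{a_1,a_2\in\Z} p_{(a_1,a_2,\alpha_3,\dots,\alpha_n)}\, x_1^{a_1}x_2^{a_2}.
\]
Homogeneity of $p$ forces every monomial of $q$ to satisfy $a_1+a_2 = d-\alpha_3-\cdots-\alpha_n =: s$. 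So $q$ is supported on a single antidiagonal and has the form $q=\sum_{a\in\Z} c_a x_1^{a}x_2^{s-a}$ with $c_a = p_{(a,s-a,\alpha_3,\dots,\alpha_n)}\ge 0$.

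Next, evaluate $q$ at $(1,1)$ using \Cref{domain of convergence definition}. The partial sums $\sum_{|a|\le k,\,|s-a|\le k} c_a$ are nondecreasing in $k$ because the terms are nonnegative. They exhaust all $a\in\Z$ as $k\to\infty$, so $(1,1)\in\Omega_q$ if and only if $\sum_{a\in\Z}c_a<\infty$. The boundary case $q\equiv 0$ is harmless: then $\Omega_q=\R_{>0}^2$ by convention, and the sum is $0$, which is also finite.

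Then unwind admissibility via \Cref{analytical admissibility}. The pair $(x_1,x_2)$ is admissible exactly when, for all $\alpha,\alpha_3,\dots,\alpha_n\in\Z$, the series $\sum_{\alpha_1+\alpha_2=\alpha}p_{(\alpha_1,\alpha_2,\alpha_3,\dots,\alpha_n)}$ converges. For $\alpha=s$ this is precisely $\sum_a c_a$. For $\alpha\ne s$, every term vanishes by homogeneity, so the sum is $0$ and converges trivially. Admissibility therefore reduces to the condition $\sum_a c_a<\infty$ for every choice of $(\alpha_3,\dots,\alpha_n)$, which by the previous step is exactly the statement that $(1,1)\in\Omega_{[x_3^{\alpha_3}\cdots x_n^{\alpha_n}]p}$ for every such choice. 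This gives both directions at once.

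I expect no real obstacle. The only care needed is to justify that the symmetric-truncation limit in the convergence definition agrees with the unordered sum, which follows from nonnegativity by monotone convergence.
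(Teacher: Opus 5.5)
Your proposal is correct and follows the paper's proof: both reduce the two conditions, via homogeneity, to finiteness of the nonnegative antidiagonal sums $\sum_{\alpha_1+\alpha_2=d-(\alpha_3+\cdots+\alpha_n)} p_{(\alpha_1,\dots,\alpha_n)}$. Your explicit remarks fill in details the paper leaves implicit: the off-antidiagonal sums vanish, the symmetric truncations agree with the unordered sum by monotonicity, and $q\equiv 0$ is a harmless boundary case.
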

\begin{proof}
     Write: $$[x_3^{\alpha_3} \cdots x_n^{\alpha_n}]p = \sum_{\alpha_1+ \alpha_2 = d - (\alpha_3 + \cdots + \alpha_n)} p_{\bm{\alpha}} x_1^{\alpha_1} x_2^{\alpha_2}. $$
    $(1,1) \in \Omega_{[x_3^{\alpha_3} \cdots x_n^{\alpha_n}]p}$ for all $\alpha_3, \cdots, \alpha_n \in \Z$ if and only if $\sum_{\alpha_1+\alpha_2 = d - (\alpha_3 + \cdots + \alpha_n)} p_{\bm{\alpha}}$ is convergent for all $\alpha_3, \cdots, \alpha_n \in \Z$, which is exactly the condition we need for admissibility of $(x_1, x_2)$ from \Cref{analytical admissibility}.
\end{proof}
 \Cref{moving the boundary case in to the interior} and \Cref{domain of convergence is non-empty for non trivial DL bivariate Laurent series} will be our base cases for the proof of the second and the third statements of \Cref{omega pk and how it is related to omega p} respectively.
    
    \begin{lemma}
\label{moving the boundary case in to the interior}
Let $n \ge 2$ and assume that $p \in \R_{\ge 0}((x_1, \cdots, x_n))$ is a $d$-homogeneous DL Laurent series linear in $x_n$, given by:
\[
p(x_1, \cdots,x_n) = x_n s(x_1, \cdots, x_{n-1}) + r(x_1, \cdots, x_{n-1}).
\]
Then $\Omega_r = \Omega_s$.
\end{lemma}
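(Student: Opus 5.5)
The plan is to prove the two inclusions $\Omega_s \subseteq \Omega_r$ and $\Omega_r \subseteq \Omega_s$ by comparing the coefficients of $r$ and $s$ monomial by monomial, using the Lorentzian structure of the truncations $\Poly[\bm{x^\gamma}p]$ (Corollary \ref{equivalent definition for DL Laurent series}). Write $r=\sum_{\bm\beta} r_{\bm\beta}\bm{x^\beta}$ with $|\bm\beta|_1=d$ and $s=\sum_{\bm\eta}s_{\bm\eta}\bm{x^\eta}$ with $|\bm\eta|_1=d-1$, both in $x_1,\dots,x_{n-1}$. The goal is a pair of estimates of the form
\[
r_{\bm\beta} \le C\sum_{i,j\in[n-1]} \mathrm{poly}(\bm\beta)\, s_{\bm\beta-\bm e_i}\cdot(\text{ratios of } s\text{-coefficients}), \qquad s_{\bm\eta}\le C\sum_i \mathrm{poly}(\bm\eta)\, r_{\bm\eta+\bm e_i}.
\]
These should be uniform enough that summing against $\bm z^{\bm\beta}$, for $\bm z\in\R^{n-1}_{>0}$, turns convergence of one series into convergence of the other. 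By Remark \ref{constant multiple of a point in the domain of convergence} I can normalize $\bm z$ freely, and by Lemma \ref{scaling the variables} I may replace $p$ by $p(z_1x_1,\dots,z_{n-1}x_{n-1},x_n)$. This reduces everything to the point $\bm z=\bm 1$, i.e.\ to comparing $\sum r_{\bm\beta}$ with $\sum s_{\bm\eta}$.

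\textbf{Local inequality.} Fix $\bm\beta$ and $i,j\in[n-1]$. Choose a shift $\bm\gamma$ so that the relevant exponents are nonnegative, and differentiate the Lorentzian polynomial $N[\Poly[\bm{x^\gamma}p]]$ down to a quadratic form in $(x_i,x_j,x_n)$. Because $p$ is linear in $x_n$, this quadratic form $q$ has $q_{nn}=0$. Its entries are $q_{ii},q_{ij},q_{jj}$, which are normalized coefficients of $r$, and $q_{in},q_{jn}$, which are normalized coefficients of $s$. A symmetric $3\times 3$ matrix with nonnegative entries and exactly one positive eigenvalue has nonnegative determinant, which here reads
\[
q_{ii}q_{jn}^2+q_{jj}q_{in}^2\le 2q_{ij}q_{in}q_{jn}.
\]
Applying this with $i=j$, or with the appropriate $2\times2$ minors, bounds each $r$-coefficient by products of $s$-coefficients and neighbouring $r$-coefficients. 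The factorial factors from $N$ contribute only polynomial weights in $\bm\beta$.

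Combining the local inequality with M-convexity of the support (Lemma \ref{support of DL Laurent series is M convex}) should give two facts. First, every monomial of $r$ is adjacent, via some $\bm\beta-\bm e_i$, to a monomial of $s$. Second, for a fixed direction $i$, the sequence $k\mapsto r_{\bm\beta+k(\bm e_i-\bm e_j)}$ is log-concave. For the second fact I would use the bivariate slices from Corollary \ref{the coefficient of x^d in a DL Laurent series is DL} together with Lemma \ref{bivaraite Laurent series is DL iff it has a log concave sequence}.

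\textbf{Summation.} Summing the local inequalities over all $\bm\beta$ with weights $\bm z^{\bm\beta}$, the polynomial factors become derivative-type operators: $\sum\beta_i r_{\bm\beta}\bm z^{\bm\beta}=z_i\partial_i r(\bm z)$. So $r(\bm z)$ is controlled by $s(\bm z)$ times $\sum_i z_i$, plus such derivative terms. I would handle the derivative terms by the log-concavity along lines noted above: along each line the coefficients are eventually dominated by a geometric sequence with ratio at most that of the convergent series. This is the same ratio-test mechanism as in the introduction, and it absorbs the polynomial weights.

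The main obstacle is this last step. Polynomial weights do not automatically preserve convergence at a fixed point, since $\Omega$ need not be open. I expect to need the strict interval from log-concavity, or a separate treatment of the case where along some direction the coefficient sequence is exactly geometric, in which case both $r$ and $s$ diverge together. If the direct determinant estimate proves too weak, my fallback is to restrict to two variables $x_i,x_n$ after identifying all other variables with $x_i$ at the point $\bm z$. This uses Theorem \ref{analytic admissibility -> x1 = x2 preserves DL}, whose admissibility hypothesis comes from Lemma \ref{admissibility and coefficient of sub Laurent series} and requires the point to lie in one of the two domains. One would then argue inductively on $n$.
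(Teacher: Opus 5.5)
\paragraph{There is a genuine gap, and it sits where you suspect.} The summation step is not a technicality; it is the entire analytic content of the lemma, and neither your main route nor your fallback supplies it.

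Incidentally, the factorial weights you worry about do not arise. For $|\bm\kappa|=\deg f-2$, the Hessian of $\partial^{\bm\kappa}N[f]$ is exactly the matrix $(f_{\bm\kappa+\bm e_a+\bm e_b})_{a,b}$ of \emph{unnormalized} coefficients. So your determinant inequality holds verbatim for the $r_{\bm\beta},s_{\bm\eta}$.

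The real obstruction is that this inequality only compares \emph{ratios} of neighbouring coefficients along a single direction $\bm e_i-\bm e_j$. It never bounds $r_{\bm\beta}$ absolutely by $s$-coefficients. Once $r,s$ have three or more variables, there is no ratio test that turns such local ratio information into convergence at a fixed point.

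Your fallback is circular. To identify $x_1=x_2$ in $p$ you need $(x_1,x_2)$ to be admissible for $p$. By \Cref{admissibility and coefficient of sub Laurent series}, this means $(1,1)$ lies in the domain of \emph{every} slice $[x_3^{k_3}\cdots x_n^{k_n}]p$. That includes the slices with $k_n=0$, which are slices of $r$, and knowing $\bm 1\in\Omega_s$ says nothing about them. In particular, an induction cannot descend from $n=3$ to the trivial case $n=2$. For $n=3$, admissibility of $(x_1,x_2)$ for $p=x_3s+r$ is literally the assertion $(1,1)\in\Omega_r\cap\Omega_s$.

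\paragraph{The missing idea.} Do all of the analysis in the case $n=3$, and then bootstrap. When $n=3$, $r$ and $s$ are bivariate homogeneous, so they are just two-sided log-concave sequences $r_k=[x_1^k]r$ and $s_k=[x_1^k]s$. The argument then goes as follows.
\begin{enumerate}
\item M-convexity of $\supp(p)$ shows that the two index sets are intervals whose endpoints differ by at most one.
\item Set $\delta_k=r_{k+1}/r_k$ and $\epsilon_k=s_{k+1}/s_k$. Your $3\times 3$ determinant then reads $(\epsilon_k-\delta_k)^2\le\delta_k(\delta_k-\delta_{k+1})$, which gives $\delta_{k+1}/2\le\epsilon_k\le 2\delta_k$.
\item Since $\delta_k$ is monotone, the right-hand side tends to $0$ at each end, or else $\delta_k\to\infty$, which forces $\epsilon_k\to\infty$ too. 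So the limiting ratios of $r$ and $s$ coincide at both ends.
\item \Cref{ratio test for log-concave sequences} is an equivalence, because a limiting ratio of exactly $1$ forces nondecreasing terms. Hence $r(1,1)$ converges if and only if $s(1,1)$ does. This also disposes of your boundary/openness worry.
\end{enumerate}
This is \Cref{base case for induction proving that domain of convergence of two Laurent series is equal}.

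For $n>3$, the paper applies this three-variable case to every slice $[x_3^{k_3}\cdots x_{n-1}^{k_{n-1}}]p$. Each slice is a DL series in $x_1,x_2,x_n$ that is linear in $x_n$, so this yields admissibility of $(x_1,x_2)$. It then sets $x_1=x_2$ via \Cref{analytic admissibility -> x1 = x2 preserves DL} and inducts on $n$.

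Finally, the statement needs $r,s\not\equiv 0$, since $\Omega_0=\R^{n-1}_{>0}$ by convention. Your M-convexity adjacency step also relies on this.
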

\begin{proof}
    We just need to prove that some vector $\bm{a} = (a_1, \cdots, a_{n-1})$ is an element of $\Omega_r$ if and only if it is an element of $\Omega_s$. Note that we can further assume $\bm{a} = \bm{1}_{n-1} = (1,\cdots, 1) \in \R^{n-1}$, since we can scale the variables of $p$, and work with $p(x_1/a_1, \cdots, x_n/a_n)$ instead (which is still a DL Laurent series by \Cref{scaling the variables}). So we want to prove that $\bm{1}_{n-1} \in \Omega_r $ if and only if $ \bm{1}_{n-1} \in \Omega_s$. 

    If $n = 2$, $p$ is a homogeneous bivariate Laurent series of degree $d$ and linear in $x_2$, so we should have:
    \[
    p(x_1, x_2) = a x_2 x_1^{d-1} +  b x_1^d
    \]
    for some $a,b \in \R_{\ge 0}$, and the statement is trivial. 
    
    Let $n \ge 3$. The proof is done by induction over $n$. The induction basis for $n =3$ is proven in \Cref{base case for induction proving that domain of convergence of two Laurent series is equal}. There are two facts at the core of the proof for $n =3$. First, there is a correspondence between the elements of $\supp(s)$ and the elements of $\supp(r)$ due to the M-convexity of $\supp(p)$. Second, the growth of the coefficients of $s$ and $r$ is roughly similar because $p$ is log-concave. We note that this second point relies directly on the one positive eigenvalue condition which defines (denormalized) Lorentzian.

    Now let $n > 3$. We would like to set $x_1 = x_2$ to get $T_{x_1 = x_2}^{\text{LS}} [p] = x_n T_{x_1 = x_2}^{\text{LS}}[s] + T_{x_1 = x_2}^{\text{LS}}[r]$. We can then use induction hypothesis for $T^{\text{LS}}_{x_1 = x_2}[p]$, which is a Laurent series in $n-1$ variables. To set $x_1 = x_2$ however, we need to show that $(x_1, x_2)$ is an admissible pair of $p$. To argue that this is the case, we will use the statement of the induction basis for $n = 3$. This is the main reason why this induction starts at $n = 3$, and not $n = 2$.
    
    Let us first show that $(x_1,x_2)$ is an admissible pair of $p$. For any $k_3, \cdots, k_{n-1} \in \Z$, write: 
    \[
    [x_3^{k_3} \cdots x_{n-1}^{k_{n-1}}]p = x_n [x_3^{k_3} \cdots x_{n-1}^{k_{n-1}}]s + [x_3^{k_3} \cdots x_{n-1}^{k_{n-1}}]r
    .\]
    By \Cref{the coefficient of x^d in a DL Laurent series is DL}, all of the Laurent series $s,r, [x_3^{k_3} \cdots x_{n-1}^{k_{n-1}}]p, [x_3^{k_3} \cdots x_{n-1}^{k_{n-1}}]s$ and $[x_3^{k_3} \cdots x_{n-1}^{k_{n-1}}]r$ are DL. Note that $[x_3^{k_3} \cdots x_{n-1}^{k_{n-1}}]p $ is a Laurent series with $3$ variables $x_1,x_2, x_n$ and is linear in $x_n$. So by the induction hypothesis, $(1,1) \in \Omega_{[x_3^{k_3} \cdots x_{n-1}^{k_{n-1}}]s} $ if and only if $ (1,1) \in \Omega_{[x_3^{k_3} \cdots x_{n-1}^{k_{n-1}}]r}$.

    Suppose $\bm{1}_{n-1} \in \Omega_s$, we need to show that $\bm{1}_{n-1} \in \Omega_r$. Since $([x_3^{k_3} \cdots x_{n-1}^{k_{n-1}}]s)(1,1) < s (\bm{1}_{n-1})$, we have that $(1,1) \in \Omega_{[x_3^{k_3} \cdots x_{n-1}^{k_{n-1}}]s}$, and using the induction hypothesis, $ (1,1) \in \Omega_{[x_3^{k_3} \cdots x_{n-1}^{k_{n-1}}]r}$. This holds for any choice of $k_3, \cdots, k_{n-1}$, which by \Cref{admissibility and coefficient of sub Laurent series}, means that $(x_1, x_2)$ is an admissible pair of both $s$ and $r$, implying that $(1,1)$ is an admissible pair of $p$ as well. 
    
    Let $p_1 = T_{x_1 = x_2}^{\PS}[p]$. $p_1$ is DL by \Cref{analytic admissibility -> x1 = x2 preserves DL}, and:
    \[
    p_1(x_1, x_3, \cdots, x_n) = x_n  s_1(x_1, x_3, \cdots, x_{n-1}) + r_1(x_1, x_3, \cdots,x_{n-1}) ,
    \]
    where $s_1 = T_{x_1 = x_2}^{\PS}[s]$ and $r_1 = T_{x_1 = x_2}^{\PS}[r]$. Going back to our initial assumption, we have $\bm{1}_{n-1} \in \Omega_s$, giving us $\bm{1}_{n-2} \in \Omega_{s_1}$. By the induction hypothesis for $s_1$ and $ r_1$, $\bm{1}_{n-2} \in \Omega_{r_1}$, which means that $r_1(\bm{1}_{n-2})$ is convergent, and so is $r(\bm{1}_{n-1}) = r_1(\bm{1}_{n-2})$. Thus, we have proven $\bm{1}_{n-1} \in \Omega_{r}$, and we are done. The other direction follows similarly. 
\end{proof}

 \begin{definition} [Trivially DL]
    \label{DL and coefficient sequence connection, non-trivially DL}
    As previously discussed in \Cref{bivaraite Laurent series is DL iff it has a log concave sequence}, $p(x,y) = \sum_{m \in \Z}p_m x^{d- m} y^{m}$ is a DL Laurent series if and only if $\{p_m\}_{m \in \Z}$ is a log-concave sequence. Further say that $p(x,y)$ is \textit{trivially DL} if $\{p_m\}_{m \in \Z}$ is a geometric progression, that is, if there exists $a,c>0$ such that $p_m = ac^m$ for all $m \in \Z$. 
\end{definition}
 \begin{lemma}
    \label{domain of convergence is non-empty for non trivial DL bivariate Laurent series}
    Let $p(x,y) = \sum_{m \in \Z}p_m x^{d- m} y^{m}$ be a DL Laurent series. $\Omega_p$ is empty if and only if $p$ is trivially DL. Moreover:
    \[ \{ 1 \} \times \left(  \inf_{m \in \Z}\frac{p_{m}}{p_{m+1}}, \sup_{m \in \Z} \frac{p_{m}}{p_{m+1}}\right) \subseteq \Omega_p. \]

\end{lemma}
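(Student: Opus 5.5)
By homogeneity (\Cref{constant multiple of a point in the domain of convergence}), a point $(x,y)\in\R_{>0}^2$ lies in $\Omega_p$ if and only if $(1,y/x)\in\Omega_p$. It therefore suffices to decide, for each $y>0$, whether the two-sided series $p(1,y)=\sum_{m\in\Z}p_m y^m$ converges. By \Cref{bivaraite Laurent series is DL iff it has a log concave sequence}, $\{p_m\}$ is log-concave with no internal zeros. Write $r_m = p_m/p_{m+1}$, using the conventions of \Cref{How to handle 0/0 in inf and sup} at the ends of the support. Log-concavity gives $\cdots\le r_{m-1}\le r_m\le r_{m+1}\le\cdots$. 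Set $L=\inf_m r_m=\lim_{m\to-\infty}r_m$ and $U=\sup_m r_m=\lim_{m\to+\infty}r_m$.

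\textbf{Containment.} Fix $y\in(L,U)$ and split $p(1,y)$ into the parts with $m\ge 0$ and $m<0$. For the positive tail, $y<U$ means there is some $M$ with $y<r_M\le r_m$ for all $m\ge M$. Then
\[p_{m+1}y^{m+1}/(p_m y^m)=y/r_m\le y/r_M<1,\]
so the tail is dominated by a geometric series and converges. If $p_m=0$ eventually, the tail is a finite sum and there is nothing to prove. The negative tail is symmetric: $y>L$ gives some $M'$ with $r_m\le r_{M'}<y$ for $m\le M'$. Hence $p_{m-1}y^{m-1}/(p_my^m)=r_{m-1}/y\le r_{M'}/y<1$, and again the tail converges geometrically. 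This proves $\{1\}\times(L,U)\subseteq\Omega_p$.

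\textbf{The equivalence.} If $p$ is not trivially DL, then either the support is bounded on one side, in which case $L=0$ or $U=\infty$ while the other endpoint is finite or positive, so $(L,U)\neq\emptyset$; or all $p_m>0$ and the ratios $r_m$ are not all equal, so $L<U$. Either way $\Omega_p\neq\emptyset$. We must take care with the degenerate case $p\equiv 0$, where $\Omega_p=\R^2_{>0}$ by convention, and with a single nonzero coefficient, where $L=0$ and $U=\infty$.

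Conversely, if $p_m=ac^m$ for all $m$, then $p(1,y)=a\sum_{m\in\Z}(cy)^m$. This diverges for every $y>0$, because one of the two tails has ratio $\ge1$. So $\Omega_p=\emptyset$.

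The only delicate step is the bookkeeping of the boundary conventions for $0/0$ and $c/0$ when the support is one-sided or finite. That is, we must check that $(L,U)$ is nonempty exactly in the non-geometric case, and that the geometric-domination argument degenerates correctly to finite sums.
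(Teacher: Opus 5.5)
Your proposal is correct and follows essentially the same route as the paper. You reduce to $p(1,y)=\sum_m p_m y^m$ and split it into the $m\ge 0$ and $m<0$ tails, use the monotonicity of $p_m/p_{m+1}$ from log-concavity to get convergence of each tail for $y$ strictly inside the interval (dominating by a geometric series directly where the paper cites the ratio test), and show divergence in the geometric case because one of the two tails has ratio at least $1$, with your handling of one-sided and finite supports a bit more careful than the paper's write-up.
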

\begin{proof}
    First observe that $\{p_m\}_{m \in \Z}$ is a log-concave sequence by \Cref{bivaraite Laurent series is DL iff it has a log concave sequence}, so $p_m^2 \ge p_{m-1}p_{m+1}$ for all $m \in \Z$, and assuming that $p_m \neq 0$, $\left(\inf_{m \in \Z} \frac{p_{m}}{p_{m+1}}, \sup_{m \in \Z} \frac{p_m}{p_{m+1}}\right) \subseteq \R_{>0}$ is a (possibly empty) interval.
    
    We will prove that $\{ 1 \} \times \left( \inf_{m \in \Z} \frac{p_{m}}{p_{m+1}},\sup_{m \in \Z} \frac{p_m}{p_{m+1}}\right) \subseteq \Omega_p$ for any DL Laurent series $p(x,y)$. Then for non-trivially DL Laurent series, this immediately implies that the domain of convergence is non-empty since $p_m^2 > p_{m-1}p_{m+1}$ for some $m \in \Z$. we will also have to show that the domain of convergence of any trivially DL Laurent series is empty. 
    
    Fix $z \in \left(\inf_{m \in \Z} \frac{p_{m}}{p_{m+1}}, \sup_{m \in \Z} \frac{p_m}{p_{m+1}}\right)$. Write $p(1,z) = \sum_{n \in \Z} p_n z^n$, and define $q_1(z) := \sum_{n \ge 0} p_{n}z^n$ and $q_2(z):=\sum_{n \ge 0} p_{-n}z^{-n}$. $p(1,z)$ converges if and only if $q_1(z)$ and $q_2(z)$ both converge.

    To show that $q_1(z)$ is convergent, we can reduce to the case where $q_1$ is an infinite sum.  Since $\{p_m\}_{m \in \Z}$ has no internal zeroes, all the coefficients $p_m$ for $m \ge 0$ are non-zero. Log-concavity of the sequence $\{ p_m \}_{m \in \Z}$, implies $\{\frac{p_m}{p_{m+1}}\}_{m \in \Z}$ is an increasing sequence. Therefore, 
    \[
    \lim_{m \to \infty} \frac{p_{m+1}z^{m+1}}{p_{m}z^{m}} = z \cdot \lim_{m \to \infty} \frac{p_{m+1}}{p_m} =  \frac{z}{\lim_{m \to \infty} \frac{p_m}{p_{m+1}}} =  \frac{z}{\sup_{m \in \Z} \frac{p_m}{p_{m+1}}}< 1 \]
     The ratio test from \Cref{ratio test} implies the convergence of $q_1$. 
    
    Similarly, if $q_2(z)$ is a finite power sum, $q_2$ is convergent, so we can assume that $p_{m}$ is nonzero for any $m \ge 0$. Note that $\{\frac{p_{m+1}}{p_m} \}_{m \in \Z}$ is a decreasing sequence. Thus we can compute:   
    \[
    \lim_{m \to \infty} \frac{p_{-(m+1)}z^{-(m+1)}}{p_{-m}z^{-m}} = z^{-1}\cdot \lim_{m \to - \infty}\frac{p_{m+1}}{p_m} = z^{-1} \cdot \inf_{m \in \Z} \frac{p_{m+1}}{p_m} < 1,
    \]
    and $q_2$ is convergent as well by \Cref{ratio test}. So far, we have proven: 
    \[ \{ 1 \} \times \left( \inf_{m \in \Z} \frac{p_{m}}{p_{m+1}}, \sup_{m \in \Z} \frac{p_{m}}{p_{m+1}}\right) \subseteq \Omega_p,\]
    implying $\Omega_p \neq \emptyset$ for non-trivially DL Laurent series. Finally, suppose $\{p_m\}_{m \in \Z}$ is a geometric progression, and $p_m = p_0 a^m$ for some $a \in \R_{> 0}$. Rewrite $p(x,y)$ as:
    \[
        p(x,y) = \sum_{m \in \Z} p_m x^{d-m}y^{m} = p_0 x^d \sum_{m \in \Z} (\frac{ay}{x})^m .
    \]
    It is easy to check that the sum $\sum_{m \in \Z} (\frac{ay}{x})^m$ does not converge for any $x,y \in \R_{>0}$, and $\Omega_p$ is empty. 
\end{proof}

Now we are ready to prove \Cref{omega pk and how it is related to omega p}. 
\begin{proof} [Proof of \Cref{omega pk and how it is related to omega p}]
     To prove the first statement, assume that $(y_1, \cdots, y_{n-1}) \in \Omega_k$ for any $k$. Ideally, if \\
     $T_{x_1 = x_2, x_2 = x_3, =\cdots, x_{n-2} = x_{n-1}}^{\PS}[p](s,t) = p(s, \cdots, s, t)$ was still a well defined DL Laurent series, we could have simply used \Cref{scaling the variables} to prove that the following Laurent series is DL:
     \[
     p(y_1s, \cdots y_{n-1}s,t) = \sum_{k \in \Z} t^k p_k(y_1s, \cdots, y_{n-1}s) =\sum_{k \in \Z} t^k s^{d-k} p_k(y_1, \cdots, y_{n-1}) ,
     \]
    and by \Cref{bivaraite Laurent series is DL iff it has a log concave sequence}, $\{p_k(y_1, \cdots, y_{n-1}) \}_{k \in \Z}$ would have been a log-concave sequence. But we are not assuming anything about the admissibility of $(x_i, x_j)$. Instead, we will utilize the $\Poly$ operator and a similar argument for $\Poly[\bm{x^{\alpha}} p]$. 
     
     First, note that it suffices to prove that for any given $l\le u$, the sequence $\{p_k(y_1, \cdots, y_{n-1}) \}_{k = l}^{u}$ is log-concave. For any $\bm{\alpha}= (\alpha_1, \cdots, \alpha_{n-1}) \in \Z^{n-1} $, we have:
     \[
     \Poly[x_1^{\alpha_1} \cdots x_{n-1}^{\alpha_{n-1}}x_n^{-l} p](x_1,\cdots, x_n) = \sum_{k \ge l} x_n^{k-l} \Poly[x_1^{\alpha_1}\cdots x_{n-1}^{\alpha_{n-1}}p_k](x_1, \cdots, x_{n-1}).
     \]
    For the sake of brevity, define:
    \begin{align*}
        q_{\bm{\alpha}}(x_1,\cdots, x_n)&:= \Poly[x_1^{\alpha_1} \cdots x_{n-1}^{\alpha_{n-1}}x_n^{-l} p](x_1,\cdots, x_n),\\
        q_{k,{\bm{\alpha}}} (x_1, \cdots, x_{n-1})&:= \Poly[x_1^{{\alpha_1}}\cdots x_{n-1}^{\alpha_{n-1}}p_k](x_1, \cdots, x_{n-1}).
    \end{align*}
   So $q_{\bm{\alpha}}(x_1, \cdots, x_n) = \sum_{k \ge l} x_n^{k-l} q_{k, \bm{\alpha}}(x_1, \cdots, x_{n-1})$. $q_{\bm{\alpha}}$ is a DL polynomial by definition, giving us that $q_{\bm{\alpha}}(y_1t, \cdots, y_{n-1}t,s)$ is a $(d + |\bm{\alpha}|_1 - l)$-homogeneous bivariate DL polynomial in $t$ and $s$ by \Cref{DL polynomials are closed under scaling variables} and \Cref{x1=x2 polynomials}. Rewrite $q_{\bm{\alpha}}(y_1t, \cdots, y_{n-1}t, s)$ as:
\begin{align*}
      q_{\bm{\alpha}}(y_1t, \cdots, y_{n-1}t,s) &= \sum_{k \ge l} s^{k-l} t^{d + |\bm{\alpha}|_1 - k} q_{k , \bm{\alpha}}(y_1, \cdots, y_{n-1}).
\end{align*}
    We conclude that the sequence of coefficients of $q_{\bm{\alpha}}(y_1t, \cdots, y_{n-1}t, s)$,  $\{ q_{k,{\bm{\alpha}}}(y_1, \cdots, y_{n-1})\}_{k \ge l}$ is log-concave by \Cref{bivariate homog polynomial is DL iff it has LC coeffs} (note that this sequence is log-concave regardless of the fact that $(y_1, \cdots, y_{n-1}) \in \cap_{k}\Omega_{p_k}$. We will use this in the proof of the second statement of this theorem).
    
    Assume further that $(y_1, \cdots, y_{n-1}) \in \Omega_{{p_k}}$ for all $k \in \Z$. Using \Cref{bivariate homog polynomial is DL iff it has LC coeffs} again, for any $u \ge l$, the polynomial
    $\sum_{k = l}^u s^{k-l}t^{u-k}q_{k,\bm{\alpha}}(y_1, \cdots, y_{n-1})$ is DL, and since the class of DL polynomials is closed by \Cref{DL polynomials are closed}, the polynomial $\lim_{\bm{\alpha}\to \infty}\sum_{k = l}^u s^{k-l}t^{u-k}q_{k,\bm{\alpha}}(y_1, \cdots, y_{n-1})$ is also DL. We have:
    \begin{align*}
        \lim_{\bm{\alpha}\to \infty}\sum_{k = l}^u s^{k-l}t^{u-k}q_{k,\bm{\alpha}}(y_1, \cdots, y_{n-1}) &= \sum_{k = l}^u s^{k-l}t^{u-k} \lim_{\bm{\alpha} \to \infty}q_{k,\bm{\alpha}}(y_1, \cdots, y_{n-1})\\
        &= \sum_{k = l}^u s^{k-l}t^{u-k} p_k(y_1, \cdots, y_{n-1}).
    \end{align*}
     So the coefficient sequence of this polynomial, $\{ p_k(y_1, \cdots, y_{n-1})\}_{k =l} ^u $, is log concave, and we are done.

     Let us move on to the proof of the second statement. We have proven in the first part of this theorem that for any arbitrary $\bm{\alpha} \in \Z^n$, $(y_1, \cdots, y_{n-1}) \in \R_{>0}^{n-1}$ and $l\le u$, the sequence $\{q_{k,{\bm{\alpha}}} \}_{k = l}^u$ is log-concave, and has no internal zeroes. $q_{k, \bm{\alpha}}$ is a ``part" of the Laurent series $p_k$, and $\{p_k\}_{k \in \Z}$ cannot have any internal zeros either, in the sense that if $p_k \equiv 0$, then $p_m \equiv 0$ either for all $m \ge k$ or for all $m \le k$. So now, it suffices to show that $\Omega_{p_{k}} = \Omega_{p_{k+1}}$ whenever $p_k,p_{k+1}$ are non-zero. Take the Laurent series: 
    \[
    \trunc_{x_n}^{k}\circ\ttrunc_{x_n}^{k+1}[p] = p_k(x_1, \cdots, x_{n-1}) + x_n p_{k+1}(x_1, \cdots, x_{n-1}).
    \]
    $\trunc_{x_n}^{k}\circ\ttrunc_{x_n}^{k+1}[p]$ is DL by \Cref{truncation in Laurent series}, and $\Omega_{p_k} = \Omega_{p_{k+1}}$ by \Cref{moving the boundary case in to the interior}. We are done. 
    
    To prove the third statement, take some $(z_1, \cdots, z_{n-1}) \in \Omega_{p_k}$. We have that $(z_1, \cdots, z_{n-1}) \in \Omega_{p_j}$ for any nonzero $p_j$ by statement 2, and therefore by statement 1, $\{ p_j(z_1, \cdots, z_{n-1})\}_{j \in \Z}$ is a log-concave sequence, and the Laurent series
    \[
    q(s,t) = \sum_{j \in \Z} p_j(z_1, \cdots, z_{n-1}) s^j t^{-j}
    \]
    is a DL bivariate Laurent series. Apply \Cref{domain of convergence is non-empty for non trivial DL bivariate Laurent series} to get that $q(s,t)$ converges for any
    $\frac{t}{s}$ in interval $\left(  \inf_{m \in \Z} \frac{p_{m}(z_1, \cdots, z_{n-1})}{p_{m+1}(z_1, \cdots, z_{n-1})},\sup_{m \in \Z} \frac{p_m(z_1,\cdots, z_{n-1})}{p_{m+1}(z_1, \cdots, z_{n-1})}\right)$. Equivalently, for any $y$ such that:
     \[
        y \in \left( \inf_{m \in \Z} \frac{p_{m}(z_1, \cdots, z_{n-1})}{p_{m+1}(z_1, \cdots, z_{n-1})}, \sup_{m \in \Z} \frac{p_m(z_1,\cdots, z_{n-1})}{p_{m+1}(z_1, \cdots, z_{n-1})}
        \right),\]
    we have that the Laurent series $q(y,1)=p(z_1,\cdots,z_{n-1},y)$ converges. 
\end{proof}

\subsection{Capacity bounds}
\label{capacity bounds}
We will define and study the capacity function for Laurent series. Then using results from \Cref{domain of convergence}, we will prove a lower bound similar to \Cref{theorem from BLP} for the coefficients of DL Laurent series. 

\begin{definition}
    Recall that the capacity of a polynomial $p \in \R_{\ge 0}[x_1, \cdots, x_n]$ at the point $\bm{\alpha} \in \Z^n$ is the function $ \cpc_{\bm{\alpha}}(p) = \inf \frac{p(\bm{x})}{\bm{x^{\alpha}}}$. For a Laurent series $p \in \R_{\ge 0}((x_1, \cdots, x_n))$, capacity is defined as:
    $$
           \cpc_{\bm{\alpha}}(p) = \begin{cases}
               \inf_{\bm{x} \in \Omega_p} \frac{p(\bm{x})}{\bm{x^{\alpha}}} & \text{ if } \Omega_p \neq \emptyset \\
               \infty & \text{ otherwise}
           \end{cases} 
           \hspace{0.2cm},
    $$
    and can also be written as $\cpc_{\bm{\alpha}}(p) = \inf_{\bm{x}> \bm{0}}\frac{p(\bm{x})}{\bm{x^{\alpha}}}$. Capacity of a Laurent series is always nonnegative, and is finite if and only if $\Omega_p \neq \emptyset$.
\end{definition}
            
Let us first present the main result of this section, \Cref{a lower bound for coefficients using capacity}, which states a lower bound for the coefficients of DL Laurent series using the capacity function. We then set up the proof of this theorem from \Cref{cap = 0 iff alpha not in newt p} to \Cref{base case for induction proving a lower bound for coefficients }. The proof of \Cref{a lower bound for coefficients using capacity} is presented at the end of this section. \Cref{a lower bound for coefficients using capacity} will be used in \S \ref{section: Flows of a graph} and \S \ref{section: applying bounds to parabolic verma modules} to obtain bounds for integer flows and dimension of weight spaces of parabolic Verma modules.
            \begin{theorem}
\label{a lower bound for coefficients using capacity}
    Let $p \in \R_{\ge 0}((x_1, \cdots, x_n))$ be a $d$-homogeneous DL Laurent series and $\bm{\alpha} \in \supp(p)$. Assume further that $\Omega_p$ is nonempty, and let $b_i \in \Z$ be an upper or lower bound for $\Deg_{x_i}^{\bm{\alpha}}(p)$ for all $i \ge 2$. Then:
    \[ 
    \frac{p_{\bm{\alpha}}}{\cpc_{\alpha}(p)} \ge \prod_{i = 2}^n \frac{|b_i - \alpha_i|^{|b_i - \alpha_i|}}{(1 + |b_i - \alpha_i|)^{1 + |b_i - \alpha_i|}}.
    \]
\end{theorem}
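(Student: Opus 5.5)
We argue by induction on the number of variables $n$, peeling off the last variable $x_n$ as in the proof of \Cref{theorem from BLP} in \cite{BLP23}. Write $p = \sum_{k \in \Z} x_n^k p_k(x_1,\dots,x_{n-1})$. The base case $n=1$ is trivial: $p = p_d x_1^d$, so $p_{\bm\alpha} = \cpc_{\bm\alpha}(p)$. For the inductive step the target is
\[
\frac{p_{\bm\alpha}}{\cpc_{\bm\alpha}(p)} \ge \frac{p_{\bm\alpha}}{\cpc_{\alpha'}(p_{\alpha_n})}\cdot \frac{\cpc_{\alpha'}(p_{\alpha_n})}{\cpc_{\bm\alpha}(p)},
\]
where $\alpha' = (\alpha_1,\dots,\alpha_{n-1})$. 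By \Cref{the coefficient of x^d in a DL Laurent series is DL}, $p_{\alpha_n} = [x_n^{\alpha_n}]p$ is a DL Laurent series in $n-1$ variables. Its sets $\Deg^{\alpha'}_{x_i}(p_{\alpha_n})$ coincide with $\Deg^{\bm\alpha}_{x_i}(p)$ for $i \le n-1$, so the same bounds $b_i$ apply. By \Cref{any point in the domain of convergence of a p gives a log concave sequence pk}, $\Omega_{p_{\alpha_n}} \supseteq \pi_n(\Omega_p) \neq \emptyset$, and $\alpha' \in \supp(p_{\alpha_n})$. The induction hypothesis therefore bounds the first factor by $\prod_{i=2}^{n-1}$ of the desired terms.

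It remains to prove the one-variable estimate
\[
\cpc_{\alpha'}(p_{\alpha_n}) \ge \frac{|b_n-\alpha_n|^{|b_n-\alpha_n|}}{(1+|b_n-\alpha_n|)^{1+|b_n-\alpha_n|}}\,\cpc_{\bm\alpha}(p).
\]
Fix $\bm z \in \Omega_{p_{\alpha_n}}$. By \Cref{omega pk and how it is related to omega p}(2) we have $\bm z \in \Omega_{p_k}$ for every nonzero $p_k$. Part (1) then shows that $c_k := p_k(\bm z)$ is a log-concave sequence with no internal zeros. Since $b_n$ bounds $\Deg^{\bm\alpha}_{x_n}(p)$, and by M-convexity of $\supp(p)$ (\Cref{support of DL Laurent series is M convex}) this should bound $\{k : p_k \not\equiv 0\}$ on the same side, we get $c_k = 0$ for $k$ beyond $b_n$ (say $k > b_n$). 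Hence $\sum_k c_k y^k$ is, after the shift $y^{-b_n}$, an ordinary power series in $1/y$ with log-concave coefficients, and the coefficient we care about sits at index $m = b_n - \alpha_n$.

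The univariate estimate from the introduction (the homogenized form via \Cref{bivaraite Laurent series is DL iff it has a log concave sequence}) then gives
\[
c_{\alpha_n} \ge \frac{m^m}{(m+1)^{m+1}} \inf_{y>0} \frac{\sum_k c_k y^k}{y^{\alpha_n}}.
\]
We will prove this directly. Choose $y$ with $c_{\alpha_n-1}/c_{\alpha_n} \le y \le c_{\alpha_n}/c_{\alpha_n+1}$, adjusted via $m$ as in Gurvits' argument. Log-concavity then makes $c_k y^k \le c_{\alpha_n} y^{\alpha_n} t^{|k-\alpha_n|}$ with a geometric tail on the unbounded side and a finite sum of at most $m$ terms on the bounded side; optimizing $t$ yields exactly the factor $m^m/(m+1)^{m+1}$. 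Part (3) of \Cref{omega pk and how it is related to omega p}, together with \Cref{if degree set is bounded then domain of convergence can be lifted}, guarantees that this $y$ is admissible, i.e.\ $(\bm z, y) \in \Omega_p$. Then $\cpc_{\bm\alpha}(p) \le p(\bm z,y)/(\bm z^{\alpha'} y^{\alpha_n})$. Combining and taking the infimum over $\bm z$ gives the one-variable estimate.

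The main obstacle is this univariate step in the Laurent setting. One must check that the optimizing $y$ lies in the open interval of \Cref{omega pk and how it is related to omega p}(3), not just its closure, and handle boundary cases where the infimum is not attained or $c_{\alpha_n}$ sits at the end of the support (where $m = 0$ and the bound is $1$). I would first try an approximation argument, taking $y$ slightly inside the interval and letting it tend to the boundary. If that fails, I would truncate $p$ with $\ttrunc$ and $\trunc$ (\Cref{truncation in Laurent series}) to reduce to a situation with finitely many nonzero $p_k$, and pass to the limit using monotone convergence of the capacities.
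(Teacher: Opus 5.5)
Your proposal is correct and is essentially the paper's proof. You peel off $x_n$, use \Cref{omega pk and how it is related to omega p} to make $\{p_k(\bm z)\}_k$ log-concave and to pass between $\Omega_{p_{\alpha_n}}$ and $\Omega_p$, apply the bivariate estimate, and then invoke the induction hypothesis on $p_{\alpha_n}$. The only difference is that the paper takes the bivariate estimate from \Cref{a bound for coefficients of bivariate DL Laurent series} and uses it as the base case $n=2$, whereas you start at the trivial case $n=1$ and re-derive it by Gurvits's geometric-domination argument.

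The boundary issue you flag does not arise: the domination argument uses a ratio $\tau\in(0,1)$, so the dominating geometric series converges and $(\bm z,y)\in\Omega_p$ follows directly (equivalently, $y$ lies strictly inside the interval of part (3)). The appeal to M-convexity is also unnecessary, because $\Deg^{\bm\alpha}_{x_n}(p)$ is by definition exactly $\{k : p_k \not\equiv 0\}$.
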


Suppose $p$ is a DL polynomial, then $0$ is a lower bound for $\Deg_{x_i}^{\bm{\alpha}}(p)$, and the degree of $x_i$ in $\restr{\partial_{i+1}^{\alpha_{i+1}}\cdots \partial_n^{\alpha_n}p}{x_{i+1} = \cdots = x_n = 0}$ is an upper bound for this set, allowing us to retrieve \Cref{theorem from BLP} from \Cref{a lower bound for coefficients using capacity}.

Note that \Cref{a lower bound for coefficients using capacity} requires all the sets $\Deg_{x_i}^{\bm{\alpha}}(p)$ to be either bounded above or below for $i \ge 2$. All DL polynomials $p \in \R[\bm{x}]$ and ``shifted" DL polynomials $\bm{x^{\alpha}}p(\bm{x})$ satisfy this condition. It might seem that our assumptions for this theorem are too restrictive, and that any Laurent series satisfying those assumptions is either a DL polynomial or a shifted DL polynomial. But we will see in the next section that there are other DL Laurent series that fulfill this condition. In fact, any DL Laurent series whose support is contained in a proper cone satisfies \Cref{a lower bound for coefficients using capacity}'s assumptions.

In the remainder of this section, we study properties of the capacity function that will be used to prove the lower bound in \Cref{a lower bound for coefficients using capacity}. In particular, we focus on bivariate DL Laurent series, study their capacity function, and then prove \Cref{a lower bound for coefficients using capacity} for such Laurent series. This result will be the base case for an inductive argument. The proof of \Cref{a lower bound for coefficients using capacity} is then completed using the recursive nature of domains of convergence from \Cref{omega pk and how it is related to omega p}. 

Before restricting to bivariate Laurent series, let us determine when $\cpc_{\bm{\alpha}}(p)$ is non-zero, that is, when the left hand-side of the inequality in \Cref{a lower bound for coefficients using capacity} is a real number. 

    \begin{lemma}
    \label{cap = 0 iff alpha not in newt p}
        Assume that $p \in \R_{\ge 0}((x_1, \cdots, x_n))$ is a nonzero Laurent series with a non-empty domain of convergence. Then $\cpc_{\bm{\alpha}}(p) > 0$ if and only if $\bm{\alpha}\in \Newt(p) $, where $\Newt(p)$ is the convex hull of $\supp(p)$ in $\R^n$.
    \end{lemma}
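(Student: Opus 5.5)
The plan is to pass to logarithmic coordinates, as in \Cref{capacity is a convex optimization problem}. For $\bm{x} = e^{\bm{y}}$ we have
\[
\frac{p(e^{\bm{y}})}{e^{\langle \bm{y}, \bm{\alpha}\rangle}} = \sum_{\bm{\beta} \in \supp(p)} p_{\bm{\beta}}\, e^{\langle \bm{y}, \bm{\beta} - \bm{\alpha}\rangle},
\]
and by definition $\cpc_{\bm{\alpha}}(p)$ is the infimum of this quantity over $\log \Omega_p$. Both directions come from comparing this sum with the position of $\bm{\alpha}$ relative to $\supp(p)$.

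For the direction $\bm{\alpha} \in \Newt(p) \Rightarrow \cpc_{\bm{\alpha}}(p) > 0$, I would write $\bm{\alpha} = \sum_{j=1}^m \lambda_j \bm{\beta}_j$ as a \emph{finite} convex combination with $\bm{\beta}_j \in \supp(p)$ and $\lambda_j > 0$. This is possible because $\Newt(p)$ is the convex hull, not its closure, and Carath\'eodory lets us take $m \le n+1$. Since all coefficients are nonnegative, for every $\bm{x} \in \Omega_p$ we can drop the other terms and apply weighted AM--GM:
\[
p(\bm{x}) \ge \sum_j \lambda_j \frac{p_{\bm{\beta}_j}}{\lambda_j}\bm{x}^{\bm{\beta}_j} \ge \prod_j \Big(\frac{p_{\bm{\beta}_j}}{\lambda_j}\Big)^{\lambda_j} \bm{x}^{\bm{\alpha}}.
\]
Hence $\cpc_{\bm{\alpha}}(p) \ge \prod_j (p_{\bm{\beta}_j}/\lambda_j)^{\lambda_j} > 0$. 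Points outside $\Omega_p$ contribute $+\infty$, and $\Omega_p \neq \emptyset$ guarantees that the infimum is finite.

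For the converse, suppose $\bm{\alpha} \notin \Newt(p)$. Finite-dimensional weak separation gives a nonzero $\bm{c}$ with $\langle \bm{c}, \bm{\beta} - \bm{\alpha}\rangle \le 0$ for all $\bm{\beta} \in \supp(p)$. I would fix $\bm{x}_0 \in \Omega_p$ and move along the ray $\bm{x}_0 e^{t\bm{c}}$ for $t \ge 0$. Each term of the series, divided by $\bm{x}^{\bm{\alpha}}$, is nonincreasing in $t$. So the whole ray stays in $\Omega_p$, since it is dominated termwise by the value at $t=0$, and by monotone (dominated) convergence the limit as $t \to \infty$ equals $q(\bm{x}_0)/\bm{x}_0^{\bm{\alpha}}$. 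Here $q$ is the sub-series of $p$ supported on the face $H = \{\bm{\beta} : \langle \bm{c}, \bm{\beta}\rangle = \langle \bm{c}, \bm{\alpha}\rangle\}$. If $q \equiv 0$, we get $\cpc_{\bm{\alpha}}(p) = 0$. Otherwise $\bm{\alpha} \in H$ but $\bm{\alpha} \notin \operatorname{conv}(\supp q)$, and I would induct on the dimension of the affine hull of the relevant support, separating again inside $H$ with a direction $\bm{c}'$.

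The main obstacle is this inductive step. Along the new direction $\bm{c}'$ only the face terms are controlled: the terms with $\langle \bm{c}, \bm{\beta} - \bm{\alpha}\rangle < 0$ may grow in $s$, so the point $\bm{x}_0 e^{s\bm{c}'}$ may leave $\Omega_p$. My plan is to use the combined path $\bm{x}_0 e^{t\bm{c} + s\bm{c}'}$ with $t = t(s)$ chosen large. I first note that $\log \Omega_p$ is convex, by H\"older or by log-convexity of $\bm{y} \mapsto p(e^{\bm{y}})$. I then try to show two things: that the off-face terms, which decay like $e^{t\langle \bm{c}, \bm{\beta}-\bm{\alpha}\rangle}$, can be made to dominate their growth in $s$; and that the face part is handled by the induction hypothesis applied to $q$, whose domain contains $\Omega_p$. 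If uniformity over infinitely many off-face terms fails, the fallback is to strengthen the induction hypothesis. It would then state that $\inf_{\bm{x} \in S} q(\bm{x})/\bm{x}^{\bm{\alpha}} = 0$ for any log-convex set $S \subseteq \Omega_q$ that is invariant under the flows $e^{t\bm{c}}$, $t \ge 0$, for the separating directions already used.
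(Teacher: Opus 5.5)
Your forward direction is correct and is the same AM--GM argument the paper uses. The gap is in the converse. The face induction is only a plan, and it cannot be completed: for an arbitrary nonnegative Laurent series the ``only if'' direction is false when $\Newt(p)$ is not closed. Take the $1$-homogeneous series $p = x_3 + x_2\sum_{k\ge 0} x_1^k x_3^{-k}$, so that $\Omega_p = \{x_1 < x_3\} \neq \emptyset$, and take $\bm{\alpha} = (1,0,0)$. A point of $\Newt(p)$ with second coordinate $0$ must equal $(0,0,1)$, so $\bm{\alpha} \notin \Newt(p)$; it lies only in the closure. Yet $p(\bm{x})/\bm{x}^{\bm{\alpha}} \ge x_3/x_1 > 1$ on $\Omega_p$, so $\cpc_{\bm{\alpha}}(p) \ge 1$.

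This is exactly where your induction breaks. Up to positive scaling and adding multiples of $(1,1,1)$, the only proper weak separator is $\bm{c} = (0,-1,0)$, and the face series is $q = x_3$. The induction hypothesis for $q$ sends $q/\bm{x}^{\bm{\alpha}} = x_3/x_1$ to $0$ only by making $x_1/x_3$ large, i.e.\ inside $\Omega_q \setminus \Omega_p$. No choice of $t(s)$ helps, because the off-face part $x_2\sum_k (x_1/x_3)^k$ diverges as soon as $x_1 \ge x_3$, whatever $x_2$ is. Your strengthened hypothesis fails too: $S = \Omega_p$ is log-convex and invariant under $e^{t\bm{c}}$ for $t \ge 0$, but $\inf_{S} q/\bm{x}^{\bm{\alpha}} = 1$.

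What is true is the converse for $\bm{\alpha} \notin \overline{\Newt(p)}$. Strict separation gives $\bm{c}$ and $\epsilon > 0$ with $\langle \bm{c}, \bm{\beta} - \bm{\alpha}\rangle \le -\epsilon$ for all $\bm{\beta} \in \supp(p)$. Along the single ray $\bm{x}_0 e^{t\bm{c}}$, $t \ge 0$, every term of the ratio is then multiplied by at most $e^{-\epsilon t}$. So the ray stays in $\Omega_p$, the ratio is at most $e^{-\epsilon t}\, p(\bm{x}_0)/\bm{x}_0^{\bm{\alpha}} \to 0$, and no induction on faces is needed.

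This is the paper's argument, and it gets strict separation by asserting that $\Newt(p)$ is closed. The example shows this is not automatic for infinite supports. It does hold where the lemma is actually used: bivariate supports without internal zeros, and M-convex supports, whose convex hulls are polyhedra. Points of $\overline{\Newt(p)} \setminus \Newt(p)$ can go either way: $1 + y e^{x}$ at $(1,0)$ has capacity $0$, while the example above has capacity at least $1$. So the right repair is to add closedness of $\Newt(p)$ as a hypothesis, not to push the face induction further.
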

    \begin{proof}
           The statement holds for all polynomials $p \in \R[\bm{x}]$ by {\cite[Fact 2.18]{AO17}}. Their argument can be generalized to Laurent series as follows.
           \\
           Let $\bm{\alpha} \in \Newt(p)$. Then there exists some $k \in \N$ and $\bm{d}_1, \cdots, \bm{d}_k \in \supp(p)$ satisfying $\bm{\alpha} = \lambda_1 \bm{d}_1 + \cdots + \lambda_k \bm{d}_k$ for some $\lambda_1 + \cdots + \lambda_k = 1$ and $\lambda_i > 0$. For any $\bm{x}> \bm{0}$ we have: 
        \[
        p(\bm{x}) \ge \sum_{i = 1}^k p_{\bm{d_i}} \bm{x^{\bm{d_i}}} = \sum_{i = 1}^k \lambda_i (\frac{p_{\bm{d_i}}}{\lambda_i} \bm{x^{\bm{d_i}}}) \ge \prod_{i = 1}^k (\frac{p_{\bm{d_i}}}{\lambda_i} \bm{x^{\bm{d_i}}})^{\lambda_i} = \prod_{i = 1}^k \left ( \frac{p_{\bm{d}_i}}{\lambda_i}^{\lambda_i}\right)\bm{x}^{\bm{\alpha}},
        \]    
        and $\inf_{\bm{x}> \bm{0}} \frac{p(\bm{x})}{\bm{x^{\alpha}}} \ge \prod_{i=1}^n \left( \frac{p_{\bm{d}_i}}{\lambda_i}\right)^{\lambda_i}> 0$.

        Now assume that $\bm{\alpha} \not \in \Newt(p)$. Since $\Newt(p)$ is a closed convex set, there exists a separating hyperplane $\bm{c}\in \R^n$ satisfying $\inf_{\bm{d} \in \supp(p)} \bm{c}^T({\bm{\alpha} - \bm{d}}) = \epsilon> 0$. Let $\bm{x} \in \Omega_p$ and $\bm{y}_t = (x_1e^{tc_1}, \cdots , x_ne^{tc_n})$ for all $t \in \R$. Then $\bm{y}_t \in \Omega_p$ by \Cref{scaling the variables}, and:
        \begin{align*}
            \cpc_{\bm{\alpha}}(p) \le \inf_{t \in \R} \frac{p(\bm{y}_t)}{\bm{y^{\alpha}}_t}
            &= \inf_{t \in \R}\sum_{\bm{d} \in \supp(p)} p_{\bm{d}} e^{t \bm{c}^T(\bm{d}- \bm{\alpha})}\bm{x^{d-\alpha}} \\
            & \le \inf_{t \in \R}e^{\epsilon t} \sum_{\bm{d} \in \supp(p)} p_{\bm{d}}\bm{x^{d-\alpha}}
            \\&=\frac{p(\bm{x})}{\bm{x^{\alpha}}} \inf_{t \in \R}e^{\epsilon t} = 0,
        \end{align*}
        and therefore $\cpc_{\bm{\alpha}}(p) = 0$.     
    \end{proof}

        Since the support of a DL Laurent series is $M$-convex by \Cref{support of DL Laurent series is M convex}, its Newton polytope is exactly its support. Hence, by \Cref{cap = 0 iff alpha not in newt p}, the assumption that $\bm{\alpha} \in \supp(p)$ implies that both $p_{\bm{\alpha}}$ and $\cpc_{\bm{\alpha}}(p)$ are nonzero, and that the left hand-side of the inequality in \Cref{a lower bound for coefficients using capacity} is a real number. In fact, one could verify \Cref{cap = 0 iff alpha not in newt p} for the bivariate DL Laurent series $\sum_{m \ge n} y^m x^{d-m}$ by hand.

    \begin{example}
    \label{capacity function for 1/x-y and 1/y-x}
    Let $n,d \in \Z$ be arbitrary. The domain of convergence of the Laurent series $p(x,y) = \sum_{m \ge n}y^mx^{d-m}$ is the set $\Omega_p = \{(x,y) : x>y>0 \}$, and for any $(x_0,y_0) \in \Omega_p$, $p(x_0,y_0) = y_0^n{x_0^{-n-d}}(1 - \frac{y_0}{x_0})^{-1}$. Now we can compute the capacity of $p$ at point $(\bm{\alpha}, \bm{\beta})$ as follows:
        \begin{align*}
            \cpc_{(\alpha, \beta)}(p)
            & = \inf_{0 < y < x} \frac{1}{x^{\alpha + \beta + d}} \cdot \frac{(1-\frac{y}{x})^{-1}}{(\frac{y}{x})^{\beta - n}}\\
            & = \inf_{0 < x} \frac{1}{x^{\alpha + \beta + t}}\cdot \inf_{0 < z < 1}\frac{(1-z)^{-1}}{z^{\beta-n}}\\
            & = \begin{cases}
                0 & \text{ if } \alpha + \beta + t \neq 0\\
                \inf_{0 < z < 1} \frac{(1-z)^{-1}}{z^{\beta-n}} & \text{ if } \alpha + \beta +t = 0
            \end{cases}
        \end{align*}
         For any $k \in \R$, $\frac{d}{dz}\left ( \frac{(1-z)^{-1}}{z^k}\right )$ is only zero at point $z_0 = \frac{k}{1 + k}$. If $\frac{k}{k+1}\in (0,1)$, then it is easy to check that $\frac{k}{k+1} = \arg \min \left(\frac{(1-z)^{-1}}{z^{\beta}}\right)_{0 < z < 1}$. If $\frac{k}{k+1} \not \in (0,1)$ however, $\frac{(1-z)^{-1}}{z^{\beta}}$ is a monotone function in the interval $ (0,1)$, and: 
         \[\inf_{0 < z <1} \frac{(1-z)^{-1}}{z^{k}} = \min \{ \lim_{z \to 0^{+}} \frac{(1-z)^{-1}}{z^{k}}, \lim_{z \to 1^{-}} \frac{(1-z)^{-1}}{z^{k}} \}   = 0.
         \]
         Therefore:
         \begin{align*}
             \cpc_{(\alpha, \beta)}(p)
             & = \begin{cases}
                0 & \text{ if } \alpha + \beta + t \neq 0\\
               \frac{(1+ \beta - n)^{1 + \beta - n}}{{(\beta-n)}^{\beta-n}} & \text{ if } \alpha + \beta +t = 0 \text{ and } 0 \le \frac{\beta-n}{1 + \beta-n} < 1 \\
                0 &\text{otherwise}
                \end{cases}\\    
                & = \begin{cases}
                0 & \text{ if } (\alpha , \beta) \not \in \Newt(p)\\
               \frac{(1+ \beta - n)^{1 + \beta - n}}{{(\beta-n)}^{\beta-n}} & \text{ if } (\alpha , \beta) \in \Newt(p) \\
                \end{cases} \hspace{0.2cm}. 
         \end{align*}
         So for any $k \ge n$, $\cpc_{(t-k, k)}(\sum_{m \ge n}y^m x^{t-m}) = \frac{(1+k-n)^{1+k-n}}{(k-n)^{k-n}}$. By a similar argument, for any $k \le n$, one can prove that $\cpc_{(t-k,k)}(\sum_{m \le n} y^m x^{t-m}) = \frac{(1+n-k)^{(1+n-k)}}{(n-k)^{n-k}}$.
    \end{example}

        In what comes next, we will define weighted log-concavity for bivariate Laurent series, and then prove a lower bound for the coefficients of bivariate weighted log-concave Laurent series.

\begin{definition}[weighted log-concavity]
         Suppose we are given two Laurent series $p (x,y) = \sum_{m \in \Z} p_m x^{d-m} y^m$ and $w(x,y) = \sum_{m \in \Z} w_m x^{d-m} y^m$ such that $w$'s support does not have any holes (i.e, the sequence $\{w_m\}_{m \in \Z}$ does not have any internal zeroes). Say that $p$ is a \textit{$w$-log-concave} Laurent series if $\supp(p) \subseteq \supp(w)$ and the sequence $\{\frac{p_m}{w_m}\}_{m \in \Z, w_m \neq 0}$ is a log-concave sequence. 
\end{definition}
\begin{remark}
\label{the support of w-log-concave Laurent series does not have any holes}
    It follows by definition that the support of $p$ cannot have any internal zeros if $p$ is $w$-log-concave. 
\end{remark}

\begin{lemma}
\label{Omega w is non-empty -> Omega p is non-empty}
    Assume that $p$ is $w$-log-concave and $\Omega_w \neq \emptyset$. Then $\Omega_p$ is also non-empty.
\end{lemma}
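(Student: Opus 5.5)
We plan to compare $p$ termwise with a rescaled copy of $w$, using the fact that a log-concave sequence is dominated by a geometric sequence. Write $p(x,y)=\sum_m p_m x^{d-m}y^m$ and $w(x,y)=\sum_m w_m x^{d-m}y^m$, and set $r_m:=p_m/w_m$ for $w_m\neq 0$. By hypothesis $\{r_m\}$ is log-concave, and by \Cref{the support of w-log-concave Laurent series does not have any holes} the set $\{m : p_m\neq 0\}$ is an interval $I\subseteq \Z$.

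First we dispose of degenerate cases. If $p\equiv 0$, then $\Omega_p=\R_{>0}^2$ by convention. If $I$ is finite, then $p$ is a Laurent polynomial with nonnegative coefficients, and it converges at every point of $\R_{>0}^2$. So we may assume $I$ is infinite. In particular there is some $k$ with $k,k+1\in I$, which gives $r_k,r_{k+1}>0$.

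Next we use a supporting line for $\log r_m$. Put $c:=r_{k+1}/r_k>0$. Log-concavity on $I$ says the increments $\log r_{j+1}-\log r_j$ are nonincreasing in $j$.
\begin{itemize}
\item For $m\ge k$, summing increments gives $\log r_m\le \log r_k+(m-k)\log c$.
\item For $m<k$, each increment from $m$ to $k$ is at least $\log(r_k/r_{k-1})\ge \log c$. Hence $\log r_k-\log r_m\ge (k-m)\log c$, which is again $\log r_m\le \log r_k+(m-k)\log c$.
\item For $m\notin I$ with $w_m\neq 0$, we have $r_m=0$.
\end{itemize}
So in all cases $p_m\le C\,c^{m}w_m$ with $C=r_kc^{-k}$. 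The only care needed is the argument for $m<k$; it is a short concavity check.

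Finally we transfer convergence from $w$ to $p$. Pick $(x_0,y_0)\in\Omega_w$; by \Cref{constant multiple of a point in the domain of convergence} we may take $x_0=1$. Then
\[
p(1,y_0/c)=\sum_m p_m (y_0/c)^m\le C\sum_m w_m y_0^m = C\,w(1,y_0)<\infty .
\]
All terms are nonnegative, so absolute convergence follows by comparison. Hence $(1,y_0/c)\in\Omega_p$, and $\Omega_p\neq\emptyset$.
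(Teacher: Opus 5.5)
Your proof is correct, and it gets the key estimate by a more direct route than the paper. The paper splits into two cases. If $\{p_m/w_m\}$ is geometric, it rescales $y$ directly. Otherwise it notes that $q(x,y)=\sum_m \frac{p_m}{w_m}x^{d-m}y^m$ is non-trivially DL, and invokes \Cref{domain of convergence is non-empty for non trivial DL bivariate Laurent series} (whose proof rests on the ratio test) to obtain a point $(x_1,y_1)\in\Omega_q$. It then bounds each term of $q$ by $L=q(x_1,y_1)$ and concludes $p(x_0x_1,y_0y_1)\le L\,w(x_0,y_0)$.

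The paper only ever uses the termwise inequality $\frac{p_m}{w_m}x_1^{d-m}y_1^m\le L$ from the convergence of $q$. That inequality is a geometric domination of $p_m/w_m$, which is exactly what you build directly from a supporting line of the concave sequence $\log(p_m/w_m)$ at an interior index $k$.

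What your version buys is self-containment. It needs neither the bivariate convergence lemma nor the geometric/non-geometric case split; in the geometric case your bound is an equality on the support, recovering the paper's first case. The cost is only the trivial checks for $p\equiv 0$ and finite support, plus the observation that indices with $w_m=0$ are covered by $\supp(p)\subseteq\supp(w)$. What the paper's version buys is reuse of machinery it already has: it can compare at any point of $\Omega_q$, whereas you produce the explicit point $(1,\,y_0\,r_k/r_{k+1})$.
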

\begin{proof}
    Let $p(x,y) = \sum_{m \in \Z} p_m x^{d-m}y^m$ and $w(x,y) = \sum_{m \in \Z} w_m x^{d-m}y^m$, and let $(x_0,y_0) \in \Omega_w$. we will find $s,r \in \R_{>0}$ such that $(sx_0, ry_0) \in \Omega_p$.
    
    Suppose $\{ \frac{p_m}{w_m} \}_{m \in \Z, p_m \neq 0}$ is a geometric sequence. So there exists some $c,a \in \R_{> 0}$ satisfying $p_m = cw_m a^m$ for all non zero $p_m$, and:
    \[
    p(x_0, \frac{1}{a}y_0) = \sum_{m \in \Z} \frac{p_m}{a^m}x_0^{d-m}y_0^m = c w(x_0, y_0).
    \]
    Therefore, $(x_0, \frac{1}{a}y_0) \in \Omega_p$. 
    
    Now assume that $\{ \frac{p_m}{w_m}\}_{m \in \Z, p_m \neq 0} $ is not a geometric sequence. So the Laurent series $q(x,y) := \sum_{m \in \Z, w_m \neq 0} \frac{p_m}{w_m}x^{d-m}y^m$ is non-trivially DL, and by \Cref{domain of convergence is non-empty for non trivial DL bivariate Laurent series}, $\Omega_q$ is non-empty. Take an arbitrary point $(x_1, y_1) \in \Omega_q$ and let $L := q(x_1, y_1)$. Then for any $m \in \Z$, each monomial $\frac{p_m}{w_m}x_1^{d-m}y_1^m $ is less than $ L$, and $p_m (x_0x_1)^{d-m} (y_0y_1)^m$ which is a monomial od $p(x_0x_1,y_0y_1$, is less than its corresponding monomial $Lw_m x_0^{d-m}y_0^m$ in $Lw(x_0,y_0)$. We can now use the comparison test to show that $p(x_0x_1, y_0y_1)$ is a converging sequence since $p(x_0x_1,y_0y_1) \le L w(x_0,y_0) $.
\end{proof}

\begin{lemma} [{\cite[Lemma 5.7]{BLP23}}]
\label{coefficient bound for w-log concave Laurent series}  
    Let $p$ be $w$-log-concave Laurent series given by $p(x,y) = \sum_{m \in \Z} p_m x^{d-m}y^m$ and $w(x,y) = \sum_{m \in \Z} w_m x^{d-m}y^m$. For any $k\in \Z$ we have: 
    \[
    \frac{p_k}{\cpc_{(d-k, k)}(p)} \ge \frac{w_k}{\cpc_{(d-k, k)}(w)}.
    \]\end{lemma}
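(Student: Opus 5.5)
The plan is to compare $p$ and $w$ coefficientwise after rescaling $y$, so that the $k$-th term of $p$ dominates relative to $w$. Then I would use the fact that capacity is monotone under coefficientwise domination and transforms predictably under scaling of variables.

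First I would dispose of degenerate cases. If $\Omega_w=\emptyset$ then $\cpc_{(d-k,k)}(w)=\infty$ and the right-hand side is $0$, so there is nothing to prove. If $w_k=0$, then $p_k=0$ since $\supp(p)\subseteq\supp(w)$; the right-hand side is then $0$ (or the statement is vacuous), and we are again done. If $p_k=0$ while $w_k\neq 0$, then by \Cref{the support of w-log-concave Laurent series does not have any holes}, $k$ lies outside the interval $\supp(p)$, so $(d-k,k)\notin\Newt(p)$. By \Cref{cap = 0 iff alpha not in newt p} (with \Cref{Omega w is non-empty -> Omega p is non-empty} guaranteeing $\Omega_p\neq\emptyset$) we get $\cpc(p)=0$, and this case is handled by the usual $0/0$ convention or excluded. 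So assume $p_k,w_k>0$, and set $r_m:=p_m/w_m$ for $m$ with $w_m\neq0$.

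The key step is to choose $c>0$ so that $r_m c^m\le r_k c^k$ for all $m$. By log-concavity of $\{r_m\}$ (no internal zeros), the ratios $r_{m}/r_{m+1}$ are nondecreasing in $m$. Hence the interval $[\,r_k/r_{k+1},\ r_{k-1}/r_k\,]$ is nonempty, with the conventions $r_k/r_{k+1}=0$ or $+\infty$ from \Cref{How to handle 0/0 in inf and sup} when a neighbour vanishes; concretely, when one endpoint is infinite or zero I would just pick any admissible $c$ on the finite side. Taking $c$ in this interval and telescoping the monotone ratios gives $r_m c^{m}\le r_k c^{k}$ for every $m$. Equivalently,
\[
p_m c^m \le \frac{p_k c^k}{w_k}\, w_m \qquad\text{for all } m\in\Z .
\]
This yields the termwise inequality $p(x,cy)\le \frac{p_k c^k}{w_k} w(x,y)$ for every $(x,y)\in\Omega_w$. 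In particular $(x,cy)\in\Omega_p$ by the comparison test.

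Finally I would compute capacities. Substituting $y\mapsto cy$ in the infimum defining capacity gives
\[
\cpc_{(d-k,k)}(p)=c^{-k}\inf_{x,y>0}\frac{p(x,cy)}{x^{d-k}y^k}\le c^{-k}\cdot\frac{p_kc^k}{w_k}\inf_{x,y>0}\frac{w(x,y)}{x^{d-k}y^k}=\frac{p_k}{w_k}\cpc_{(d-k,k)}(w).
\]
Rearranging gives the claim. The only genuinely delicate step is choosing $c$ at the boundary of the support, when $r_{k\pm1}$ are zero or undefined. I would handle this explicitly via the conventions of \Cref{How to handle 0/0 in inf and sup}, checking that every $m$ with $p_m\neq0$ still satisfies the telescoped inequality.
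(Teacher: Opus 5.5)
Your argument is correct and is essentially the paper's proof: the paper also dominates the log-concave ratio sequence $a_j=p_j/w_j$ (normalized so $a_k=1$) by a two-sided geometric sequence through the $k$-th term, taking ratio $a_{k+1}$, which is your choice $c=r_k/r_{k+1}$, and then absorbs the rescaling into the infimum over $z>0$; the paper merely phrases this as a sup--inf over log-concave sequences. One slip to fix: the constraints $r_{k+1}c\le r_k$ and $r_{k-1}\le r_k c$ force $c\in[\,r_{k-1}/r_k,\ r_k/r_{k+1}\,]$, which is the interval that log-concavity makes nonempty, whereas the interval $[\,r_k/r_{k+1},\ r_{k-1}/r_k\,]$ you wrote has its endpoints reversed and is empty unless $r_k^2=r_{k-1}r_{k+1}$.
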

\begin{proof}
    The right hand side of this inequality will be zero if $\Omega_w = \emptyset$, and by \Cref{cap = 0 iff alpha not in newt p}, the left hand side of the inequality is $+\infty$ if $(d-k,k) \not \in \Newt(p)$. Since the support of $p$ does not have internal zeros by \Cref{the support of w-log-concave Laurent series does not have any holes}, $(d-k,k)$ is a lattice point of $\Newt(p)$ if and only if $(d-k, k) \in \supp(p)$. So the statement of this lemma is trivial if $\Omega_w = \emptyset$, or if $(d-k,k) \not \in \supp(p)$, and we will assume that neither are the case for the rest of this proof. 

    We have that $(d-k,k) \in \supp(p) \subseteq \supp(w)$. Again, the statement is trivial if $\supp(w) = \{ (d-k,k) \}$, since we would have $\cpc_{(d-k,k)}(p) = p_k$ and $\cpc_{(d-k,k)}(w) = w_k$. So we can assume that $(d-k,k)$ is not the only element of $\supp(w)$, and either $w_{k+1}$ or $w_{k-1}$ is nonzero as well. Moreover, since $\Omega_w \neq \emptyset$, by \Cref{Omega w is non-empty -> Omega p is non-empty}, $\Omega_p$ is non-empty. 
   
    Define:
    \[
    C_{d-k,k} := \sup_{l.c. \bm{a}} \inf_{x,y>0}  \left( \frac{1}{a_k x^{d-k} y^{t}}\sum_{(d-j,j) \in \supp(w)} w_j a_j x^{d-j}y^j \right).
    \]
    Where the $\sup$ is over all positive log concave sequences $\bm{a} = \{a_m\}_{m \in \Z, w_m \neq 0}$. The statement of the lemma is equivalent to proving that $C_{d-k,k} = \cpc_{(d-k,k)}(w)$. The rest of this proof is similar to the proof of \cite[Lemma 5.7]{BLP23} with minor changes. We can rewrite $C_{d-k,k}$ as follows:
    \begin{align*}
        C_{d-k, k } & = \sup_{\substack{l.c. \bm{a} \\ a_k = 1}} \inf_{z > 0} \left( \sum_{(d-j,j) \in \supp(w)} w_j a_j z^{j-k} \right),
    \end{align*}
    and we can further restrict to the case where $a_{k-j} = a_{k-1}^j$ and $a_{k + j} = a_{k + 1}^j$ for any $j \ge 1$. Thus:
    \[
    C_{d-k,k} = \sup_{a_{k-1}.a_{k+1}\le 1} \inf_{z > 0} \left[\left( \sum_{\substack{j \le k - 1 \\ (d-j,j) \in \supp(w)}}w_{j} a_{k-1}^{k - j}z^{j-k}\right)+ w_k + \left( \sum_{\substack{j \le k + 1 \\ (t-j,j) \in \supp(w)}} w_{j} a_{k + 1}^{j - k} z^{j - k} \right) \right].
    \]
    As discussed before, either $w_{k+1}$ or $w_{k-1}$ is nonzero. It is easy to check that the supremum will not be attained for the sequence $\bm{a}$ if $a_{k-1} = a_{k+1}=0$ . Without loss of generality, let $a_{k+1} \neq 0$. So $a_{k-1}\le a_{k + 1}^{-1}$, which implies:
    \[
    C_{d-k , k} = \sup_{a_{k+1} > 0} \inf_{z>0} \left( \sum_{(d-j,j) \in \supp(w)} w_j (a_{k+1}z)^{j-k}\right) = \inf_{z > 0} \left( \sum_{(d-j,j) \in \supp(w)} w_j z^{j-k}\right) = \cpc_{(d-k,k)}(w),
    \]
    proving the result. 
\end{proof}

Let $p(x,y) = \sum_{m \in \Z} p_m x^{d-m}y^m$ be a bivariate DL Laurent series. Then $p$ is $\sum_{m \in S} x^{d-m}y^m$-log-concave for any $S \supseteq \supp(p)$. Applying \Cref{coefficient bound for w-log concave Laurent series} to this special case gives us the following lower bound for the coefficients of bivariate DL Laurent series.
\begin{corollary} 
\label{a bound for coefficients of bivariate DL Laurent series}
    Let $p(x,y) =\sum_{m \in \Z} p_m x^{d-m} y^m$ be a DL Laurent series. Assume that $n$ is either a lower bound or an upper bound for the set $\{m : (d-m,m) \in \supp(p) \}. $Then:
    \[ 
    p_k \ge{\cpc_{(d-k,k)}(p)}. \frac{|k-n|^{|k-n|}}{(1+|k-n|)^{1+|k-n|}},
    \]
    for any $k$ such that $(d-k,k) \in \supp(p)$.
\end{corollary}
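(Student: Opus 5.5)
We take $w(x,y) = \sum_{m \ge n} x^{d-m} y^m$ when $n$ is a lower bound for $\{m : (d-m,m)\in\supp(p)\}$, and $w(x,y) = \sum_{m \le n} x^{d-m}y^m$ when $n$ is an upper bound. We then apply \Cref{coefficient bound for w-log concave Laurent series} and compute $\cpc_{(d-k,k)}(w)$ explicitly with \Cref{capacity function for 1/x-y and 1/y-x}.

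\textbf{Step 1: $p$ is $w$-log-concave.} By the choice of $n$, $\supp(p)\subseteq\supp(w)$, and $\{w_m\}$ has no internal zeros. On $\supp(w)$ we have $w_m = 1$, so $p_m/w_m = p_m$. This sequence is log-concave with no internal zeros by \Cref{bivaraite Laurent series is DL iff it has a log concave sequence}.

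\textbf{Step 2: apply the comparison lemma.} For $k$ with $(d-k,k)\in\supp(p)$, \Cref{coefficient bound for w-log concave Laurent series} gives
\[
p_k \ge \cpc_{(d-k,k)}(p)\cdot\frac{w_k}{\cpc_{(d-k,k)}(w)} = \frac{\cpc_{(d-k,k)}(p)}{\cpc_{(d-k,k)}(w)}.
\]
We need $w_k=1$, which holds because $k \ge n$ in the lower-bound case and $k\le n$ in the upper-bound case.

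\textbf{Step 3: evaluate $\cpc_{(d-k,k)}(w)$.} \Cref{capacity function for 1/x-y and 1/y-x} gives, for $k\ge n$,
\[
\cpc_{(d-k,k)}\Big(\sum_{m\ge n}y^mx^{d-m}\Big)=\frac{(1+k-n)^{1+k-n}}{(k-n)^{k-n}},
\]
and the symmetric formula for $k\le n$ with $n-k$ in place of $k-n$. In both cases this equals $\frac{(1+|k-n|)^{1+|k-n|}}{|k-n|^{|k-n|}}$, using $0^0=1$ when $k=n$. Substituting into Step 2 gives the claimed inequality.

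\textbf{Main obstacle: whether $\Omega_w$ is empty.} If $\Omega_p=\emptyset$, the capacity of $p$ is $\infty$, but that case cannot occur here. Since $\supp(p)$ is bounded on one side, $p$ is not trivially DL, so \Cref{domain of convergence is non-empty for non trivial DL bivariate Laurent series} gives $\Omega_p\neq\emptyset$. Also $\Omega_w=\{x>y\}$ or $\{y>x\}$ is nonempty. The lemma's proof handles the remaining degenerate cases: a single-point support, and $(d-k,k)$ lying on the boundary $k=n$, where the formula still gives the factor $1$.
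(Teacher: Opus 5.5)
Your proposal is correct and follows the paper's proof: you choose $w$ as the one-sided series with unit coefficients on $\{m \ge n\}$ or $\{m \le n\}$, apply the $w$-log-concavity comparison lemma, and evaluate $\cpc_{(d-k,k)}(w)$ using the explicit example. Your extra check that $\Omega_p \neq \emptyset$ is a sensible detail that the paper leaves implicit; it holds because one-sided support rules out $p$ being trivially DL.
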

\begin{proof}
    If $p(x,y) = \sum_{m \ge n} p_m x^{d-m}y^m$ is DL, then $p$ is ($\sum_{m \ge n} x^{d-m} y^m$)-log-concave. Using \Cref{capacity function for 1/x-y and 1/y-x} and \Cref{coefficient bound for w-log concave Laurent series}, we have:
    \[
    \frac{p_k}{\cpc_{(d-k, k)}(p)}\ge \frac{(k-n)^{k-n}}{(1+k-n)^{1+k-n}},
    \]
    for any $k \ge n$. Note that $p$ is also ($\sum_{m \le d-n} x^m y^{d-m}$)-log-concave, but \Cref{coefficient bound for w-log concave Laurent series} gives the same bound as above.
    
    The statement can be proven similarly for $p(x,y) = \sum_{m \le n} p_m x^{d-m}y^m$. 
\end{proof}

\begin{remark}
    We cannot use the same method to bound the coefficients of a general DL Laurent series $p(x,y) = \sum_{m \in \Z} p_mx^{d-m}y^{m}$. The best thing we can say about this Laurent series is that it is $w$-log-concave for $w(x,y) = \sum_{m \in \Z}x^{d-m}y^m$, and using \Cref{a bound for coefficients of bivariate DL Laurent series} for this $p$ and $w$ gives us a trivial bound. We can try and fix this by swapping $w$ for a Laurent series that captures the ``growth" of $p$'s coefficients, but that would beat the purpose of bounding the coefficients of a Laurent series without knowing much about the coefficients themselves. 
\end{remark}

Let us rewrite \Cref{a bound for coefficients of bivariate DL Laurent series} using the set $\Deg_{x_i}^{\bm{\alpha}}$ from \Cref{degree set}. 
\begin{corollary}
\label{base case for induction proving a lower bound for coefficients }
    Let $p \in \R((x_1,x_2))$ be a DL Laurent series and let $(\alpha_1, \alpha_2) \in \supp(p)$. Further assume that $b_2$ is either a lower or an upper bound for $\Deg_{x_2}^{(\alpha_1, \alpha_2)}(p)$, then we have: 
    \[
    p_{(\alpha_1, \alpha_2)} \ge \cpc_{(\alpha_1, \alpha_2)}(p)\cdot \frac{(|\alpha_2 - b_2|)^{|\alpha_2 - b_2|}}{(1+|\alpha_2 - b_2|)^{1+|\alpha_2 - b_2|}}.
    \]
\end{corollary}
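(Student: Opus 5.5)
This corollary is essentially a restatement of \Cref{a bound for coefficients of bivariate DL Laurent series}, so the plan is to translate notation and invoke it. Write $p(x_1,x_2)=\sum_{m\in\Z} p_m x_1^{d-m}x_2^m$, where $d$ is the homogeneity degree of $p$. Since $(\alpha_1,\alpha_2)\in\supp(p)$, we have $\alpha_1=d-\alpha_2$. Hence $p_{(\alpha_1,\alpha_2)}=p_{\alpha_2}$ and $\cpc_{(\alpha_1,\alpha_2)}(p)=\cpc_{(d-\alpha_2,\alpha_2)}(p)$.

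Next I identify the degree set. By \Cref{degree set} with $n=2$, $\Deg_{x_2}^{(\alpha_1,\alpha_2)}(p)=\{k_2 : (k_1,k_2)\in\supp(p)\text{ for some }k_1\}$. There are no later coordinates to fix, so this is exactly $\{m : (d-m,m)\in\supp(p)\}$, because homogeneity forces $k_1=d-k_2$. Therefore $b_2$ is a lower or upper bound for the set appearing in \Cref{a bound for coefficients of bivariate DL Laurent series}.

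Applying that corollary with $n=b_2$ and $k=\alpha_2$ gives
\[
p_{\alpha_2}\ge \cpc_{(d-\alpha_2,\alpha_2)}(p)\cdot\frac{|\alpha_2-b_2|^{|\alpha_2-b_2|}}{(1+|\alpha_2-b_2|)^{1+|\alpha_2-b_2|}},
\]
which is the claim. I expect no real obstacle here. The only points that need checking are the convention $0^0=1$ when $\alpha_2=b_2$, and whether the capacity is finite. If $\Omega_p=\emptyset$, then $p$ is trivially DL by \Cref{domain of convergence is non-empty for non trivial DL bivariate Laurent series}, so its support is all of $\Z$ in $m$. That is unbounded on both sides, which contradicts the existence of $b_2$, so the capacity is indeed finite and the inequality is meaningful.
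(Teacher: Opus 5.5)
Your proposal is correct and matches the paper, which gives this statement no separate proof and presents it as a rewriting of \Cref{a bound for coefficients of bivariate DL Laurent series}. The rewriting is exactly your identification $\Deg_{x_2}^{(\alpha_1,\alpha_2)}(p)=\{m : (d-m,m)\in\supp(p)\}$ followed by the substitution $n=b_2$, $k=\alpha_2$. Your additional check, that the existence of $b_2$ rules out the trivially DL case and so guarantees $\Omega_p\neq\emptyset$ and a finite capacity, is a harmless extra; the paper handles this implicitly inside the proof of the bivariate corollary, via the $w$-log-concavity lemmas.
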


Now we have all the tools needed to prove \Cref{a lower bound for coefficients using capacity}.
\begin{proof}[Proof of \Cref{a lower bound for coefficients using capacity}]
    The proof is by induction over $n \ge 2$. The case $n =2$ is \Cref{base case for induction proving a lower bound for coefficients }. Let $n > 2$.
    First, assume that $\Deg_{x_n}^{\bm{\alpha}}(p)$ is bounded below by $b_n$, and write $p(x_1, \cdots, x_n) = \sum_{k = b_n}^{\infty} x_n^{k} p_k(x_1, \cdots, x_{n-1}).$ 
    
    For any $(z_1, \cdots, z_{n-1}, z_n) \in \Omega_p$, we know that $(y_1t, \cdots, y_{n-1}t, t) \in \Omega_p$ by \Cref{scaling the variables}, where $y_i = \frac{z_i}{z_n}$ and $t > 0$ is arbitrary, besides, for any $s \in (0,t)$:
    \begin{align*}
     p(y_1t, \cdots, y_{n-1}t, s) &= \sum_{k \ge b_n} s^k t^{d-k} p_k(y_1, \cdots, y_{n-1})\\
     &= s^{b_n} \sum_{k \ge b_n} s^{k-b_n} t^{d-k} p_k(y_1, \cdots, y_{n-1})\\
     &\le s^{b_n} \sum_{k \ge b_n} t^{d-b_n}p_k(y_1, \cdots, y_{n-1}) \\
     &= (\frac{s}{t})^{b_n} p(y_1t, \cdots, y_{n-1}t , t) < \infty.
    \end{align*}
    Note that each $p_k$ is a ($d-k$)-homogeneous Laurent series with $(y_1, \cdots, y_{n-1})$ in its domain of convergence by \Cref{any point in the domain of convergence of a p gives a log concave sequence pk}. So $p(y_1t, \cdots, y_{n-1}t, s)$ also converges for any $0 < s < t$. Observe that the capacity of $p$ at point $\bm{\alpha}$ is at most $\frac{p(y_1t,\cdots, y_{n-1}t,s)}{(y_1t,\cdots, y_{n-1}t,s)^{\bm{\alpha}}}$, and we can write:
    \begin{align*}
        \cpc_{\alpha}(p) &\le \inf_{0<s<t}\frac{\sum_{k \ge b_n} s^k t^{k-b} p_k(y_1, \cdots, y_{n-1})}{y_1^{\alpha_1}\cdots y_{n-1}^{\alpha_{n-1}}s^{\alpha_n}t^{d-\alpha_n}}\\
        &=\frac{1}{y_1^{\alpha_1} \cdots y_{n-1}^{\alpha_{n-1}}}\inf_{0 < s < t} \frac{\sum_{k \ge b_n}s^k t^{d-k}p_k(y_1,\cdots, y_{n-1})}{s^{\alpha_n} t^{d- \alpha_n}}.
    \end{align*}
    The sequence $\{ p_k(y_1, \cdots, y_{n-1})\}_{k \ge b_n}$ is log-concave by \Cref{omega pk and how it is related to omega p}, so the Laurent series \\ $\sum_{k \ge b_n}s^k t^{d-k}p_k(y_1,\cdots, y_{n-1})$ is a bivariate DL Laurent series in $s$ and $t$. Using the induction hypothesis for $n=2$, we obtain:
    \begin{align*}
         \cpc_{\alpha}(p) &\le \frac{1}{y_1^{\alpha_1} \cdots y_{n-1}^{\alpha_{n-1}}}\inf_{0 < s < t} \frac{\sum_{k \ge b_n}s^k t^{d-k}p_k(y_1,\cdots, y_{n-1})}{s^{\alpha_n} t^{d- \alpha_n}}\\
        & \le  \frac{p_{\alpha_n}(y_1,\cdots,y_{n-1})}{y_1^{\alpha_1} \cdots y_{n-1}^{\alpha_{n-1}}}\cdot\frac{(1 + \alpha_n - b_n)^{1 + \alpha_n - b_n}}{(\alpha_n - b_n)^{\alpha_n - b_n}} \\
        &= \frac{p_{\alpha_n}(z_1, \cdots, z_{n-1})}{z_1^{\alpha_1}\cdots z_{n-1}^{\alpha_{n-1}}}\cdot \frac{(1 + \alpha_n - b_n)^{1 + \alpha_n - b_n}}{(\alpha_n - b_n)^{\alpha_n - b_n}}.
    \end{align*}
    Since this inequality holds for any $(z_1, \cdots, z_n) \in \Omega_p$, we conclude:
    \begin{align*}
         \cpc_{\alpha}(p) &\le \inf_{(z_1, \cdots, z_{n-1},z_n) \in \Omega_p} \frac{p_{\alpha_n}(z_1,\cdots,z_{n-1})}{z_1^{\alpha_1} \cdots z_{n-1}^{\alpha_{n-1}}}\cdot\frac{(1 + \alpha_n - b_n)^{1 + \alpha_n - b_n}}{(\alpha_n - b_n)^{\alpha_n - b_n}} \\ 
        &= \inf_{(z_1, \cdots, z_{n-1}) \in \Omega_{p_{\alpha_n}}}\frac{p_{\alpha_n}(z_1,\cdots,z_{n-1})}{z_1^{\alpha_1} \cdots z_{n-1}^{\alpha_{n-1}}} \cdot\frac{(1 + \alpha_n - b_n)^{1 + \alpha_n - b_n}}{(\alpha_n - b_n)^{\alpha_n - b_n}}\\
        & = \cpc_{(\alpha_1,\cdots,\alpha_{n-1})}(p_{\alpha_n})\cdot\frac{(1 + \alpha_n - b_n)^{1 + \alpha_n - b_n}}{(\alpha_n - b_n)^{\alpha_n - b_n}}.
    \end{align*}
   The second to the last step is due to \Cref{projection of p's domain of convergence}. The results follows from the induction hypothesis for $p_{\alpha_n}$. 

   If $\Deg_{x_n}^{\bm{\alpha}}$ is bounded above, then a similar argument works. 
\end{proof}
 
\section{Integer flows of a graph}
\label{section: Flows of a graph}
Recall that an integer flow of a directed graph $G = (V, E)$ is some $\phi : E \to \Z_{\ge 0}$. For any vertex $v$ and such a flow $\phi$, we say that the net-flow of $v$ with respect to $\phi$ is $\sum_{\overset{e}{\to} v} \phi(e) - \sum_{v \overset{e}{\to}} \phi(e)$ and is denoted by $\netflow_{\phi}(v)$. We will also regard the map $\netflow: V \to \Z$ as a vector $\netflow_{\phi} \in \Z^{V}$. Furthermore, for some $\bm{N} \in \Z^V$, let $K_G(\bm{N})$ be the set of flows of $G$ with net-flow $\bm{N}$. 

Consider the generating series $f_G(\bm{x}) := \sum_{\phi: E \to \Z_{\ge 0}} \bm{x}^{\netflow_{\phi}}$. In case $G$ is acyclic, this is a well-defined Laurent series in which the coefficient of $\bm{x^{N}}$ counts $|K_G(\bm{N})|$. In \Cref{subsection: definition of flow series}, we will argue that this series is indeed a DL Laurent series when $G$ is acyclic. We then apply \Cref{a lower bound for coefficients using capacity} to obtain capacity bounds for the number of integer flows with a fixed net-flow in \Cref{subsection: bounds for integer flows}. Using convex analysis, we rewrite $\cpc_{\bm{N}}(f_G)$ as a maximization problem. Evaluating this maximization problem at any points yields a more explicit lower bound, which we have stated in \Cref{subsection: explicit bounds for integer flows}.

Finally, we consider two special case in \Cref{subsection: special cases}:
\begin{itemize}
    \item when $G$ is a complete acyclic graph, we recover bounds for type A Kostant partition functions,
    \item when $G$ is a complete bipartite acyclic graph, we recover bounds for the number of contingency tables.
\end{itemize}

\subsection{The flow Laurent series of a graph}
\label{subsection: definition of flow series}
Let us take a quick look at the generating series $\sum_{\phi: E \to \Z_{\ge 0}} \bm{x}^{\netflow_{\phi}}$ and introduce a closed form for it.

\begin{definition}[flow Laurent series]
    \label{Laurent series assigned to a graph}
Let $G = ([n], E)$ be a directed graph. For any edge $v \overset{e}{\to} u \in E$, define:
\[
f_{e}(x_v,x_u) = \sum_{k \ge 0} x_u^k x_v^{-k}
\]
and:
\[f_G(\bm{x}) = \prod_{e = v \to u \in E} f_e(x_v,x_u)\]
\end{definition}

\begin{remark}
    \label{acyclic graphs have DL Laurent series}
    Assume that $G$ is an acyclic graph. For any edge $e$, the series $f_e$ is a DL Laurent series by \Cref{bivaraite Laurent series is DL iff it has a log concave sequence}. It is easy to check that when $G$ is acyclic, $f_G$ is a well-defined product of such DL Laurent series and by \Cref{multiplication of DL Laurent series with admissible pairs}, is a DL Laurent series itself. In fact, $f_G$ is the closed form of the generating series $\sum_{\phi: E \to \Z_{\ge 0}}\bm{x}^{\netflow_{\phi}}$.
\end{remark}
Throughout the rest of this paper, unless stated otherwise, we will assume that all of our directed graphs are simple, have vertices labeled with integers, and that all edges are directed from vertices with larger labels to smaller ones. 
\begin{remark}
\label{domain of convergence of flow Laurent series}
        Let $G$ be an acyclic directed graph. Then: $$\Omega_{f_G} = \{ (x_1, \cdots, x_n) \in 
        \R_{\ge 0}: x_v > x_u > 0 \text{ for all } e = u \to v \in E(G)\}, $$
        and for any $\bm{x} \in \Omega_{f_G}$: 
        $$f_G(\bm{x}) = \prod_{v \to u \in E(G)} \frac{1}{1 - \frac{x_v}{x_u}}.$$
\end{remark}
\subsection{Bounds for integer flows of a graph}
\label{subsection: bounds for integer flows}
Having established that the flow Laurent series of a directed acyclic graph $G$ is DL, we can now apply \Cref{a lower bound for coefficients using capacity} to this series. This yields a lower bound for the number of integer flows with a fixed net-flow, stated in \Cref{a lower bound for flows of any graph}. This lower bound uses the set $\Deg_{x_i}^{\bm{\alpha}}(f_G)$ and the capacity function. We will therefore turn towards expressing both the set $\Deg_{x_i}(\bm{\alpha})$ and the capacity function based on combinatorial properties of the graph.

\begin{theorem}
\label{a lower bound for flows of any graph}
    Let $G = ([n], E)$ be any acyclic directed graph. For any $\bm{N} \in \Z^n$ satisfying $|\bm{N}|_1 = 0$:
    \[
    |K_G({\bm{N}})| \ge \cpc_{\bm{N}}(f_G)\cdot \prod_{i = 2}^{n} \frac{|m_i -N_i|^{|m_i- N_i|}}{(|m_i- N_i| + 1)^{|m_i- N_i| + 1}},
    \]
    where $m_i \in \Z$ is either a lower bound or an upper bound for the following set:
    \[
    \Deg_{x_i}^{\bm{N}}(f_G) =  \{ \netflow_{\phi}(i) \mid  \phi: E \to \Z_{\ge 0}, \netflow_{\phi}(j) = N_j \hspace{.2cm} \forall j > i\}.
    \]
\end{theorem}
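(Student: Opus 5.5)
The plan is to apply \Cref{a lower bound for coefficients using capacity} directly to the flow Laurent series $f_G$ at the point $\bm{\alpha} = \bm{N}$. By \Cref{acyclic graphs have DL Laurent series}, $f_G$ is a DL Laurent series. It is $0$-homogeneous, since every monomial $x_u^k x_v^{-k}$ of each factor $f_e$ has degree $0$. Its coefficients are
\[
[\bm{x^N}]f_G = |K_G(\bm{N})|,
\]
because $f_G = \sum_{\phi} \bm{x}^{\netflow_\phi}$ and, by acyclicity, each coefficient is a finite count.

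Next we verify the hypotheses of \Cref{a lower bound for coefficients using capacity}. First, $\Omega_{f_G} \neq \emptyset$. Since all edges go from larger to smaller labels, we may take $x_i = c^i$ for some $c>1$; this satisfies the strict inequalities in \Cref{domain of convergence of flow Laurent series}. Second, we need $\bm{N} \in \supp(f_G)$. If $\bm{N} \notin \supp(f_G)$, then $|K_G(\bm{N})| = 0$. On the other side, \Cref{support of DL Laurent series is M convex} shows $\supp(f_G)$ is M-convex, hence it equals the lattice points of $\Newt(f_G)$. So $\bm{N}$ lies outside $\Newt(f_G)$, and \Cref{cap = 0 iff alpha not in newt p} (which applies since $\Omega_{f_G}\neq\emptyset$) gives $\cpc_{\bm{N}}(f_G) = 0$. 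The inequality then reads $0 \ge 0$, so we may assume $\bm{N} \in \supp(f_G)$.

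The remaining step is to identify $\Deg_{x_i}^{\bm{N}}(f_G)$ with the set described in the statement. By \Cref{degree set}, this set consists of all $k_i$ such that $(k_1,\dots,k_i,N_{i+1},\dots,N_n) \in \supp(f_G)$ for some $k_1,\dots,k_{i-1}$. Since the support of $f_G$ is exactly the set of net-flow vectors $\netflow_\phi$ over $\phi: E\to\Z_{\ge 0}$, this is precisely $\{\netflow_\phi(i) : \netflow_\phi(j) = N_j \ \forall j>i\}$. Given $m_i$ a lower or upper bound for this set, we set $b_i = m_i$ in \Cref{a lower bound for coefficients using capacity}, which yields
\[
\frac{|K_G(\bm{N})|}{\cpc_{\bm{N}}(f_G)} \ge \prod_{i=2}^n \frac{|m_i-N_i|^{|m_i-N_i|}}{(1+|m_i-N_i|)^{1+|m_i-N_i|}}.
\]
Multiplying through by $\cpc_{\bm{N}}(f_G)$, which is finite and positive in this case, gives the claimed bound.

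The main obstacle is justifying the DL property of $f_G$ as a product. \Cref{acyclic graphs have DL Laurent series} asserts this but leaves the admissibility check of \Cref{multiplication of DL Laurent series with admissible pairs} unverified. If that needs detail, we would start from the product of the $f_e$ in disjoint variable copies, which is DL by \Cref{multiplication of DL Laurent series with disjoint sets of variables}. We would then identify the copies of each vertex variable one at a time. Each identification only sums, at each exponent, over finitely many flows, because acyclicity together with the fixed net-flows bounds all edge values. Every intermediate series therefore remains well defined, which is the admissibility required.
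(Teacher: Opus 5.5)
Your proposal is correct and follows the paper's own proof, which applies \Cref{a lower bound for coefficients using capacity} to $f_G$ at $\bm{N}$ with $b_i=m_i$, using \Cref{acyclic graphs have DL Laurent series} for the DL property and \Cref{domain of convergence of flow Laurent series} for a nonempty domain of convergence. Your additional checks fill in details the paper leaves implicit: the case $\bm{N}\notin\supp(f_G)$, handled via \Cref{cap = 0 iff alpha not in newt p}, and the admissibility sketch for the product.
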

\begin{proof}
    Note that such $m_i$ is either a lower bound or an upper bound for the set $\Deg_{x_i}^{\bm{N}}(f_G)$.
     The statement follows from \Cref{a lower bound for coefficients using capacity}, \Cref{domain of convergence of flow Laurent series} and  \Cref{acyclic graphs have DL Laurent series}.
\end{proof}

Let us find explicit bounds for the set $\Deg_{x_i}^{\bm{N}}(f_G)$. \Cref{proposition to simplify the lower bound for Mi} bounds the elements of $\Deg_{x_i}^{\bm{N}}(f_G)$ below, and \Cref{a tight bound for Mi} bounds them above. Recall that we are assuming edges are directed from larger vertices to smaller ones. 

\begin{proposition}
    \label{proposition to simplify the lower bound for Mi}
    A terminal vertex in a directed graph $G = ([n], E)$ is a vertex $i \in [n]$ such that no edge $i \to j$ exists in $G$. If $i$ is a terminal vertex, 0 is a lower bound for the set $\Deg_{x_i}^{\bm{N}}(f_G)$. If $i$ is not a terminal vertex, then $\Deg_{x_i}^{\bm{N}}(f_G)$ has no lower bound. 
\end{proposition}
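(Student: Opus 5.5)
The plan is to argue directly from the definition of net-flow, using the convention that every edge is directed from a larger label to a smaller one. By \Cref{a lower bound for flows of any graph}, an element of $\Deg_{x_i}^{\bm{N}}(f_G)$ is a value $\netflow_{\phi}(i)$ for some integer flow $\phi: E \to \Z_{\ge 0}$ with $\netflow_{\phi}(j) = N_j$ for all $j > i$. The two claims then become elementary statements about modifying such flows.

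\emph{Terminal case.} Suppose $i$ is terminal, so no edge $i \to j$ exists. For any flow $\phi$,
\[
\netflow_{\phi}(i) = \sum_{e: u \to i} \phi(e) - \sum_{e: i \to w}\phi(e) = \sum_{e: u \to i}\phi(e) \ge 0,
\]
because the outgoing sum is empty and $\phi \ge 0$. In particular this holds for every $\phi$ contributing to $\Deg_{x_i}^{\bm{N}}(f_G)$, so $0$ is a lower bound. This uses neither the constraints at vertices $j>i$ nor acyclicity.

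\emph{Non-terminal case.} Suppose there is an edge $e_0 = i \to j$; by the labeling convention $j < i$. Start from any $\phi$ in the defining set, so that $\netflow_{\phi}(k) = N_k$ for all $k > i$. For $t \in \Z_{\ge 0}$, let $\phi_t$ agree with $\phi$ except that $\phi_t(e_0) = \phi(e_0) + t$. Changing flow on a single edge affects only the net-flows at its endpoints, so
\[
\netflow_{\phi_t}(i) = \netflow_{\phi}(i) - t, \qquad \netflow_{\phi_t}(j) = \netflow_{\phi}(j) + t,
\]
and all other net-flows are unchanged. Since both endpoints $i$ and $j$ are at most $i$, every vertex $k > i$ still has net-flow $N_k$. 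Hence $\phi_t$ is admissible, and $\netflow_{\phi}(i) - t \in \Deg_{x_i}^{\bm{N}}(f_G)$ for every $t \ge 0$, so the set is unbounded below.

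The only delicate point is that the non-terminal argument needs a starting flow, that is, $\Deg_{x_i}^{\bm{N}}(f_G) \neq \emptyset$. I would note that this holds in the setting where the bound is applied: there $\bm{N} \in \supp(f_G)$, i.e.\ $K_G(\bm{N}) \neq \emptyset$, and any flow in $K_G(\bm{N})$ satisfies the constraints at all $k > i$. (If the set were empty, the statement would be vacuous, and it is irrelevant for \Cref{a lower bound for flows of any graph}.) Beyond this, both parts are immediate from the definitions.
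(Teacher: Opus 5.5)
Your proof is correct and follows the paper's argument: the terminal case is the same one-line observation that $\netflow_{\phi}(i)$ is a sum of incoming flows, and the non-terminal case likewise pushes arbitrarily large flow along an edge $i \to j$ with $j<i$, which leaves the constraints at all vertices $k>i$ untouched. Your nonemptiness caveat makes explicit an assumption the paper uses implicitly when it says to assign values to the other edges so that the net-flows at $j>i$ equal $N_j$; note that if the set were empty the claim ``has no lower bound'' would be false rather than vacuous, though as you say this case never arises when $\bm{N}\in\supp(f_G)$.
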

\begin{proof}
    Let $i$ be a terminal vertex, then for any $\phi : E \to \Z_{\ge 0}$, we have: 
    \[
    \netflow_{\phi}(i) = \sum_{\overset{e}{\to}i} \phi (e) \ge 0.
    \]
    If $i$ is not a terminal vertex, let $i \overset{e}{\to} v \in E$ be an edge. Assign integers to all edges but $e$ so that the net-flow of all vertices $j> i$ is $N_j$. Then we can assign an arbitrarily large flow to $e$, making the net-flow of $i$ arbitrarily small. 
\end{proof}

\begin{proposition}
\label{a tight bound for Mi}
    For a directed acyclic graph $G= ([n],E)$, let $H$ be the undirected induced subgraph of $G$ on $\{i, \cdots, n\}$. Within $H$, let $C_i$ be the connected component of $i$. Then $-\sum_{j \in C_i \setminus \{i\}}N_j$ is a tight upper bound for the set $\Deg_{x_i}^{\bm{N}}(f_G)$.
\end{proposition}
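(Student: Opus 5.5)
The plan is to prove the inequality by summing the net-flow condition over the component $C_i$, and then to show that the bound is attained. For the inequality, fix any $\phi: E \to \Z_{\ge 0}$ with $\netflow_\phi(j) = N_j$ for all $j > i$, and write $C = C_i$. Summing net-flows over the vertices of $C$, every edge with both endpoints in $C$ contributes once positively and once negatively, so it cancels. Hence
\[
\sum_{v \in C} \netflow_\phi(v) \;=\; \sum_{\substack{u \to v \in E\\ u \notin C,\ v \in C}} \phi(u\to v) \;-\; \sum_{\substack{v \to u \in E\\ v \in C,\ u \notin C}} \phi(v\to u).
\]

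The key structural step is to identify the edges crossing the boundary of $C$. Any vertex $u \notin C$ with $u \ge i$ is a vertex of $H$ lying in a different connected component, so it is not adjacent to $C$. Every edge between $C$ and its complement therefore joins some $v \in C$ to some $u < i$. Since $C \subseteq \{i,\dots,n\}$ and edges are directed from larger to smaller labels, every such edge is directed $v \to u$, out of $C$. The incoming sum above is thus empty, and the right-hand side is $\le 0$. Isolating the term for $v=i$ gives
\[
\netflow_\phi(i) \;\le\; -\sum_{j \in C\setminus\{i\}} N_j ,
\]
which proves the upper bound. The same computation shows that equality holds exactly when $\phi$ vanishes on every edge leaving $C$.

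For tightness, I would exhibit flows attaining equality. Take any $\psi : E \to \Z_{\ge 0}$ supported on the edges of the induced subgraph $G[C]$ (for instance $\psi \equiv 0$, or $\psi$ nonzero on a single edge $j \to i$). Let $N_j := \netflow_\psi(j)$ for $j > i$. Then $\psi$ lies in the set defining $\Deg_{x_i}^{\bm{N}}(f_G)$, and by the equality criterion $\netflow_\psi(i) = -\sum_{j\in C\setminus\{i\}} N_j$. So the bound is achieved, and no bound depending only on the data $\{N_j\}_{j\in C\setminus\{i\}}$ can be smaller.

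The main obstacle is deciding whether equality can be forced for every $\bm N$ with nonempty $\Deg$ set, not just for some $\bm N$. The natural attempt is rerouting: take a feasible $\phi$ and push each unit of flow leaving $C$ along an edge $v \to u$ with $u<i$ to the vertex $i$ instead. This needs a directed path from $v$ to $i$ inside $C$, and such a path need not exist. For example, let $C=\{1,2,3\}$ with edges $3\to1$, $3\to2$ and $2\to0$, and take $N_3=0$, $N_2=-2$. Then vertex $2$ receives nothing from $3$ and must send $2$ units along $2\to 0$, which forces $\netflow(1)=0$, while the bound is $2$. So the tightness I can actually prove is the attainment statement above. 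The rerouting argument would need a directed-path hypothesis, and the example shows that per-$\bm N$ attainment fails without one. For the application in \Cref{a lower bound for flows of any graph}, only the upper-bound half is used.
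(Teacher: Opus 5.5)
Your proof of the inequality is the paper's argument, slightly streamlined. The paper sums net-flows over $D_i=C_i\setminus\{i\}$ (no edge enters $D_i$, and every edge leaving it lands in $[i]$), and then separately bounds $\netflow_\phi(i)$ by the inflow from $D_i$. You sum over all of $C_i$ at once, which also yields the exact equality criterion: $\phi$ must vanish on every edge leaving $C_i$. This half is correct. It is also the only half used later, in Theorem~\ref{a lower bound for flows of any graph} and Corollaries~\ref{simplified lower bound for flows in a general directed graph} and~\ref{main theorem for contingency tables}.

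On tightness, you are right and the paper is not. The paper claims attainment for the given $\bm N$. It starts from a feasible $\phi$, and for each edge $e: v\to u$ with $v\in C_i$ and $u<i$, it subtracts or adds $\phi(e)$ along $e$ followed by an \emph{undirected} path from $v$ to $i$. It never checks that edges traversed against their orientation stay nonnegative. Your example is exactly this failure. The only path from $2$ to $1$ is $2\leftarrow 3\to 1$, so rerouting the two units on $2\to 0$ would set $\phi(3\to 2)=-2$. Indeed $\Deg^{\bm N}_{x_1}(f_G)=\{0\}$ while the bound is $2$; shifting all labels up by one places the example on the vertex set $[4]$ with $i=2$. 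So the tightness clause, in the per-$\bm N$ sense the paper's proof aims at, is false. It does hold under the directed-path hypothesis you identify, which the complete and complete bipartite graphs of the special cases satisfy. In general only your weaker attainment survives.

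One small overstatement in your write-up: the claim that no bound depending only on $(N_j)_{j\in C_i\setminus\{i\}}$ can be smaller holds only at data values realizable by flows supported on $G[C_i]$. Your own data $(N_2,N_3)=(-2,0)$ is a value where the sharp bound is $0$.
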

\begin{proof}
    For the sake of brevity, let $D_i = C_i \setminus\{i\}$. First, let us prove that $-\sum_{j \in D_i}N_j$ is an upper bound. Observe that there are no edges $e \in E$ entering the set $D_i$. Moreover, if $e : u \to v$ exits the set $D_i$, we must have $v \le i$. Hence:
    \[
        - \sum_{j \in D_i} N_j =\sum_{D_i \overset{e}{\to}}\phi(e)- \sum_{\overset{e}{\to}D_i}\phi(e)  = \sum_{D_i \overset{e}{\to} [i]} \phi(e) \ge  \sum_{D_i \overset{e}{\to} i} \phi(e). \label{N(i) to i} \tag{1}
    \]
    Moreover, any edge $v \to i$ satisfies $v \in D_i$, therefore: 
    \[
        \netflow_{\phi}(i) = \sum_{\overset{e}{\to}i} \phi(e) - \sum_{i\overset{e}{\to}} \phi(e) 
        \le \sum_{\overset{e}{\to}i}\phi(e) =  \sum_{D_i \overset{e}{\to}i} \phi(e) 
        \label{i from N(i)} \tag{2}
    \]
    It follows that $- \sum_{j \in D_i}N_j$ is an upper bound from \ref{N(i) to i} and \ref{i from N(i)}.

    To show that $-\sum_{j \in D_i}N_j$ is a tight upper bound, take some flow $\phi: E \to \Z_{\ge 0}$ with $\netflow_{\phi}(j) = N_j$ for $j > i$. Note that such a $\phi$ exists. Let $u \in [i-1]$ be a vertex that is connected to the set $D_i \cup \{ i\}$ with an edge, say $e : v \to u$ for some $v \in D_i \cup \{ i\}$. Let $p$ be a (possibly empty) undirected path from $v$ to $i$. Walk through the pass $e \cup p$ and subtract/add $\phi(e)$ units of flow from/to each edge of $e \cup p$ so that the net-flow of none of the intermediate vertices change. Repeat this for all such vertices $v$ and all edges connecting them to $D_i \cup \{i\}$ to obtain a new flow $\phi'$. By the way of construction, the net-flow of none of the vertices $j > i$ changes, and $\netflow_{\phi}(j) = \netflow_{\phi'}(j) = N_j$. Furthermore, $\phi'(e) = 0$ for any edge $v \overset{e}{\to} D_i \cup \{ i\}$. Now both of the inequalities in \ref{N(i) to i} and \ref{i from N(i)} become equalities:
    \[
        -\sum_{j \in D_i}N_j= \sum_{D_i \overset{e}{\to}} \phi'(e) - \sum_{\overset{e}{\to} D_i} \phi'(e)= \sum_{D_i \overset{e}{\to}[i]} \phi'(e) = \sum_{D_i \overset{e}{\to}i} \phi'(e),
        \label{tight 1} \tag{3}
        \]
    and:
    \[
    \netflow_{\phi'}(i) = \sum_{\overset{e}{\to}i}\phi'(e) - \sum_{i \overset{e}{\to}}\phi'(e) = \sum_{\overset{e}{\to}i}\phi'(e) = \sum_{D_i \overset{e}{\to}i}\phi'(e). \label{tight 2} \tag{4}
    \]
    Tightness of the bound follows from \ref{tight 1} and \ref{tight 2}.
\end{proof}

Now that we have a lower bound and an upper bound for the elements of the set $\Deg_{x_i}^{\bm{N}}(f_G)$, we can rewrite \Cref{a lower bound for flows of any graph} without using the set $\Deg_{x_i}^{\bm{N}}(f_G)$ in the statement.

\begin{corollary}
\label{simplified lower bound for flows in a general directed graph}
    Let $G = ([n], E)$ be a directed graph with terminal vertices $T$ and assume $\bm{N} \in \Z_{\ge 0}^{n}$ satisfies $|\bm{N}|_1 = 0$. For each $i$, let $C_i$ be the connected component of the undirected induced subgraph of $G$ on $\{i, \cdots, n\}$ containing $i$. Then: 
    \[
    |K_G({\bm{N}})| \ge \cpc_{\bm{N}}(f_G)\cdot \prod_{i = 2}^{n} \max \left\{ \frac{|s_i|^{|s_i|}}{(|s_i| + 1)^{|s_i| + 1}},\frac{|N_i|^{|N_i|}}{(|N_i| + 1)^{|N_i| + 1}}\cdot\delta_{i \in T} \right\}
    \]
    where $s_i = - \sum_{j \in C_i} N_j$, and $\delta_{i\in T}$ is the indicator variable of $i$ being a terminal vertex.
\end{corollary}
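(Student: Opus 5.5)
This corollary follows from \Cref{a lower bound for flows of any graph} by choosing $m_i$ well for each $i \in [2,n]$. That theorem allows $m_i$ to be any lower or upper bound of $\Deg_{x_i}^{\bm{N}}(f_G)$, and the factor $\frac{t^t}{(t+1)^{t+1}}$ with $t = |m_i - N_i|$ can be optimized independently for each $i$. So we will show that each of the two quantities inside the maximum is achievable, for the appropriate $i$, as the factor coming from a legitimate choice of $m_i$. Taking the better choice vertex by vertex then gives the product of maxima.

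\emph{Upper-bound choice.} For every $i \ge 2$, \Cref{a tight bound for Mi} shows that $m_i = -\sum_{j \in C_i\setminus\{i\}} N_j$ is an upper bound for $\Deg_{x_i}^{\bm{N}}(f_G)$. Then
\[
m_i - N_i = -\sum_{j \in C_i} N_j = s_i,
\]
so the corresponding factor is $\frac{|s_i|^{|s_i|}}{(|s_i|+1)^{|s_i|+1}}$.

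\emph{Lower-bound choice.} If $i \in T$, then \Cref{proposition to simplify the lower bound for Mi} shows that $m_i = 0$ is a lower bound for $\Deg_{x_i}^{\bm{N}}(f_G)$. In that case $|m_i - N_i| = |N_i|$, giving the factor $\frac{|N_i|^{|N_i|}}{(|N_i|+1)^{|N_i|+1}}$. If $i \notin T$, the second entry of the maximum is $0$ because of $\delta_{i\in T}$, so it never beats the first and only the upper-bound choice matters. Hence, for each $i$, the factor from \Cref{a lower bound for flows of any graph} under the better choice of $m_i$ equals the maximum in the statement, and multiplying over $i$ gives the claimed inequality.

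The only point needing care is that \Cref{a lower bound for flows of any graph} genuinely allows a separate choice (upper or lower bound) at each index. This holds because \Cref{a lower bound for coefficients using capacity} takes an arbitrary $b_i$ that is an upper or lower bound, independently for each $i$. Two routine checks remain. First, the hypotheses of \Cref{a lower bound for coefficients using capacity} hold: $f_G$ is DL by \Cref{acyclic graphs have DL Laurent series} (here $G$ is implicitly acyclic with edges directed from larger to smaller labels), and $\Omega_{f_G}$ is nonempty by \Cref{domain of convergence of flow Laurent series}. Second, if $\bm{N} \notin \supp(f_G)$, then $\cpc_{\bm{N}}(f_G) = 0$ by \Cref{cap = 0 iff alpha not in newt p} together with M-convexity (\Cref{support of DL Laurent series is M convex}), so the inequality is trivial. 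Neither check is a real obstacle.
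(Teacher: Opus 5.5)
Your proposal is correct and matches the paper's proof, which simply cites \Cref{a lower bound for flows of any graph} together with \Cref{proposition to simplify the lower bound for Mi} and \Cref{a tight bound for Mi}. That is exactly your per-vertex choice of $m_i$: the upper bound $-\sum_{j\in C_i\setminus\{i\}}N_j$ gives $|s_i|$, and the lower bound $0$ at terminal vertices gives $|N_i|$. Your remarks that the choice can be made independently for each $i$, and that the case $\bm N\notin\supp(f_G)$ is trivial, fill in details the paper leaves implicit.
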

\begin{proof}
    Follows from \Cref{a lower bound for flows of any graph}, \Cref{proposition to simplify the lower bound for Mi} and \Cref{a tight bound for Mi}.
\end{proof}

\subsection{Explicit bounds for integer flows}
\label{subsection: explicit bounds for integer flows}
The next step is to write $\cpc_{\bm{N}}(f_G)$ as a more explicit quantity of $G$.
We use tools from convex analysis to turn the minimization problem $\cpc_{\bm{N}}f_G = \inf_{\bm{x} \in \Omega_{f_G}} \frac{f_G}{\bm{x^N}}$ to a maximization problem. We can then evaluate this maximization problem at any point to obtain a lower bound for the capacity function $\cpc_{\bm{N}}f_G$. The ideas we use in the rest of this section have been presented before in \cite[Lemma 5]{Bar12}, \cite[Proposition 6.2]{BLP23}, and \cite[Proposition 3.1]{LM26}.

We will start with a short review of some basic concepts in convex analysis. A function $f: \R^n \to \R^*$ is said to be convex if its domain $D(f) := \{ \bm{x} \in \R^n: f(\bm{x}) < \infty \}$ is convex and $f$ is convex over $D(f)$. Further, the convex conjugate of $f$ is the function: 
    \[
    f^*(\bm{y}) = \sup_{\bm{x} \in D(f)} \left\{ \left<\bm{x}, \bm{\alpha}\right> - f(\bm{x})\right\},
    \]
where $\left<.,.\right>$ is the normal dot product over $\R^n$. 
\begin{theorem}[{\cite[Theorem 16.4]{Roc97}}] 
\label{conjugate of sum of convex functions}Let $f$ be a convex function given by the sum $f = \sum_i f_i$ of convex functions with respective domains $D_i$. If $\cap_i\relint D_i \neq \emptyset $, then:
\[
f^*(\bm{\alpha}) = \inf_{\overset{\sum_i \bm{\alpha}_i = \bm{\alpha}}{\bm{\alpha}_i \in D(f_i^*)}} f_i(\bm{\alpha}_i)
\]
    where for each $\bm{\alpha}$, the infimum is attained. 
\end{theorem}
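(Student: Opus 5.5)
The plan is to prove the two inequalities separately, reduce the hard direction to the case of two functions, and then induct on the number of summands. I read the displayed formula as $f^*(\bm{\alpha}) = \inf \sum_i f_i^*(\bm{\alpha}_i)$ over $\sum_i \bm{\alpha}_i = \bm{\alpha}$, since the summation sign and the conjugates are clearly intended. Throughout I assume the $f_i$ are proper convex functions, as in Rockafellar.

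\emph{Easy inequality.} Take any decomposition $\bm{\alpha} = \sum_i \bm{\alpha}_i$ and any $\bm{x} \in D(f) = \bigcap_i D_i$. Then
\[
\langle \bm{x}, \bm{\alpha}\rangle - f(\bm{x}) = \sum_i \bigl(\langle \bm{x}, \bm{\alpha}_i\rangle - f_i(\bm{x})\bigr) \le \sum_i f_i^*(\bm{\alpha}_i).
\]
Taking the supremum over $\bm{x}$ and then the infimum over decompositions gives $f^*(\bm{\alpha}) \le \inf \sum_i f_i^*(\bm{\alpha}_i)$. In particular, if $f^*(\bm{\alpha}) = +\infty$, the right-hand side is also $+\infty$. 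The infimum is then attained trivially, since every decomposition gives the value $+\infty$.

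\emph{Hard inequality, two summands.} Assume $f^*(\bm{\alpha}) < \infty$. Replacing $f_1$ by $f_1 - \langle \cdot, \bm{\alpha}\rangle$ shifts $f_1^*$ by $\bm{\alpha}$, so it suffices to treat $\bm{\alpha} = \bm{0}$. Set $\mu := -f^*(\bm{0}) = \inf_{\bm{x}}(f_1 + f_2)(\bm{x})$, which is finite. Consider the two convex sets
\[
A = \{(\bm{x},t) : t > f_1(\bm{x})\}, \qquad B = \{(\bm{x},t) : t \le \mu - f_2(\bm{x})\}.
\]
These are disjoint by the definition of $\mu$. Because $\relint D_1 \cap \relint D_2 \neq \emptyset$, their relative interiors are disjoint and they can be properly separated (Rockafellar Thm.~11.3). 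The separating hyperplane cannot be vertical: a vertical hyperplane would properly separate $D_1$ and $D_2$, which is impossible when their relative interiors meet. Normalizing the hyperplane then gives an affine function $\langle \bm{x}, \bm{\beta}\rangle + c$ with $f_1 \ge \langle \cdot, \bm{\beta}\rangle + c \ge \mu - f_2$. Unwinding this gives $f_1^*(\bm{\beta}) \le -c$ and $f_2^*(-\bm{\beta}) \le c - \mu$. Hence $f_1^*(\bm{\beta}) + f_2^*(-\bm{\beta}) \le -\mu = f^*(\bm{0})$. Combined with the easy inequality, this is equality, attained at $(\bm{\alpha}_1, \bm{\alpha}_2) = (\bm{\beta}, -\bm{\beta})$.

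\emph{Induction, and the expected obstacle.} For $k$ summands, write $f = (f_1 + \cdots + f_{k-1}) + f_k$ and apply the two-function case. This requires $\relint\bigl(\bigcap_{i<k} D_i\bigr) \cap \relint D_k \neq \emptyset$. That follows from the fact that $\relint \bigcap_i D_i = \bigcap_i \relint D_i$ whenever the right-hand side is nonempty (Rockafellar Thm.~6.5). Then apply the induction hypothesis to $(f_1 + \cdots + f_{k-1})^*$, which gives a decomposition with attained infimum at each stage. The step I expect to be the main obstacle is the separation argument in the two-function case. Specifically, one must rule out a vertical separating hyperplane and make sure the relative-interior hypothesis, rather than an interior hypothesis, suffices. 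If needed, I would first reduce to the affine hull of $D_1 \cap D_2$ so that relative interiors become genuine interiors.
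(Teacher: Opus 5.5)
Your proof is correct, and you are right that the display should read $\inf\sum_i f_i^*(\bm{\alpha}_i)$. Properness is also automatic in the paper's setting: its convex functions take values in $\R\cup\{\infty\}$, and the hypothesis forces every $D_i$ to be nonempty.

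The paper gives no argument of its own here; it only cites Rockafellar, and the cited proof runs differently from yours. It goes through closures. First it dualizes the elementary identity for the conjugate of an infimal convolution. This gives, with no hypothesis, that $(\mathrm{cl} f_1+\cdots+\mathrm{cl} f_m)^*$ is the closure of the infimal convolution of the $f_i^*$. It then uses the relative-interior condition twice:
\begin{itemize}
\item to replace $\sum_i\mathrm{cl} f_i$ by $\mathrm{cl}\sum_i f_i$;
\item together with a recession-function criterion and separation, to show that this infimal convolution is already closed with its infimum attained. (The recession functions of the $f_i^*$ are the support functions of the $D_i$.)
\end{itemize}
That machinery treats all summands at once and yields the hypothesis-free closure formula as a by-product.

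Your route is the direct Fenchel-duality argument. You make a single proper separation of the strict epigraph of $f_1$ from the hypograph of $\mu-f_2$, using the domain hypothesis only to exclude vertical hyperplanes, and then induct using $\relint\bigcap_i D_i=\bigcap_i\relint D_i$. It needs nothing beyond Theorems~6.5 and~11.3 of Rockafellar, and it reads off the optimal decomposition $(\bm{\beta},-\bm{\beta})$ directly from the separating hyperplane.

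Two small points of exposition. First, $\relint A$ and $\relint B$ are disjoint simply because $A\cap B=\emptyset$. Your sentence attributes this to the hypothesis on $D_1,D_2$, which you actually need only for non-verticality. Second, the reduction to an affine hull that you hold in reserve is unnecessary, since Theorem~11.3 is already formulated with relative interiors.
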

Using \Cref{conjugate of sum of convex functions}, we can rewrite $\cpc_{\bm{N}}(f_G)$ as a maximization problem: 
\begin{proposition}
\label{convex analysis of capacity of flow series}
    Let $G = ([n],E)$ be an acyclic directed graph with edges directed from larger vertices to smaller vertices. Recall that $K_{G}(\bm{N}) \subseteq \Z^{E}_{\ge 0}$ is the set of integral $\bm{N}$-flows of $G$. Then:
    \[
    \cpc_{\bm{N}} (f_G) = \sup_{\bm{a} \in K_G(\bm{N})}\prod_{j \to i \in E(G)} \frac{(a_{ij}+ 1)^{a_{ij} + 1}}{a_{ij}^{a_{ij}}}.
    \]
   
\end{proposition}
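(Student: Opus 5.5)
The plan is to pass to logarithmic coordinates and compute the capacity by Fenchel duality, using \Cref{conjugate of sum of convex functions}. Throughout, the supremum should be read over \emph{real} nonnegative flows $\bm{a}\in\R_{\ge 0}^E$ with net-flow $\bm{N}$, i.e.\ the real points of the flow polytope, which is how the dual program is meant. For $\bm{x}\in\Omega_{f_G}$ write $\bm{x}=e^{\bm{y}}$. Each edge $e: v\to u$ contributes $f_e=\sum_{k\ge 0}x_u^kx_v^{-k}=(1-x_u/x_v)^{-1}$. This converges exactly on $\{x_u<x_v\}$, and each unit of flow on $e$ adds $+1$ to the net-flow of the head $u$ and $-1$ to that of the tail $v$. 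Setting $\ell_e(\bm{y}):=y_u-y_v$ and $h(t):=-\log(1-e^t)$ for $t<0$ (and $h=+\infty$ otherwise), \Cref{domain of convergence of flow Laurent series} and \Cref{capacity is a convex optimization problem} give
\[
-\log\cpc_{\bm{N}}(f_G)= -\inf_{\bm{y}\in\R^n}\Big[\sum_{e\in E}h(\ell_e(\bm{y}))-\langle \bm{N},\bm{y}\rangle\Big] = F^*(\bm{N}),
\]
where $F:=\sum_e h\circ \ell_e$. The function $h$ is convex, since $\log(1-e^t)$ is concave. So $F$ is a sum of convex functions, and $-\log\cpc_{\bm{N}}(f_G)$ is exactly the convex conjugate $F^*(\bm{N})$.

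Next I compute the conjugate of each summand. For $a\ge 0$, a one-variable calculus computation gives
\[
h^*(a)=\sup_{t<0}\big(at+\log(1-e^t)\big)=a\log\tfrac{a}{a+1}-\log(a+1)=-\big[(a+1)\log(a+1)-a\log a\big],
\]
with the supremum attained at $e^t=a/(a+1)$. For $a<0$ we get $h^*(a)=+\infty$. Since $f_e:=h\circ\ell_e$ factors through a linear map, its conjugate satisfies $f_e^*(\bm{\alpha})=h^*(a)$ if $\bm{\alpha}=a(\bm{e}_u-\bm{e}_v)$ for some $a\ge 0$, and $f_e^*(\bm{\alpha})=+\infty$ otherwise. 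This follows directly from the definition, because $\langle\bm{\alpha},\bm{y}\rangle$ must depend only on $y_u-y_v$ for the supremum to be finite.

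Now I apply \Cref{conjugate of sum of convex functions}. Its hypothesis is that the relative interiors of the domains $\{\bm{y}: y_u<y_v\}$, for $e: v\to u$, have a common point. These domains are open half-spaces, and a common point exists because $G$ is acyclic: with edges directed from larger to smaller labels, $y_i=i$ works. The theorem then gives
\[
F^*(\bm{N})=\min\Big\{\sum_e h^*(a_e)\;:\;a_e\ge 0,\ \sum_{e:v\to u}a_e(\bm{e}_u-\bm{e}_v)=\bm{N}\Big\}.
\]
The constraint says precisely that $\bm{a}$ is a real nonnegative flow with net-flow $\bm{N}$. Negating and exponentiating yields
\[
\cpc_{\bm{N}}(f_G)=\sup_{\bm{a}}\prod_e\frac{(a_e+1)^{a_e+1}}{a_e^{a_e}},
\]
with the supremum attained.

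I expect the main obstacle to be bookkeeping rather than an idea. The sign and orientation conventions need care: the paper's remark on $\Omega_{f_G}$ and the edge-direction convention must be matched with the definition of $f_e$ so that the dual constraint comes out as net-flow $+\bm{N}$ and not $-\bm{N}$. I also need to check the degenerate cases. If no real flow with net-flow $\bm{N}$ exists, both sides should be $0$, matching \Cref{cap = 0 iff alpha not in newt p}. Finally, $a_e=0$ should be handled via the convention $0^0=1$.
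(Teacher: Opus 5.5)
Your proposal is correct and follows the paper's argument. You pass to logarithmic coordinates, apply Rockafellar's Theorem 16.4 to the edgewise summands $-\log(1-e^{y_u-y_v})$, compute each conjugate as $\log\frac{a^a}{(a+1)^{a+1}}$ (supported on nonnegative multiples of $\bm{e}_u-\bm{e}_v$), and negate and exponentiate.

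Reading the supremum over real rather than integral flows is the right reading. It is what the duality actually produces, and it is what the paper uses in the subsequent corollary. Your explicit check of the relative-interior hypothesis via $y_i=i$, and your corrected sign conventions, fill in details the paper's write-up leaves implicit or gets wrong.
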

\begin{proof}
    Let $f = - \log{f_G(e^{\bm{x}})} = - \sum_{j \to i \in E} \log (1 - e^{x_i -x_j})$. The function $-\log(1 - e^t)$ is convex over its domain $\{ t :  t<0\}$, so  $f$ is a convex function over its domain $D = \{\bm{x} \in \R^n: x_i < x_j \hspace{0.2cm} \forall j \to i \in E \}$, and is given as the sum of convex functions $f_{i,j}(\bm{x}) = - \log(1 - e^{x_i - x_j})$.  Let us calculate the conjugate of each $f_{i,j}$ where $j \to i$ is an edge in $G$ for some $j > i$.
    \begin{align*}
        f_{i,j}^*(\bm{\alpha}) &= \sup_{x_i < x_j} \left\{ \left< \bm{x}, \bm{\alpha}\right> - \log(1 - e^{x_i-x_j}) \right\}
    \end{align*}
    It is easy to see that $f_{i,j}^*(\bm{\alpha}) = \infty$ if $\alpha_k \neq 0$ for some $k \neq i,j$, and also, $f_{i,j}^*(\bm{\alpha}) = \infty$ if $\alpha_i \neq \alpha_j$. Hence any $\bm{\alpha} \in D(f_{i,j}^*)$ is of the form $\bm{\alpha} = c\cdot e_i - c. e_j$ for some $c \in \R$. Furthermore, we can use basic calculus to obtain: 
     \begin{align*}
        f_{i,j}^*(c\cdot e_i - c \cdot e_j) &= \sup_{x_i < x_j} \left\{ cx_i - cx_j - \log(1 - e^{x_i-x_j}) \right\} = \log \left( \frac{c^c}{(c + 1)^{c + 1}}\right). 
    \end{align*}
    By \Cref{conjugate of sum of convex functions}: 
    \[
    \sup_{\bm{x} \in D(f)} \left\{ \left<\bm{x}, \bm{\alpha}\right> - f(\bm{x})\right\} = \inf_{\sum_{j \to i \in E} \bm{\alpha}^{(i,j)} = \bm{\alpha}} \sum_{j \to i \in E} f^*_{i,j}(\bm{\alpha}^{(i,j)})
    \]
    Negating and exponentiating the above expression yields: 
    \[
    \inf_{\bm{x} \in D} \frac{f_G(e^{\bm{x}})}{(e^{\bm{x}})^{\bm{\alpha}}} = \sup_{\bm{a} \in K_G(\bm{\alpha})} \prod_{j \to i \in E} \frac{(a_{ij} + 1)^{a_{ij} + 1}}{a_{ij}^{a_{ij}}}.
    \] 
\end{proof}

Finally, we can replace the capacity function in \Cref{a lower bound for flows of any graph} to obtain the following lower bound for the number of integral flows of a directed acyclic graph.
\begin{corollary}
    \label{capacity swapped with a supermum for all graphs and complete graphs}
    Let $G= ([n], E)$ be a directed acyclic graph. Let $\bm{N} \in \Z^n$, then for any (not necessarily integer) flow $\phi$ of $G$:
    \[
    |K_G({\bm{N}})| \ge \left[  \prod_{e \in E(G)} \frac{(\phi(e) + 1)^{\phi(e) + 1}}{\phi(e)^{\phi(e)}}\right] \cdot\prod_{i = 2}^{n} \max \left\{ \frac{|s_i|^{|s_i|}}{(|s_i| + 1)^{|s_i| + 1}},\frac{|N_i|^{|N_i|}}{(|N_i| + 1)^{|N_i| + 1}}\cdot\delta_{i \in T} \right\},
    \]
    where $s_i = - \sum_{j \in C_i} N_j$, and $\delta_{i\in T}$ is the indicator variable of $i$ being a terminal vertex.
    
\end{corollary}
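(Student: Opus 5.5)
The plan is to combine \Cref{simplified lower bound for flows in a general directed graph} with \Cref{convex analysis of capacity of flow series}. The former already gives
\[
|K_G(\bm{N})| \ge \cpc_{\bm{N}}(f_G)\cdot \prod_{i=2}^n \max\left\{\frac{|s_i|^{|s_i|}}{(|s_i|+1)^{|s_i|+1}},\frac{|N_i|^{|N_i|}}{(|N_i|+1)^{|N_i|+1}}\delta_{i\in T}\right\}.
\]
So it suffices to show that $\cpc_{\bm{N}}(f_G) \ge \prod_{e}\frac{(\phi(e)+1)^{\phi(e)+1}}{\phi(e)^{\phi(e)}}$ for every nonnegative real flow $\phi$ with net-flow $\bm{N}$.

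The cleanest route avoids the duality theorem and argues directly by weak duality. Fix $\bm{x}\in\Omega_{f_G}$ and write $t_e = x_i/x_j\in(0,1)$ for each edge $e=j\to i$. Since $\phi$ has net-flow $\bm{N}$, we get $\bm{x}^{\bm{N}} = \prod_{e} t_e^{\phi(e)}$, because each edge contributes $x_i^{\phi(e)}x_j^{-\phi(e)}$. Hence
\[
\frac{f_G(\bm{x})}{\bm{x}^{\bm{N}}} = \prod_{e}\frac{1}{(1-t_e)\,t_e^{\phi(e)}}.
\]
Each factor is then bounded separately by the univariate computation of \Cref{capacity function for 1/x-y and 1/y-x}: $\inf_{0<t<1}(1-t)^{-1}t^{-c} = \frac{(c+1)^{c+1}}{c^c}$ for real $c\ge 0$, attained at $t=c/(c+1)$, with the convention $0^0=1$. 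Taking the infimum over $\bm{x}$ yields the desired lower bound on the capacity. Equivalently, this is the inequality ``$\ge$'' in \Cref{convex analysis of capacity of flow series}, with real flows allowed in the supremum, which is what the conjugate computation there actually produces.

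The only point needing care is that the supremum in \Cref{convex analysis of capacity of flow series} is written over $K_G(\bm{N})$, whereas the statement we want allows non-integral flows. The direct weak-duality argument above sidesteps this, since it never uses integrality of $\phi$. I would also remark that if no such $\phi$ exists the statement is vacuous. Note too that $|\bm{N}|_1=0$ is automatic for any flow, because each edge contributes $+\phi(e)$ to one endpoint and $-\phi(e)$ to the other.
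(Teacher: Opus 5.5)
Your proposal is correct. Its overall structure is the paper's: the paper obtains this corollary by substituting \Cref{convex analysis of capacity of flow series} into the capacity bound of \Cref{simplified lower bound for flows in a general directed graph} and evaluating the supremum at the given $\phi$. You differ in how you justify $\cpc_{\bm{N}}(f_G)\ge\prod_e(\phi(e)+1)^{\phi(e)+1}/\phi(e)^{\phi(e)}$.

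The paper derives it from strong duality. It splits $\log f_G(e^{\bm{y}})$ into edge terms, computes their convex conjugates, and applies \Cref{conjugate of sum of convex functions} to get an \emph{equality} between the capacity and a supremum over flows. You prove only the weak-duality half, but directly: writing $\bm{x}^{\bm{N}}=\prod_e t_e^{\phi(e)}$ splits $f_G(\bm{x})/\bm{x}^{\bm{N}}$ into one-variable factors, each minimized separately.

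Your version is shorter, self-contained, and works verbatim for real $\phi\ge 0$. That matters, because \Cref{convex analysis of capacity of flow series} takes its supremum over the integral set $K_G(\bm{N})$. Read literally, it does not cover the non-integral flows the corollary allows; as you note, the conjugate computation actually ranges over real flows. What the paper's route adds is sharpness: choosing $\phi$ optimally recovers $\cpc_{\bm{N}}(f_G)$ exactly, so the explicit bound loses nothing relative to \Cref{simplified lower bound for flows in a general directed graph}. For the corollary itself, your inequality is all that is needed.

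One small sharpening of your closing remark about the vacuous case. Suppose a real flow $\phi\ge 0$ with integral net-flow $\bm{N}$ exists. Then total unimodularity of the incidence matrix gives an integral flow as well, so $\bm{N}\in\supp(f_G)$. Hence the hypothesis $\bm{\alpha}\in\supp(p)$ of the underlying \Cref{a lower bound for coefficients using capacity} genuinely holds whenever the statement is non-vacuous.
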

For a general directed acyclic graph $G$ and a general vector $\bm{N} \in \Z^V$, it is often not easy to find a canonical flow $\phi \in K_G(\bm{N})$. Thus, we will be mostly using \Cref{simplified lower bound for flows in a general directed graph} for applications.  

\subsection{Special cases}
\label{subsection: special cases}
Recall from \Cref{subsection: contingency tables} that if $G$ is a complete directed acyclic graph on vertices $[n+1]$, the number of integer flows of $G$ with a fixed net-flow $\bm{N}$ is given by the type A Kostant partition function, denoted by $K_n(\bm{N})$. We also recall that the the number of contingency tables with marginals $(\bm{\alpha}, \bm{\beta})$ is denoted by $\CT(\bm{\alpha}, \bm{\beta})$.

We first apply the lower bound from \Cref{simplified lower bound for flows in a general directed graph} to a complete graph to derive a bound for type A Kostant partition functions. We will then show that one could obtain $\CT(\bm{\alpha}, \bm{\beta})$ from a coefficient of the flow generating series of a bipartite graph. Applying \Cref{simplified lower bound for flows in a general directed graph} to this flow series then yields a bound for the number of contingency tables with given marginals. 

\begin{corollary}
    
\label{bounding flows in complete graphs}

Let $G = ([n+1], E)$ be a directed complete graph, and let $\bm{N} \in \Z_{\ge 0}^{n + 1}$ satisfy $|\bm{N}|_1 = 0$. Recall that $K_{n}(\bm{N})$ is the number of $\bm{N}$-flows of $G$. We have:
\[
      K_n(\bm{N})  \ge \cpc_{\bm{N}}(f_{G}). \prod_{i =1}^{n} \frac{|s_i|^{|s_i|}}{(|s_i| + 1)^{|s_i| + 1}},
     \]
where $f_G$ is the flow Laurent series of $G$, and $s_i = \sum_{k = 1}^i N_k$. 
\end{corollary}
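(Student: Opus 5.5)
The plan is to specialize \Cref{simplified lower bound for flows in a general directed graph} to the complete directed graph $G$ on $[n+1]$, compute the sets $C_i$ explicitly, and check that the resulting factor matches $\frac{|s_i|^{|s_i|}}{(|s_i|+1)^{|s_i|+1}}$ with $s_i=\sum_{k=1}^i N_k$. Since each factor in the corollary is a maximum of two terms, we may simply drop the terminal-vertex term and keep only the component term. This is legitimate because the corollary's bound holds with any choice of the two bounds, and in particular with the choice $m_i$ equal to the upper bound from \Cref{a tight bound for Mi} via \Cref{a lower bound for flows of any graph}.

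\textbf{Computing $C_i$.} In the complete graph, the undirected induced subgraph on $\{i,\dots,n+1\}$ is complete, so it is connected and $C_i=\{i,\dots,n+1\}$. Then $-\sum_{j\in C_i}N_j=\sum_{k=1}^{i-1}N_k$, using $|\bm N|_1=0$. The corollary therefore gives, for each $i\in[2,n+1]$, the factor $h(|\sigma_{i-1}|)$, where $\sigma_m:=\sum_{k=1}^{m}N_k$ and $h(t)=t^t/(t+1)^{t+1}$.

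\textbf{Reindexing.} Reindexing $i\mapsto i-1$ turns the product $\prod_{i=2}^{n+1}h(|\sigma_{i-1}|)$ into $\prod_{i=1}^{n}h(|s_i|)$, which is exactly the stated bound.

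\textbf{Main obstacle.} The only real subtlety is that labels and edge directions must match the paper's convention. Edges go from larger to smaller labels, which is the convention used in \Cref{a tight bound for Mi}, and the ordering of variables in \Cref{a lower bound for coefficients using capacity} runs over $i\ge2$. One should double-check that the upper bound in \Cref{a tight bound for Mi} is $-\sum_{j\in C_i\setminus\{i\}}N_j$, and that the corresponding quantity in the theorem is $|m_i-N_i|$. With $m_i=-\sum_{j\in C_i\setminus\{i\}}N_j$, we get $m_i-N_i=-\sum_{j\in C_i}N_j=\sigma_{i-1}$, so the sign and offset work out.

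If instead the intended ordering in $K_n$ reverses labels (vertex $0$ as a sink), the same computation applies with $s_i$ read as partial sums from the other end. By $|\bm N|_1=0$, these differ only in sign, which is absorbed by the absolute value.
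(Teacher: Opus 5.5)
Your proof is correct and is essentially the paper's argument. The paper likewise notes that $C_i=\{i,\dots,n+1\}$ in the complete graph and applies \Cref{simplified lower bound for flows in a general directed graph} together with $|\bm N|_1=0$; your reindexing $i\mapsto i-1$ is the step the paper leaves implicit. Also, since edges point from larger to smaller labels, vertex $1$ is the only terminal vertex, so for $i\ge 2$ the maximum equals the component term exactly; dropping the terminal-vertex term loses nothing.
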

\begin{proof}
 For each $i$, the connected component of the undirected induced subgraph of $G$ on $\{i , \cdots, n\}$ containing $i$ is $C_i = \{ i, i + 1, \cdots, n\}$. The result follows from \Cref{simplified lower bound for flows in a general directed graph} and the fact that $N_1 + \cdots + N_{n + 1} = 0$. 
\end{proof}

\begin{remark}
    \label{C is the generating series for contingency tables} Define:
\[
C (x_1, \cdots, x_n; y_1, \cdots, y_m) = \prod_{i = 1}^n \prod_{j = 1}^m \left( \sum_{k \ge 0} x_i^k y_j^{-k}\right)
\]
$C$ is the flow Laurent series of a complete bipartite graph on vertices $[n+m]$, where there is an edge from vertex $n+i$ to vertex $j$ for each $i \in [m], j \in [n]$. We have replaced variables $x_{n + 1}, \cdots, x_{n+m}$ with $y_1, \cdots, y_m$ for the sake of simplicity.

For a contingency tables $A\in \Z_{\ge 0}^{n \times m}$ of marginals $(\bm{\alpha}, \bm{\beta})$, choose the monomial $x_i^{A_{ij}}y_j^{- A_{ij}}$ from the sum $\sum_{k \ge 0} x_i^k y_j^{-k}$, to ultimately obtain $\prod_{i = 1}^n x_i^{\sum_{k = 1}^m A_{ik}}\prod_{j =1}^m y_j^{\sum_{k =1}^n A_{kj}} = \bm{x^{\alpha}} \bm{y}^{- \bm{\beta}}$. This gives us a bijection between the ways of getting $\bm{x^{\alpha}} \bm{y}^{-\bm{\beta}}$ in $C$ and the set of contingency tables of marginals $(\bm{\alpha}, \bm{\beta})$.
\end{remark}

\begin{corollary}
\label{main theorem for contingency tables}
    Let $\bm{\alpha} \in \Z_{\ge 0}^n, \bm{\beta} \in \Z_{\ge 0}^m$ and suppose $\bm{\alpha}$ is in decreasing order, that is, $\alpha_1 \ge \alpha_2 \ge \cdots$. Then:
    \[
    \CT(\bm{\alpha}, \bm{\beta}) \ge \cpc_{(\bm{\alpha}, -\bm{\beta})}(C).\prod_{j = 1}^m \frac{\beta_j^{\beta_j}}{(\beta_j+1)^{\beta_j + 1}} \prod_{i  = 2}^n \frac{\alpha_i^{\alpha_i}}{(\alpha_i+1)^{\alpha_i + 1}}.
    \]
\end{corollary}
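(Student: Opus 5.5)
The plan is to apply \Cref{simplified lower bound for flows in a general directed graph} (equivalently \Cref{a lower bound for flows of any graph}) to the complete bipartite graph $G$ of \Cref{C is the generating series for contingency tables}, whose flow Laurent series is $C$. The vertices are labelled $1,\dots,n$ for the $x$-variables and $n+1,\dots,n+m$ for the $y$-variables, with an edge $n+j \to i$ for every $i\in[n]$, $j\in[m]$. This labelling respects the convention that edges go from larger to smaller labels, and $G$ is acyclic, so $C=f_G$ is a DL Laurent series by \Cref{acyclic graphs have DL Laurent series}.

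By \Cref{C is the generating series for contingency tables}, integral flows of $G$ correspond bijectively to tables $A$ via $A_{ij}=\phi(n+j\to i)$. The net-flow of such a flow is $\alpha_i$ at vertex $i$ and $-\beta_j$ at vertex $n+j$. Hence $\CT(\bm\alpha,\bm\beta)=|K_G(\bm N)|$ with $\bm N=(\bm\alpha,-\bm\beta)$, and $\cpc_{\bm N}(f_G)=\cpc_{(\bm\alpha,-\bm\beta)}(C)$.

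There is a degenerate case to dispose of first. If $|\bm\alpha|_1\neq|\bm\beta|_1$, or more generally if $\bm N\notin\supp(C)$, then $\CT(\bm\alpha,\bm\beta)=0$. By \Cref{support of DL Laurent series is M convex} and \Cref{cap = 0 iff alpha not in newt p} the capacity also vanishes, since $\Omega_C$ is nonempty by \Cref{domain of convergence of flow Laurent series}. So the inequality holds trivially in that case, and we may assume $\bm N\in\supp(C)$ with $|\bm N|_1=0$. This is exactly the hypothesis needed by \Cref{a lower bound for coefficients using capacity}, which underlies the corollary.

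It remains to read off the product of correction factors over the vertices $i=2,\dots,n+m$. It suffices to exhibit, for each vertex, one admissible bound on $\Deg^{\bm N}_{x_i}(f_G)$.
\begin{itemize}
\item For a $y$-vertex $i=n+j$: the induced undirected subgraph on $\{n+j,\dots,n+m\}$ has no edges, because the $y$-vertices are pairwise non-adjacent. So $C_i=\{n+j\}$ and $s_i=-N_{n+j}=\beta_j$. By \Cref{a tight bound for Mi} the upper bound is then $0$, and the factor is $\beta_j^{\beta_j}/(\beta_j+1)^{\beta_j+1}$. This covers all $j\in[m]$, including $j=1$, since $n+1\ge 2$.
\item For an $x$-vertex $i\in\{2,\dots,n\}$: $i$ has no outgoing edges, so it is terminal. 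By \Cref{proposition to simplify the lower bound for Mi}, $0$ is a lower bound for its degree set, which gives the factor $\alpha_i^{\alpha_i}/(\alpha_i+1)^{\alpha_i+1}$. We take this branch of the maximum and discard the other.
\end{itemize}
Multiplying these factors yields the claimed bound.

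The step needing the most care is this identification of which branch of the $\max$ to use at each vertex, and checking that the $C_i$ computed for the $y$-vertices really are singletons under the chosen labelling. The paper's stated corollary is sloppy in writing $\bm N\in\Z_{\ge 0}$. I would instead invoke \Cref{a lower bound for flows of any graph} directly, choosing $m_i=0$ for every vertex, which works for integer $\bm N$ of either sign. The decreasing-order hypothesis on $\bm\alpha$ is not needed for the argument. Its only effect is to ensure that the omitted factor, at $i=1$, corresponds to the largest $\alpha_i$, which makes the bound as strong as possible.
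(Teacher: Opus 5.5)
Your proof is correct and follows the paper's argument. Both apply the flow-bound corollary to the complete bipartite graph underlying $C$, using the upper bound $0$ at each singleton component $C_{n+j}=\{n+j\}$ and the terminal-vertex lower bound $0$ at each $x$-vertex $i\ge 2$. The only difference is that the paper also computes the competing branch $s_i-\alpha_i=\alpha_1+\cdots+\alpha_{i-1}$ and uses the decreasing order to show the $\max$ equals the $\alpha_i$-branch. You instead take that branch directly, which is valid since the $\max$ dominates each branch, so you are right that the ordering hypothesis is not needed for the inequality itself.
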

\begin{proof}
We will apply \Cref{simplified lower bound for flows in a general directed graph} to $C(\bm{x};\bm{y})$ to find a bound for $[\bm{x}^{\bm{\alpha}}\bm{y^{- \beta}}]C(\bm{x}; \bm{y})$.

For each $j \in [m]$, vertex $n+j$ has no edges coming into it, so $C_{n+ j} = \{n + j \}$ and by \Cref{a tight bound for Mi}, $0$ is an upper bound for the set $M_{n + j}$. Note that $n+j$ is not a terminating vertex, so $M_{n + j}$ has no lower bound by \Cref{proposition to simplify the lower bound for Mi}.

For each $i \in [n]$, all the vertices $\{i + 1, \cdots, n + m \}$ are reachable from $i$ with an undirected path.
Therefore, $ - \sum_{k = n + 1}^m \beta_k + \sum_{k = i + 1}^n \alpha_k  = \sum_{k = 1}^i \alpha_k = s_i$ is an upper bound for the set $M_i$. Note that $i$ is a terminating vertex, and therefore, $l_i = 0$ is a lower bound for $M_i$. \Cref{simplified lower bound for flows in a general directed graph} gives us: 
\[
[\bm{x}^{\bm{\alpha}}\bm{y^{- \beta}}]C(\bm{x}; \bm{y}) \ge \cpc_{(\bm{\alpha},- \bm{\beta})} \prod_{i = 1}^m \frac{|\beta_i|^{|\beta_i|}}{(|\beta_i|+1)^|\beta_i|} \prod_{i  = 2}^n \max\left\{ \frac{|s_i - \alpha_i|^{|s_i - \alpha_i|}}{(|s_i - \alpha_i| + 1)^{|s_i - \alpha_i| + 1}}, \frac{|\alpha_i|^{|\alpha_i|}}{(|\alpha_i|+1)^{|\alpha_i| + 1}}\right\},
\]
where $s_i = \sum_{k = 1}^i \alpha_i$. Note that by assumption, $s_i -\alpha_i \ge \alpha_i$ for $i \ge 2$, and $\frac{z^z}{(1 + z)^{1 + z}}$ is decreasing on $\R_{>0}$, implying that: 
\[
\max\left\{ \frac{(s_i - \alpha_i)^{s_i - \alpha_i}}{(s_i - \alpha_i + 1)^{s_i - \alpha_i + 1}}, \frac{\alpha_i^{\alpha_i}}{(\alpha_i+1)^{\alpha_i + 1}}\right\} = \frac{\alpha_i^{\alpha_i}}{(\alpha_i+1)^{\alpha_i+ 1}},
\]
completing the proof. 
\end{proof}

\section{Parabolic Verma modules}
\label{section: applying bounds to parabolic verma modules}
Recall from \Cref{subsection: verma modules and parabolic verma modules} that given $n\in \N$, $J \subseteq [n]$ and some $\bm{\lambda} \in \Lambda_J^+$, the Verma module $M(\bm{\lambda})$ is some quotient of the universal enveloping of $\mathfrak{sl}_{n+1}(\C)$, and the parabolic Verma module $M(\bm{\lambda}, J)$ is some quotient of $M(\bm{\lambda})$. 

It is proven in \cite[Theorem 1.5]{KMS25} that the polynomial part of any shift of $\character M(\bm{\lambda}, J)$ is a DL polynomial, or equivalently, that this character is a DL Laurent series. In fact, by \cite[Section 3]{KMS25}, $\character M(\bm{\lambda}, J)$ is the product of a flow Laurent series and some Schur polynomials. In  \Cref{subsection: characters of parabolic Verma modules}, we analyze the structure of the graph whose flow Laurent series appears in this character, as well as the partitions indexing the corresponding Schur polynomials. Since $\character M(\bm{\lambda}, J)$ is DL, we obtain capacity bounds for the coefficients of this Laurent series using \Cref{a lower bound for coefficients using capacity}. These bounds are stated in \Cref{subsection: bounds for dimensions of weight spaces}. Lastly, we rewrite $\cpc_{\mu}(\character (M(\bm{\lambda},J))$ as a maximization problem in \Cref{subsections: explicit bounds for dimensions of weight spaces}. We then evaluate this maximization problem at an arbitrary point to get a more explicit lower bound for the dimension of weight spaces of parabolic Verma modules. 

\subsection{Characters of parabolic Verma modules}
\label{subsection: characters of parabolic Verma modules}
Fix $n \in\N$, $J \subseteq[n]$ and $\bm{\lambda} \in \Lambda_J^+$. By \cite[\S 3]{KMS25}, we can write: 
\[
\character M(\bm{\lambda},J) = \prod_{\overset{i < j \in [n+1]}{i \to j \in E(G_J)}} \left( 1 + x_jx_i^{-1} + x_j^2 x_i^{-2} + \cdots\right) \cdot \prod_{t = 0}^l s_{\lambda_t}(x_i: i \in J_r),
\]
where $G_J$ is a graph on vertices $[n+1]$ and contains the edge $i \to j$ if $i<j$ and $[i,j-1] \not \subseteq J$. Moreover, $J_0, \cdots, J_l$ are the connected components of the Dynkin sub-diagram of $J \subseteq [n]$. In other words, $J_0, \cdots, J_l$ partitions $J$ into maximal contiguous intervals, and $\lambda_r = (\lambda_j - \lambda_{1 + \max J_r} : j \in J_r)$. We can rewrite this character as follows:
\[
\character M(\bm{\lambda},J)(x_i: i \in [n+1]) = f_{G_J}(x_i : i \in [n+1]). \prod_{t = 1}^l s_{\lambda_r}(x_i : i \in J_r).
\]
 It is now easy to see that $\character M(\bm{\lambda}, J)$ is a DL Laurent series since it is a well defined product of a DL Laurent series, $f_{G_J}$, with a DL polynomial, $\prod_{r =1}^l s_{{\lambda}_r}$. We can therefore apply \Cref{a lower bound for coefficients using capacity} to $\character M(\bm{\lambda}, J)$, to obtain a lower bound for the dimensions of weight spaces of $M(\bm{\lambda}, J)$. This lower bound is presented in \Cref{initial lower bound for parabolic verma modules}. The structure of the graph $G_J$ plays an essential role in this lower bound, so let us analyze the structure of this graph. 

\begin{lemma}
\label{G_J is multipartite}
   Define $0=:i_0 < i_1 < \cdots < i_r < i_{r+1} := n+1$ such that $J^c= \{ i_1 < i_2 < \cdots < i_r\}$, then the underlying undirected graph of $G_J$ is a complete multipartite graph with parts $[1,i_1], [i_1+1 , i_2], \cdots,[i_{r-1}+1, i_r ], [i_r+1,n+1]$. 
   Moreover, each of these parts corresponds to a maximal continuous interval of $J$, namely:
   \[
   \sqcup_{t = 0}^r [i_t + 1, i_{t+1}-1] = J
   \]
   For the rest of this section, we will assume $l = r$ and $J_t = [i_t + 1, i_{t + 1}- 1]$ for any $t \in [0,r]$. 
\end{lemma}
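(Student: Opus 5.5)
This is a direct combinatorial check from the definition of $G_J$: the pair $i<j$ is joined by an edge exactly when $[i,j-1]\not\subseteq J$. Equivalently, the edge is present exactly when some element of $J^c=\{i_1,\ldots,i_r\}$ lies in the integer interval $[i,j-1]$. I will first partition $[n+1]$ into the blocks $B_t=[i_t+1,i_{t+1}]$ for $t\in[0,r]$. Since $i_0=0$ and $i_{r+1}=n+1$, these are exactly the intervals $[1,i_1],[i_1+1,i_2],\ldots,[i_r+1,n+1]$ listed in the statement, and they are pairwise disjoint and cover $[n+1]$.

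Next I will show there are no edges inside a block. Take $i<j$ in the same block $B_t$. Then $[i,j-1]\subseteq[i_t+1,i_{t+1}-1]$. No element of $J^c$ lies strictly between $i_t$ and $i_{t+1}$, and $n+1\notin[n]$ causes no trouble since $j-1\le n$. Hence $[i,j-1]\subseteq J$, so $i\to j\notin E(G_J)$.

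Then I will show all cross-block pairs are edges. If $i\in B_t$ and $j\in B_{t'}$ with $t<t'$, then $i\le i_{t+1}\le i_{t'}<j$, so $i_{t+1}\in[i,j-1]$. Also $t+1\le t'\le r$, so $i_{t+1}\in J^c$ and is a genuine element of $[n]$ outside $J$. Therefore $[i,j-1]\not\subseteq J$ and the edge $i\to j$ exists. Together with the previous step, this shows the underlying undirected graph is complete multipartite with parts $B_0,\ldots,B_r$.

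Finally, for the decomposition of $J$: each $B_t$ minus its last element $i_{t+1}$ is $[i_t+1,i_{t+1}-1]$. These intervals are disjoint, and their union is $[n+1]$ minus $\{i_1,\ldots,i_{r+1}\}$. That is $[n]\setminus J^c=J$, because removing $i_{r+1}=n+1$ cuts $[n+1]$ down to $[n]$. Each such interval is a maximal run of consecutive elements of $J$, since its neighbours $i_t$ and $i_{t+1}$ lie outside $J$ (or outside $[n]$). So these intervals are exactly the connected components of the Dynkin subdiagram on $J$. Some of them may be empty, which is why allowing empty $J_t$ lets us set $l=r$.

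The only delicate point is the boundary bookkeeping: $i_0=0$ and $i_{r+1}=n+1$ are not elements of $J^c\subseteq[n]$, so the cross-block argument must use $i_{t+1}$ with $t+1\le r$, as above. Everything else is routine.
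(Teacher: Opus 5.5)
Your proof is correct and follows the same approach as the paper: a direct check against the definition of $G_J$. Your use of $i_{t+1}\in[i,j-1]$ for cross-part pairs is exactly the paper's choice of some $i_k$ with $u\le i_k<v$. Beyond the paper, you also verify that there are no edges within a part, and you spell out the decomposition $\bigsqcup_t [i_t+1,i_{t+1}-1]=J$, including the possibly empty $J_t$ needed to set $l=r$. The paper dismisses these as trivial, but the no-edges-within-a-part check is genuinely needed for the ``complete multipartite'' claim.
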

\begin{proof}
    Let $u<v$ be two vertices in different parts. Let $k$ be such that $u \le i_k < v$ (if multiple such $k$s exist, choose one). Then $i_k \in [u,v-1]$ and $[u,v-1] \not \subseteq J$. By definition, $u \to v \in E(G)$.

    The other statement of this remark is trivial.
\end{proof}
\subsection{Bounds for dimensions of weight spaces of parabolic Verma modules}
\label{subsection: bounds for dimensions of weight spaces}
Applying \Cref{a lower bound for coefficients using capacity} to $\character M(\bm{\lambda},J) $ gives us the following lower bound for the dimensions of weight spaces of parabolic Verma modules: 
\begin{theorem}
\label{initial lower bound for parabolic verma modules}
For any parabolic Verma module $M(\lambda, J)$ and a weight $\bm{\mu}$:    
    \begin{align*}
        \dim M(\bm{\lambda}, J)_{\bm{\mu}} &\ge \cpc_{\bm{\mu}}\left(\character M (\bm{\lambda}, J) \right) \prod_{i = 1}^{n}\frac{|m_i|^{|m_i|}}{(|m_i| + 1)^{|m_i| + 1}}
    \end{align*}
where where $m_i = -\sum_{k \in C_i} \mu_k$, and $C_i$ is the connected component containing $i$ in the undirected induced subgraph of $G_J$ on $[i,n+1]$.

\end{theorem}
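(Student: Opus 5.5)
Our plan is to apply \Cref{a lower bound for coefficients using capacity} directly to $p := \character M(\bm{\lambda},J)$, with the variables ordered $x_1,\dots,x_{n+1}$ and $\bm{\alpha} := -\bm{\mu}$. By definition of the character, $\dim M(\bm{\lambda},J)_{\bm{\mu}} = [\bm{x}^{-\bm{\mu}}]p$, so the capacity appearing in the statement should be read as the capacity at the exponent $-\bm{\mu}$.

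The first step is to check the hypotheses of that theorem. By \S\ref{subsection: characters of parabolic Verma modules}, $p = f_{G_J}\cdot\prod_t s_{\lambda_t}$. This is a well-defined product of a DL Laurent series (\Cref{acyclic graphs have DL Laurent series}) and a DL polynomial, so it is a homogeneous DL Laurent series by \Cref{multiplication of DL Laurent series with admissible pairs}. Its domain of convergence contains $\Omega_{f_{G_J}}$, which is nonempty: take $\bm{x}$ with coordinates strictly monotone along the edge orientation, since multiplying by a polynomial with positive coefficients does not shrink the domain. If $-\bm{\mu}\notin\supp(p)$, the left-hand side is $0$ and the capacity is $0$ by \Cref{cap = 0 iff alpha not in newt p} together with M-convexity (\Cref{support of DL Laurent series is M convex}), so the inequality is trivial. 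We may therefore assume $-\bm{\mu}\in\supp(p)$.

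The main step is to produce, for each $i$ from $2$ to $n+1$, a bound $b_i$ on $\Deg_{x_i}^{-\bm{\mu}}(p)$ with $|b_i-\alpha_i|=|m_{i-1}|$ or similar. We will mimic \Cref{a tight bound for Mi}. Any monomial of $p$ with exponents $-\mu_j$ for all $j>i$ arises from a flow on $G_J$ plus a Schur monomial. Summing exponents over the component $C_i$ of the induced subgraph on $[i,n+1]$ should give the bound. Edges between $C_i$ and $[i,n+1]\setminus C_i$ do not exist. Edges from $C_i$ to vertices $<i$ contribute with a definite sign. Schur factors are homogeneous of nonnegative degree and supported inside single blocks $J_t$, which are intervals and hence either contained in $C_i$ on $[i,n+1]$ or straddling $i$. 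The resulting one-sided bound on the $x_i$-exponent should be $-\sum_{k\in C_i\setminus\{i\}}(-\mu_k)$ plus a correction from Schur degrees. We then need this to match the statement's $m_i$ after the index shift between $[2,n+1]$ and $[1,n]$, and to account for the orientation convention: here edges go $u\to v$ for $u<v$, the reverse of \S\ref{section: Flows of a graph}, so the relevant bound is an upper or lower bound as appropriate.

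We expect this step to be the main obstacle. The Schur polynomial degrees $|\lambda_t|$ must cancel correctly so that $b_i-\alpha_i$ equals exactly $\pm m_i$. The indexing also has to be reconciled: the statement's product runs over $i=1,\dots,n$, while the theorem uses variables $2,\dots,n+1$, which suggests ordering the variables in reverse, with $x_{n+1}$ first. We will try the reversed ordering first, so that the components used are exactly the $C_i\subseteq[i,n+1]$ of the statement and the dropped index is $n+1$. Once the $b_i$ are identified, the inequality follows immediately from \Cref{a lower bound for coefficients using capacity}.
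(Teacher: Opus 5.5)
Your overall route is the paper's: apply \Cref{a lower bound for coefficients using capacity} to the character, then bound each $\Deg$ set by a flow argument on $C_i$. The proposal, however, stops at exactly the step that carries the content, and the mechanism you anticipate for it is wrong. No Schur degrees enter, and nothing has to cancel. Keep the natural order $x_1,\dots,x_{n+1}$, so that for $i\in[2,n+1]$ the set $\Deg^{\bm\alpha}_{x_i}(p)$ fixes the exponents $e_k=\alpha_k$ for $k>i$. Write an exponent vector of $p$ as $e=\netflow_\phi+w$, where $w\ge 0$ is the content of the tableaux. Edges go from smaller to larger labels, so all out-neighbours of $i$ and of the vertices of $C_i\setminus\{i\}$ lie in $C_i\setminus\{i\}$. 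Hence no flow leaves $C_i\setminus\{i\}$, and all flow leaving $i$ enters it. This gives
\[
e_i\;\ge\;\netflow_\phi(i)\;\ge\;-\sum_{k\in C_i\setminus\{i\}}\netflow_\phi(k)\;\ge\;-\sum_{k\in C_i\setminus\{i\}}\alpha_k ,
\]
where the last step uses only $\netflow_\phi(k)=\alpha_k-w_k\le\alpha_k$. So $b_i=-\sum_{k\in C_i\setminus\{i\}}\alpha_k$ is a lower bound with $|b_i-\alpha_i|=|m_i|$, for either sign convention $\bm\alpha=\pm\bm\mu$. This is precisely the paper's argument.

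Your plan for the indexing would fail. In \Cref{a lower bound for coefficients using capacity}, the set $\Deg^{\bm\alpha}_{x_i}$ fixes the coordinates \emph{after} $x_i$. Reversing the order therefore fixes the $x_k$ with $k<i$, and three things go wrong:
\begin{itemize}
\item the relevant component lives in $[1,i]$, not $[i,n+1]$;
\item the out-flow of $i$ is unconstrained, so $e_i$ has no lower bound unless $i$ lies in the last part;
\item the only available bound is an upper bound involving the tableau weights, which is exactly where your feared Schur correction would appear.
\end{itemize}
In fact no ordering can produce the range $i=1,\dots,n$ together with $C_i\subseteq[i,n+1]$, because the displayed inequality is false as written. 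Take $\mathfrak{sl}_2$ and $J=\emptyset$. The character is $f_{G_\emptyset}=\sum_{k\ge0}x_1^{-k}x_2^{k}$, and the coefficient of $x_1^{-k}x_2^k$ is $1$. Since $C_1=\{1,2\}$, we get $m_1=0$, while $\cpc_{(-k,k)}(f_{G_\emptyset})=(k+1)^{k+1}/k^k>1$ for $k\ge1$.

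The paper's own proof fixes the coordinates $k>i$, i.e.\ it uses the natural order. It therefore really proves the bound with $\prod_{i=2}^{n+1}$, where $C_{n+1}=\{n+1\}$ and $m_{n+1}=-\mu_{n+1}$. In the example above this bound equals $1$ exactly. The range $1,\dots,n$ in the statement is an index slip. Rather than trying to reconcile the indices, keep the natural order and correct the range.
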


\begin{proof}
For each $i \in [n]$, let $J_{r_i}$ be the connected component of the Dynkin sub-diagram of $J \subseteq[n]$ containing $i$. Denote by $\text{SSYT}_{J_{r_i}}(\lambda_{r_i})$ the set of semi-standard Young tableaux of shape $\lambda_{r_i}$ filled with labels from the set $J_{r_i}$. For any semi-standard Young tableau $T$, let $w_i(T)$ be the number of boxes labeled $i$ in $T$.
 
    $\character M(\bm{\lambda}, J)$ is a DL Laurent series since it is the product of a DL Laurent series ($f_{G_J}$) with a DL polynomial ($\prod_{t = 0}^r s_{\lambda_t}$). By \Cref{a lower bound for coefficients using capacity}, for any weight multiplicity $\bm{\mu}$, we have: 
    \[
    \dim M(\bm{\lambda},J)_{\bm{\mu}} \ge \cpc_{\bm{\mu}}\left( \character M(\bm{\lambda}, J) \right). \prod_{i = 1}^{n}\frac{|b_i - \mu_i|^{|b_i - \mu_i|}}{(|b_i - \mu_i| + 1)^{|b_i - \mu_i| + 1}},
    \]
    where $b_i$ is a bound for the following set: 
    \[
    \{\netflow_{\phi}(i) + w_i(T_{r_i}) \mid  \phi: E(G_J) \to Z_{\ge 0}, T_{r_i} \in \text{SSYT}_{J_{r_i}}(\lambda_{r_i}), \netflow_{\phi}(k) + w_k(T_{r_k})= \mu_k \forall k > i\}
    \] 

    With an argument similar to \Cref{proposition to simplify the lower bound for Mi}, we obtain: 
    \[
    \sum_{i \overset{e }{\to }k } \phi(e) \le \sum_{\overset{k \in C_i}{k > i}} \netflow_{\phi}(k),
    \]
    therefore,
    \[
    \netflow_{\phi}(i) \ge - \sum_{i\overset{e}{\to}k} \phi(e) \ge - \sum_{\overset{k \in C_i}{k > i}}\netflow_{\phi}(k).
    \]
    For all $k > i$, we are assuming that $\netflow_{\phi}(k) + w_k( T_{r_k} )= \mu_k$, and as a result, $\netflow_{\phi}(k) \le \mu_k$. Combining this with the inequality above, we get $\netflow_{\phi}(i) \ge - \sum_{k \in C_i - \{ i\}}\mu_k$. Hence $-\sum_{k \in C_i - \{ i\}} \mu_k$ is a lower bound for $\netflow_{\phi}(i) + w_i(T_{r_i})$ for any choice of $\phi$ and $T_{r_i}$, completing the proof.
\end{proof}
The lower bound presented in \Cref{initial lower bound for parabolic verma modules} depends on the connected components of the undirected induced subgraphs of $G_J$. Since $G_J$ is a complete multipartite graph by \Cref{G_J is multipartite}, the connected component containing $i$ in the induced subgraph of $G_J$ on vertices $[i,n+1]$ is empty for any $i \ge i_r +1 $, and is the whole set $[i,n+1]$ for any $i \le i_r$. Let us now rewrite \Cref{initial lower bound for parabolic verma modules}.  
\begin{corollary}    
\label{structure of G_J applied to bounds for verma modules}
For any parabolic Verma module $M(\bm{\lambda}, J)$ and a weight $\bm{\mu}$:    
    \begin{align*}
        \dim M(\bm{\lambda}, J)_{\bm{\mu}} &\ge \cpc_{\bm{\mu}}\left(\character M (\bm{\lambda}, J)\right) \prod_{i = 1}^{\max J^c} \frac{|l - \sigma_{i-1}|^{|l - \sigma_{i-1}|}}{(|l - \sigma_{i-1}| + 1)^{|l - \sigma_{i-1}| + 1}} \cdot \prod_{i = 1 + \max J^c}^n \frac{
        |\mu_i|^{|\mu_i|}}{(|\mu_i| + 1)^{|\mu_i| + 1}} 
    \end{align*}
where $\sigma_i = \sum_{k  =1}^i\mu_k$ and $l = |\bm{\lambda}|_1$. 
\end{corollary}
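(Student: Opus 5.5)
The plan is to specialize \Cref{initial lower bound for parabolic verma modules} using the multipartite structure of $G_J$ from \Cref{G_J is multipartite}. The only input needed is the identification of $m_i=-\sum_{k\in C_i}\mu_k$ for each $i\in[n]$, where $C_i$ is the component of $i$ in the undirected induced subgraph of $G_J$ on $[i,n+1]$. I will then rewrite the factors accordingly.

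\textbf{Case $i\le i_r=\max J^c$.} Vertex $i$ lies in some part $[i_t+1,i_{t+1}]$ with $t<r$. The last part $[i_r+1,n+1]$ is nonempty, since it contains $n+1$, and it lies entirely inside $[i,n+1]$. In a complete multipartite graph every vertex of $[i,n+1]$ outside $i$'s part is adjacent to $i$. Any vertex of $[i,n+1]$ inside $i$'s part is adjacent to $n+1$, which is adjacent to $i$. Hence $C_i=[i,n+1]$, and
\[
m_i=-\sum_{k=i}^{n+1}\mu_k=\sum_{k=1}^{i-1}\mu_k-|\bm\mu|_1=\sigma_{i-1}-|\bm\mu|_1 .
\]
The weights are assumed to satisfy $|\bm\mu|_1=|\bm\lambda|_1=l$, because $\bm\lambda-\bm\mu$ is a sum of roots. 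Therefore $|m_i|=|l-\sigma_{i-1}|$, which gives the first product.

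\textbf{Case $i\ge i_r+1$.} Here $[i,n+1]$ lies inside the single last part, which is an independent set in $G_J$. So $i$ is isolated in the induced subgraph and $C_i=\{i\}$, giving $|m_i|=|\mu_i|$. This is the second product.

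Substituting both cases into \Cref{initial lower bound for parabolic verma modules} yields the corollary. The only delicate point is the sum convention: the character in this paper is indexed so that $\bm x^{\bm\mu}$ carries total degree equal to $|\bm\lambda|_1$ (the $\bm\lambda$-shift). I would check this against the definition of $\character M(\bm\lambda)$ so that $|\bm\mu|_1=l$ holds, possibly up to a sign that disappears under the absolute value. If instead $|\bm\mu|_1=0$ is intended, the same computation gives $|\sigma_{i-1}|$, so I would reconcile this with the stated $l$ as the main check.
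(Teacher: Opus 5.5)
Your proposal is correct and is essentially the paper's argument: the paper's one-line proof specializes \Cref{initial lower bound for parabolic verma modules} via \Cref{G_J is multipartite}, using exactly that $C_i=[i,n+1]$ for $i\le \max J^c$ and $C_i=\{i\}$ for $i>\max J^c$, and then rewrites $-\sum_{k\ge i}\mu_k$ as $\sigma_{i-1}-l$. Your connectivity check through the vertex $n+1$ fills in a step the paper only asserts, and the paper also uses the identity $|\bm\mu|_1=|\bm\lambda|_1$ without comment, so your caveat is about the paper's normalization of the character, not a gap in your reasoning.
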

\begin{proof}
Follows immediately from \Cref{initial lower bound for parabolic verma modules} and \Cref{G_J is multipartite}.
\end{proof}
\subsection{Explicit bounds for the dimensions of weight spaces of parabolic Verma modules}
\label{subsections: explicit bounds for dimensions of weight spaces}
We further simplify the capacity term that shows up in \Cref{initial lower bound for parabolic verma modules} by restating $\cpc_{\mu} (\character M(\lambda,J))$ as a maximization problem. Evaluating this maximization problem at some point yields a lower bound  for $\cpc_{\mu}(\character M(\lambda,J)) $, which we use in \Cref{Secondary lower bound for parabolic verma modules} to obtain a more explicit bound for the dimension of weight spaces of parabolic Verma modules.

\begin{proposition}
\label{product of capacities}
For any $f_1, \cdots, f_k \in \R_{\ge 0}((x_1, \cdots, x_n))$ and $\bm{\alpha} \in \Z^n$, we have: 
\[
\cpc_{\bm{\alpha}}\prod_{i =1}^k f_i = \sup_{\overset{\bm{\alpha^i \in \R_{> 0}^n}}{\sum_{i = 1}^k \bm{\alpha^i} = \bm{\alpha}}} \prod_{i = 1}^k \cpc_{\bm{\alpha^i}} f_i
\]
\end{proposition}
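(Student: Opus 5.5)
The inequality $\cpc_{\bm{\alpha}}\prod_i f_i \ge \sup \prod_i \cpc_{\bm{\alpha}^i} f_i$ is elementary, and I would do it first. Fix a splitting $\sum_i \bm{\alpha}^i = \bm{\alpha}$. For every $\bm{x} > \bm{0}$ we have $\bm{x}^{\bm{\alpha}} = \prod_i \bm{x}^{\bm{\alpha}^i}$, so
\[
\frac{\prod_i f_i(\bm{x})}{\bm{x}^{\bm{\alpha}}} = \prod_{i=1}^k \frac{f_i(\bm{x})}{\bm{x}^{\bm{\alpha}^i}} \ge \prod_{i=1}^k \cpc_{\bm{\alpha}^i}(f_i).
\]
This holds with the convention that both sides may be $+\infty$. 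The factors are nonnegative, so there are no sign issues. Taking the infimum over $\bm{x}$ and then the supremum over splittings gives the inequality.

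For the reverse inequality I would copy the proof of \Cref{convex analysis of capacity of flow series}. Set $g_i(\bm{y}) := -\log f_i(e^{\bm{y}})$ and $g := \sum_i g_i = -\log \prod_i f_i(e^{\bm{y}})$. By \Cref{capacity is a convex optimization problem}, extended to Laurent series with the same computation, we have $-\log \cpc_{\bm{\beta}}(f) = \sup_{\bm{y}}\{\langle \bm{y},\bm{\beta}\rangle \cdot(-1) \ldots\}$. It is cleaner to write $\log \cpc_{\bm{\beta}}(f) = -h^*(\bm{\beta})$, where $h(\bm{y}) = \log f(e^{\bm{y}}) \ldots$; I would fix signs so that $\log\cpc_{\bm{\beta}}(f_i) = -\phi_i^*(\bm{\beta})$ with $\phi_i(\bm{y}) = \log f_i(e^{\bm{y}})$ up to the sign flip $\bm{y}\mapsto -\bm{y}$.

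The key point is that $\bm{y}\mapsto \log f_i(e^{\bm{y}})$ is convex, being the log of a positive combination of exponentials; this is a Hölder argument. Its effective domain is $\log \Omega_{f_i}$. Then \Cref{conjugate of sum of convex functions} (Rockafellar 16.4) says the conjugate of a sum is the infimal convolution of the conjugates, with the infimum attained. Negating and exponentiating, this turns exactly into $\cpc_{\bm{\alpha}}\prod_i f_i = \sup_{\sum \bm{\alpha}^i = \bm{\alpha}} \prod_i \cpc_{\bm{\alpha}^i}(f_i)$, and the supremum is even attained.

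The main obstacle is verifying the hypothesis $\bigcap_i \relint D_i \ne \emptyset$. I would handle it by cases.

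\textbf{Case 1: $\Omega_{\prod f_i} = \emptyset$.} Then the left side is $+\infty$ by definition. I must show the right side is also $+\infty$, or else adopt the convention that the identity is only claimed when the product converges somewhere. Note that $\Omega_{\prod f_i} = \bigcap_i \Omega_{f_i}$ for nonnegative series.

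\textbf{Case 2: $\Omega_{\prod f_i} \neq \emptyset$.} The intersection of the domains is nonempty, but I still need a common point of the relative interiors. The relative interior of a convex set is dense in it, and each $\log\Omega_{f_i}$ is convex. To get a common relative-interior point, I would try to show that domains of convergence of these series are open in their affine hulls, or else replace each $g_i$ by its closure. Closure does not change the conjugates, and the version of Rockafellar's theorem for closed proper functions (Theorem 16.4 together with 20.1) requires only a common point of the relative interiors. If that fails, I would fall back on a direct minimax argument: write the infimum over $\bm{x}$ of $\prod_i \sup_{\bm{\alpha}^i}(\cdots)$ and apply Sion's theorem to the concave–convex function $\sum_i [\log f_i(e^{\bm{y}}) - \langle \bm{y},\bm{\alpha}^i\rangle]$ on $\bigcap_i D_i \times \{\sum_i\bm{\alpha}^i = \bm{\alpha}\}$.

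Finally, I note that the restriction $\bm{\alpha}^i \in \R^n_{>0}$ in the statement should really be read as $\bm{\alpha}^i \in \R^n$, or as restricted to the Newton polytopes $\Newt(f_i)$. Outside $\Newt(f_i)$ the capacity vanishes by \Cref{cap = 0 iff alpha not in newt p}, so such splittings never contribute to the supremum, and the restriction loses nothing.
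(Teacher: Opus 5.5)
Your elementary inequality is right, and the Fenchel-duality route is genuinely different from the paper's. The paper quotes the polynomial case from \cite[Proposition~6.2]{BLP23} and passes to the limit using continuity of capacity from \cite[Proposition~2.18]{GL21}. Whenever $\bigcap_i\relint(\log\Omega_{f_i})\neq\emptyset$, your argument is complete and more self-contained. Set $\phi_i(\bm y)=\log f_i(e^{\bm y})$; then $\log\cpc_{\bm\beta}(f_i)=-\phi_i^*(\bm\beta)$ directly, with no sign flip needed. Rockafellar's Theorem~16.4 (\Cref{conjugate of sum of convex functions}) then gives the identity with the supremum attained. That hypothesis holds where the paper actually uses the proposition, since $\Omega_{f_{G_J}}$ is open and nonempty and Schur polynomials converge everywhere.

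As a proof of the statement as written, however, there is a genuine gap. In Case~2 you never establish the relative-interior hypothesis, and none of your fallbacks supplies it.
\begin{itemize}
\item Domains of convergence need not be relatively open: $\sum_{k\ge1}x^k/k^2$ converges exactly on $(0,1]$.
\item Passing to closures changes nothing. Each $\phi_i$ is already lower semicontinuous, being an increasing limit of continuous partial sums, so its domain and the hypothesis of Theorem~16.4 are unchanged.
\item The Sion step is degenerate. On your product set $\sum_i\langle\bm y,\bm\alpha^i\rangle=\langle\bm y,\bm\alpha\rangle$, so the objective does not depend on the splitting at all. A genuine Lagrangian needs separate variables $\bm y^i$, and then neither side is compact.
\end{itemize}
Without the hypothesis, the full Theorem~16.4 only gives $(\sum_i\phi_i)^*=\operatorname{cl}(\phi_1^*\,\square\cdots\square\,\phi_k^*)$. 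That is, the left side is the upper semicontinuous hull, in $\bm\alpha$, of the right side. What is missing is upper semicontinuity of the right side at $\bm\alpha$, which is automatic only in the relative interior of the set where it is positive. This boundary behaviour really differs from the relint case. For $f_1=\sum_{k\ge1}x^k/k^2$ and $f_2=\sum_{k\ge1}x^{-k}/k^2$, both sides equal $(\pi^2/6)^2$, but every term of the supremum is strictly smaller, so even the attainment you claim fails.

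Case~1 cannot be closed, because the statement is false there. In $\R_{\ge0}((x_1,x_2))$ take $f_1=\sum_{k\ge0}x_1^k$ and $f_2=1+\sum_{k\ge1}k^{-2}x_1^{-k}$.
\begin{itemize}
\item The product is a well-defined Laurent series, since each coefficient is a tail of the convergent series $1+\sum_{k\ge1}k^{-2}$.
\item However $\Omega_{f_1}=\{x_1<1\}$ and $\Omega_{f_2}=\{x_1\ge1\}$, so $\cpc_{(0,1)}(f_1f_2)=\infty$ by definition.
\item Every splitting of $(0,1)$ gives some $\bm\alpha^i$ a nonzero second coordinate. Since $f_i$ does not involve $x_2$, that capacity is $0$, so the right side is $0$.
\end{itemize}
So the proposition needs an extra hypothesis, such as $\bigcap_i\relint(\log\Omega_{f_i})\neq\emptyset$ with the $\bm\alpha^i$ ranging over $\R^n$. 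Under that hypothesis your argument is a full proof. The paper's limiting argument does not address these degenerate cases either.

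Finally, do not restrict the $\bm\alpha^i$ to $\Newt(f_i)$. The proof of \Cref{cap = 0 iff alpha not in newt p} uses that $\Newt(p)$ is closed, which fails for infinite supports. For example, $f=1+x_1\sum_{j\ge1}x_2^j$ has $\cpc_{(0,1)}(f)=1$ although $(0,1)\notin\Newt(f)$.
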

\begin{proof}
    This proposition holds for all polynomials $f_1, \cdots, f_k \in \R_{\ge 0} [x_1, \cdots, x_n]$ by \cite[Proposition 6.2]{BLP23}. Moreover, \cite[Proposition 2.18]{GL21} states that if a sequence of polynomials $\{p_i\}_{i = 1}^{\infty}$ converges to some analytic function $p$ uniformly on compact sets, then $\cpc_{\bm{\alpha}} p = \lim_{i \to \infty} \cpc_{\bm{\alpha}} p_i$, completing the proof. 
\end{proof}
\begin{lemma}
    \label{capacity is at least the corresponfing coefficient} Let $p \in \R_{\ge 0}[x_1, \cdots, x_n]$ and let $\bm{\alpha} \in \Z_{\ge 0}^n$. Then:
    \[
        \cpc_{\bm{\alpha}} (p)  \ge p_{\bm{\alpha}}
    \]
\end{lemma}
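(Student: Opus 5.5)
The plan is to bound the single monomial $p_{\bm{\alpha}}\bm{x^{\alpha}}$ by the whole sum and then take the infimum. Since $p \in \R_{\ge 0}[x_1, \cdots, x_n]$ has nonnegative coefficients, every term $p_{\bm{\beta}}\bm{x^{\beta}}$ is nonnegative for $\bm{x} > \bm{0}$. Dropping all terms except the one indexed by $\bm{\alpha}$ gives, for every $\bm{x} \in \R_{>0}^n$,
\[
p(\bm{x}) = \sum_{\bm{\beta}} p_{\bm{\beta}} \bm{x^{\beta}} \ge p_{\bm{\alpha}} \bm{x^{\alpha}}.
\]
Because $\bm{x^{\alpha}} > 0$, we may divide and obtain $\frac{p(\bm{x})}{\bm{x^{\alpha}}} \ge p_{\bm{\alpha}}$ pointwise on $\R_{>0}^n$.

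Next we take the infimum over $\bm{x} > \bm{0}$. For a polynomial, $\Omega_p = \R_{>0}^n$, so this infimum is exactly $\cpc_{\bm{\alpha}}(p)$ under the definition given before \Cref{a lower bound for coefficients using capacity}, which also agrees with the classical polynomial definition. The pointwise bound therefore passes to the infimum and yields $\cpc_{\bm{\alpha}}(p) \ge p_{\bm{\alpha}}$.

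There is no genuine obstacle here; the only point requiring care is making sure the capacity is taken over the open orthant, where the division is legitimate. The same argument works verbatim for a Laurent series $p \in \R_{\ge 0}((x_1,\cdots,x_n))$. If $\Omega_p$ is nonempty, we take the infimum over $\Omega_p$, where the absolutely convergent nonnegative sum still dominates each of its terms. If $\Omega_p$ is empty, the capacity is $\infty$ by convention and the inequality holds trivially. This extended version is the form used together with \Cref{product of capacities} to bound $\cpc_{\bm{\mu}}(\character M(\bm{\lambda},J))$ from below by Kostka numbers.
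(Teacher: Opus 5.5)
Your argument is correct and is exactly the paper's proof: nonnegativity of the coefficients gives $p(\bm{x})/\bm{x^{\alpha}} \ge p_{\bm{\alpha}}$ for every $\bm{x} > \bm{0}$, and taking the infimum yields $\cpc_{\bm{\alpha}}(p) \ge p_{\bm{\alpha}}$. Your Laurent-series extension is also valid, though \Cref{Secondary lower bound for parabolic verma modules} applies the lemma only to the Schur polynomials $s_{\lambda_t}$, so the polynomial version already suffices there.
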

\begin{proof}
    The statement follows directly from the fact that $\frac{p(\bm{x})}{\bm{x^{\alpha}}} \ge p_{\bm{\alpha}}$ for all $\bm{x}>\bm{0}$.
\end{proof}


\begin{corollary}
\label{Secondary lower bound for parabolic verma modules} Suppose  $M(\bm{\lambda}, J)$ is a parabolic Verma module. Fix the following:
\begin{itemize}
    \item $\bm{\nu} \in \Z^{[n+1]}_{\ge 0}$ with $\nu_{i_t} = 0$ and $\bm{\nu}_t := \restr{\bm{\nu}}{J_t}$ such that $|\bm{\nu}_t|_1 = |{\lambda}_t|_1$ for all $t \in \{0,1,\ldots,r\}$, and
    \item a (not necessarily integral) flow $\phi$ of $G_J$ with net-flows $\bm\mu-\bm\nu$.
\end{itemize}
Then
    \begin{align*}
        \dim M(\bm{\lambda}, J)_{\bm{\mu}} &\ge   \prod_{e \in E(G_J)}\frac{(\phi(e) + 1)^{\phi(e) + 1}}{\phi(e)^{\phi(e)}} \cdot \prod_{ t = 0}^r K_{\lambda_t, \bm{\nu}_t} \cdot \prod_{i = 1}^{n}\frac{|m_i|^{|m_i|}}{(|m_i| + 1)^{|m_i| + 1}} 
    \end{align*}
    where $m_i = -\sum_{k \in C_i} \mu_k$, and $K_{\lambda_t, \bm{\nu}_t}$ is the Kostka number indexed by $\lambda_t, \bm{\nu}_t$. 
\end{corollary}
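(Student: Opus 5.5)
The plan is to start from \Cref{initial lower bound for parabolic verma modules}, which already provides the factor $\prod_{i=1}^n |m_i|^{|m_i|}/(|m_i|+1)^{|m_i|+1}$. What remains is to show
\[
\cpc_{\bm{\mu}}\left(\character M(\bm{\lambda},J)\right) \ge \prod_{e \in E(G_J)}\frac{(\phi(e)+1)^{\phi(e)+1}}{\phi(e)^{\phi(e)}}\cdot \prod_{t=0}^r K_{\lambda_t,\bm{\nu}_t}.
\]
To get this, I will use the factorization $\character M(\bm{\lambda},J) = f_{G_J}\cdot \prod_{t=0}^r s_{\lambda_t}(x_i : i\in J_t)$. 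I then apply \Cref{product of capacities} with $k = r+2$ factors. The exponent split is $\bm{\alpha}^0 = \bm{\mu}-\bm{\nu}$ for $f_{G_J}$ and $\bm{\alpha}^{t+1} = \bm{\nu}_t$ (extended by zero outside $J_t$) for $s_{\lambda_t}$. These sum to $\bm{\mu}$ because the $J_t$ are disjoint and $\bm{\nu}$ vanishes on the indices $i_t\in J^c$. Since the capacity of the product is a supremum over such splittings, it is at least the product of the individual capacities at this particular split.

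Next I bound each factor. For each Schur factor, \Cref{capacity is at least the corresponfing coefficient} gives $\cpc_{\bm{\nu}_t}(s_{\lambda_t}) \ge [\bm{x}^{\bm{\nu}_t}]s_{\lambda_t} = K_{\lambda_t,\bm{\nu}_t}$. The condition $|\bm{\nu}_t|_1 = |\lambda_t|_1$ ensures $\bm{\nu}_t$ lies in the homogeneous degree of $s_{\lambda_t}$, so this coefficient is the Kostka number. One technicality: $s_{\lambda_t}$ involves only the variables in $J_t$, so its capacity at a vector with zero entries outside $J_t$ should be read as the capacity in those variables alone. The infimum over the other variables is trivial because the ratio does not depend on them.

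For the flow factor, \Cref{convex analysis of capacity of flow series} gives $\cpc_{\bm{\mu}-\bm{\nu}}(f_{G_J})$ as a supremum over flows with net-flow $\bm{\mu}-\bm{\nu}$ of $\prod_e (a_e+1)^{a_e+1}/a_e^{a_e}$. Evaluating this at the given $\phi$ yields the edge product. The sign and direction conventions for $G_J$ (edges $u\to v$ with $u<v$, versus the convention of edges going from larger to smaller labels) have to be matched to the net-flow definition so that $\phi$ really has net-flows $\bm{\mu}-\bm{\nu}$ for the series $f_{G_J}$.

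I expect the main obstacle to be the legitimacy of that supremum over real flows, since the proposition is stated with $K_G(\bm{N})$. Its proof via \Cref{conjugate of sum of convex functions} is really an infimum over real decompositions $\bm{\alpha}^{(i,j)} = c(e_i - e_j)$. I would therefore reread it as a supremum over all nonnegative real flows, checking that $f^*_{i,j}$ is finite only for $c\ge 0$, which gives nonnegativity of $\phi$. I would also verify the relative-interior hypothesis, which holds since $\Omega_{f_{G_J}}$ is a nonempty open cone when $G_J$ is acyclic. A similar caveat applies to \Cref{product of capacities}, which allows real positive splittings; our split has zero coordinates, so I would pass to a limit or note that the polynomial version in \cite{BLP23} allows nonnegative splittings. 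Combining the three bounds with \Cref{initial lower bound for parabolic verma modules} finishes the proof.
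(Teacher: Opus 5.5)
Your proposal is correct and is essentially the paper's own argument: split $\bm{\mu}=(\bm{\mu}-\bm{\nu})+\sum_t\bm{\nu}_t$ in \Cref{product of capacities}, bound each Schur capacity by the Kostka number via \Cref{capacity is at least the corresponfing coefficient}, evaluate \Cref{convex analysis of capacity of flow series} at $\phi$, and conclude with \Cref{initial lower bound for parabolic verma modules}. The caveats you raise (real versus integral flows, zero coordinates in the split) do point to loose statements in the paper, but both inequalities you need are the elementary directions, so no limiting or relative-interior argument is required: use $\inf(AB)\ge\inf A\cdot\inf B$, and weak duality $f_{G_J}(\bm{x})/\bm{x}^{\bm{\mu}-\bm{\nu}}=\prod_e \frac{1}{(1-z_e)z_e^{\phi(e)}}\ge\prod_e\frac{(\phi(e)+1)^{\phi(e)+1}}{\phi(e)^{\phi(e)}}$ with $z_e=x_v/x_u\in(0,1)$ for $e=u\to v$; one small point is that the split sums to $\bm{\mu}$ only if $\nu_{n+1}=0$ as well, which is the intended hypothesis $\nu_{i_{t+1}}=0$ from the introduction.
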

\begin{proof}
Let us rewrite $\cpc_{\bm{\mu}}\left(\character M (\bm \lambda, J)\right)$. By \Cref{product of capacities}:
\begin{align*}
    \cpc_{\bm{\mu}}\left(f_{G_J}. \prod_{t = 0}^{r} s_{\lambda_t}\right) &= \sup_{\bm{\beta} + \sum_{t = 0}^{r} \alpha_t = \bm{\mu}} \cpc_{\bm{\beta}} f_{G_J} . \prod_{t = 0}^{r} \cpc_{\bm{\beta_t}}s_{\lambda_t}\\
    & \ge \cpc_{\bm{\mu - \nu}} (f_{G_J}). \prod_{t = 0}^{r} \cpc_{\bm{\nu_t}}(s_{\lambda_t})\\
    & \ge \sup_{\psi \in K_{G_J}{(\bm{\mu - \nu}})} \left\{ \prod_{e \in E(G_J)}\frac{(\psi(e) + 1)^{\psi(e) + 1}}{\psi(e)^{\psi(e)}}\right\}. \prod_{t = 0}^{r + 1} K_{\lambda_t, \bm{\nu}_t} \\
    & \ge \prod_{e \in E(G_J)}\frac{(\phi(e) + 1)^{\phi(e) + 1}}{\phi(e)^{\phi(e)}} \cdot \prod_{t = 0}^{r} K_{\lambda_r, \bm{\nu}_r},
\end{align*}
the last inequality is due to \Cref{convex analysis of capacity of flow series} and \Cref{capacity is at least the corresponfing coefficient}. The corollary follows from \Cref{initial lower bound for parabolic verma modules}.
\end{proof}

\subsection*{Acknowledgements}

We would like to thank Apoorva Khare, Jacob Matherne, and Alejandro Morales for helpful and interesting conversations.
Both authors acknowledge the support of the Natural Sciences and Engineering Research Council of Canada (NSERC), [funding reference number RGPIN-2023-03726]. Cette recherche a \'et\'e partiellement financ\'ee par le Conseil de recherches en sciences naturelles et en g\'enie du Canada (CRSNG), [num\'ero de r\'ef\'erence RGPIN-2023-03726].

\bibliographystyle{amsalpha}
\bibliography{uw-ethesis}
\appendix
\section{Missing proofs}
\label{AppendixA}
For completeness, the proofs of results used without proof in previous sections are included in this appendix. 
\subsection{Ratio test}
We state and ratio test for general positive sequences, and prove a version of it for log-concave sequences. 
\begin{lemma}
    \label{ratio test}
    Let $\{ a_n\}_{n \ge 0}$ be a sequence with $a_i \in \R_{>0}$. The sum $\sum_{n \ge 0} a_n$ converges if $\lim_{n \to \infty} \frac{a_{n+1}}{a_n} < 1$, and diverges if $\lim_{n \to \infty} \frac{a_{n+1}}{a_n}> 1$.
\end{lemma}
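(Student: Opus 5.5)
We prove the ratio test for a sequence $\{a_n\}_{n \ge 0}$ of positive reals by comparing it with a geometric series. Throughout we assume, as the statement implicitly does, that $L := \lim_{n \to \infty} \frac{a_{n+1}}{a_n}$ exists in $[0,\infty]$. This is the case that matters later: when $\{a_n\}$ comes from a positive log-concave sequence, the ratios $\frac{a_{n+1}}{a_n}$ are monotone, so the limit exists automatically. That is exactly how the lemma is used in \Cref{domain of convergence is non-empty for non trivial DL bivariate Laurent series}.

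\textbf{Convergence when $L<1$.} Fix $r$ with $L < r < 1$. By the definition of the limit, there is an index $N$ such that $\frac{a_{n+1}}{a_n} \le r$ for all $n \ge N$. Induction on $n$ then gives $a_n \le a_N r^{n-N}$ for every $n \ge N$. Hence the tail $\sum_{n \ge N} a_n$ is bounded above by the convergent geometric series $a_N \sum_{k \ge 0} r^k = \frac{a_N}{1-r}$. Since all terms are positive, the partial sums are monotone and bounded, so the full series converges.

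\textbf{Divergence when $L>1$.} Here $L$ may be $+\infty$. Choose an index $N$ with $\frac{a_{n+1}}{a_n} \ge 1$ for all $n \ge N$. Then $a_n \ge a_N > 0$ for every $n \ge N$. So the terms do not tend to zero, and the series diverges by the term test.

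Neither step presents a real obstacle. The only subtlety is that the lemma needs the limit to exist. In the log-concave setting where we apply it, the ratios $\frac{a_{n+1}}{a_n}$ are monotone, so we may equally phrase the argument with $\sup$ and $\inf$ in place of the limit.
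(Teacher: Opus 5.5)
Your proof is correct and complete. For $L<1$ you compare the tail with a geometric series of ratio $r\in(L,1)$, and for $L>1$ you show the terms are eventually bounded below by $a_N>0$; this is the standard argument. The paper does not prove this lemma (it quotes it in the appendix as the classical ratio test) and proves only the log-concave refinement, \Cref{ratio test for log-concave sequences}; there it settles the borderline case $L=1$ with the same ``terms bounded below by a positive constant'' idea you use for $L>1$.
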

\begin{lemma}
    \label{ratio test for log-concave sequences}
     Let $\{ a_n\}_{n \ge 0}$ be a log-concave sequence with $a_i \in \R_{>0}$. Then the sequence $\{\frac{a_{n+1}}{a_n}\}_{n \in \N}$ is decreasing and convergent, and the sum $\sum_{n \ge 0} a_n$ converges if and only if $\lim_{n \to \infty}\frac{a_{n+1}}{a_n}< 1$.
\end{lemma}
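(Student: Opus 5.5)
Write $r_n := a_{n+1}/a_n > 0$. The plan is to get monotonicity of $(r_n)$ directly from log-concavity, deduce convergence, and then split the convergence question of $\sum_{n\ge 0} a_n$ according to the value of the limit, with the case $L=1$ handled by a separate monotonicity argument.

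\textbf{Monotonicity and limit.} Since all $a_n>0$, the inequality $a_n^2 \ge a_{n-1}a_{n+1}$ can be divided by $a_{n-1}a_n$. This gives $r_{n-1} = a_n/a_{n-1} \ge a_{n+1}/a_n = r_n$, so $(r_n)_{n\in\N}$ is decreasing. It is bounded below by $0$, hence it converges to some limit $L := \lim_{n\to\infty} r_n \ge 0$, and in fact $L = \inf_n r_n$.

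\textbf{The cases $L\neq 1$.} If $L<1$, convergence of $\sum_n a_n$ follows from \Cref{ratio test}. If $L>1$, divergence also follows from \Cref{ratio test}. More directly, in that case $r_n \ge L > 1$ for every $n$, so $(a_n)$ is increasing and cannot tend to $0$.

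\textbf{The case $L=1$.} This is the only point needing care, because \Cref{ratio test} gives no conclusion here. The key observation is that since $(r_n)$ decreases to its infimum $L=1$, we have $r_n \ge 1$ for all $n$. Therefore $a_{n+1} \ge a_n$ for all $n$, so $a_n \ge a_0 > 0$ and the terms do not tend to zero. Hence $\sum_n a_n$ diverges.

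Combining the three cases, $\sum_{n\ge 0} a_n$ converges if and only if $L<1$, which is the claimed equivalence.
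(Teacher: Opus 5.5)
Your proof is correct and follows the same argument as the paper. Monotonicity of the ratios comes from log-concavity, the ratio test handles $L\neq 1$, and for $L=1$ a decreasing sequence with limit $1$ stays $\ge 1$, so the terms are bounded below by a positive constant. (Your $L\ge 0$ is slightly more accurate than the paper's claim $L\in\R_{>0}$; for example, $a_n=1/n!$ gives $L=0$.)
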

\begin{proof}
    Because $\frac{a_{n+1}}{a_n} \le \frac{a_n}{a_{n-1}}$ for a log-concave sequence $\{a_n\}_{n \in \N}$, the sequence $\{\frac{a_{n+1}}{a_n} \}_{n \in \N}$ is decreasing. So $\{\frac{a_{n+1}}{a_n} \}_{n \in \N}$ is a decreasing positive sequence, and converges to some $L \in \R_{>0}$.

    By \Cref{ratio test}, $\sum_{n \ge 0}a_n$ converges if $L < 1$, and diverges if $L > 1$. It remains to show that this sum diverges in case $L = 1$ as well. 

    Note that $\{\frac{a_{n+1}}{a_n} \}_{n \in \N}$ is a decreasing sequence, and if $L = 1$, none of the terms $\frac{a_{n+1}}{a_n}$ can be less than 1, or we will end up with a sequence that is eventually strictly less 1. So we should have $\frac{a_{n+1}}{a_n} \ge 1$ for all $n \in \N$, and $a_n \ge a_1$. Now we have: 
    \[
    \sum_{n \in \N}a_n \ge \sum_{n \in \N}a_1
    \]
    and this sum diverges.
\end{proof}

\subsection{Proof of \Cref{moving the boundary case in to the interior} for $n = 3$}
\begin{lemma}
\label{base case for induction proving that domain of convergence of two Laurent series is equal}
Let 
\[
p(x_1, x_2, x_3) = x_3 r(x_1, x_2) + s(x_1, x_2)
\]
be a $(d+1 )$-homogeneous DL Laurent series. Assuming that $r$ and $s$ are both non-zero Laurent series, we have    $(1,1) \in \Omega_r $ if and only if $ (1,1) \in \Omega_s$.
\end{lemma}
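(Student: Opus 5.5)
Write $r(x_1,x_2)=\sum_{m\in\Z} r_m x_1^{d-m}x_2^m$ and $s(x_1,x_2)=\sum_{m\in\Z} s_m x_1^{d+1-m}x_2^m$, so that $(1,1)\in\Omega_r$ iff $\sum_m r_m<\infty$, and likewise for $s$. By \Cref{the coefficient of x^d in a DL Laurent series is DL}, $r=[x_3]p$ and $s=[x_3^0]p$ are bivariate DL Laurent series. Hence by \Cref{bivaraite Laurent series is DL iff it has a log concave sequence} both $\{r_m\}$ and $\{s_m\}$ are log-concave with no internal zeros. The plan is to compare the tails of $\{r_m\}$ and $\{s_m\}$ at $m\to+\infty$ and at $m\to-\infty$ separately. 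We show that the tail ratios $\lim r_{m+1}/r_m$ and $\lim s_{m+1}/s_m$ coincide and that the supports extend equally far, up to a bounded shift. Then \Cref{ratio test for log-concave sequences} gives convergence of one sum iff convergence of the other.

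\textbf{Step 1 (supports).} $\supp(p)$ is M-convex by \Cref{support of DL Laurent series is M convex}. Apply the exchange property to a point $(d-m,m,1)\in\supp(p)$ (coming from $r$) and a point of $s$ whose second coordinate is far to the right or left. Since the third coordinate drops from $1$ to $0$, the exchange lets us move $x_3$ into $x_1$ or $x_2$. The aim is the interlacing relation: $r_m\neq0$ implies $s_m\ne0$ or $s_{m+1}\neq 0$, and conversely $s_m\ne0$ implies $r_m\neq0$ or $r_{m-1}\ne 0$. Consequently the supports of $r$ and $s$ (in the index $m$) differ by at most one at each end; in particular one is bounded above iff the other is. If a tail is finite for both, convergence on that side is automatic.

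\textbf{Step 2 (growth rates).} This is the main step, and it uses the Lorentzian signature condition rather than mere log-concavity of each slice. Fix an index $m$ and take the quadratic (degree-two) piece of $p$ around the monomials $x_1^{a}x_2^{m}x_3$. That is, apply $\Poly$ of a suitable shift (\Cref{equivalent definition for DL Laurent series}) and then take derivatives to reach a quadratic form in $x_1,x_2,x_3$. This form is Lorentzian after normalization, so its $3\times3$ coefficient matrix has at most one positive eigenvalue. Its entries involve $s_{m}, s_{m+1}, s_{m+2}$ (from the $x_3^0$ part) and $r_m, r_{m+1}$ (the $x_3x_1$, $x_3x_2$ entries), with the $x_3^2$ entry equal to $0$. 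The nonnegativity of the relevant $2\times2$ and $3\times 3$ minor inequalities (the ``Alexandrov--Fenchel''-type inequality $r_m r_{m+1}$ versus $s$'s) should yield inequalities of the form
\[
r_{m+1}^2 \ge c\, r_m\, s_{m+2}\cdot\frac{r_{m+1}}{s_{m+1}}, \qquad s_{m+1} r_m \ \text{comparable to}\ s_m r_{m+1},
\]
and in particular
\[
\frac{s_{m+1}}{s_m}\ \le\ C\frac{r_{m}}{r_{m-1}}, \qquad \frac{r_{m+1}}{r_m}\ \le\ C\frac{s_{m+1}}{s_{m}}
\]
with constants independent of $m$ after normalization factorials cancel asymptotically. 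I expect pinning down exactly which minor to use, and handling the factorial normalizations (which only contribute ratios tending to $1$ after a shift by $\bm{x^\alpha}$ with large $\alpha$), to be the main obstacle. A fallback is to use log-concavity of the sequence of values $p_k(y)$ in $x_3$ from the first part of \Cref{omega pk and how it is related to omega p}, applied to truncations. Either way the goal is $\lim_{m\to\infty} r_{m+1}/r_m=\lim_{m\to\infty} s_{m+1}/s_m$ and the same at $-\infty$.

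\textbf{Step 3 (conclusion).} Suppose $\sum r_m<\infty$. Then on each infinite tail, \Cref{ratio test for log-concave sequences} gives limit ratio $<1$ (respectively, ratio $>1$ toward $-\infty$). By Step 2, $\{s_m\}$ has the same limit ratio, so its tail converges; by Step 1, the finite tails match up. Hence $\sum s_m<\infty$. The converse direction is symmetric.
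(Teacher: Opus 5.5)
Your outline is the paper's own: M-convex interlacing of the supports, then the Hessian of the quadratic piece $s_m x_1^2+s_{m+1}x_1x_2+s_{m+2}x_2^2+r_m x_1x_3+r_{m+1}x_2x_3$ of a suitable shift of $p$, then \Cref{ratio test for log-concave sequences}. Your Step 1 is correct.

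The gap is Step 2, which carries the whole content of the lemma. You never derive an inequality there, and the ones you aim for are too weak. Take the comparisons $r_{m+1}/r_m\le C\,s_{m+1}/s_m$ and $s_{m+1}/s_m\le C\,r_m/r_{m-1}$; they do hold, with $C=2$. But they only give $\delta^*/C\le\epsilon^*\le C\delta^*$ for the limiting ratios $\epsilon^*$ of $r$ and $\delta^*$ of $s$. If $\delta^*=0.9$, nothing forces $\epsilon^*<1$. So your Step 3, which needs $\epsilon^*=\delta^*$, does not follow.

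Your fallback does not help either. The sequence of $x_3$-coefficients is just $(s(\bm y),r(\bm y))$, of length two, so its log-concavity is vacuous. Also, statement 1 of \Cref{omega pk and how it is related to omega p} presupposes $\bm y\in\Omega_r\cap\Omega_s$, which is what you are trying to prove. Nor are there factorials to control. After normalization, the Hessian of this quadratic is exactly the matrix with rows $(0,r_m,r_{m+1})$, $(r_m,s_m,s_{m+1})$, $(r_{m+1},s_{m+1},s_{m+2})$, in the variable order $x_3,x_1,x_2$, with no asymptotic error.

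The missing step is to use the full $3\times3$ determinant. It is $\ge 0$ because the matrix has exactly one positive eigenvalue; the $2\times2$ principal minors give only $-r_m^2\le 0$ and the log-concavity of $s$. Write $\epsilon_m=r_{m+1}/r_m$ and $\delta_m=s_{m+1}/s_m$, and divide $-r_m^2s_{m+2}+2r_mr_{m+1}s_{m+1}-r_{m+1}^2s_m\ge 0$ by $r_m^2s_m$. This gives
\[
(\epsilon_m-\delta_m)^2\le\delta_m(\delta_m-\delta_{m+1}).
\]
Since $\delta_m$ is monotone, there are two cases.
\begin{itemize}
\item If its limit (as $m\to+\infty$ or $m\to-\infty$) is finite, then $\delta_m-\delta_{m+1}\to 0$, hence $\epsilon_m-\delta_m\to 0$ and the limits coincide exactly.
\item If $\delta_m\to\infty$, which is possible only as $m\to-\infty$, the same inequality gives $\epsilon_m\ge\delta_{m+1}/2\to\infty$.
\end{itemize}
So the limiting ratios always agree in $[0,\infty]$. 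This is what the paper proves, in the form $\delta_m-\sqrt{\delta_m^2-\delta_m\delta_{m+1}}\le\epsilon_m\le\delta_m+\sqrt{\delta_m^2-\delta_m\delta_{m+1}}$. With it, your Step 3 goes through.
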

\begin{proof}
Write: 
 \begin{align*}
        r(x_1, x_2) &= \sum_{n \in \Z} r_n x_1^n x_2^{-n + d}, \\
        s(x_1, x_2) &= \sum_{n \in \Z} s_n x_1^n x_2^{-n + d + 1},
    \end{align*}
and let $D_s:=\Deg_{x_1}(s) = \{n: s_n \neq 0 \}$, $D_r:=\Deg_{x_1}(r) = \{ n: r_n \neq 0\}$, and $D := D_r \cap D_s$. We need to show that that $\sum_{n \in D_r} r_n$ converges if and only if $\sum_{n \in D_s}s_n$ converges. $s$ and $r$ are two bivariate DL Laurent series by \Cref{the coefficient of x^d in a DL Laurent series is DL}, and therefore, the sequences of their coefficients are log-concave by \Cref{bivaraite Laurent series is DL iff it has a log concave sequence}. Particularly, $\{r_n\}_{n \in \Z}$ and $\{s_n\}_{n \in \Z}$ cannot have internal zeros, implying that $D_r$ and $D_s$ are two sets of consecutive integers. We will prove that $D_r \Delta D_s$ has finitely many elements. 

Take an arbitrary $n \in D_s$ and an arbitrary element $m \in D_r$. So we have $\bm{\nu}:= (n, -n + d + 1, 0) \in \supp(p)$ and $\bm{\mu}:= (m, -m + d, 1) \in \supp(p)$. Note that the support of $p$ is M-convex by \Cref{support of DL Laurent series is M convex}, and we have two elements of $p$'s support differing in their third coordinate, implying that there should exist some $i \in \{1,2\}$ satisfying $\bm{\mu} - \bm{e}_3 + \bm{e}_i, \bm{\nu} + \bm{e}_3 -\bm{e}_i \in \supp(p)$. If $i = 1$, we obtain $(m + 1 , -m + d, 0), (n-1, -n + d + 1, 1) \in \supp(p)$, and therefore $m+1 \in D_s$ and $n-1 \in D_r$. In case $i = 2$, we get $(m, -m + d + 1, 0), (n, -n + d, 1) \in \supp (p)$, so $m \in D_s$ and $n \in D_r$. Thus, we have proven so far that for any element $n \in D_s$, either $n$ itself or $n-1$ is an element of $D_r$ as well, and conversely, for any $m \in D_r$, either $m$ or $m+1$ is in $D_s$. So we have: 
\[
\min(D_r) \ge \min(D_s) - 1 \hspace{0.2 cm} \text{and} \hspace{.2 cm} \max(D_s) \le \max(D_r) + 1,
\]
where $\min(A), \max(A)$ are the minimum and the maximum elements of a finite set $A$ respectively. Taking into consideration that both $D_r$ and $D_s$ are sets of consecutive integers, we conclude $|D \Delta D_s|, |D \Delta D_r| \le 1$.  We now just have to prove that the sum $\sum_{n \in D} s_n$ converges if and only if $\sum_{n \in D}r_n$ converges. 

For any $n \in D$, define:
    \begin{align*}
         \epsilon_n &= \frac{r_{n+1}}{r_n}, \\
         \delta_n &= \frac{s_{n+1}}{s_n},
    \end{align*}

The idea is to prove that $\epsilon_n = o(\delta_n)$, and that the coefficients of neither $s$ nor $r$ grow faster than those of the other. Since $p$ is a DL Laurent series, for any $k \in \Z$, the polynomial:
    \[
    \Poly[x_1^{-k} x_2^{k - d + 1} p] = s_k x_2^2 + s_{k + 1}x_1x_2 + s_{k + 2} x_1^2 + r_k x_3 x_2 + r_{k + 1} x_3 x_1
    \]
is DL, and the matrix:
    $$\nabla^2 N[\Poly[x_1^{-k} x_2^{k - d + 1} p]] =  \begin{bmatrix}
        0 & r_k & r_{k + 1} \\
        r_k & s_k & s_{k + 1}\\
        r_{k + 1} & s_{k + 1} & s_{k + 2}
    \end{bmatrix}$$
    has at most one positive eigen-value by \Cref{definition of Lorentzian polynomials} (this hessian is written in the reverse order of variables). So its determinant must be nonnegative. If $k + 1 \in D$, we can divide the first row and the first column of this hessian by $r_{k+1}$, to get a matrix whose determinant is still nonnegative, giving us: 
    \begin{align*}
    0 &\le \begin{vmatrix}
        0 & 1 & \epsilon_k \\
        1 & s_k & \delta_k s_k \\
        \epsilon_k & \delta_k s_k & \delta_k \delta_{k + 1} s_k
    \end{vmatrix}\\
    &= -\epsilon_k^2 s_k + 2 \epsilon_k \delta_k s_k - \delta_k \delta_{k+ 1}s_k ,
    \end{align*}
    which is a quadratic form in $\epsilon_k$, and as long as $s_k > 0$ (so for instance if $k \in D$), we will get:
    \[ \delta_k - \sqrt{\delta_k^2 - \delta_k \delta_{k + 1}} \le \epsilon_k \le \delta_k + \sqrt{\delta_k^2 - \delta_k \delta_{k + 1}}. \]

    First, suppose $(1,1) \in \Omega_s$, and let us prove that $\sum_{n \in D_{\ge 0}} r_n$ and $\sum_{n \in D_{< 0}}r_n$ both converge, where $D_{\ge 0} = D \cap \Z_{\ge 0}$ and $D_{< 0} = D \cap \Z_{< 0}$.
    
    If $D_{\ge 0}$ is a finite set, $\sum_{n \in D_{\ge 0}}r_n$ converges without any further issues. So suppose $D_{\ge 0}$ is an infinite set. Recall that $D$ is a set of consecutive integers, and therefore, $D_{\ge 0}$ contains all nonngeative integers greater than some $M \in \Z_{\ge 0}$. We conclude that $\delta_n$ is a well defined ratio for all $n \ge M$. Apply \Cref{ratio test for log-concave sequences} to $\sum_{n \ge M} s_n$ to obtain $\delta^* := \lim_{n \to \infty}\delta_n < 1$. 

    Let $k_0 \in \N$ be a large enough integer satisfying $k_0, k_0 + 1 \in D$ and $\delta_{k_0} < 1$, and let $c = {\frac{ (1- \delta_{k_0})^2}{2{\delta_{k_0}}}}$. Now, there exists some large enough $k \ge k_0 $ satisfying $\delta_k - \delta_{k + 1} \le c$, therefore:
    \[
    \epsilon_k \le \delta_k + \sqrt{\delta_k}. \sqrt{ \delta_k - \delta_{k + 1}} \le \delta_{k_0} + \sqrt{\delta_{k_0}}.\sqrt{c} < 1
    \]
    So $\lim_{n \to \infty} \epsilon_n < 1$, and $\sum_{n \in D_{\ge 0}}r_n$ converges by \Cref{ratio test for log-concave sequences}. 

    If $D_{<0}$ is finite, we don't have anything to prove. So we can assume that $D_{<0}$ is the set of all negative integers smaller than some $M \in \Z$, and that $\delta_{-n}$ is a well defined ratio for all $n > |M|$. Apply \Cref{ratio test for log-concave sequences} to get $\delta_*^{-1} :=\lim_{n \to \infty}\delta_{-n}^{-1} < 1$. 
    
    If $\delta_* \neq \infty$, we have that $\lim_{n \to \infty} \delta_{-n} = \delta_* > 1$.
    Let $k_1$ be a large enough integer satisfying $\delta_{-k_1}>1$, and let $c = \frac{(\delta_{-k_1}-1)^2}{2\delta_*}$. There exists some large enough $k \ge k_1$ satisfying $\delta_{-k} - \delta_* \le c$ and $\delta_{-k} - \delta_{-k+1} \le c$. Note that $\{\delta_{-n}\}_{n \in \N}$ is an increasing sequence and we have: 
    \begin{align*}
        \epsilon_{-k} &\ge \delta_{-k} - \sqrt{\delta_{-k}}. \sqrt{(\delta_{-k} - \delta_{-k + 1})}
        \ge \delta_{-k_1} - \sqrt{\delta_*}. \sqrt{c} > 1
    \end{align*}
    So $\{\delta_{-n}\}_{n \in \N}$ is an increasing sequence with $\delta_k > 1$, and therefore $\lim_{n \to \infty} \delta_{-n}^{-1} < 1$.

    On the other hand, let $\delta_*=\infty$, and assume for the sake of contradiction that $\epsilon_{-k}$ is always less than 1 for $k \in \N$. We know that $\delta_{-k} - \sqrt{\delta_{-k}^2 - \delta_{-k} \delta_{-k+1}} \le \epsilon_{-k} \le  1$, and we can rewrite this inequality to get:
    \begin{align*}
        \delta_{-k}^2 - \delta_{-k} \delta_{-k+1} &\ge (\delta_{-k} - 1)^2 \\
        &= \delta_{-k}^2 - 2\delta_{-k} + 1 \\
        \implies \delta_{-k}(2 - \delta_{-k + 1}) &\ge 1
    \end{align*}
    and therefore, $\delta_{-k + 1} \le 2$, a contradiction since we know $\lim_{n \to \infty} \delta_{-n} = \infty$. Note that we could assume that the sequence $\{ \epsilon_{-k} \}_{k \in \N}$ is bounded above by any number and derive a similar contradiction. So in this case, not only $\lim_{n \to \infty} \epsilon_{-n}$ is strictly larger than 1, but it is infinite as well. 

    Overall, we have proven that $\lim_{n \to \infty} \epsilon_{-n}^{-1} < 1$, and $\sum_{n \in D_{<0}}r_n$ converges by \Cref{ratio test for log-concave sequences}.
    
    For the second part of the proof, assume that $(1,1) \in \Omega_r$. We need to show that both of the sums $\sum_{n \in D_{\ge 0}} s_n$ and $\sum_{n \in D_{<0}} s_n$ converge. 

    Suppose $D_{\ge 0}$ is the set of all nonnegative integers greater than $M$, or we have nothing to prove. By \Cref{ratio test for log-concave sequences}, the limit of $\epsilon_n$ as $n$ goes to infinity be some $\epsilon^*<1$. 

    Let $m_0$ be a large enough integer satisfying $\epsilon_{m_0} < 1$, and let $c = \frac{(1 - \epsilon_{m_0})^2}{2\delta_{m_0}}$. Note that the sequence $\{\delta_k\}_{k > M }$ converges by \Cref{ratio test for log-concave sequences}, and therefore, there exists some large enough $m$ satisfying $\delta_{m} - \delta_{m + 1} \le c$, therefore: 

    \begin{align*}
         \delta_m & \le \epsilon_m + \sqrt{\delta_m} \sqrt{\delta_m - \delta_{m + 1}} \\
         & \le \epsilon_{m_0} + \sqrt{\delta_m} \sqrt{\delta_m - \delta_{m + 1}} \\
         & \le \epsilon_{m_0} + \sqrt{\delta_{m_0}}\sqrt{c} < 1        
     \end{align*}
    
  For the last part of this proof, assume that $D_{< 0} = \Z_{<0}$. The ratio test for the convergence of $\sum_{n \in \N} r_{-n}$ tells as that $\lim_{n \to \infty} \epsilon_{-n}^{-1} \le 1$. If this limit is exactly 1, we will get a contradiction, so let $ \epsilon_*^{-1} = \lim_{n \to \infty} \epsilon_{-n}^{-1} < 1$. 

  For the last part of this proof, suppose $D_{<0}$ is an infinite set and contains all negative integers smaller than some $M \in \Z$. Then $\epsilon_{-n}$ is a well-defined ratio for all $n > |M|$, and by \Cref{ratio test for log-concave sequences}, $\epsilon_*^{-1} = \lim_{n \to \infty} \epsilon_{-n}^{-1} < 1$.

  If $\epsilon_* = \infty$, we have:
  \[
  \epsilon_{-k} \le \delta_{-k} + \sqrt{\delta_{-k}^2 -\delta_{-k} \delta_{-k + 1}} \le 2 \delta_{-k},
  \]
  and $\{ \delta_{-n}\}_{n > |M|}$ goes to infinity, in which case $\lim_{n \to \infty} \delta_{-n}^{-1} < 1$ and $\sum_{n \in D_{< 0}} s_n$ converges by \Cref{ratio test for log-concave sequences}.

So assume that $1 < \epsilon_* < \infty$. Moreover, if $\delta_*^{-1} := \lim_{n \to \infty}\delta_{-n}^{-1}$, we can assume $\delta_* < \infty$ as well (note that $\{ \delta_{-n}^{-1}\}_{n > |M|}$ converges by \Cref{ratio test for log-concave sequences}).  Let $m_1$ be a large enough integer satisfying $\epsilon_{-m_1} > 1$, and let $c = \frac{(\epsilon_{-m_1} - 1)^2}{2 \delta_*}$. There exists some large enough $m$ such that $\delta_{-m} - \delta_{-m + 1} \le c$, and we will get: 
\begin{align*}
    \delta_{-m} &\ge \epsilon_{-m} - \sqrt{\delta_{-m}}. \sqrt{\delta_{-m} - \delta_{-m + 1}}\\
    & \ge \epsilon_{-m_1} - \sqrt{\delta_*}. \sqrt{c}\\
    & > 1
\end{align*}
So $\delta_{-m} > 1$ for some $m > |M|$, and $\lim_{n \to \infty} \delta_{-n}^{-1} < 1$. So $\sum_{n \in D_{<0}}s_n$ converges by \Cref{ratio test for log-concave sequences}, completeing the proof. 
\end{proof} 
\end{document}